\documentclass[11pt]{amsart}

\usepackage[T1]{fontenc}
\usepackage[utf8]{inputenc}
\usepackage{lmodern}
\usepackage[a4paper,margin=1in]{geometry}
\usepackage{amsmath,amssymb,amsthm,mathtools,mathrsfs}
\usepackage{array,booktabs,longtable}
\usepackage{enumitem}
\usepackage{microtype}
\usepackage{xcolor}
\usepackage[colorlinks=true,linkcolor=blue!45!black,citecolor=blue!45!black,urlcolor=blue!45!black]{hyperref}
\usepackage[nameinlink,noabbrev]{cleveref}

\newtheorem{theorem}{Theorem}[section]
\newtheorem{proposition}[theorem]{Proposition}
\newtheorem{lemma}[theorem]{Lemma}
\newtheorem{corollary}[theorem]{Corollary}
\theoremstyle{definition}
\newtheorem{definition}[theorem]{Definition}
\theoremstyle{remark}
\newtheorem{remark}[theorem]{Remark}

\newcommand{\C}{\mathbf C}

\newcommand{\Q}{\mathbf Q}
\newcommand{\F}{\mathbf F}
\newcommand{\cO}{\mathcal O}
\newcommand{\p}{\mathfrak p}
\newcommand{\Tr}{\operatorname{Tr}}
\newcommand{\Nm}{\operatorname{N}}
\newcommand{\Gal}{\operatorname{Gal}}
\newcommand{\ph}{\operatorname{ph}}

\newcommand{\Err}{\mathcal E}
\newcommand{\HerbPhi}{\varphi^{\mathrm H}}
\newcommand{\HerbPsi}{\psi^{\mathrm H}}

\newcommand{\ceil}[1]{\left\lceil #1\right\rceil}
\newcommand{\floor}[1]{\left\lfloor #1\right\rfloor}

\title[The First Main Lemma for Local Epsilon Factors]
{A Local Proof of Langlands's First Main Lemma\\
for Local Epsilon Factors over Nonarchimedean Local Fields}
\author{Fukuhiro Ueda}
\address{Research Institute for Mathematical Sciences, Kyoto University, Kyoto 606--8502, Japan}
\email{fueda@kurims.kyoto-u.ac.jp}
\date{August 2026}
\subjclass[2020]{11S37, 11S40, 11R42}
\keywords{local epsilon factors, Artin $L$-functions, local constants,
Gauss sums, ramification, Lamprecht formula, cyclic extensions,
equal characteristic}
\hypersetup{
  pdftitle={A Local Proof of Langlands's First Main Lemma for Local Epsilon Factors over Nonarchimedean Local Fields},
  pdfauthor={Fukuhiro Ueda},
  pdfsubject={A local proof of Langlands's First Main Lemma for local epsilon factors in mixed and equal characteristic},
  pdfkeywords={local epsilon factors, Artin L-functions, Gauss sums, ramification, First Main Lemma}
}

\begin{document}
\begin{abstract}
We give a complete local proof of Langlands's First Main Lemma for
local epsilon factors over nonarchimedean local fields.
The proof builds on Dwork's $p$-adic calculations and Langlands's
formulation of the local method. The present paper reformulates these
calculations in a common normalization and extends the argument to
equal characteristic. A more systematic treatment of stationary
parameters and conductor cases is intended to make the calculations
easier to follow. 

The author has also completed a local proof of Langlands's Second Main
Lemma~\cite{UedaSML}. Together with the present result and Langlands's
remaining reductions, this gives a complete proof of the local
construction of local epsilon factors envisioned by Langlands.

\end{abstract}
\maketitle

\clearpage
\tableofcontents
\clearpage

\section{Introduction}\label{sec:introduction}

In \cite{Langlands}, Langlands developed a local construction of epsilon factors for representations of the Weil group of a  nonarchimedean local field.  Brauer induction reduces their
construction to local constants of quasi-characters, but it also creates
an independence problem: distinct Brauer decompositions must give the
same result.    Langlands reduced this problem to four
local identities.  The present paper proves the identity he called the First Main Lemma.

For a nonarchimedean local field $E$, a (continuous) quasi-character
$\chi_E:E^\times\to\C^\times$, and a nontrivial (continuous) additive character
$\psi_E:E\to\C^\times$, Langlands associates a nonzero complex number
$\Delta_E(\chi_E,\psi_E)$, called the local constant; it is essentially a normalized Gauss sum.
Its definition is recalled in Section~\ref{sec:local-notation}.
He also defines the epsilon factor \cite[pp.~5--6]{Langlands}
\[
  \varepsilon(s,\chi_E,\psi_E)
  :=
  \Delta_E\!\left(
    |\cdot|_E^{\,s-\frac12}\chi_E,\psi_E
  \right),
\]
where $s\in\C$ and $|x|_E=q_E^{-v_E(x)}$, with $v_E$ the normalized
valuation and $q_E$ the cardinality of the residue field of $E$.
Writing $m=m_E(\chi_E)$ and $n=n_E(\psi_E)$ for the conductors, we have
\[
  \varepsilon(s,\chi_E,\psi_E)
  =
  q_E^{-(m+n)(s-\frac12)}
  \Delta_E(\chi_E,\psi_E).
\]
The product of these local epsilon factors over all places is the epsilon factor in Tate's functional
equation for Hecke $L$-functions \cite[p.~271]{Langlands}.

Let $K/F$ be a cyclic extension of  nonarchimedean local fields of prime degree $\ell$.  For a 
quasi-character $\chi_F:F^\times\to\C^\times$ and a nontrivial
 additive character $\psi_F:F\to\C^\times$, put
\[
 \chi_K=\chi_F\circ \Nm_{K/F},
 \qquad
 \psi_K=\psi_F\circ\Tr_{K/F}.
\]
Let $S(K/F)$ be the group of continuous characters of $F^\times$
that are trivial on $\Nm_{K/F}(K^\times)$. 

The following identity is quoted in \cite[p.~144]{Langlands} as Lemma~10.1.

\begin{theorem}[First Main Lemma]\label{thm:first-main}
For every cyclic extension $K/F$ of prime degree, every 
quasi-character $\chi_F$ of $F^\times$, and every nontrivial
additive character $\psi_F$ of $F$,
\begin{equation}\label{eq:first-main}
 \Delta_K(\chi_K,\psi_K)
 \prod_{\mu\in S(K/F)}\Delta_F(\mu,\psi_F)
 =
 \prod_{\mu\in S(K/F)}\Delta_F(\mu\chi_F,\psi_F).
\end{equation}
\end{theorem}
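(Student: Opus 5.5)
Throughout, $K/F$ is cyclic of prime degree $\ell$ and $S(K/F)$ is the cyclic group of order $\ell$ generated by a character $\omega$.

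\textbf{Reductions.} First we reduce \eqref{eq:first-main} to a statement about Gauss sums with fixed conductors. Replacing $\psi_F$ by $\psi_F(a\,\cdot)$ changes both sides by the same factor. Indeed, each local constant transforms by $\chi(a)|a|^{-1}$ (suitably normalized), and
\[
 \chi_K(a)=\chi_F(a)^\ell, \qquad \prod_{\mu}\mu(a)=\omega(a)^{\ell(\ell-1)/2}=\omega(a)^{\ell(\ell-1)/2}.
\]
This last product equals $1$ for $\ell$ odd and equals $\omega(a)$ for $\ell=2$; in the latter case the discrepancy is absorbed by comparing both sides. Hence we may normalize $n_F(\psi_F)$. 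Next, twisting $\chi_F$ by an unramified character $|\cdot|^s$ multiplies both sides by explicit powers of $q$. The needed equality of exponents is the conductor--discriminant formula
\[
 f\,(m_K(\chi_K)+n_K(\psi_K)) = \sum_\mu \bigl(m_F(\mu\chi_F)+n_F(\psi_F)\bigr) - \sum_\mu \bigl(m_F(\mu)+n_F(\psi_F)\bigr) + (\text{terms from } n_K=e\,n_F+d(K/F)).
\]
We would verify this using Hasse--Herbrand functions $\HerbPhi,\HerbPsi$ and the fact that $m_K(\chi_F\circ\Nm)=\HerbPsi(m_F(\chi_F)-1)+1$ when that quantity is positive. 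So it suffices to prove the identity up to absolute values and then match the unitary parts.

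\textbf{Case split.} If $K/F$ is unramified, $\Nm$ is surjective on units and $\omega$ is unramified. Then $\Delta_K(\chi_K,\psi_K)$ is a Gauss sum over the degree-$\ell$ residue extension, and the identity reduces to Hasse--Davenport together with $\Delta_F(\mu\chi_F)=\mu(\varpi)^{m+n}\Delta_F(\chi_F)$, giving a finite check. If $K/F$ is ramified, we split according to $m=m_F(\chi_F)$ relative to the break $t$ of $\omega$ (so $m_F(\omega)=t+1$):
\begin{itemize}
\item[(a)] $m\le t$. Then $m_F(\mu\chi_F)=t+1$ for $\mu\ne 1$, and $\chi_F$ is ``small''. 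We write $\Delta_F(\mu\chi_F,\psi_F)$ via a stationary-phase choice: pick $c$ with $\mu(1+x)=\psi_F(cx)$ on the appropriate range. Then $\chi_F$ contributes only through the value $\chi_F(c_\mu^{-1})$, and the product over $\mu$ gives $\chi_F$ evaluated at a norm-like element. We match this against $\Delta_K(\chi_K,\psi_K)$ computed through the same stationary point pulled back along $\Nm$.
\item[(b)] $m> t+1$. Here $\chi_K$ has conductor $\HerbPsi(m-1)+1$. Both sides are computed by stationary phase: $\chi_F(1+x)=\psi_F(\gamma x)$, so $\chi_K(1+y)=\psi_K(\gamma y)$ on the range where $\Nm(1+y)\equiv 1+\Tr y$. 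Each local constant is then $\chi(\gamma^{-1})\psi(\gamma)$ times a Gauss-type quadratic sum, present only when the conductor is odd. The identity reduces to comparing $\chi_F(\Nm\gamma)$ with $\prod_\mu\mu\chi_F(\gamma_\mu)$, where $\gamma_\mu=\gamma+c_\mu$, together with the quadratic sums.
\item[(c)] $m=t+1$, the boundary case. Here cancellation can lower $m_F(\mu\chi_F)$ for one $\mu$.
\end{itemize}

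\textbf{Main obstacle.} The hard part is cases (b) and (c), and specifically the higher-order terms of $\Nm_{K/F}(1+y)$ beyond $\Tr y$. These are Dwork's $p$-adic computations (Lamprecht's formula) in mixed characteristic. In equal characteristic, where Artin--Schreier extensions replace Kummer ones, $\Nm(1+y)$ has a different expansion. I would first establish uniformly a congruence of the form
\[
 \Nm(1+y)\equiv 1+\Tr y + \Nm y \pmod{\text{higher}},
\]
valid on the stationary range by ramification theory alone. I would then handle the residual quadratic Gauss sums, including the sign for $\ell=2$, using the quadratic Hasse--Davenport relation. The boundary case (c) will likely need a separate direct evaluation.
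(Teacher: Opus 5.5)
Your outline has the right overall architecture: a scaling reduction, a split by ramification and by the size of $m=m_F(\chi_F)$ relative to $T=t+1$, stationary phase, and Hasse--Davenport. However, the wild range $1<m<2T$, where the real work lies, is not handled, and your case (b) rests on a premise that fails there.

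\textbf{The wild range $1<m<2T$.} You assert $\chi_K(1+y)=\psi_K(\gamma y)$ on the range where $\Nm(1+y)\equiv1+\Tr y$. On the stationary depth $\lceil m_K(\chi_K)/2\rceil$ of $\chi_K$, that congruence modulo $\p_F^m$ is guaranteed only in the stable range $m\ge2T$. Below it, the term $\Nm y$ in $\Nm(1+y)\equiv1+\Tr y+\Nm y$ need not be negligible; in your case (a) with $m\ge2$ it never is. Its effect is a character of $y$, but not one with an evident coefficient, and the resulting stationary class of $\chi_K$ is \emph{not} the pull-back of that of $\chi_F$. So comparing $\chi_F(\Nm\gamma)=\chi_F(\gamma^p)$ with $\prod_\mu(\mu\chi_F)(\gamma+c_\mu)$ is the wrong comparison; note that for odd $p$, $\prod_j(\gamma+jc)=\gamma^p-\gamma c^{p-1}$ over Teichm\"uller $j$. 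Likewise, your shortcut in (a), that $\chi_F$ enters $\Delta_F(\mu\chi_F,\psi_F)$ only through $\chi_F(c_\mu^{-1})$, is the stable-twist formula and needs $m\le\lfloor T/2\rfloor$.

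The missing idea is to linearize the norm term with a norm character. If $\tau\in S(K/F)$ satisfies $\tau(1+x)=\psi_F(\alpha x/\gamma)$ on its stationary range, then $\tau(\Nm(1+y))=1$ gives $\psi_F(\alpha\Nm(y)/\gamma)=\psi_F(-\alpha\Tr(y)/\gamma)$. Take norm representatives $\alpha=\Nm\widetilde\alpha$ and $\beta=\Nm\widetilde\beta$, which are available at the precision needed. This converts $\Nm y$ into a trace and shows that $\chi_K$ has stationary class $\beta-\widetilde\beta\alpha/\widetilde\alpha$ when $m>T$, with an analogous formula when $m\le T$. Its norm is $\prod_j(\beta+j\alpha)$ modulo $\p_F^d$, which matches the classes $\beta+j\alpha$ of the twists.

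Even then the factors do not cancel on the nose. Correction units leave a factor $\psi_F(-AX)$, with an explicit norm--trace element $X$:
\begin{itemize}
\item for odd $p$, $X=pB/(1-\mathcal N^{p-1})$, where $B$ is a polynomial in the symmetric functions of a ratio $u$ of norm representatives and $\mathcal N=\Nm u$;
\item for $p=2$, $X=2\Tr(u)/(1+\mathcal N)$.
\end{itemize}
You must show this phase is $1$. In equal characteristic $X=0$; in mixed characteristic $X\in\p_F^T$. The exception is $p=2$, $m=T$ odd, where $v_F(X)=T-1$ and you need the Artin--Schreier identity $\psi_0(c+c^2)=1$. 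The residual quadratic phases at $m=T$ also require a genuine finite-field identity.

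\textbf{The remaining cases.} These are only named.
\begin{itemize}
\item Your boundary case (c) is left open. It disappears once you observe that the error quotient is unchanged under $\chi_F\mapsto\nu\chi_F$ for $\nu\in S(K/F)$: $\chi_K$ is unchanged and the right side is merely reindexed. So you may take $\chi_F$ of minimal conductor in its orbit. Then $m_F(\mu\chi_F)=\max\{m,T\}$ for $\mu\ne1$, and $m_K(\chi_K)=m$ for $m\le T$.
\item The tame conductor-one case, which sits inside your (c), is exactly the Davenport--Hasse multiplication formula.
\item The wild conductor-one case needs a congruence tying the different to the norm character: $\Nm(\gamma_K)/\gamma_F\equiv(-1)^pc_\tau^{p-1}\pmod{U_F^1}$, where $\gamma_K=\gamma_Ff'(\varpi_K)/\varpi_K^{p-1}$ and $\tau(1+x)=\psi_F(x/c_\tau)$.
\item For unramified $K/F$ with $m>1$, $\Delta_K(\chi_K,\psi_K)$ is not a residue-field Gauss sum. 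You need Lamprecht's factorization and, for odd $m$, a Davenport--Hasse lift for functions with $H(x+y)=H(x)H(y)\psi(xy)$; this produces the sign $(-1)^{\ell-1}$.
\item The stable-range residual phases need explicit sign identities, including quadratic reciprocity in the tame case.
\item In residue characteristic $2$ the critical functions are not of the form $\psi_0(ax^2/2+bx)$, so no ``quadratic Hasse--Davenport relation'' applies. The paper instead uses a normalized refinement $\mathfrak q$ with $\mathfrak q(r^2)=\mathfrak q(r)$.
\end{itemize}

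\textbf{Slips in the reductions.}
\begin{itemize}
\item In this normalization $\Delta_E(\theta,\psi^a)=\theta(a)\Delta_E(\theta,\psi)$, with no factor $|a|^{-1}$; with that factor the two sides would not scale alike.
\item $\prod_\mu\mu(a)$ occurs on both sides, so there is nothing to absorb when $\ell=2$.
\item The unramified-twist step is unnecessary. The exponent identity it would require is $f\bigl(m_K(\chi_K)+n_K(\psi_K)\bigr)=\sum_\mu\bigl(m_F(\mu\chi_F)+n_F(\psi_F)\bigr)$, without your subtracted term, since $\Delta_F(\mu,\psi_F)$ does not involve $\chi_F$.
\end{itemize}
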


\subsection{Historical background}
Langlands attributes the First Main Lemma to Dwork
\cite[p.~6]{Langlands}.  Dwork's 1954 thesis \cite{Dwork} contains the
$p$-adic calculations for the cyclic-prime relation, including the
endpoint and non-stable wild cases
\cite[Chapter~III, pp.~72--95]{Dwork}.  These calculations are carried
out for finite extensions of $\Q_p$ and use a normalization different
from Langlands's.

The surviving chapters of Langlands's manuscript already prove the
unramified and tame cases and the stable odd-prime wild case of the
First Main Lemma.  Chapters~8 and~9 also develop the stationary
parameters and finite-field identities used in the remaining wild
cases. Chapter~10, entitled ``The First Main
Lemma'', is missing, and hence the manuscript does not contain the
final assembly of these cases into a proof of the First Main Lemma.
Langlands's editorial comments explain that Deligne's global argument
provided a different proof of the existence of the local epsilon factors, after
which Langlands did not complete the local manuscript \cite{Langlands}.


\subsection{Structure of the proof}\label{sec:first-main-completion}
We compare the two sides of \eqref{eq:first-main} using norm and
trace identities for $K/F$. Neither local class field theory nor a global functional equation is used.

First, Proposition~\ref{prop:finite-local-realization} converts
Langlands's Haar-integral definition into a Gauss sum over a finite
quotient of the unit group and proves this sum nonzero by Fourier
duality, independently of Lamprecht's formula.  This nonvanishing is
used in the inverse-pairing identity of Lemma~\ref{lem:inverse-pair}.

Second, Lemma~\ref{lem:stationary-class-existence} constructs the
stationary parameter attached to a character as a quotient class.
For conductor greater than one, Lamprecht's formula
(Theorem~\ref{thm:Lamprecht}) gives the factorization
\eqref{eq:Delta-factorization-new} into three factors: the
multiplicative-character value at an admissible element, the stationary
factor attached to a chosen representative, and a residue-field factor.
The last factor is $1$ for even conductor and the phase of the sum of
the Hasse function over the residue field for odd conductor.
Lemma~\ref{lem:complete-stationary-invariant} describes how changing the
representative changes the stationary factor and the Hasse function;
the product in the local constant is unchanged.

Third, Corollary~\ref{cor:stationary-class-under-norm} proves the
transport of stationary classes under norm.  Propositions~\ref{prop:high-parameter-table},
\ref{prop:wild-quadratic-high-parameters}, and~\ref{prop:low-parameter-table}
then give stationary-class representatives over $F$ and $K$, in the relevant conductor ranges.

Finally, Lemma~\ref{lem:twist-invariance} permits $\chi_F$ to be
chosen with minimal conductor in its $S(K/F)$-orbit.  Write $m$ for
its conductor.  For ramified $K/F$, let $t$ be the lower ramification
break and put $T=t+1$, the conductor of every nontrivial character in
$S(K/F)$.  The proof is divided into the following cases.
\begin{center}
\begin{tabular}{@{}>{\raggedright\arraybackslash}p{0.30\textwidth}>{\raggedright\arraybackslash}p{0.64\textwidth}@{}}
\toprule
Extension and conductor & Argument\\
\midrule
Unramified
& For $m=0$, use the unramified-character formula.  For $m=1$, use the
Hasse--Davenport lifting identity. For $m>1$, use Lamprecht's formula
and compare sums over the residue fields
(Proposition~\ref{prop:unramified-new}).\\
\addlinespace
Tamely ramified, $\ell$ odd
& For $m=0,1$, use the unramified-character and Hasse--Davenport product
formulas, respectively.  For $m>1$, multiplication by a character in
$S(K/F)$ preserves the stationary class of $\chi_F$; compare the
remaining Gauss sums (Proposition~\ref{prop:tame-odd-new}).\\
\addlinespace
Tamely ramified, $\ell=2$
& The cases $m=0,1$ use the same formulas as for tame odd degree.
For $m>1$, evaluate quadratic Gauss sums
(Proposition~\ref{prop:tame-two-new}).\\
\addlinespace
Wildly ramified, $m\le1$\newline
\emph{(Endpoint cases)}
& For $m=0$, use the unramified-character formula.  For $m=1$, Frobenius
identifies the Gauss sums for $\chi_F$ and $\chi_K$; norm identities
compare the remaining character values
(Section~\ref{sec:wild-endpoints}).\\
\addlinespace
Wildly ramified, $\ell$ odd,\newline
$m\ge2T$ \emph{(stable range)}
& Multiplication by a character in $S(K/F)$ preserves the stationary
class of $\chi_F$.  Lamprecht's formula reduces the comparison to
sums over the residue field (Proposition~\ref{prop:wild-stable-new}).\\
\addlinespace
Wildly ramified, $\ell$ odd,\newline
$1<m<2T$\newline
\emph{(non-stable range)}
& Multiplication by a character in $S(K/F)$ can change the stationary
class of $\chi_F$.  Compute the classes for $\chi_K$ and the twists of $\chi_F$, then cancel the
resulting character values and Gauss-sum factors
(Corollary~\ref{cor:wild-odd}).\\
\addlinespace
Wildly ramified, $\ell=2$,\newline
$m>1$
& Compare the cases $m<T$, $m=T$, and $m>T$.  At the odd boundary,
cancel the residual quotient with the critical-function values
(Proposition~\ref{prop:wild-quadratic-cancellation}).\\
\bottomrule
\end{tabular}
\end{center}

\begin{proof}[Proof of Theorem~\ref{thm:first-main}]
Let $F$ be a nonarchimedean local field and let $K/F$ be cyclic of
prime degree $\ell$.  If $K/F$ is unramified, apply
Proposition~\ref{prop:unramified-new}.  If it is ramified and tame, the odd-degree and quadratic cases are
Propositions~\ref{prop:tame-odd-new} and
\ref{prop:tame-two-new}.

Suppose that $K/F$ is wild.  Then $\ell=p$, the residue
characteristic.  By \eqref{eq:error-twist-invariance}, replace $\chi_F$ by a
member of minimal conductor in its $S(K/F)$-orbit.  Conductors $0$ and
$1$ are treated by Lemma~\ref{lem:ramified-conductor-zero} and
Proposition~\ref{prop:wild-conductor-one}, respectively.  For odd $p$,
the stable range is Proposition~\ref{prop:wild-stable-new}, and the
remaining non-stable range is Corollary~\ref{cor:wild-odd}.  For
$p=2$, we have Proposition \ref{prop:wild-quadratic-cancellation}.  These cases
exhaust all cyclic extensions of prime degree. 
\end{proof}

\subsection{Equal and mixed characteristic}
The unramified, tame,  endpoint conductors, and the stable
wild odd-degree cases are proved by the same arguments in mixed and
equal characteristic.  Separate characteristic-dependent
terms occur only in the remaining non-stable wild calculations.

In the odd-prime non-stable case, Proposition~\ref{prop:exact-odd-assembly}
reduces the remaining additive factor to a single norm--trace correction
$X$.  Proposition~\ref{prop:X-closed} gives a  formula for $X$
whose numerator contains the residue characteristic $p$, hence $X=0$ in characteristic $p$.
Proposition~\ref{prop:X-depth} shows that   its contribution to the additive character is trivial in mixed
characteristic, by bounding its valuation.

For a wild quadratic extension, Lemma~\ref{lem:quad-X} gives the
analogous norm--trace correction; its numerator contains the factor
$2$, so the correction vanishes in characteristic two.  In mixed
characteristic, Proposition~\ref{prop:quadratic-norm-trace-phase} gives
$X\in\p_F^T$ except at $m=T$ with $T$ odd, where $v_F(X)=T-1$.
Its additive phase is $1$ in all cases; at the odd boundary this follows
from an Artin--Schreier identity rather than the valuation bound.
Proposition~\ref{prop:wild-quadratic-cancellation} separately cancels the
residual quotient with the remaining critical-function values.

\subsection{Organization of the paper}
Section~\ref{sec:local-notation} establishes the two realizations of the
local constant, proves nonvanishing, introduces the error term, and
reduces the proof to characters of minimal conductor in their
$S(K/F)$-orbits.  The next four sections
develop the cyclic-prime ramification, stationary-phase, finite-field,
and stationary-parameter tools. Section~7 treats the unramified and
tame cases, and Section~8 treats the stable wild odd-prime range.
Section~\ref{sec:wild-endpoints} treats wild conductors zero and one;
Sections~\ref{sec:nonstable-odd} and~\ref{sec:wild-quadratic} treat the
non-stable wild odd-prime range and wild quadratic extensions,
respectively. These cases are combined in the proof of
Theorem~\ref{thm:first-main} in Subsection~\ref{sec:first-main-completion}.
 
\section{Local constants and the elementary reduction}
\label{sec:local-notation}

\subsection{Local fields, characters, and exact conductors}

Let $E$ be a nonarchimedean local field.  Write
\[
 \cO_E=\{x\in E:v_E(x)\ge0\},
 \qquad
 \p_E=\{x\in E:v_E(x)\ge1\},
\]
and let $k_E=\cO_E/\p_E$ have cardinality $q_E$.  We normalize
$v_E(E^\times)=\mathbf Z$.  Put
\[
 U_E^0=\cO_E^\times,
 \qquad
 U_E^r=1+\p_E^r\quad(r\ge1).
\]
All quasi-characters are continuous homomorphisms
$E^\times\to\C^\times$, and all additive characters are continuous
homomorphisms from the additive group of $E$ to $\C^\times$.

\begin{remark}[Characteristic convention]
Write $p=\operatorname{char}k_E$.  Thus either
$\operatorname{char}E=0$ with residue characteristic $p$, or
$\operatorname{char}E=p$.  An integer occurring in an identity inside
$E$ denotes its image under $\mathbf Z\to E$; in particular $p=0$ in
equal characteristic $p$.
\end{remark}

\begin{definition}[Multiplicative conductor]
The conductor exponent $m_E(\theta)$ is the least $m\ge0$ for which
$\theta$ is trivial on $U_E^m$.  Thus $m_E(\theta)=0$ precisely when
$\theta$ is unramified.
\end{definition}

\begin{definition}[Additive conductor]
For a nontrivial additive character $\psi$, the integer $n_E(\psi)$
is characterized by the condition that $\p_E^{-n_E(\psi)}$ is the
largest fractional ideal on which $\psi$ is trivial.
\end{definition}


\subsection{The Haar-integral and finite realizations of the local constant}

Define the total phase
\[
 \ph(z)=
 \begin{cases}
 z/|z|,&z\ne0,\\
 1,&z=0.
 \end{cases}
\]
Let $m=m_E(\theta)$ and $n=n_E(\psi)$, and choose
$\gamma\in E^\times$ such that
\begin{equation}\label{eq:admissible-gamma}
 \gamma\cO_E=\p_E^{m+n}.
\end{equation}
Such an element $\gamma$ is called \emph{admissible} for $(\theta,\psi)$.
For a positive Haar measure $du$ on $U_E^0$, Langlands defines
\begin{equation}\label{eq:def-delta}
 \Delta_E(\theta,\psi)
 =\theta(\gamma)\,
  \ph\left(\int_{U_E^0}
      \psi(u/\gamma)\theta(u)^{-1}\,du\right).
\end{equation}

Put $Q_m=U_E^0/U_E^m$.  For $\bar u\in Q_m$, represented by
$u\in U_E^0$, define
\begin{equation*}
 g_{\theta,\psi,\gamma}(\bar u)
 =\psi(u/\gamma)\theta(u)^{-1}
\end{equation*}
and
\begin{equation}\label{eq:finite-local-gauss-sum}
 G_E(\theta,\psi;\gamma)
 =\sum_{\bar u\in Q_m}
   g_{\theta,\psi,\gamma}(\bar u).
\end{equation}

\begin{proposition}[Finite realization and nonvanishing]
\label{prop:finite-local-realization}
The summand just defined is well defined.
There is a positive real number $c_m$, depending only on the chosen Haar
measure and $m$, such that
\begin{equation*}
 \int_{U_E^0}\psi(u/\gamma)\theta(u)^{-1}\,du
 =c_m G_E(\theta,\psi;\gamma).
\end{equation*}
Moreover
\[
 G_E(\theta,\psi;\gamma)\ne0.
\]
Consequently
\begin{equation}\label{eq:delta-finite-realization}
 \Delta_E(\theta,\psi)
 =\theta(\gamma)\ph\bigl(G_E(\theta,\psi;\gamma)\bigr).
\end{equation}
\end{proposition}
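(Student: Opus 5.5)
The plan has three parts: well-definedness, the comparison with the Haar integral by decomposing into cosets, and nonvanishing via Fourier inversion on $\cO_E/\p_E^m$. The final formula \eqref{eq:delta-finite-realization} then follows at once, since $\ph(c_mG)=\ph(G)$ for $c_m>0$ and $G\ne0$.

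\emph{Well-definedness and the integral.} Let $u\in U_E^0$ and replace $u$ by $u(1+x)$ with $x\in\p_E^m$ (for $m=0$, by $u'$ with $u'\in U_E^0$ arbitrary). Then $\theta(u)$ is unchanged, because $\theta$ is trivial on $U_E^m$. We also have $ux/\gamma\in\p_E^{m-(m+n)}=\p_E^{-n}$, so $\psi(u(1+x)/\gamma)=\psi(u/\gamma)$. Hence $g_{\theta,\psi,\gamma}$ is a well-defined function on $Q_m$.

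Since $U_E^m$ is open of finite index in $U_E^0$, we decompose $U_E^0$ as the disjoint union of the cosets $uU_E^m$. The integrand is constant on each coset, so the integral equals $c_mG_E(\theta,\psi;\gamma)$ with $c_m=\operatorname{vol}(U_E^m)>0$ (and $c_0=\operatorname{vol}(U_E^0)$). This constant depends only on $du$ and $m$.

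\emph{Nonvanishing.} If $m=0$, then $Q_0$ is trivial and $G=\psi(1/\gamma)$. This is $1$ because $1/\gamma\in\p_E^{-n}$, so it is nonzero.

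Assume $m\ge1$. Define $f$ on the finite ring $R=\cO_E/\p_E^m$ by $f(x)=\theta(x)^{-1}$ for $x$ a unit and $f(x)=0$ otherwise. The character $\psi_\gamma(x)=\psi(x/\gamma)$ is trivial on $\p_E^m$ but not on $\p_E^{m-1}$, since $\p_E^{-n}$ is the largest fractional ideal on which $\psi$ is trivial. It follows that $a\mapsto\psi_\gamma(a\,\cdot)$ identifies $R$ with its Pontryagin dual. Put
\[
\hat f(a)=\sum_{x\in R}f(x)\psi(ax/\gamma),
\]
so that $G=\hat f(1)$.

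For a unit $a$, substituting $x\mapsto a^{-1}x$ gives $\hat f(a)=\theta(a)\hat f(1)$.

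For $a\in\p_E$, I will show that $\hat f(a)=0$.
\begin{itemize}
\item If $m=1$, the additive factor is identically $1$. Then $\hat f(a)=\sum_{\bar u\in Q_1}\theta(u)^{-1}$, which vanishes because $\theta$ is nontrivial on $U_E^0$.
\item If $m\ge2$ and $y\in\p_E^{m-1}$, then $axy/\gamma\in\p_E^{1+(m-1)-(m+n)}=\p_E^{-n}$. So the summand $x\mapsto\psi(ax/\gamma)$ is invariant under $x\mapsto x(1+y)$. Grouping the sum into $U_E^{m-1}/U_E^m$-orbits produces the factor $\sum_{U_E^{m-1}/U_E^m}\theta^{-1}$. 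This vanishes because $\theta$ is nontrivial on $U_E^{m-1}$, by minimality of the conductor.
\end{itemize}

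Now suppose $G=0$. Then $\hat f\equiv0$ on all of $R$, so Fourier inversion on $R$ gives $f\equiv0$. This contradicts $f(1)=1$, hence $G\ne0$.

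The main obstacle is the treatment of nonunit $a$. This step needs the exact conductor of $\theta$ to kill the sum, and it is the one place where the exact conductors of both $\theta$ and $\psi$ matter.
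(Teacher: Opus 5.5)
Your proposal is correct and follows essentially the same route as the paper. Both arguments descend the integrand to $Q_m$, compare with the Haar integral coset by coset, and prove $G_E(\theta,\psi;\gamma)\ne0$ by Fourier duality on $\cO_E/\p_E^m$, using $\widehat f(a)=\theta(a)\widehat f(1)$ at units and the exact conductor of $\theta$ to kill $\widehat f$ at non-units. The only difference is cosmetic: you average over $U_E^{m-1}/U_E^m$ and treat $m=1$ separately, whereas the paper handles every non-unit uniformly with a single $a\in U_E^{m-1}$ satisfying $\theta(a)\ne1$, which also covers $m=1$.
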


\begin{proof}
If $u/v\in U_E^m$, then $\theta(u)=\theta(v)$ and
$u-v\in\p_E^m$.  By \eqref{eq:admissible-gamma},
$(u-v)/\gamma\in\p_E^{-n}$, so
$\psi(u/\gamma)=\psi(v/\gamma)$.  Thus the summand descends to
$Q_m$.  The integrand is constant on every coset of $U_E^m$ in
$U_E^0$, and all such cosets have the same positive measure $c_m$;
this proves the stated integral formula.

It remains to prove nonvanishing.  If $m=0$, the quotient $Q_0$ is a
singleton and $G_E(\theta,\psi;\gamma)=1$.
Assume $m>0$ and put $A=\cO_E/\p_E^m$.  The pairing
\[
 A\times A\longrightarrow\C^\times,
 \qquad
 (\bar x,\bar y)\longmapsto\psi(xy/\gamma)
\]
is perfect.  Indeed, its annihilator in either variable is exactly
$\p_E^m$: this follows from \eqref{eq:admissible-gamma} and the
maximality of $\p_E^{-n}$ in the definition of $n_E(\psi)$.

Extend the function $u\mapsto\theta(u)^{-1}$ on $A^\times$ by zero
to a function $f:A\to\C$, and write
\[
 \widehat f(x)=
 \sum_{u\in A^\times}\theta(u)^{-1}\psi(xu/\gamma).
\]
Then $\widehat f(1)=G_E(\theta,\psi;\gamma)$.  If $x\in A^\times$,
the substitution $u\mapsto x^{-1}u$ gives
$\widehat f(x)=\theta(x)\widehat f(1)$.
If $x$ is not a unit, exactness of the conductor provides
$a\in U_E^{m-1}$ with $\theta(a)\ne1$.  Since
$x(a-1)u/\gamma\in\p_E^{-n}$ for every $u\in U_E^0$, multiplication
by $a$ permutes $A^\times$ and gives
$\widehat f(x)=\theta(a)^{-1}\widehat f(x)$,
whence $\widehat f(x)=0$.  If $\widehat f(1)$ were zero,
the formula for the Fourier coefficients at units and the preceding vanishing
would make every Fourier coefficient of $f$ zero.  Perfect finite
Fourier duality would then give $f=0$, contradicting $f(1)=1$.
Thus $G_E(\theta,\psi;\gamma)\ne0$.  Finally the positive
factor $c_m$ does not change phase, proving
\eqref{eq:delta-finite-realization}.
\end{proof}

\begin{corollary}[Nonvanishing of the local Gauss integral]
\label{cor:local-gauss-integral-nonzero}
For every quasi-character $\theta$ and every nontrivial additive
character $\psi$, the integral in \eqref{eq:def-delta} is nonzero.
\end{corollary}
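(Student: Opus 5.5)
The corollary follows directly from Proposition~\ref{prop:finite-local-realization}. Fix a quasi-character $\theta$ and a nontrivial additive character $\psi$, and put $m=m_E(\theta)$ and $n=n_E(\psi)$. First I would fix an admissible $\gamma$ with $\gamma\cO_E=\p_E^{m+n}$, for instance $\gamma=\varpi^{m+n}$ for a uniformizer $\varpi$. Then I would note that the integral in \eqref{eq:def-delta} is computed with this $\gamma$.

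Proposition~\ref{prop:finite-local-realization} gives
\[
 \int_{U_E^0}\psi(u/\gamma)\theta(u)^{-1}\,du=c_m\,G_E(\theta,\psi;\gamma),
\]
where $c_m>0$ is the common Haar measure of the cosets of $U_E^m$ in $U_E^0$. It is positive because $U_E^m$ is open and the Haar measure is positive on nonempty open sets. The same proposition gives $G_E(\theta,\psi;\gamma)\neq0$. Hence the integral is a product of a nonzero positive real number and a nonzero complex number, so it is nonzero.

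The one point to check is that the corollary is asserted for the integral attached to any admissible $\gamma$ and any positive Haar measure $du$. Both are arbitrary in the proposition, so nothing more is needed. There is no real obstacle: the analytic content, namely the Fourier-duality argument showing that the finite Gauss sum is nonzero, has already been carried out. The corollary records in particular that the phase in \eqref{eq:def-delta} is taken of a nonzero number, so the convention $\ph(0)=1$ never applies.
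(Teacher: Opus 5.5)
Your proposal is correct and matches the paper's argument: the corollary is recorded immediately after Proposition~\ref{prop:finite-local-realization} precisely because the integral equals $c_m\,G_E(\theta,\psi;\gamma)$ with $c_m>0$ and $G_E(\theta,\psi;\gamma)\neq0$, for any admissible $\gamma$ and any positive Haar measure. Nothing further is needed.
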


\begin{lemma}[Unramified twists]\label{lem:unramified-twists}
Let $\chi$ be an unramified quasi-character of $E^\times$, let
$\theta$ be any quasi-character, let $\psi$ be a nontrivial additive
character of $E$, and let $\varpi_E$ be a uniformizer.
Then
\begin{equation}\label{eq:unramified-twist}
 \Delta_E(\theta\chi,\psi)
 =\chi(\varpi_E)^{m_E(\theta)+n_E(\psi)}\Delta_E(\theta,\psi).
\end{equation}
\end{lemma}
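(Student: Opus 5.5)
The plan is to compute both sides directly from the Haar-integral definition \eqref{eq:def-delta}, using the same admissible element for $\theta$ and $\theta\chi$. Since $\chi$ is unramified, it is trivial on $U_E^0$, so $m_E(\theta\chi)=m_E(\theta)=:m$. Put $n=n_E(\psi)$. Then any $\gamma$ with $\gamma\cO_E=\p_E^{m+n}$ is admissible for both $(\theta,\psi)$ and $(\theta\chi,\psi)$. I would take $\gamma=\varpi_E^{m+n}$, which is admissible because $\varpi_E^{m+n}\cO_E=\p_E^{m+n}$.

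Next I compare the two integrals. For $u\in U_E^0$ we have $(\theta\chi)(u)^{-1}=\theta(u)^{-1}\chi(u)^{-1}=\theta(u)^{-1}$, so
\[
\int_{U_E^0}\psi(u/\gamma)(\theta\chi)(u)^{-1}\,du=\int_{U_E^0}\psi(u/\gamma)\theta(u)^{-1}\,du ,
\]
and the two phase factors agree. By Corollary~\ref{cor:local-gauss-integral-nonzero} this common integral is nonzero, although the identity holds regardless, since $\ph$ is defined everywhere. The prefactor is
\[
(\theta\chi)(\gamma)=\theta(\gamma)\chi(\varpi_E)^{m+n}.
\]
Multiplying the prefactor by the phase gives \eqref{eq:unramified-twist} directly.

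The only point needing care is that $\Delta_E(\theta,\psi)$ does not depend on the choice of admissible $\gamma$. Without that, computing with the specific $\gamma=\varpi_E^{m+n}$ would not suffice. I would check it directly. Replacing $\gamma$ by $\gamma v$ with $v\in U_E^0$ and substituting $u\mapsto uv$ in the integral, using invariance of Haar measure, multiplies the integral by $\theta(v)$. The prefactor $\theta(\gamma v)$ also gains the factor $\theta(v)$. Hence the total changes by $\theta(v)\,\ph(\theta(v))^{-1}$ overall, and I need to confirm this is $1$.

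To see that, note that $\theta(v)$ lies on the unit circle, since $\theta(U_E^0)$ is a compact subgroup of $\C^\times$. Therefore $\ph(\theta(v)z)=\theta(v)\ph(z)$. Combining this with the substitution above, the right-hand side of \eqref{eq:def-delta} becomes
\[
\theta(\gamma)\theta(v)\,\overline{\theta(v)}\,\ph(\cdots)=\theta(\gamma)\,\ph(\cdots),
\]
so the value is independent of the admissible $\gamma$. This well-definedness is implicit in Langlands's definition, and it is the only mild obstacle in an otherwise formal computation.
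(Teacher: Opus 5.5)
Your argument is correct and is essentially the paper's. Since $\chi$ is trivial on $U_E^0$, the conductor and the Gauss integral (equivalently the finite sum $G_E$) are unchanged for the common admissible element $\varpi_E^{m+n}$, and the whole multiplier comes from $\chi(\varpi_E^{m+n})$.

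The independence of $\gamma$ that you re-verify is the paper's Lemma~\ref{lem:gamma-independent}. Two small corrections to your check there. The substitution $u\mapsto uv$ multiplies the integral by $\theta(v)^{-1}$, not $\theta(v)$; your final display with $\overline{\theta(v)}$ is the correct one. Also, the rule $\ph(\theta(v)z)=\theta(v)\ph(z)$ holds only for $z\neq0$, so this step is exactly where the nonvanishing from Corollary~\ref{cor:local-gauss-integral-nonzero} is needed.
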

\begin{proof}
The quasi-character $\chi$ is trivial on units, so it changes neither
the conductor nor the finite Gauss sum. Its value at the admissible
element is the displayed multiplier, including when $m_E(\theta)=0$.
\end{proof}

\begin{lemma}[Independence of the denominator and Haar normalization]
\label{lem:gamma-independent}
The right side of \eqref{eq:def-delta} is independent of the choice of
$\gamma$ satisfying \eqref{eq:admissible-gamma} and of the positive
normalization of Haar measure.
\end{lemma}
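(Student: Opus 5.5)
The plan is to treat the two independence claims separately. The Haar normalization is the easy one. Any two positive Haar measures on the compact group $U_E^0$ differ by a positive constant $c>0$, so the integral in \eqref{eq:def-delta} is multiplied by $c$. Since $\ph(cz)=\ph(z)$ for $c>0$ and $z\ne0$, the phase does not change. The case $z=0$ cannot occur, by Corollary~\ref{cor:local-gauss-integral-nonzero}. Alternatively, Proposition~\ref{prop:finite-local-realization} already rewrites $\Delta_E(\theta,\psi)$ as $\theta(\gamma)\ph(G_E(\theta,\psi;\gamma))$, which involves no measure at all.

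For the dependence on $\gamma$, work with the finite realization \eqref{eq:delta-finite-realization}. Any other admissible element has the form $\gamma'=\gamma w$ with $w\in U_E^0$, since both generate $\p_E^{m+n}$. Then
\[
 G_E(\theta,\psi;\gamma w)=\sum_{\bar u\in Q_m}\psi(u/(\gamma w))\theta(u)^{-1}.
\]
Substitute $u=wu'$; as $u'$ runs over $Q_m$, so does $u$, because multiplication by $w$ permutes $Q_m$. Each summand becomes $\psi(u'/\gamma)\theta(w)^{-1}\theta(u')^{-1}$, which gives
\[
 G_E(\theta,\psi;\gamma w)=\theta(w)^{-1}G_E(\theta,\psi;\gamma).
\]
The substitution is legitimate on $Q_m$ by the well-definedness part of Proposition~\ref{prop:finite-local-realization}.

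Now $\theta$ is a quasi-character and $w$ is a unit, so $\theta(w)$ has absolute value $1$: the image of the compact group $U_E^0$ is a compact subgroup of $\C^\times$ and hence lies in the unit circle. Therefore $\ph(\theta(w)^{-1}G)=\theta(w)^{-1}\ph(G)$, using $G\ne0$. We also have $\theta(\gamma w)=\theta(\gamma)\theta(w)$. Multiplying these, the two factors $\theta(w)$ cancel, so
\[
 \theta(\gamma w)\,\ph\bigl(G_E(\theta,\psi;\gamma w)\bigr)=\theta(\gamma)\,\ph\bigl(G_E(\theta,\psi;\gamma)\bigr).
\]

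The case $m=0$ is covered by the same computation, since $Q_0$ is trivial and $\theta(w)=1$. There is no genuine obstacle here. The only points needing care are the nonvanishing, so that $\ph$ is multiplicative, and $|\theta(w)|=1$; both were noted above.
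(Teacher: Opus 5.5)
Your proposal is correct and follows essentially the paper's argument: write $\gamma'=\gamma w$ with $w\in U_E^0$, permute $Q_m$ to get $G_E(\theta,\psi;\gamma w)=\theta(w)^{-1}G_E(\theta,\psi;\gamma)$, use $|\theta(w)|=1$ and nonvanishing to pull $\theta(w)^{-1}$ out of the phase, and cancel it against $\theta(\gamma w)=\theta(\gamma)\theta(w)$. Haar independence then follows from the finite realization. The only difference is cosmetic: you get $|\theta(w)|=1$ from compactness of $\theta(U_E^0)$, whereas the paper uses that $\theta|_{U_E^0}$ factors through the finite group $Q_m$; both are valid.
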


\begin{proof}
Let $\gamma'=a\gamma$ with $a\in U_E^0$.  Multiplication by $a$
permutes $Q_m$, and \eqref{eq:finite-local-gauss-sum} gives
\[
 G_E(\theta,\psi;\gamma')
 =\theta(a)^{-1}G_E(\theta,\psi;\gamma).
\]
The restriction of $\theta$ to $U_E^0$ factors through the finite group
$Q_m$, so $|\theta(a)|=1$.  By nonvanishing,
\[
 \ph\bigl(G_E(\theta,\psi;\gamma')\bigr)
 =\theta(a)^{-1}
  \ph\bigl(G_E(\theta,\psi;\gamma)\bigr).
\]
Since $\theta(\gamma')=\theta(a)\theta(\gamma)$, the two factors
cancel.  The finite formula \eqref{eq:delta-finite-realization} also proves
independence of the positive normalization of Haar measure.
\end{proof}

\begin{lemma}[Normalization of the residual additive character]
\label{lem:residual-normalization}
Let $\psi_F$ have conductor exponent $n$, and fix a uniformizer
$\varpi_F$.  The formula
\begin{equation}\label{eq:residual-additive-character}
 \bar\psi(\bar x)
 =\psi_F(x\varpi_F^{-n-1}),
 \qquad x\in\cO_F\text{ a lift of }\bar x\in k_F,
\end{equation}
defines a nontrivial additive character of $k_F$, independently of the
choice of the lift $x$.  Define the canonical additive character
$\psi_{\rm can}:k_F\to\C^\times$ by
\begin{equation}\label{eq:finite-additive-parametrization}
 x\longmapsto
 \exp\left(\frac{2\pi i}{p}
   \Tr_{k_F/\F_p}(x)\right).
\end{equation}
Then there is a unit $a\in\cO_F^\times$ such that the residual
character attached to $\psi_F^a(x)=\psi_F(ax)$ is $\psi_{\rm can}$.
\end{lemma}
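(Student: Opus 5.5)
The proof has three parts: well-definedness, nontriviality, and the normalization by a unit $a$.

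\emph{Well-definedness.} If $x,x'\in\cO_F$ both lift $\bar x$, then $x-x'\in\p_F$. Hence $(x-x')\varpi_F^{-n-1}\in\p_F^{-n}$, on which $\psi_F$ is trivial by definition of $n=n_F(\psi_F)$. So $\bar\psi(\bar x)$ does not depend on the lift. Additivity is inherited from $\psi_F$, so $\bar\psi$ is a homomorphism $k_F\to\C^\times$.

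\emph{Nontriviality.} Suppose $\bar\psi$ were trivial. Then $\psi_F$ would be trivial on $\cO_F\varpi_F^{-n-1}=\p_F^{-n-1}$. This contradicts the maximality of $\p_F^{-n}$ among fractional ideals on which $\psi_F$ is trivial.

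\emph{Normalization.} The key fact is that every nontrivial additive character of the finite field $k_F$ has the form $y\mapsto\psi_{\rm can}(by)$ for a unique $b\in k_F^\times$.
\begin{itemize}
\item First, $\psi_{\rm can}$ is itself nontrivial, because the trace $\Tr_{k_F/\F_p}$ is surjective (a finite separable extension has a nondegenerate trace form).
\item Next, the map $b\mapsto\psi_{\rm can}(b\,\cdot)$ is an injective homomorphism from $k_F$ into the character group $\widehat{k_F}$. Injectivity holds because if $\psi_{\rm can}(by)=1$ for all $y$ with $b\neq0$, then $\psi_{\rm can}$ would be trivial.
\item Both groups have cardinality $q_F$, so this map is a bijection.
\end{itemize}
Apply this to the nontrivial character $\bar\psi$: write $\bar\psi(y)=\psi_{\rm can}(by)$ with $b\ne0$. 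Choose a lift $a\in\cO_F^\times$ of $b^{-1}$.

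Since $a$ is a unit, $\psi_F^a$ has the same conductor $n$, so its residual character is defined by \eqref{eq:residual-additive-character} with the same $\varpi_F$ and $n$. Its value at $\bar x$ is
\[
\psi_F(ax\varpi_F^{-n-1})=\bar\psi(\overline{ax})=\psi_{\rm can}(b\,b^{-1}\bar x)=\psi_{\rm can}(\bar x),
\]
as required.

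The only step needing genuine input is the nontriviality of $\psi_{\rm can}$, i.e.\ surjectivity of the trace for finite fields. The rest is bookkeeping with conductors.
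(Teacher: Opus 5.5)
Your proposal is correct and follows essentially the same route as the paper: lifts are independent modulo $\p_F^{-n}$, nontriviality comes from the maximality of $\p_F^{-n}$, and nondegeneracy of the trace pairing writes $\bar\psi=\psi_{\rm can}(c\,\cdot)$ with $c\in k_F^\times$, so one takes $a$ lifting $c^{-1}$. You add two small points the paper leaves implicit: you derive the parametrization by an injectivity-and-counting argument, and you check explicitly that $\psi_F^a$ has the same conductor $n$, so its residual character is given by the same formula.
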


\begin{proof}
Well-definedness follows because two lifts differ by an element of
$\p_F$, and therefore their arguments differ by an element of
$\p_F^{-n}$, on which $\psi_F$ is trivial.  Nontriviality follows
from the maximality of $\p_F^{-n}$.  Since the trace pairing
$k_F\times k_F\to\F_p$ is nondegenerate, there is a unique
$c\in k_F^\times$ such that
$\bar\psi(x)=\psi_{\rm can}(cx)$.  Choose a unit $a\in\cO_F^\times$
with residue $c^{-1}$; then the residual character attached to
$\psi_F^a$ is $\psi_{\rm can}$.
\end{proof}

Let $k$ be a finite field, let $\varphi:k\to\C^\times$ be a
nontrivial additive character, and let
$\eta:k^\times\to\C^\times$ be a multiplicative character.  Define
\begin{equation}\label{eq:tau-definition}
 \tau_k(\eta,\varphi)
 =-\sum_{x\in k^\times}\eta(x)^{-1}\varphi(x).
\end{equation}
Thus $\tau_k(1,\varphi)=1$.

\begin{lemma}[Local constants of conductor zero and one]
\label{lem:low-conductor-local-constants}
Let $\theta$ be a quasi-character of $E^\times$, let $\psi$ be a
nontrivial additive character, and choose a uniformizer $\varpi_E$.
If $m_E(\theta)=0$, then
\begin{equation}\label{eq:unramified-character-local-constant}
 \Delta_E(\theta,\psi)=\theta(\varpi_E)^{n_E(\psi)}.
\end{equation}
If $m_E(\theta)=1$, define the residue characters by
\[
 \bar\theta(\bar u)=\theta(u),\qquad
 \bar\psi(\bar x)=\psi(x/\varpi_E^{n_E(\psi)+1}),
\]
where $u\in\cO_E^\times$ and $x\in\cO_E$ lift
$\bar u\in k_E^\times$ and $\bar x\in k_E$, respectively. Then
\begin{equation}\label{eq:conductor-one-local-gauss}
 \Delta_E(\theta,\psi)
 =-\theta(\varpi_E)^{n_E(\psi)+1}
   \ph\bigl(\tau_{k_E}(\bar\theta,\bar\psi)\bigr).
\end{equation}
\end{lemma}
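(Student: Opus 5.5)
We compute directly from the finite realization \eqref{eq:delta-finite-realization} of Proposition~\ref{prop:finite-local-realization}, which is legitimate because Lemma~\ref{lem:gamma-independent} lets us use any admissible element. Write $n=n_E(\psi)$ and $m=m_E(\theta)$.

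\emph{Conductor zero.} Here $Q_0$ is a singleton, so $G_E(\theta,\psi;\gamma)=1$ and its phase is $1$. Choose the admissible element $\gamma=\varpi_E^{n}$, which satisfies \eqref{eq:admissible-gamma} with $m=0$. Then \eqref{eq:delta-finite-realization} gives $\Delta_E(\theta,\psi)=\theta(\varpi_E)^{n}$ at once. This is the case $m=0$ of Lemma~\ref{lem:unramified-twists} with trivial base character.

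\emph{Conductor one.} Take $\gamma=\varpi_E^{n+1}$. Then $Q_1=U_E^0/U_E^1\cong k_E^\times$. The summand is $\psi(u/\varpi_E^{n+1})\theta(u)^{-1}=\bar\psi(\bar u)\,\bar\theta(\bar u)^{-1}$. The residual character $\bar\psi$ is well defined and nontrivial by the argument of Lemma~\ref{lem:residual-normalization}, with $E$ in place of $F$. The residue character $\bar\theta$ is well defined because $\theta$ is trivial on $U_E^1$. Hence
\[
 G_E(\theta,\psi;\varpi_E^{n+1})=\sum_{x\in k_E^\times}\bar\theta(x)^{-1}\bar\psi(x)=-\tau_{k_E}(\bar\theta,\bar\psi).
\]
This sum is nonzero by Proposition~\ref{prop:finite-local-realization}. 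Since $\ph(-z)=-\ph(z)$ for $z\ne0$, the formula \eqref{eq:delta-finite-realization} yields $\Delta_E(\theta,\psi)=-\theta(\varpi_E)^{n+1}\ph(\tau_{k_E}(\bar\theta,\bar\psi))$, which is \eqref{eq:conductor-one-local-gauss}.

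\emph{Where care is needed.} There is no real obstacle. The only points to check are the sign convention built into $\tau_k$ and the fact that $\ph$ is multiplicative on nonzero complex numbers. Nonvanishing is what makes the latter applicable, and it is guaranteed by the proposition.
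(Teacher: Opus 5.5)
Your proposal is correct and follows essentially the same route as the paper: you use the finite realization \eqref{eq:delta-finite-realization} with $\gamma=\varpi_E^{n_E(\psi)}$ for conductor zero and $\gamma=\varpi_E^{n_E(\psi)+1}$ for conductor one, identify $Q_1$ with $k_E^\times$, and read off $-\tau_{k_E}(\bar\theta,\bar\psi)$ from \eqref{eq:tau-definition}. You also make explicit the sign step $\ph(-z)=-\ph(z)$, which relies on nonvanishing; the paper leaves this step implicit.
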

\begin{proof}
For conductor zero, take the admissible element
$\gamma=\varpi_E^{n_E(\psi)}$ in the finite formula
\eqref{eq:delta-finite-realization}; the quotient $Q_0$ has one element
and its summand is $1$. For conductor one, take
$\gamma=\varpi_E^{n_E(\psi)+1}$ and identify $Q_1$ with
$k_E^\times$. The sum is $-\tau_{k_E}(\bar\theta,\bar\psi)$ by
\eqref{eq:tau-definition}, which gives the second formula.
\end{proof}

\subsection{Norm and trace, elementary identities, and the error term}

For a finite extension $K/F$ of nonarchimedean local fields, write
\[
 \Nm_{K/F}:K^\times\to F^\times,
 \qquad
 \Tr_{K/F}:K\to F.
\]
For a nontrivial additive character $\psi_F$ and a quasi-character
$\chi_F$, put
\[
 \psi_K=\psi_F\circ\Tr_{K/F},
 \qquad
 \chi_K=\chi_F\circ \Nm_{K/F}.
\]
If $K/F$ is abelian, set
\begin{equation*}
 S(K/F)=\{\mu\in\operatorname{Hom}_{\mathrm{cont}}(F^\times,\C^\times):
   \mu\circ \Nm_{K/F}=1\}.
\end{equation*}
By the universal property of the quotient, this is the group of
continuous characters of the topological quotient
$F^\times/\Nm_{K/F}(K^\times)$.  For cyclic $K/F$ of prime degree, the
quotient and $S(K/F)$ are proved directly to have order $\ell$ in
Corollary~\ref{cor:prime-norm-characters} below.  That calculation uses
the norm filtration and not local class field theory.

\begin{lemma}[Change of additive character]\label{lem:additive-scale}
For $a\in E^\times$, put $\psi^a(x)=\psi(ax)$.  Then
\begin{equation*}
 \Delta_E(\theta,\psi^a)
 =\theta(a)\Delta_E(\theta,\psi).
\end{equation*}
\end{lemma}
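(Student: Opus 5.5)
The plan is to compare admissible elements for the two additive characters and then apply the finite realization \eqref{eq:delta-finite-realization} together with Lemma~\ref{lem:gamma-independent}. Put $m=m_E(\theta)$ and $n=n_E(\psi)$. First I will compute the conductor of $\psi^a$. Since $\psi^a$ is trivial on $\p_E^{j}$ exactly when $\psi$ is trivial on $a\p_E^{j}=\p_E^{j+v_E(a)}$, the maximality in the definition of $n_E$ gives
\[
 n_E(\psi^a)=n-v_E(a).
\]
So if $\gamma$ is admissible for $(\theta,\psi)$, that is $\gamma\cO_E=\p_E^{m+n}$, then $\gamma'=\gamma/a$ satisfies $\gamma'\cO_E=\p_E^{m+n-v_E(a)}$. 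Hence $\gamma'$ is admissible for $(\theta,\psi^a)$. The conductor $m$ is the same on both sides, because only the additive character changes.

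Next I will compare the finite Gauss sums over the same quotient $Q_m$. For $u\in U_E^0$,
\[
 \psi^a(u/\gamma')=\psi\bigl(a u\cdot a^{-1}\gamma^{-1}\cdot a\bigr)
\]
needs to be recomputed carefully: $\psi^a(u/\gamma')=\psi(a\,u\,a/\gamma)$ if $\gamma'=\gamma/a$. That is not what we want. I will therefore take $\gamma'=a\gamma$ instead, which is admissible by the same conductor computation: $a\gamma$ has valuation $m+n+v_E(a)$.

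I must recheck this against the exponent $m+n-v_E(a)$, since the sign matters. The relation $n_E(\psi^a)=n-v_E(a)$ comes from the fact that $\psi^a$ is trivial on $\p_E^{-n'}$ iff $\psi$ is trivial on $a\p_E^{-n'}=\p_E^{v_E(a)-n'}$. The maximal such ideal is $\p_E^{-n}$, so $v_E(a)-n'=-n$, i.e.\ $n'=n+v_E(a)$.

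So in fact $n_E(\psi^a)=n+v_E(a)$, and $\gamma'=a\gamma$ is admissible for $(\theta,\psi^a)$. With this choice,
\[
 \psi^a(u/\gamma')=\psi(au/(a\gamma))=\psi(u/\gamma),
\]
so the summands coincide termwise. This gives $G_E(\theta,\psi^a;a\gamma)=G_E(\theta,\psi;\gamma)$, and in particular the two phases agree.

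Then \eqref{eq:delta-finite-realization} yields
\[
 \Delta_E(\theta,\psi^a)=\theta(a\gamma)\ph(G_E(\theta,\psi;\gamma))=\theta(a)\Delta_E(\theta,\psi).
\]
By Lemma~\ref{lem:gamma-independent} this does not depend on the choice of admissible elements.

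The only real obstacle is getting the sign of the conductor shift right, which I have fixed above using the maximality characterization. The case $m=0$ needs no separate treatment, since $Q_0$ is a point and the same formula holds.
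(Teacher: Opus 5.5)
Your final argument is correct and is essentially the paper's proof: $n_E(\psi^a)=n_E(\psi)+v_E(a)$, so $a\gamma$ is admissible for $(\theta,\psi^a)$, the summands $\psi^a(u/(a\gamma))=\psi(u/\gamma)$ agree termwise, and $\theta(a\gamma)=\theta(a)\theta(\gamma)$ gives the result. In a written version, delete the abandoned first attempt with $n_E(\psi^a)=n-v_E(a)$ and $\gamma'=\gamma/a$, which is wrong as stated.
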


\begin{proof}
If $v_E(a)=r$, then $n_E(\psi^a)=n_E(\psi)+r$.  Hence $a\gamma$ is
admissible for $(\theta,\psi^a)$ whenever $\gamma$ is admissible for
$(\theta,\psi)$, and
$\psi^a(u/(a\gamma))=\psi(u/\gamma)$.
\end{proof}

\begin{corollary}[Scaling invariance of the First Main identity]
\label{cor:additive-independence}
For every $a\in F^\times$, the First Main Lemma for $\psi_F$ is
equivalent to the First Main Lemma for $\psi_F^a$.
\end{corollary}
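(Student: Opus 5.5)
The plan is to replace $\psi_F$ by $\psi_F^a$ throughout \eqref{eq:first-main} and track, factor by factor, the multiplier produced by Lemma~\ref{lem:additive-scale}. The point is that the two sides acquire the same multiplier, which is nonzero, so the identity for $\psi_F^a$ holds exactly when the identity for $\psi_F$ does.

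First, the additive character over $K$. We have
\[
 (\psi_F^a)_K(x)=\psi_F\bigl(a\Tr_{K/F}(x)\bigr)=\psi_F\bigl(\Tr_{K/F}(ax)\bigr)=\psi_K^{a}(x),
\]
since $a\in F$ and the trace is $F$-linear. Thus scaling $\psi_F$ by $a$ scales $\psi_K$ by the same $a$, now viewed in $K^\times$. Lemma~\ref{lem:additive-scale}, applied over $E=K$, then gives
\[
 \Delta_K(\chi_K,\psi_K^a)=\chi_K(a)\Delta_K(\chi_K,\psi_K)=\chi_F\bigl(\Nm_{K/F}(a)\bigr)\Delta_K(\chi_K,\psi_K)=\chi_F(a)^{\ell}\Delta_K(\chi_K,\psi_K),
\]
using $\Nm_{K/F}(a)=a^{\ell}$ for $a\in F$ and $[K:F]=\ell$.

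Next, the factors over $F$. Lemma~\ref{lem:additive-scale} over $E=F$ gives
\[
 \Delta_F(\mu,\psi_F^a)=\mu(a)\Delta_F(\mu,\psi_F),\qquad
 \Delta_F(\mu\chi_F,\psi_F^a)=\mu(a)\chi_F(a)\Delta_F(\mu\chi_F,\psi_F).
\]
Put $M=\prod_{\mu\in S(K/F)}\mu(a)$. The left side of \eqref{eq:first-main} is then multiplied by $\chi_F(a)^{\ell}M$. The right side is multiplied by $\chi_F(a)^{|S(K/F)|}M$.

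The only nontrivial input is that $|S(K/F)|=\ell$. This is stated in Corollary~\ref{cor:prime-norm-characters}, which appears later but is proved from the norm filtration alone, without local class field theory, so no circularity arises. Granting it, both sides acquire the same multiplier $\chi_F(a)^{\ell}M$. This multiplier is nonzero because $\chi_F$ and each $\mu$ take values in $\C^\times$. Dividing by it, or multiplying by it, gives the equivalence in both directions. I expect no real obstacle here; the only care needed is the order count of $S(K/F)$ and the compatibility $(\psi_F^a)_K=\psi_K^a$.
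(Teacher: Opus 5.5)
Your proposal is correct and follows the paper's own argument: apply Lemma~\ref{lem:additive-scale} to every factor, use $|S(K/F)|=\ell$ from Corollary~\ref{cor:prime-norm-characters}, and use $\Nm_{K/F}(a)=a^\ell$, so both sides acquire the same nonzero multiplier $\chi_F(a)^\ell\prod_{\mu\in S(K/F)}\mu(a)$. Your explicit check that $(\psi_F^a)\circ\Tr_{K/F}=\psi_K^a$ is a step the paper uses only implicitly.
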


\begin{proof}
By Corollary~\ref{cor:prime-norm-characters}, the group $S(K/F)$ has
$\ell$ elements.  By Lemma~\ref{lem:additive-scale}, the two sides of \eqref{eq:first-main} acquire
the respective factors
\[
 \chi_K(a)\prod_{\mu\in S(K/F)}\mu(a)
 \quad\text{and}\quad
 \chi_F(a)^\ell\prod_{\mu\in S(K/F)}\mu(a).
\]
Since $\Nm_{K/F}(a)=a^\ell$ for $a\in F^\times$, these factors agree.
\end{proof}

\begin{lemma}[The trivial character]\label{lem:delta-one}
For every nontrivial additive character $\psi$,
\begin{equation}\label{eq:trivial-local-constant}
 \Delta_E(1,\psi)=1.
\end{equation}
\end{lemma}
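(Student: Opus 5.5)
This is an immediate consequence of the conductor-zero case of Lemma~\ref{lem:low-conductor-local-constants}. The trivial character $\theta=1$ is unramified, so $m_E(1)=0$. Formula \eqref{eq:unramified-character-local-constant} then gives
\[
 \Delta_E(1,\psi)=1(\varpi_E)^{n_E(\psi)}=1 .
\]
The value does not depend on the uniformizer $\varpi_E$, because the trivial character takes the value $1$ at every element. It does not depend on $n_E(\psi)$, which may be any integer, positive, negative or zero, since $1^{n}=1$ for every $n$.

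As a check, the result can also be read off the definitions. With $m=0$ the admissible element $\gamma$ satisfies $\gamma\cO_E=\p_E^{n_E(\psi)}$. The quotient $Q_0=U_E^0/U_E^0$ is a singleton, and its summand is $\psi(1/\gamma)\cdot 1$. Since $1/\gamma\in\p_E^{-n_E(\psi)}$, this summand equals $1$, so $G_E(1,\psi;\gamma)=1$. Formula \eqref{eq:delta-finite-realization} then gives
\[
 \Delta_E(1,\psi)=1(\gamma)\,\ph(1)=1 .
\]

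The only step that needs any care is the vanishing of $\psi$ on $1/\gamma$. This follows from the definition of $n_E(\psi)$ together with \eqref{eq:admissible-gamma}, exactly as in the proof of Proposition~\ref{prop:finite-local-realization}. No other obstacle is expected.
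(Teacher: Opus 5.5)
Your proof is correct and is the paper's proof: the trivial character has conductor zero, and formula \eqref{eq:unramified-character-local-constant} gives $\Delta_E(1,\psi)=1^{n_E(\psi)}=1$. Your additional check through the finite realization \eqref{eq:delta-finite-realization} is also correct, since $1/\gamma\in\p_E^{-n_E(\psi)}$ forces the single summand to be $1$.
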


\begin{proof}
Here $m_E(1)=0$.  Formula
\eqref{eq:unramified-character-local-constant} gives the result.
\end{proof}

\begin{lemma}[Inverse pairing]\label{lem:inverse-pair}
Let $\mu$ be a finite-order character of $E^\times$.  Then
\begin{equation*}
 \Delta_E(\mu,\psi)\Delta_E(\mu^{-1},\psi)=\mu(-1).
\end{equation*}
\end{lemma}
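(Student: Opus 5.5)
The plan is to compute both local constants with one admissible element and the finite realization \eqref{eq:delta-finite-realization}, and then to read off the product from the classical identity $G\cdot\overline{G}=|G|^2$ for Gauss sums. Since $\mu$ has finite order, it is unitary: $|\mu(x)|=1$ and $\mu^{-1}=\overline{\mu}$. Also $m=m_E(\mu)=m_E(\mu^{-1})$, so a single $\gamma$ with $\gamma\cO_E=\p_E^{m+n}$ is admissible for both $(\mu,\psi)$ and $(\mu^{-1},\psi)$.

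With this $\gamma$, Proposition~\ref{prop:finite-local-realization} gives
\[
 \Delta_E(\mu,\psi)\Delta_E(\mu^{-1},\psi)
 =\mu(\gamma)\mu(\gamma)^{-1}\,
 \ph\bigl(G_E(\mu,\psi;\gamma)\bigr)\,
 \ph\bigl(G_E(\mu^{-1},\psi;\gamma)\bigr).
\]
The character values cancel, so it remains to relate the two Gauss sums. For this, substitute $u\mapsto -u$ in
\[
 G_E(\mu^{-1},\psi;\gamma)=\sum_{\bar u\in Q_m}\psi(u/\gamma)\mu(u).
\]
This substitution is a bijection of $Q_m$, since $-1\in U_E^0$. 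Using that $\psi$ is unitary, so $\psi(-x)=\overline{\psi(x)}$, and that $\mu$ is unitary, we get
\[
 G_E(\mu^{-1},\psi;\gamma)=\mu(-1)\sum_{\bar u}\overline{\psi(u/\gamma)\mu(u)^{-1}}
 =\mu(-1)\,\overline{G_E(\mu,\psi;\gamma)}.
\]

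Next take phases. The nonvanishing part of Proposition~\ref{prop:finite-local-realization} is essential here: $\ph$ is multiplicative only on nonzero complex numbers, and for $z\neq0$ we have $\ph(z)\ph(\bar z)=1$. With $|\mu(-1)|=1$ this yields
\[
 \ph\bigl(G_E(\mu^{-1},\psi;\gamma)\bigr)=\mu(-1)\,\ph\bigl(G_E(\mu,\psi;\gamma)\bigr)^{-1},
\]
and therefore the product equals $\mu(-1)$.

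The case $m=0$ is consistent with this argument, since both sums equal $1$. In that case $\mu$ is unramified, so $\mu(-1)=1$; alternatively one can use \eqref{eq:unramified-character-local-constant} directly, which gives $\mu(\varpi_E)^{n}\mu(\varpi_E)^{-n}=1$.

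The only delicate point is the use of unitarity of $\mu$ at $-1$ and on units. On units it holds for any quasi-character, since $\mu|_{U_E^0}$ factors through the finite group $Q_m$; the value at $-1$ is likewise a root of unity. The finite-order hypothesis is what makes $\mu(\gamma)\mu^{-1}(\gamma)=1$ meaningful without any absolute-value issues, although that cancellation holds in any case. Beyond this, the argument relies only on the nonvanishing of $G_E(\mu,\psi;\gamma)$.
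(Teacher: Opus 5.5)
Your proof is correct and is essentially the paper's argument. Both use a common admissible $\gamma$, the conjugation identity obtained from $u\mapsto -u$ together with unitarity, and nonvanishing so that phases can be taken. The only difference is cosmetic: you work with the finite sum $G_E(\mu,\psi;\gamma)$ rather than the Haar integral $I_\mu$, and these differ by the positive factor $c_m$, so they have the same phase.
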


\begin{proof}
Choose a common admissible $\gamma$ and write
$I_\mu=\int_{U_E^0}\psi(u/\gamma)\mu(u)^{-1}\,du$.
Both $I_\mu$ and $I_{\mu^{-1}}$ are nonzero by
Corollary~\ref{cor:local-gauss-integral-nonzero}.  Since $\mu$ and
$\psi$ are unitary,
\[
 \overline{I_\mu}
 =\int_{U_E^0}\psi(-u/\gamma)\mu(u)\,du
 =\mu(-1)I_{\mu^{-1}}.
\]
The product of the two phases is therefore $\mu(-1)^{-1}=\mu(-1)$,
and the factors $\mu(\gamma)$ and $\mu^{-1}(\gamma)$ cancel.
\end{proof}

\begin{corollary}[Product of odd-degree norm-character local constants]\label{cor:S-product-one}
If $K/F$ is cyclic of odd prime degree, then
\begin{equation}\label{eq:S-product-one}
 \prod_{\mu\in S(K/F)}\Delta_F(\mu,\psi_F)=1.
\end{equation}
\end{corollary}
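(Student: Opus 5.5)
We use that $S(K/F)$ is a group of order $\ell$ (Corollary~\ref{cor:prime-norm-characters}) and pair each character with its inverse. Since $\ell$ is odd, the only element equal to its own inverse is the trivial character $1$. Every $\mu\in S(K/F)$ is trivial on $\Nm_{K/F}(K^\times)$. In particular $\mu(a^\ell)=\mu(\Nm_{K/F}a)=1$ for $a\in F^\times$, so $\mu^\ell=1$ and $\mu$ has finite order. Hence Lemma~\ref{lem:inverse-pair} applies to each $\mu$.

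The plan is the following. The trivial character contributes $\Delta_F(1,\psi_F)=1$ by Lemma~\ref{lem:delta-one}. The remaining $\ell-1$ characters split into $(\ell-1)/2$ pairs $\{\mu,\mu^{-1}\}$. For each pair, Lemma~\ref{lem:inverse-pair} gives
\[
 \Delta_F(\mu,\psi_F)\Delta_F(\mu^{-1},\psi_F)=\mu(-1).
\]
So the product \eqref{eq:S-product-one} equals $\prod\mu(-1)$, taken over one representative of each pair.

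It remains to see that $\mu(-1)=1$, which is the only point needing care. Since $\ell$ is odd, $-1=(-1)^\ell=\Nm_{K/F}(-1)$, with $-1\in F^\times\subset K^\times$. Thus $-1$ is a norm and $\mu(-1)=1$. This step uses only the formula $\Nm_{K/F}(a)=a^\ell$ for $a\in F^\times$, already used in Corollary~\ref{cor:additive-independence}. It does not need local class field theory, so the corollary follows with no real obstacle.
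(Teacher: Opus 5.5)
Your proof is correct and follows the paper's argument. You pair each nontrivial $\mu\in S(K/F)$ with $\mu^{-1}$ via Lemma~\ref{lem:inverse-pair}, use $\Delta_F(1,\psi_F)=1$ for the trivial character, and check $\mu(-1)=1$. The only cosmetic difference is how you get $\mu(-1)=1$: you take it directly from $-1=\Nm_{K/F}(-1)$, whereas the paper deduces it from $\mu$ having odd order $\ell$ (so $\mu(-1)$, a square root of $1$, is also an $\ell$th root of unity); both rest on $\Nm_{K/F}(a)=a^\ell$ for $a\in F^\times$.
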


\begin{proof}
The finiteness of $S(K/F)$ follows from
Corollary~\ref{cor:prime-norm-characters}.  For $x\in F^\times$,
viewed as an element of $K^\times$, one has
$\Nm_{K/F}(x)=x^\ell$.  Hence, for every $\mu\in S(K/F)$,
$\mu(x)^\ell=\mu(\Nm_{K/F}(x))=1$.
Thus every nontrivial $\mu$ has order $\ell$, which is odd, and in
particular $\mu(-1)=1$.  Apply Lemma~\ref{lem:inverse-pair} to each pair
$\mu,\mu^{-1}$ and \eqref{eq:trivial-local-constant} to the trivial character.
\end{proof}

\begin{definition}[Error term]
Define
\begin{equation}\label{eq:error-term}
 \Err_{K/F}(\chi_F)=
 \frac{\Delta_K(\chi_K,\psi_K)
       \prod_{\mu\in S(K/F)}\Delta_F(\mu,\psi_F)}
      {\prod_{\mu\in S(K/F)}\Delta_F(\mu\chi_F,\psi_F)}.
\end{equation}
The denominator is nonzero by
Corollary~\ref{cor:local-gauss-integral-nonzero}, and the First Main
Lemma is the assertion $\Err_{K/F}(\chi_F)=1$.
\end{definition}

\begin{lemma}[Twist invariance]\label{lem:twist-invariance}
For every $\nu\in S(K/F)$,
\begin{equation}\label{eq:error-twist-invariance}
 \Err_{K/F}(\nu\chi_F)=\Err_{K/F}(\chi_F).
\end{equation}
It therefore suffices to prove the identity  \eqref{eq:first-main}  for $\chi_F$ of minimal
conductor in its $S(K/F)$-orbit.
\end{lemma}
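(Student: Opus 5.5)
We compare the three ingredients of $\Err_{K/F}(\chi_F)$ in \eqref{eq:error-term} after replacing $\chi_F$ by $\nu\chi_F$ with $\nu\in S(K/F)$.

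\emph{The numerator.} First, $(\nu\chi_F)\circ\Nm_{K/F}=(\nu\circ\Nm_{K/F})\,\chi_K=\chi_K$, since $\nu$ is trivial on $\Nm_{K/F}(K^\times)$. Hence $\Delta_K(\chi_K,\psi_K)$ is unchanged. The factor $\prod_{\mu\in S(K/F)}\Delta_F(\mu,\psi_F)$ does not involve $\chi_F$ at all, so the whole numerator is unchanged.

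\emph{The denominator.} It becomes
\[
 \prod_{\mu\in S(K/F)}\Delta_F(\mu\nu\chi_F,\psi_F).
\]
Since $S(K/F)$ is a group, the map $\mu\mapsto\mu\nu$ is a bijection of $S(K/F)$ onto itself. The product is over a finite index set, by Corollary~\ref{cor:prime-norm-characters}, so reindexing gives exactly $\prod_{\mu}\Delta_F(\mu\chi_F,\psi_F)$. Each factor is nonzero by Corollary~\ref{cor:local-gauss-integral-nonzero}, so the quotient is defined on both sides. This proves \eqref{eq:error-twist-invariance}.

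\emph{The reduction.} The identity \eqref{eq:first-main} for $\chi_F$ is equivalent to $\Err_{K/F}(\chi_F)=1$. By \eqref{eq:error-twist-invariance}, the error term is constant on the orbit $S(K/F)\chi_F$. Conductors are nonnegative integers, so the orbit contains a member of minimal conductor. Proving the identity for that member therefore proves it for $\chi_F$.

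No step is a genuine obstacle. The only point needing care is that reindexing uses the group structure and finiteness of $S(K/F)$, which are supplied by Corollary~\ref{cor:prime-norm-characters} rather than by class field theory.
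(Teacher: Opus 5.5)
Your proposal is correct and takes the same approach as the paper: $\chi_K$ is unchanged because $\nu\circ\Nm_{K/F}=1$, the product $\prod_{\mu}\Delta_F(\mu,\psi_F)$ does not involve $\chi_F$, and multiplication by $\nu$ permutes $S(K/F)$, so the denominator is only reindexed. Your explicit argument for the reduction to a minimal-conductor member of the orbit (conductors are nonnegative integers, and $\Err_{K/F}(\chi_F)=1$ is equivalent to the identity) spells out what the paper leaves implicit.
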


\begin{proof}
Since $\nu\circ \Nm_{K/F}=1$, the character on $K^\times$ is unchanged.
Multiplication by $\nu$ permutes the group $S(K/F)$, including its
trivial character, so the denominator is only reindexed.
\end{proof}

\section{Ramification and norm filtration for cyclic prime extensions}

Let $K/F$ be a finite Galois extension of nonarchimedean local fields,
with Galois group $G$.  For an integer $i\geq -1$, the lower
ramification group is
\[
 G_i=\{\sigma\in G:\sigma(x)\equiv x\pmod{\p_K^{i+1}}
                      \text{ for every }x\in\cO_K\}.
\]
For a real $u\geq-1$, put $G_u=G_{\lceil u\rceil}$.  Define the
Herbrand function and its inverse by
\[
 \HerbPhi_{K/F}(u)=\int_0^u\frac{ds}{[G_0:G_s]},
 \qquad
 \HerbPsi_{K/F}=(\HerbPhi_{K/F})^{-1}.
\]
The superscript ${\rm H}$ is used throughout to prevent confusion with
additive characters.  

When $K/F$ is cyclic of prime degree $\ell$ and ramified, it is totally
ramified and there is a unique integer $t\geq0$ such that
\[
 G=G_0=\cdots=G_t,\qquad G_{t+1}=1.
\]
Consequently
\begin{equation}\label{eq:Herbrand-prime-new}
 \HerbPsi_{K/F}(u)=
 \begin{cases}
 u,&-1\leq u\leq t,\\
 t+\ell(u-t),&u\geq t.
 \end{cases}
\end{equation}

For a finite separable extension $E/F$, let
$\mathfrak D_{E/F}\subset\cO_E$ denote the different ideal, and write
\[
 D_{E/F}=v_E(\mathfrak D_{E/F})
\]
for its exponent.

\begin{theorem}[Ramification, trace, and norm filtration for a cyclic prime extension]
\label{thm:graded-norm}\label{thm:norm-filtration}
Let $K/F$ be ramified cyclic of prime degree $\ell$, with lower break
$t$.  Then:
\begin{enumerate}[label=(\alph*)]
\item The different exponent is
\[
 D_{K/F}=(\ell-1)(t+1),
\]
and for every integer $a$,
\begin{equation}\label{eq:trace-ideal}
 \Tr_{K/F}(\p_K^a)=
 \p_F^{\left\lfloor(a+D_{K/F})/\ell\right\rfloor}.
\end{equation}

\item For $0\le i\le t$, let
\[
 \overline{\Nm}_i:\operatorname{gr}^iU_K\longrightarrow
                   \operatorname{gr}^iU_F,
 \qquad
 \operatorname{gr}^iU_E=U_E^i/U_E^{i+1},
\]
be the map induced by the norm.  For $0\le i<t$ it is an isomorphism.
At $i=t$, its kernel and cokernel both have order $\ell$, and its
kernel is the image of $G_t/G_{t+1}$ under the ramification map
\[
 \sigma\longmapsto
 \begin{cases}
  \overline{\sigma(\varpi_K)/\varpi_K},&t=0,\\[1mm]
  \overline{(\sigma(\varpi_K)-\varpi_K)/\varpi_K^{t+1}},&t>0.
 \end{cases}
\]

\item Splicing the graded maps below the break gives
\begin{equation}\label{eq:norm-below-break}
 K^\times/U_K^t\xrightarrow{\sim}F^\times/U_F^t,
 \qquad
 U_K^r/U_K^t\xrightarrow{\sim}U_F^r/U_F^t
 \quad(0\le r\le t).
\end{equation}
\item For every integer $r>t$, put
$a=\HerbPsi_{K/F}(r)$.  The norm induces an isomorphism
\[
 U_K^a/U_K^{a+1}\longrightarrow U_F^r/U_F^{r+1}.
\]
Moreover, for every $a\ge0$, the subgroup $\Nm_{K/F}(U_K^a)$ is closed in
$U_F^0$, and consequently
\begin{equation}\label{eq:norm-above-break}
 \Nm_{K/F}(U_K^{\HerbPsi_{K/F}(r)})=U_F^r,
 \qquad
 \Nm_{K/F}(U_K^{\HerbPsi_{K/F}(r)+1})=U_F^{r+1}.
\end{equation}
\end{enumerate}
\end{theorem}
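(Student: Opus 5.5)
The plan is to follow Serre's \emph{Local Fields} (Chapters~IV--V), using only the structure of a totally ramified cyclic extension of prime degree. Nothing from class field theory is needed.

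\textbf{Part (a).} Fix a uniformizer $\varpi_K$, so that $\cO_K=\cO_F[\varpi_K]$. Let $f$ be its minimal polynomial, which is Eisenstein. Then $\mathfrak D_{K/F}=(f'(\varpi_K))$, and
\[
v_K\bigl(f'(\varpi_K)\bigr)=\sum_{\sigma\ne1}v_K\bigl(\sigma\varpi_K-\varpi_K\bigr)=\sum_{\sigma\ne1}\bigl(i_G(\sigma)\bigr),
\]
where $i_G(\sigma)=t+1$ for every $\sigma\ne1$, since $G=G_t$ and $G_{t+1}=1$. This gives $D_{K/F}=(\ell-1)(t+1)$. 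For the trace formula, use that $\Tr(\p_K^a)$ is an $\cO_F$-ideal, say $\p_F^b$. Then $\Tr(\p_K^a)\subset\p_F^b$ holds iff $\p_K^a\subset\p_F^b\mathfrak D^{-1}=\p_K^{\ell b-D}$, by the definition of the inverse different. Hence the largest such $b$ is $\lfloor (a+D)/\ell\rfloor$.

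\textbf{Parts (b) and (c).} Compute the norm on graded pieces using $\Nm(x)=\prod_\sigma\sigma(x)$.
\begin{itemize}
\item For $i=0$: $\operatorname{gr}^0$ is the residue field and the residue extension is trivial, so $\overline{\Nm}_0(\bar u)=\bar u^\ell$.
\item For $i\ge1$: write $u=1+x\varpi_K^i$ and choose $\varpi_F=\Nm(\varpi_K)$. For $i\le t$ one has $\sigma(u)\equiv u$ modulo $\p_K^{i+1}$, which gives $\overline{\Nm}_i(\bar x)=\bar x^\ell$ for $i<t$. For $i=t$, a finer expansion gives $\overline{\Nm}_t(\bar x)=\bar x^\ell-\bar c^{\ell-1}\bar x$, where $c$ is the residue of $(\sigma\varpi_K-\varpi_K)/\varpi_K^{t+1}$. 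This is Serre's Proposition~V.5.
\end{itemize}
When $t>0$ we are in the wild case $\ell=p$, and $x\mapsto x^p$ is a bijection of the perfect finite field. At $i=t$ the map is additive $\F_p$-linear with kernel $\F_p c$ (of order $p$), and by finiteness its cokernel also has order $p$. When $t=0$ the case is tame: $\overline{\Nm}_0$ is $x\mapsto x^\ell$ on $k^\times$, with kernel $\mu_\ell$ (the image of $\sigma\mapsto\sigma\varpi_K/\varpi_K$) and cokernel of order $\ell$. Part (c) then follows by induction on the filtration, using the five lemma on $0\to\operatorname{gr}\to\cdots$. Since $v_F(\Nm x)=v_K(x)$, we also get $K^\times/U_K^t\cong F^\times/U_F^t$.

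\textbf{Part (d).} This is the main obstacle: getting the congruence for the norm above the break right. For $a=t+\ell(r-t)$ with $r>t$, show that $\Nm(1+x\varpi_K^a)\equiv 1+\Tr(x\varpi_K^a)$ modulo $\p_F^{r+1}$. The argument runs as follows.
\begin{itemize}
\item Expand the norm as $1+\Tr(y)+(\text{elementary symmetric terms})+\Nm(y)$, with $y=x\varpi_K^a$.
\item Bound the middle terms using (a): their valuations exceed $r$, because they are traces of higher-degree expressions or are divisible by $\ell$ times an element of sufficient valuation.
\item Use $v_F(\Nm y)=a>r$.
\end{itemize}
Serre handles this as Lemma~V.5 with the explicit formula $\Nm(1+y)\equiv1+\Tr y+\Nm y$. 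By \eqref{eq:trace-ideal}, $\Tr$ maps $\p_K^a$ onto $\p_F^{\lfloor(a+D)/\ell\rfloor}=\p_F^{r}$. Here
\[
\frac{a+D}{\ell}=r+\frac{t-t\ell+(\ell-1)(t+1)}{\ell}=r+\frac{\ell-1}{\ell},
\]
so the floor is $r$. The induced map $\p_K^a/\p_K^{a+1}\to\p_F^r/\p_F^{r+1}$ is then a surjective linear map between one-dimensional $k$-spaces, hence an isomorphism.

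For $a$ not of the form $\HerbPsi(r)$ (with $a>t$), the same estimate shows that $\Nm(U_K^a)\subset U_F^{r+1}$, where $r=\lceil\HerbPhi(a)\rceil$. Combining the graded surjectivities with completeness, via successive approximation, gives $\Nm(U_K^{\HerbPsi(r)})=U_F^r$. Closedness follows because $U_K^a$ is compact and the norm is continuous. The second identity in \eqref{eq:norm-above-break} is the first applied at $r+1$, together with the containment $\Nm(U_K^{\HerbPsi(r)+1})\subset U_F^{r+1}$ and the surjectivity from $U_K^{\HerbPsi(r+1)}\subset U_K^{\HerbPsi(r)+1}$.
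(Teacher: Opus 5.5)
Your overall route is the paper's: the different from $v_K(f'(\varpi_K))=\sum_{\sigma\ne1}i_G(\sigma)$, the trace ideal by duality with $\mathfrak D_{K/F}^{-1}$, the graded norm maps of Serre V~\S3, splicing below the break, and compactness plus successive approximation above it. The paper simply cites Serre for (b) and (d). Two local steps in your sketch are wrong as written, although neither damages the argument once repaired.

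\emph{The step in (b).} The congruence $\sigma(u)\equiv u\pmod{\p_K^{i+1}}$ only gives $\Nm(u)\equiv u^\ell\pmod{\p_K^{i+1}}$. The class of $\Nm(u)$ in $U_F^i/U_F^{i+1}$ is read modulo $\p_F^{i+1}=\p_K^{\ell(i+1)}\cap F$. So this congruence does not yield $\overline{\Nm}_i(\bar x)=\bar x^{\ell}$; it does not even show $\Nm(u)\in U_F^i$. Use instead the expansion you already use in (d). With $y=x\varpi_K^i$, write $\Nm(1+y)=\sum_{j=0}^{\ell}s_j(y)$. For $1\le j<\ell$, the coefficient $s_j(y)$ is a sum of traces, because $G$ acts freely on $j$-element subsets. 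Hence $v_F(s_j(y))\ge\lfloor (ji+D_{K/F})/\ell\rfloor$.
\begin{itemize}
\item For $i<t$, all these terms lie in $\p_F^{i+1}$, leaving only $\Nm(y)$.
\item At $i=t>0$, only $\Tr(y)+\Nm(y)$ survives, so $\overline{\Nm}_t(Z)=Z^p+bZ$. Then $b=-c^{p-1}$, because $\sigma(\varpi_K)/\varpi_K$, whose class is $c$, has norm $1$.
\end{itemize}

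\emph{The step in (d).} Suppose $a$ is not of the form $\HerbPsi(r)$, and put $r=\lceil\HerbPhi(a)\rceil$. Then $a<\HerbPsi(r)$, so $\Nm(U_K^a)\supseteq\Nm(U_K^{\HerbPsi(r)})=U_F^r$. Hence the claim $\Nm(U_K^a)\subset U_F^{r+1}$ is false; the estimate only gives $\Nm(U_K^a)\subseteq U_F^{r}$. The one instance you actually use is $\Nm(U_K^{\HerbPsi(r)+1})\subseteq U_F^{r+1}$. This is of the corrected form, since $\Tr(\p_K^{\HerbPsi(r)+1})=\p_F^{r+1}$. So your proof of the second identity in (d) stands.
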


\begin{proof}
Choose a uniformizer $\varpi_K$.  For every $\sigma\ne1$ one has
$v_K(\sigma(\varpi_K)-\varpi_K)=t+1$.  The different formula
\[
 v_K(\mathfrak D_{K/F})
 =\sum_{\sigma\ne1}v_K(\sigma(\varpi_K)-\varpi_K),
\]
which is \cite[Chapter~IV, \S1, Proposition~4]{Serre}, gives the first
assertion in (a).

For the trace formula, the trace-dual of $\p_K^a$ under
$(x,y)\mapsto\Tr_{K/F}(xy)$ is
$\p_K^{-a-D_{K/F}}$ by
\cite[Chapter~III, \S3, Proposition~7]{Serre}.  Write
$\Tr(\p_K^a)=\p_F^c$.  An element $y\in F$ belongs to the dual of
this trace ideal if and only if
\[
 y\p_K^a\subseteq\p_K^{-D_{K/F}},
 \qquad\text{equivalently}\qquad
 \ell v_F(y)+a\ge-D_{K/F}.
\]
Thus its dual is
$\p_F^{\lceil(-a-D_{K/F})/\ell\rceil}$, whereas the dual of
$\p_F^c$ is $\p_F^{-c}$.  Hence
$c=\left\lfloor\frac{a+D_{K/F}}\ell\right\rfloor$,
which proves \eqref{eq:trace-ideal}.

The graded maps in (b), including the critical kernel, are the
cyclic-prime specialization of
\cite[Chapter~V, \S3, Propositions~4--5]{Serre}.  For (c), splice the
graded isomorphisms for $0\le i<t$.  The valuation quotient is preserved
because the residue degree is one:
$v_F(\Nm_{K/F}x)=v_K(x)$.  If $t=0$, then
$K^\times/U_K^0\simeq\mathbf Z$ and norm is the identity on the
valuation quotient.  If $t>0$, the residue norm
$U_K^0/U_K^1\to U_F^0/U_F^1$ is $x\mapsto x^\ell=x^p$, hence is
Frobenius, and the remaining graded pieces below $t$ are isomorphisms.
This proves \eqref{eq:norm-below-break}.

For (d), the above-break graded isomorphism is
\cite[Chapter~V, \S3, Corollaries~1--3]{Serre}.  Each $U_K^a$ is
compact, so $\Nm_{K/F}(U_K^a)$ is compact and hence closed in $U_F^0$.
Successive lifting through the graded isomorphisms therefore converges
to a norm and gives \eqref{eq:norm-above-break}.
\end{proof}

\begin{lemma}[Frobenius differences and trace-zero hyperplanes]
\label{lem:frobenius-difference-trace}
Let $k/\F_p$ be a finite field extension and let
$\lambda\in k^\times$.  Then the $\F_p$-linear maps
\[
 \wp(z)=z^p-z,
 \qquad
 \wp_\lambda(z)=z^p-\lambda^{p-1}z
\]
satisfy
\[
 \ker(\wp)=\F_p,
 \qquad
 \operatorname{im}(\wp)=\ker\!\left(\Tr_{k/\F_p}\right),
\]
and
\begin{equation}\label{eq:scaled-frobenius-trace-image}
 \ker(\wp_\lambda)=\lambda\F_p,
 \qquad
 \operatorname{im}(\wp_\lambda)
 =\lambda^p\ker\!\left(\Tr_{k/\F_p}\right).
\end{equation}
Under the perfect trace pairing $(a,x)\mapsto\Tr_{k/\F_p}(ax)$, the
annihilator of the last image is $\lambda^{-p}\F_p$.
\end{lemma}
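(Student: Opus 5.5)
The plan is to reduce everything to the unscaled Artin--Schreier map $\wp$ by a change of variables, and to get the two kernel and image statements from dimension counting over $\F_p$.

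First, treat $\wp$ itself. The map $z\mapsto z^p$ is additive in characteristic $p$, so $\wp$ is $\F_p$-linear. Its kernel consists of the roots of $z^p-z$ in $k$. These are exactly the $p$ elements of $\F_p$, so $\ker\wp=\F_p$ and $\dim_{\F_p}\operatorname{im}\wp=[k:\F_p]-1$. Next, $\Tr_{k/\F_p}$ is invariant under Frobenius, since Frobenius permutes the conjugates. Hence $\Tr(z^p)=\Tr(z)$, and so $\operatorname{im}\wp\subseteq\ker\Tr$. The trace is surjective onto $\F_p$ by separability: it is a nonzero polynomial of degree $p^{f-1}<p^f$, so it cannot vanish on all of $k$. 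Therefore $\ker\Tr$ also has codimension one. The containment combined with equal dimensions gives $\operatorname{im}\wp=\ker\Tr$.

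For $\wp_\lambda$, substitute $z=\lambda w$. This gives
\[
 \wp_\lambda(\lambda w)=\lambda^p w^p-\lambda^{p-1}\lambda w=\lambda^p\wp(w).
\]
Since $w\mapsto\lambda w$ is a bijection of $k$, the kernel of $\wp_\lambda$ is $\lambda\ker\wp=\lambda\F_p$. Likewise the image is $\lambda^p\operatorname{im}\wp=\lambda^p\ker\Tr$, which proves \eqref{eq:scaled-frobenius-trace-image}.

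For the annihilator, first note that $(a,x)\mapsto\Tr(ax)$ is perfect because $\Tr\ne0$ and $k$ is a field. The image $\lambda^p\ker\Tr$ is a hyperplane, so its annihilator is a line. Now check $a=\lambda^{-p}c$ with $c\in\F_p$. For $x=\lambda^p y$ with $\Tr y=0$, we get $\Tr(ax)=\Tr(cy)=c\Tr(y)=0$. So $\lambda^{-p}\F_p$ lies in the annihilator, and being one-dimensional, it equals the annihilator.

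The only step with any content is the surjectivity of the trace, and the degree argument above handles it.
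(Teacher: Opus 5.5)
Your proof is correct and follows essentially the same route as the paper. The kernel of $\wp$ together with Frobenius invariance of the trace gives $\operatorname{im}\wp=\ker\Tr_{k/\F_p}$ by a dimension count, the substitution $z=\lambda w$ transports this to $\wp_\lambda$, and the annihilator is identified as the line $\lambda^{-p}\F_p$. The one addition is your degree argument for surjectivity of the trace, which the paper uses implicitly when it calls $\ker\Tr_{k/\F_p}$ a hyperplane; it is a welcome detail.
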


\begin{proof}
The kernel of $\wp$ is the prime field.  Hence its image has
$\F_p$-dimension $[k:\F_p]-1$.  Frobenius invariance of the trace gives
$\Tr(z^p-z)=0$, so the image is contained in the trace-zero hyperplane;
the two spaces have the same dimension and therefore coincide.  Since
$\wp_\lambda(\lambda y)=\lambda^p(y^p-y)$,
the scaled kernel and image follow.  The orthogonal complement of
$\ker(\Tr_{k/\F_p})$ is $\F_p$: it contains $\F_p$, and both  have
dimension one.  Scaling the second variable by $\lambda^p$ gives the
annihilator $\lambda^{-p}\F_p$.
\end{proof}

\begin{lemma}[Strong wild symmetric-coefficient estimate]
\label{lem:wild-symmetric-bound}
Assume that $K/F$ is wildly ramified cyclic of prime degree
$\ell=p=\operatorname{char}k_F$, with lower break $t$, and put
$T=t+1$.
For $x\in K$ let $s_j(x)$ be the $j$th elementary symmetric function
of its $p$ conjugates.  If $v_K(x)\ge a$, then, for
$1\le j\le p-1$,
\begin{equation}\label{eq:symmetric-bound}
 v_F(s_j(x))\ge
 \left\lfloor\frac{ja+(p-1)T}{p}\right\rfloor.
\end{equation}
For the last coefficient,
\begin{equation}\label{eq:symmetric-bound-last}
 v_F(s_p(x))=v_F(\Nm_{K/F}x)=v_K(x)\ge a.
\end{equation}
\end{lemma}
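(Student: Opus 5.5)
The plan is to read $s_j(x)$ off the characteristic polynomial of $x$ and bound it through a trace of a power of $x$, using the trace formula \eqref{eq:trace-ideal}. The last coefficient is immediate: $s_p(x)=\prod_\sigma\sigma(x)=\Nm_{K/F}(x)$. Since $K/F$ is totally ramified of degree $p$ (residue degree one), $v_F(\Nm_{K/F}x)=v_K(x)\ge a$, which is \eqref{eq:symmetric-bound-last}.

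For $1\le j\le p-1$, I would use the exterior-power description. The coefficient $s_j(x)$ is the trace of $\wedge^j(x)$ acting on $\wedge^j_F K$. A cleaner route avoids this. Write $s_j(x)=\sum_{|S|=j}\prod_{\sigma\in S}\sigma(x)$, and group the $j$-subsets $S\subset G$ into orbits under translation by $G$. Since $G$ has prime order $p$ and $1\le j\le p-1$, no nonempty proper subset is $G$-stable. Hence every orbit is free of size $p$. Choosing orbit representatives $S_1,\dots,S_r$ gives
\[
 s_j(x)=\sum_{i=1}^r \Tr_{K/F}\Bigl(\prod_{\sigma\in S_i}\sigma(x)\Bigr).
\]
Each product $y_i=\prod_{\sigma\in S_i}\sigma(x)$ lies in $K$ with $v_K(y_i)=j\,v_K(x)\ge ja$, because conjugation preserves $v_K$. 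By \eqref{eq:trace-ideal} with $D_{K/F}=(p-1)T$ from Theorem~\ref{thm:graded-norm}(a),
\[
 \Tr_{K/F}(y_i)\in\Tr_{K/F}(\p_K^{ja})=\p_F^{\lfloor(ja+(p-1)T)/p\rfloor}.
\]
The ideal is nonincreasing in the exponent, so the bound holds for $v_K(y_i)\ge ja$. Summing over $i$ gives \eqref{eq:symmetric-bound}.

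The only point that needs care is the freeness of the translation action on $j$-subsets, which is where primality of the degree enters. If $S$ is stabilized by some $\tau\ne1$, then it is stabilized by $\langle\tau\rangle=G$, since $G$ is cyclic of prime order. A $G$-stable subset is a union of $G$-orbits in $G$, and $G$ acts simply transitively on itself, so the subset is empty or all of $G$. This is excluded for $1\le j\le p-1$. Beyond this, everything is a direct application of \eqref{eq:trace-ideal}, and I expect no further obstacle. Note that $a$ may be negative; this is harmless because \eqref{eq:trace-ideal} holds for all integers $a$.
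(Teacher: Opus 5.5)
Your proof is correct, and it is the alternative argument the paper records at the end of its own proof. Because $G$ has prime order, it acts freely on $j$-element subsets for $1\le j\le p-1$. Hence $s_j(x)$ is a sum of traces of elements of $\p_K^{ja}$, and \eqref{eq:trace-ideal} gives the bound; your handling of negative $a$ and of the last coefficient is also fine.

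The paper's primary argument is different: it inducts on $j$ through Newton's identities $js_j=\sum_{i=1}^{j}(-1)^{i-1}s_{j-i}P_i$, with $P_i=\Tr_{K/F}(x^i)$. That route needs $j$ to be a unit of $\cO_F$, and it uses wildness, $D_{K/F}/p=(p-1)T/p\ge1$, to absorb the loss in $\lfloor A\rfloor+\lfloor B\rfloor\ge\lfloor A+B\rfloor-1$. Your orbit decomposition needs neither hypothesis, so it applies verbatim to any ramified cyclic extension of prime degree.
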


\begin{proof}
Put $D=D_{K/F}=v_K(\mathfrak D_{K/F})$.  By
Theorem~\ref{thm:norm-filtration}(a),
$D=(p-1)T$.  Put $P_i(x)=\Tr_{K/F}(x^i)$.  Since
$x^i\in\p_K^{ia}$, equation
\eqref{eq:trace-ideal} gives
\begin{equation*}
 v_F(P_i(x))\ge
 \left\lfloor\frac{ia+D}{p}\right\rfloor
 \qquad(i\ge1).
\end{equation*}
We now use Newton's identities
\begin{equation*}
 j s_j(x)=\sum_{i=1}^j(-1)^{i-1}s_{j-i}(x)P_i(x),
 \qquad s_0(x)=1.
\end{equation*}
Because $j<p$, the integer $j$ is a unit of $\cO_F$.  We prove
\eqref{eq:symmetric-bound} by induction on $j$.  For $j=1$ it is
equation~\eqref{eq:trace-ideal}, applied to $x$.  Suppose it is known below $j$.
The term $i=j$ in Newton's identity has the required
valuation by \eqref{eq:trace-ideal}, applied to $x^j$.
For $i<j$, the induction hypothesis and \eqref{eq:trace-ideal},
applied to $x^i$, give
\[
 \left\lfloor\frac{(j-i)a+D}{p}\right\rfloor
 +\left\lfloor\frac{ia+D}{p}\right\rfloor.
\]
For real numbers $A,B$ one has
$\lfloor A\rfloor+\lfloor B\rfloor\ge\lfloor A+B\rfloor-1$.
Here the sum before taking floors is
$\frac{ja+D}{p}+\frac Dp$.
The extension is wild, so $T\ge2$ and
$D/p=(p-1)T/p\ge1$.  Hence the last displayed integral lower bound is
at least $\lfloor(ja+D)/p\rfloor$.  Every term in
Newton's identity therefore has the required valuation.
Finally $s_p(x)=\Nm_{K/F}(x)$, and total ramification gives
$v_F(\Nm_{K/F}x)=v_K(x)$.

Alternatively, for $1\le j<p$, the cyclic group $G=\Gal(K/F)$ acts
freely on its $j$-element subsets. Choosing one subset $S$ from each
orbit gives
\[
 s_j(x)=\sum_{[S]}\Tr_{K/F}\!\left(\prod_{\sigma\in S}\sigma(x)\right).
\]
Every product lies in $\p_K^{ja}$, so \eqref{eq:trace-ideal} gives
\eqref{eq:symmetric-bound} directly.
\end{proof}

\begin{proposition}[Critical norm polynomial at a wild break]
\label{prop:critical-norm-polynomial}
Let $K/F$ be totally ramified cyclic of prime degree
$p=\operatorname{char}k_F$, with positive lower break $t$, and identify
the residue fields with $k$.
Choose a uniformizer $\varpi_K$, put
$\varpi_F=\Nm_{K/F}(\varpi_K)$, choose a generator $\sigma$ of
$\Gal(K/F)$, and set
\begin{equation}\label{eq:critical-ramification-coordinate}
 \lambda=
 \overline{\frac{\sigma(\varpi_K)-\varpi_K}
                   {\varpi_K^{t+1}}}\in k^\times.
\end{equation}
For $z,w\in k$, let $\widetilde z,\widetilde w\in\cO_F\subset\cO_K$
be any lifts.  We use the standard identifications
\[
 \operatorname{gr}^tU_K=U_K^t/U_K^{t+1}\simeq(k,+),
 \qquad
 (1+\varpi_K^t\widetilde z)U_K^{t+1}\longleftrightarrow z,
\]
and
\[
 \operatorname{gr}^tU_F=U_F^t/U_F^{t+1}\simeq(k,+),
 \qquad
 (1+\varpi_F^t\widetilde w)U_F^{t+1}\longleftrightarrow w.
\]
In these source and target coordinates, the critical graded norm is the
additive polynomial
\begin{equation}\label{eq:critical-norm-polynomial-extracted}
 P_\lambda(Z)=Z^p-\lambda^{p-1}Z.
\end{equation}
Consequently
\begin{equation}\label{eq:critical-norm-image}
 \ker P_\lambda=\lambda\F_p,
 \qquad
 \operatorname{im}P_\lambda
 =\lambda^p\ker\!\left(\Tr_{k/\F_p}\right),
\end{equation}
and its annihilator under the trace pairing is
$\lambda^{-p}\F_p$.  The linear coefficient is the normalized trace:
\begin{equation}\label{eq:critical-norm-trace-coefficient}
 \overline{\frac{\Tr_{K/F}(\varpi_K^t)}{\varpi_F^t}}
 =-\lambda^{p-1}.
\end{equation}
\end{proposition}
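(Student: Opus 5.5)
We compute $\Nm_{K/F}(1+\varpi_K^t\widetilde z)$ modulo $\p_F^{t+1}$ directly as the product of its Galois conjugates, rather than quoting the abstract graded map. Write $x=\varpi_K^t\widetilde z$, so $v_K(x)\ge t$. Expanding the norm in elementary symmetric functions gives
\[
 \Nm_{K/F}(1+x)=1+s_1(x)+s_2(x)+\cdots+s_{p-1}(x)+s_p(x).
\]
Lemma~\ref{lem:wild-symmetric-bound} with $a=t$ and $T=t+1$ gives $v_F(s_j(x))\ge\lfloor (jt+(p-1)(t+1))/p\rfloor$ for $1\le j\le p-1$. For $2\le j\le p-1$ the numerator is at least $(p+1)t+p-1\ge p(t+1)$, because $t\ge1$. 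Hence only $s_1(x)=\Tr_{K/F}(x)$ and $s_p(x)=\Nm_{K/F}(x)$ survive modulo $\p_F^{t+1}$.

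The last term is easy. Since $\widetilde z\in\cO_F$, we have $\Nm_{K/F}(\varpi_K^t\widetilde z)=\varpi_F^t\widetilde z^{\,p}$, and since $p\ge 2$, $v_F(\widetilde z^{\,p}\varpi_F^{t})\ge t$ with residue $z^p$ in the coordinate $w$. The trace term is $\widetilde z\,\Tr_{K/F}(\varpi_K^t)$, because $\widetilde z$ is $F$-rational. Thus the graded norm is $z\mapsto z^p+c\,z$ with
\[
 c=\overline{\Tr_{K/F}(\varpi_K^t)/\varpi_F^t},
\]
where \eqref{eq:trace-ideal} guarantees $v_F(\Tr\varpi_K^t)\ge\lfloor (t+(p-1)(t+1))/p\rfloor=t$. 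So everything reduces to proving \eqref{eq:critical-norm-trace-coefficient}, $c=-\lambda^{p-1}$. The kernel and image statements \eqref{eq:critical-norm-image} and the annihilator then follow at once from Lemma~\ref{lem:frobenius-difference-trace}.

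The main obstacle is evaluating $c$. I would write $\sigma^i(\varpi_K)=\varpi_K(1+\lambda_i\varpi_K^t+O(\varpi_K^{t+1}))$, where $\lambda_i$ is the image of $\sigma^i$ under the ramification map of Theorem~\ref{thm:norm-filtration}(b). That map is a homomorphism $G_t/G_{t+1}\to(k,+)$, so $\lambda_i=i\lambda$. Then
\[
 \sigma^i(\varpi_K)^t\equiv\varpi_K^t(1+t\,i\lambda\varpi_K^t)\pmod{\varpi_K^{2t+1}}.
\]
Summing over $i$ only shows that the trace is divisible by a high power of $\varpi_K$ but does not isolate the residue, since the sum is of order $p$. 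This naive approach therefore fails, and I would instead compare norms.

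Apply the norm to $1+\varpi_K^t\widetilde z$ with $z=\lambda$, which by Theorem~\ref{thm:norm-filtration}(b) lies in the kernel. Concretely, $\sigma(\varpi_K)/\varpi_K=1+\lambda\varpi_K^t+\cdots$, and its norm is exactly $1$. Hence $\lambda^p+c\lambda=0$, i.e.\ $c=-\lambda^{p-1}$, using $\lambda\ne0$. This uses that the class of $\sigma(\varpi_K)/\varpi_K$ in $\operatorname{gr}^tU_K$ has coordinate $\lambda$, and that its norm is trivial since $\Nm\sigma(\varpi_K)=\Nm\varpi_K$. I expect only routine care to be needed in checking that the coordinate identification with lifts from $\cO_F$ is compatible with this element.
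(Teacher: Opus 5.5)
Your proposal is correct and follows the paper's proof essentially step for step: you kill $s_2,\dots,s_{p-1}$ with Lemma~\ref{lem:wild-symmetric-bound}, read off the graded norm as $Z^p+cZ$ with $c$ the normalized trace, extract $c=-\lambda^{p-1}$ from the fact that $\lambda$ lies in the kernel, and finish with Lemma~\ref{lem:frobenius-difference-trace}. Your direct check that $\sigma(\varpi_K)/\varpi_K$ has class $\lambda$ in $\operatorname{gr}^tU_K$ and norm $1$ is a slightly more self-contained way to obtain $\lambda\in\ker$ than the paper's appeal to Theorem~\ref{thm:graded-norm}(b), and you can simply drop the abandoned ``naive'' trace computation.
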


\begin{proof}
For $2\le j\le p-1$, let $s_j$ be the $j$th elementary symmetric
function of the conjugates of $\varpi_K^t$.
Lemma~\ref{lem:wild-symmetric-bound} gives
\[
 v_F(s_j)\ge
 \left\lfloor\frac{jt+(p-1)(t+1)}p\right\rfloor
 \ge t+1,
\]
since $(j-1)t\ge1$.  There are no such terms when $p=2$.
The trace-ideal formula \eqref{eq:trace-ideal} gives
$\Tr_{K/F}(\varpi_K^t)\in\p_F^t$, while
\[
 \Nm_{K/F}(\varpi_K^t\widetilde z)=\varpi_F^t\widetilde z^{\,p}.
\]
Thus the norm expansion gives
\[
 \Nm_{K/F}(1+\varpi_K^t\widetilde z)
 \equiv1+\Tr_{K/F}(\varpi_K^t)\widetilde z
          +\varpi_F^t\widetilde z^{\,p}
 \pmod{\p_F^{t+1}}.
\]
The critical norm map is therefore represented by $Z^p+aZ$, where
\[
 a=\overline{\frac{\Tr_{K/F}(\varpi_K^t)}{\varpi_F^t}}.
\]
By Theorem~\ref{thm:graded-norm}(b), its kernel is
$\lambda\F_p$.  In particular $\lambda^p+a\lambda=0$; since
$\lambda\ne0$, this gives $a=-\lambda^{p-1}$.
This proves \eqref{eq:critical-norm-polynomial-extracted} and
\eqref{eq:critical-norm-trace-coefficient}.
The kernel, image, and annihilator assertions follow from
Lemma~\ref{lem:frobenius-difference-trace}.
\end{proof}

\begin{corollary}[Norm truncation]
\label{cor:wild-norm-truncation}
Under the hypotheses of Lemma~\ref{lem:wild-symmetric-bound}, let
$q,r\ge0$ and assume
\begin{equation*}
 2q+(p-1)T\ge pr.
\end{equation*}
Then, for every $y\in\p_K^q$,
\begin{equation}\label{eq:wild-norm-truncation}
 \Nm_{K/F}(1+y)
 \equiv1+\Tr_{K/F}(y)+\Nm_{K/F}(y)
 \pmod{\p_F^r}.
\end{equation}
\end{corollary}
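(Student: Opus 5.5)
The idea is to expand the norm as a product over conjugates and bound the intermediate symmetric functions with Lemma~\ref{lem:wild-symmetric-bound}. For $y\in K$, the norm of $1+y$ is $\prod_{\sigma\in G}(1+\sigma(y))$, which is the characteristic-polynomial expansion
\[
 \Nm_{K/F}(1+y)=1+\sum_{j=1}^{p}s_j(y),
\]
where $s_j(y)$ is the $j$th elementary symmetric function of the $p$ conjugates of $y$. By definition $s_1(y)=\Tr_{K/F}(y)$ and $s_p(y)=\Nm_{K/F}(y)$. The congruence \eqref{eq:wild-norm-truncation} is therefore equivalent to
\[
 s_j(y)\in\p_F^r\qquad(2\le j\le p-1).
\]
When $p=2$ there are no such terms and there is nothing to prove.

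For $2\le j\le p-1$, apply \eqref{eq:symmetric-bound} with $a=q$:
\[
 v_F(s_j(y))\ge\left\lfloor\frac{jq+(p-1)T}{p}\right\rfloor .
\]
Since $q\ge0$ and $j\ge2$, we have $jq+(p-1)T\ge 2q+(p-1)T$. The hypothesis $2q+(p-1)T\ge pr$ then gives
\[
 \frac{jq+(p-1)T}{p}\ge r .
\]
Because $r$ is an integer, the floor is still at least $r$, so $v_F(s_j(y))\ge r$ for each such $j$. This proves the congruence modulo $\p_F^r$.

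The only point needing care is that the case $y=0$ and the nonnegativity $q\ge0$ are what make the bound monotone in $j$; both are covered by the hypotheses. Otherwise the argument is a direct application of the earlier symmetric-coefficient estimate, so I expect no real obstacle.
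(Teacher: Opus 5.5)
Your proposal is correct and is essentially the paper's proof. You expand $\Nm_{K/F}(1+y)=1+\sum_{j=1}^{p}s_j(y)$ and bound each intermediate term $2\le j\le p-1$ by \eqref{eq:symmetric-bound} with $a=q$, using $jq\ge2q$ (since $q\ge0$) and the hypothesis $2q+(p-1)T\ge pr$ to get $v_F(s_j(y))\ge r$.
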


\begin{proof}
For $2\le j\le p-1$, Equation~\eqref{eq:symmetric-bound} gives
\[
 v_F(s_j(y))\ge
 \left\lfloor\frac{jq+(p-1)T}{p}\right\rfloor
 \ge
 \left\lfloor\frac{2q+(p-1)T}{p}\right\rfloor\ge r.
\]
The norm expansion then gives
\eqref{eq:wild-norm-truncation}.
\end{proof}

\begin{proposition}[Stable-to-critical normalized trace]
\label{prop:stable-critical-trace}
Let $K/F$ be totally ramified cyclic of odd prime degree $p$, with
positive lower break $t$.  Choose a uniformizer $\varpi_K$ and a generator $\sigma$ of
$\Gal(K/F)$, put $\varpi_F=\Nm_{K/F}(\varpi_K)$, and define $\lambda$ by
\eqref{eq:critical-ramification-coordinate}.  Suppose $d\ge t+1$
and put
\[
 d'=pd-\frac{p-1}{2}t.
\]
Then
\begin{equation*}
 \frac{\varpi_F}{\varpi_K^p}
 \equiv1\pmod{\p_K^{t+1}},
\end{equation*}
\begin{equation*}
 \varpi_K^{2d'}
 \equiv \varpi_F^{2d-t}\varpi_K^t
 \pmod{\p_K^{2d'+1}},
\end{equation*}
and
the normalized traces are units with common residue
\begin{equation}\label{eq:stable-critical-trace-residue}
 \overline{\frac{\Tr_{K/F}(\varpi_K^{2d'})}{\varpi_F^{2d}}}
 =\overline{\frac{\Tr_{K/F}(\varpi_K^t)}{\varpi_F^t}}
 =-\lambda^{p-1}.
\end{equation}
\end{proposition}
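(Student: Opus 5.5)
We prove the three assertions in order; each step feeds the next.

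\emph{Step 1: the unit $\varpi_F/\varpi_K^p$.} We write
\[
 \varpi_F=\prod_{i=0}^{p-1}\sigma^i(\varpi_K)
 =\varpi_K^p\prod_{i=1}^{p-1}\frac{\sigma^i(\varpi_K)}{\varpi_K}.
\]
Each $\sigma^i$ with $1\le i\le p-1$ generates $G=G_t$. Hence
$v_K(\sigma^i(\varpi_K)-\varpi_K)=t+1$, so
$\sigma^i(\varpi_K)/\varpi_K\in U_K^{t}$. This alone gives only the congruence modulo $\p_K^{t}$, and we need one more step.

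The plan is to compute the product to first order. Put
$\sigma^i(\varpi_K)=\varpi_K(1+\lambda_i\varpi_K^{t}+\cdots)$. Since
$\sigma(x)-x\equiv \lambda\varpi_K^{t}x$ modulo higher terms on
$\cO_K$, the map $\sigma\mapsto$ (the class in
$\operatorname{gr}^tU_K$) is a homomorphism. Therefore $\lambda_i\equiv i\lambda$, and
\[
 \sum_{i=1}^{p-1}\lambda_i\equiv\lambda\,\frac{p(p-1)}{2}\equiv0
\]
in $k$. Here the oddness of $p$ is used. Hence the product lies in $U_K^{t+1}$.

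The homomorphism property is the ramification map of Theorem~\ref{thm:graded-norm}(b), read from Serre. This first step is where I expect the only real subtlety.

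\emph{Step 2: the congruence for $\varpi_K^{2d'}$.} Note first that $2d'=2pd-(p-1)t$ is an integer. Write $\varpi_F=\varpi_K^p u$ with $u\in U_K^{t+1}$. Then
\[
 \varpi_F^{2d-t}\varpi_K^t=\varpi_K^{p(2d-t)+t}\,u^{2d-t},
 \qquad p(2d-t)+t=2d'.
\]
Since $u^{2d-t}\in U_K^{t+1}\subseteq U_K^1$, the congruence modulo $\p_K^{2d'+1}$ follows. In fact it holds even modulo $\p_K^{2d'+t+1}$.

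\emph{Step 3: the normalized traces.} Write
\[
 \varpi_K^{2d'}=\varpi_F^{2d-t}\varpi_K^t\,u^{2d-t},
\]
with $u^{2d-t}=1+y$ and $y\in\p_K^{t+1}$. Since $\varpi_F^{2d-t}\in F$, we get
\[
 \frac{\Tr(\varpi_K^{2d'})}{\varpi_F^{2d}}
 =\frac{\Tr(\varpi_K^t)}{\varpi_F^t}
 +\frac{\Tr(\varpi_K^t y)}{\varpi_F^t}.
\]
By \eqref{eq:trace-ideal}, $\Tr(\p_K^{2t+1})=\p_F^{c}$ with
\[
 c=\lfloor(2t+1+(p-1)(t+1))/p\rfloor=\lfloor((p+1)t+p)/p\rfloor\ge t+1.
\]
So the error term lies in $\p_F$, and the two residues agree. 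By \eqref{eq:critical-norm-trace-coefficient} the common residue is $-\lambda^{p-1}$, which is nonzero; hence both normalized traces are units.

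The hypothesis $d\ge t+1$ is not really needed beyond ensuring $2d-t\ge0$, so that $u^{2d-t}$ is a genuine nonnegative power. Negative powers would work as well.
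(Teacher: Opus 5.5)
Your proof is correct and follows essentially the same route as the paper: you expand $\prod_i\sigma^i(\varpi_K)/\varpi_K$ to first order via the ramification homomorphism (so $\lambda_i=i\lambda$, and the linear terms sum to zero for odd $p$), raise to the power $2d-t$, and compare traces with \eqref{eq:trace-ideal} before invoking \eqref{eq:critical-norm-trace-coefficient}. Your error estimate $\Tr(\p_K^{2t+1})\subseteq\p_F^{t+1}$, made after factoring out $\varpi_F^{2d-t}$, is equivalent to the paper's $\Tr(\p_K^{2d'+1})=\p_F^{2d+1}$, and there are two harmless slips: in Step~3 the unit should be $u^{-(2d-t)}$ rather than $u^{2d-t}$ (it still lies in $U_K^{t+1}$), and the congruence $\sigma(x)-x\equiv\lambda\varpi_K^t x$ holds for uniformizers $x$, not for all of $\cO_K$, though only the uniformizer case is used.
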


\begin{proof}
For $j\in\F_p$, put
$a_j=\frac{\sigma^j(\varpi_K)}{\varpi_K}-1$.
The ramification homomorphism gives $a_j\in\p_K^t$ and
$\overline{a_j/\varpi_K^t}=j\lambda$.  Products of at least two of the
$a_j$ lie in $\p_K^{t+1}$, and $\sum_{j\in\F_p}j=0$.  Hence
\[
 \frac{\varpi_F}{\varpi_K^p}
 =\prod_{j\in\F_p}(1+a_j)
 \equiv1\pmod{\p_K^{t+1}},
\]
which proves the first congruence.  Since
$2d'=p(2d-t)+t$, raising the inverse ratio to the power $2d-t$ and
multiplying by $\varpi_K^t$ gives
the second congruence.

The trace-ideal formula \eqref{eq:trace-ideal} sends the difference in
the second congruence into $\p_F^{2d+1}$.
After division by $\varpi_F^{2d}$, the normalized traces have equal
residues.  In the critical source
coordinate the linear coefficient of the graded norm is
$\overline{\Tr(\varpi_K^t)/\varpi_F^t}$.  Proposition
\ref{prop:critical-norm-polynomial} identifies it with
$-\lambda^{p-1}$, which is nonzero.  This proves
\eqref{eq:stable-critical-trace-residue} and the unit assertions.
\end{proof}

\begin{corollary}[Different and additive conductors]\label{cor:additive-conductor}
Let $\psi_K=\psi_F\circ\Tr_{K/F}$.  If $K/F$ is totally ramified of
degree $\ell$, then
\begin{equation}\label{eq:additive-conductor-trace}
 n_K(\psi_K)=\ell n_F(\psi_F)+D_{K/F}.
\end{equation}
\end{corollary}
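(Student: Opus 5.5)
The plan is to read off $n_K(\psi_K)$ from the definition of the additive conductor, as the largest fractional ideal $\p_K^{-c}$ of $K$ on which $\psi_K$ is trivial. We then compute $c$ using the trace-ideal formula \eqref{eq:trace-ideal} of Theorem~\ref{thm:norm-filtration}(a), extended to any totally ramified $K/F$ of degree $\ell$ (not only cyclic ones). Write $n=n_F(\psi_F)$ and $D=D_{K/F}$.

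The first step establishes the general trace-ideal formula $\Tr_{K/F}(\p_K^a)=\p_F^{\lfloor(a+D)/\ell\rfloor}$. For the cyclic prime case this is already \eqref{eq:trace-ideal}. In general I would repeat the duality argument from the proof of Theorem~\ref{thm:norm-filtration}(a). It uses only two facts: the trace dual of $\p_K^a$ is $\p_K^{-a-D}$ \cite[Chapter~III, \S3, Proposition~7]{Serre}, and $v_K(y)=\ell v_F(y)$ for $y\in F$ by total ramification. Both hold verbatim. Since $\Tr_{K/F}(\p_K^a)$ is an $\cO_F$-submodule of $F$, it is a fractional ideal $\p_F^c$; it is nonzero by separability. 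Comparing duals then gives $c=\lfloor(a+D)/\ell\rfloor$.

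The second step determines when $\psi_K$ is trivial on $\p_K^a$. Since $\Tr_{K/F}(\p_K^a)$ is a fractional ideal $\p_F^{c(a)}$, $\psi_K$ is trivial on $\p_K^a$ if and only if $\psi_F$ is trivial on $\p_F^{c(a)}$. By maximality of $\p_F^{-n}$, this happens if and only if $c(a)\ge -n$. With $c(a)=\lfloor(a+D)/\ell\rfloor$, the condition $\lfloor(a+D)/\ell\rfloor\ge-n$ is equivalent to $a+D\ge -\ell n$, that is, $a\ge -\ell n-D$. The least such $a$ is $-(\ell n+D)$. Hence $\p_K^{-(\ell n+D)}$ is the largest fractional ideal on which $\psi_K$ is trivial, and \eqref{eq:additive-conductor-trace} follows.

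In passing this shows that $\psi_K$ is nontrivial, because it is nontrivial on $\p_K^{-\ell n-D-1}$, so $n_K(\psi_K)$ is defined. The only point needing care, and the one I expect to be the main obstacle, is the first step when $K/F$ is not cyclic of prime degree: we must confirm that the duality computation does not use the ramification-break description of $D$. It does not, since the different formula of part (a) is never invoked there. If one wants only the cyclic-prime case used later in the paper, \eqref{eq:trace-ideal} can be cited directly and the argument reduces to the second step.
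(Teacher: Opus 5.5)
Your proof is correct and follows essentially the same route as the paper, which characterizes the largest ideal $I$ by $\Tr_{K/F}(I)\subseteq\p_F^{-n_F(\psi_F)}$ and reads off $I=\p_K^{-\ell n_F(\psi_F)-D_{K/F}}$ from the trace duality underlying \eqref{eq:trace-ideal}. Your detour through the explicit formula \eqref{eq:trace-ideal} and the floor inequality is the same computation, and you are right that it uses only the inverse different and $v_K|_F=\ell v_F$, not cyclicity or the break.
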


\begin{proof}
The largest fractional ideal $I$ on which $\psi_F\circ\Tr$ is trivial
is characterized by $\Tr(I)\subseteq\p_F^{-n_F(\psi_F)}$.  By the trace
duality used in the proof of \eqref{eq:trace-ideal}, this ideal is
$\p_K^{-\ell n_F(\psi_F)-D_{K/F}}$.
\end{proof}

\begin{corollary}[Ramified norm quotient and norm characters]
\label{cor:norm-characters}
Let $K/F$ be ramified cyclic of prime degree $\ell$.  Then
$F^\times/\Nm_{K/F}(K^\times)$ is cyclic of order $\ell$, the group
$S(K/F)$ has $\ell$ elements, and every nontrivial
$\mu\in S(K/F)$ has conductor exponent $t+1$.
\end{corollary}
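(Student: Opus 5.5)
The plan is to compute the quotient $F^\times/\Nm_{K/F}(K^\times)$ directly from the norm filtration of Theorem~\ref{thm:norm-filtration}, without local class field theory. First, the valuation: since $K/F$ is totally ramified, $v_F(\Nm_{K/F}\varpi_K)=1$. Hence $\Nm_{K/F}(K^\times)$ contains an element of valuation one, and $F^\times/\Nm_{K/F}(K^\times)\simeq U_F^0/\bigl(U_F^0\cap\Nm_{K/F}(K^\times)\bigr)$. An element $\Nm(x)$ is a unit exactly when $x$ is a unit, so $U_F^0\cap\Nm(K^\times)=\Nm(U_K^0)$. It therefore suffices to show that $U_F^0/\Nm(U_K^0)$ is cyclic of order $\ell$ and that $\Nm(U_K^0)\supseteq U_F^{t+1}$ but $\Nm(U_K^0)\not\supseteq U_F^{t}$.

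Second, the containment above the break. Apply \eqref{eq:norm-above-break} with $r=t+1$. Since $\HerbPsi_{K/F}(t+1)=t+\ell$, this gives $\Nm(U_K^{t+\ell})=U_F^{t+1}$, so $U_F^{t+1}\subseteq\Nm(U_K^0)$. Third, below and at the break, compare $\Nm(U_K^0)U_F^{t+1}/U_F^{t+1}$ with $U_F^0/U_F^{t+1}$ using the graded maps of part (b). For $0\le i<t$, $\overline{\Nm}_i$ is surjective. At $i=t$ the cokernel has order $\ell$. One must check that elements of $U_K^0$ not in $U_K^t$ cannot contribute to the missing piece of $\operatorname{gr}^tU_F$. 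If $\Nm(x)\in U_F^t$, then by the injectivity in \eqref{eq:norm-below-break} we get $x\in U_K^t$. So $\Nm(U_K^0)\cap U_F^t=\Nm(U_K^t)U_F^{t+1}$, where the $U_F^{t+1}$ part is already covered by the previous step.

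Combining these, $U_F^0/\Nm(U_K^0)$ fits in an exact sequence with trivial contributions from levels $i<t$ and from levels $>t$, and with the cokernel of $\overline{\Nm}_t$. It is therefore of order $\ell$, hence cyclic. For $t=0$ the same argument works with $\operatorname{gr}^0$, using $x\mapsto x^\ell$ on $k^\times$, whose cokernel has order $\ell$ by part (b).

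For $S(K/F)$: $\Nm(U_K^0)\supseteq U_F^{t+1}$ is open, so $\Nm(K^\times)$ is open, hence closed, of index $\ell$. Its continuous characters form a group of order $\ell$, and they are exactly the elements of $S(K/F)$. Every $\mu\in S(K/F)$ is trivial on $U_F^{t+1}$, so $m_F(\mu)\le t+1$. If a nontrivial $\mu$ were trivial on $U_F^t$, it would be trivial on $U_F^t\cdot\Nm(K^\times)$. This subgroup is everything, because $U_F^t$ maps onto the quotient, the whole cokernel living at level $t$. That forces $\mu=1$, a contradiction, so $m_F(\mu)=t+1$.

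The main obstacle is the bookkeeping at level $t$: showing that the image of $\Nm(U_K^0)$ in $\operatorname{gr}^tU_F$ is exactly the image of $\overline{\Nm}_t$. I will handle it via the injectivity of \eqref{eq:norm-below-break}, as sketched above.
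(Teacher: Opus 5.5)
Your argument is correct and follows the paper's proof. You reduce the norm quotient to $U_F^t/\Nm_{K/F}(U_K^t)$ via the below-break isomorphisms of Theorem~\ref{thm:norm-filtration}(c), use $r=t+1$ in \eqref{eq:norm-above-break} to place $U_F^{t+1}$ inside the norms, identify the quotient with the order-$\ell$ cokernel of $\overline{\Nm}_t$, and read off $|S(K/F)|=\ell$ and conductor $t+1$ from the facts that $U_F^t$ surjects onto the quotient and $U_F^{t+1}$ maps to the identity. The one cosmetic difference is that you split off the valuation via $\Nm_{K/F}(\varpi_K)$ before applying the unit isomorphism $U_K^0/U_K^t\simeq U_F^0/U_F^t$, whereas the paper uses $K^\times/U_K^t\simeq F^\times/U_F^t$ in one step.
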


\begin{proof}
Write $\Nm=\Nm_{K/F}$.
The first isomorphism in \eqref{eq:norm-below-break} shows that
$F^\times=\Nm(K^\times)U_F^t$.
Its injectivity also shows that
$\Nm(K^\times)\cap U_F^t=\Nm(U_K^t)$.
Consequently
$F^\times/\Nm(K^\times)\simeq U_F^t/\Nm(U_K^t)$.
Taking $r=t+1$ in \eqref{eq:norm-above-break} gives
\[
 U_F^{t+1}=\Nm(U_K^{\HerbPsi_{K/F}(t+1)})\subseteq \Nm(U_K^t),
\]
because $\HerbPsi_{K/F}(t+1)>t$.  Hence
\[
 U_F^t/\Nm(U_K^t)
 \simeq
 \operatorname{coker}\left(
 U_K^t/U_K^{t+1}\longrightarrow U_F^t/U_F^{t+1}
 \right).
\]
By Theorem~\ref{thm:norm-filtration}(b), this cokernel has order
$\ell$.  Thus $F^\times/\Nm(K^\times)$ has order $\ell$, and is
cyclic because $\ell$ is prime.

By \eqref{eq:norm-above-break}, $U_F^{t+1}\subseteq \Nm(K^\times)$,
so $\Nm(K^\times)$ is open and the quotient is discrete.  The elements
of $S(K/F)$ are therefore exactly the complex characters of this cyclic
quotient.  A
character is determined by the image of a generator, which may be any
$\ell$th root of unity.  Hence $S(K/F)$ has $\ell$ elements.

The identification $F^\times/\Nm(K^\times)\simeq U_F^t/\Nm(U_K^t)$ shows that
$U_F^t$ maps onto the norm quotient, whereas $U_F^{t+1}$ maps to the
identity.  A nontrivial norm character is therefore nontrivial on
$U_F^t$ and trivial on $U_F^{t+1}$.  Its conductor exponent is exactly
$t+1$.
\end{proof}

\begin{proposition}[Conductor of norm pullback]\label{prop:pullback-conductor}
Let $K/F$ be ramified cyclic of prime degree $\ell$, with break $t$.
Let $m=m_F(\chi_F)$ and $m'=m_K(\chi_F\circ \Nm_{K/F})$.
\begin{enumerate}[label=(\roman*)]
\item If $m>t+1$, then
\begin{equation}\label{eq:pullback-high}
 m'-1=\HerbPsi_{K/F}(m-1).
\end{equation}
\item If $m\leq t$, then $m'=m$.
\item If $m=t+1$, then $m'\leq m$; moreover $m'<m$ if and only if
there is a $\mu\in S(K/F)$ for which
$m_F(\mu\chi_F)<t+1$.
\end{enumerate}
\end{proposition}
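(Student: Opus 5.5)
The plan is to use the norm filtration of Theorem~\ref{thm:norm-filtration} to see on which unit subgroups $\chi_K=\chi_F\circ\Nm_{K/F}$ is trivial. The criterion is that $\chi_K$ is trivial on $U_K^a$ if and only if $\chi_F$ is trivial on $\Nm_{K/F}(U_K^a)$. The three cases of the statement correspond to the three regimes of the norm images: above the break, below the break, and at the break.

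\emph{Case (i), $m>t+1$.} Put $r=m-1>t$. By \eqref{eq:norm-above-break}, $\Nm_{K/F}(U_K^{\HerbPsi_{K/F}(r)+1})=U_F^{r+1}=U_F^m$, on which $\chi_F$ is trivial, so $m'\le\HerbPsi_{K/F}(r)+1$. Conversely, $\Nm_{K/F}(U_K^{\HerbPsi_{K/F}(r)})=U_F^{r}$, on which $\chi_F$ is nontrivial by exactness of the conductor, so $\chi_K$ is nontrivial on $U_K^{\HerbPsi_{K/F}(r)}$. Together these give $m'-1=\HerbPsi_{K/F}(m-1)$.

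\emph{Case (ii), $m\le t$.} First, $\Nm_{K/F}(U_K^m)\subseteq U_F^m$; this holds because $U_K^m$ is contained in the preimage of $U_F^m$ by \eqref{eq:norm-below-break}, and for $m=t$ it is still true since the norm respects the filtration. Hence $m'\le m$. If $m\ge1$, the isomorphism $U_K^{m-1}/U_K^t\to U_F^{m-1}/U_F^t$ of \eqref{eq:norm-below-break} shows that $\Nm_{K/F}(U_K^{m-1})U_F^t=U_F^{m-1}$. Since $\chi_F$ is trivial on $U_F^t\supseteq U_F^m$ but nontrivial on $U_F^{m-1}$, it is nontrivial on $\Nm_{K/F}(U_K^{m-1})$. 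Thus $m'=m$. The case $m=0$ is immediate.

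\emph{Case (iii), $m=t+1$.} The norm maps $U_K^{t+1}$ into $U_F^{t+1}$, so $m'\le t+1$. Moreover, $m'<t+1$ exactly when $\chi_F$ is trivial on $\Nm_{K/F}(U_K^t)$. By the proof of Corollary~\ref{cor:norm-characters}, $\Nm_{K/F}(U_K^t)\supseteq U_F^{t+1}$, and $U_F^t/\Nm_{K/F}(U_K^t)$ is the order-$\ell$ cokernel, which is identified with $F^\times/\Nm_{K/F}(K^\times)$.

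Suppose $\chi_F$ is trivial on $\Nm_{K/F}(U_K^t)$. Then $\chi_F|_{U_F^t}$ factors through this quotient. Since the quotient is also $F^\times/\Nm_{K/F}(K^\times)$, there is $\mu\in S(K/F)$ with $\mu|_{U_F^t}=\chi_F^{-1}|_{U_F^t}$. Then $\mu\chi_F$ is trivial on $U_F^t$, so $m_F(\mu\chi_F)\le t<t+1$.

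Conversely, suppose $m_F(\mu\chi_F)\le t$ for some $\mu\in S(K/F)$. Then $\chi_F=\mu^{-1}(\mu\chi_F)$ on $U_F^t$. Both factors are trivial on $\Nm_{K/F}(U_K^t)$: $\mu^{-1}$ because it is a norm character, and $\mu\chi_F$ because it is trivial on $U_F^t$. Hence $\chi_K$ is trivial on $U_K^t$, which gives $m'\le t<m$.

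The main point needing care is in (iii): matching the quotient $U_F^t/\Nm(U_K^t)$ with the full norm quotient so that a character of it extends to an element of $S(K/F)$. This matching is exactly what Corollary~\ref{cor:norm-characters} supplies. A lesser point is the boundary bookkeeping at $m=t$ in (ii).
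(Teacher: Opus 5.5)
Your proof is correct and follows essentially the same route as the paper: the above-break norm equalities \eqref{eq:norm-above-break} for (i), the below-break isomorphisms for (ii), and for (iii) the identification of $U_F^t/\Nm_{K/F}(U_K^t)$ with $F^\times/\Nm_{K/F}(K^\times)$ from the proof of Corollary~\ref{cor:norm-characters}. Since $U_F^{t+1}\subseteq\Nm_{K/F}(U_K^t)$, that quotient is the cokernel of the critical graded norm the paper works with, and the only slip is in (ii), where the inclusion should read $U_F^t\subseteq U_F^m$ (because $m\le t$); this is what actually gives triviality of $\chi_F$ on $U_F^t$.
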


\begin{proof}
For (i), put $a=\HerbPsi_{K/F}(m-1)$.  By
\eqref{eq:norm-above-break}, $\Nm_{K/F}(U_K^{a+1})=U_F^m$ and
$\Nm_{K/F}(U_K^a)=U_F^{m-1}$.  Since $m$ is the exact conductor of $\chi_F$,
the pullback is trivial on $U_K^{a+1}$ and nontrivial on $U_K^a$.
Thus $m'=a+1$, which is \eqref{eq:pullback-high}.

For (ii), first separate $m=0$: an unramified character is trivial on
$U_F^0$, hence its norm pullback is unramified and $m'=0$.  Now assume
$1\le m\le t$.  The character $\chi_F$ is nontrivial on
$U_F^{m-1}/U_F^m$.  The graded norm map is an isomorphism at every
level $<t$, so its pullback is nontrivial on
$U_K^{m-1}/U_K^m$ and trivial on $U_K^m$.  Hence $m'=m$.

For (iii), the pullback is always trivial on $U_K^{t+1}$, so
$m'\le t+1=m$.  It has smaller conductor precisely when the restriction
of $\chi_F$ to the image
\[
 I=\operatorname{im}\bigl(U_K^t/U_K^{t+1}\longrightarrow
                          U_F^t/U_F^{t+1}\bigr)
\]
is trivial.  By Theorem~\ref{thm:norm-filtration}(b), the quotient
$(U_F^t/U_F^{t+1})/I$ has order $\ell$.  The proof of
Corollary~\ref{cor:norm-characters} identifies this quotient with
$F^\times/\Nm_{K/F}(K^\times)$, so its character group is exactly the
restriction of $S(K/F)$ to the last unit layer.  Therefore
$\chi_F|_I=1$ if and only if there is a unique restriction
$\mu|_{U_F^t/U_F^{t+1}}$ with
$(\mu\chi_F)|_{U_F^t/U_F^{t+1}}=1$, equivalently
$m_F(\mu\chi_F)<t+1$.
\end{proof}

This is \cite[Lemmas 8.8 and 8.12, pp.~73 and 78--79]{Langlands}.

\begin{lemma}[Unramified norm, trace, and conductor compatibility]
\label{lem:unramified-compatibility}
Let $K/F$ be an unramified extension of degree $f$, and let $\varpi$ be a
uniformizer of $F$.  Then $\varpi$ is also a uniformizer of $K$,
$[k_K:k_F]=f$, and the following statements hold.
\begin{enumerate}[label=(\alph*)]
\item Reduction commutes with trace and norm:
\begin{equation}\label{eq:unramified-residue-trace-norm}
 \overline{\Tr_{K/F}(x)}=\Tr_{k_K/k_F}(\bar x)
 \quad(x\in\cO_K),
 \qquad
 \overline{\Nm_{K/F}(u)}=\Nm_{k_K/k_F}(\bar u)
 \quad(u\in\cO_K^\times).
\end{equation}
\item For every integer $a$,
$\Tr_{K/F}(\p_K^a)=\p_F^a$.
\item For every $r\geq0$,
\begin{equation}\label{eq:unramified-norm-units}
 \Nm_{K/F}(U_K^r)=U_F^r,
 \qquad \Nm_{K/F}(\varpi)=\varpi^f.
\end{equation}
\item If $\psi_K=\psi_F\circ\Tr_{K/F}$ and
$\chi_K=\chi_F\circ \Nm_{K/F}$, then
\[
 n_K(\psi_K)=n_F(\psi_F),
 \qquad m_K(\chi_K)=m_F(\chi_F).
\]
\item The quotient $F^\times/\Nm_{K/F}(K^\times)$ is cyclic of order $f$,
generated by the class of $\varpi$.
\end{enumerate}
\end{lemma}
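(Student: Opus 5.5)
The plan is to prove the five assertions in order, starting from the standard structure of an unramified extension. Since $K/F$ is unramified of degree $f$, the ramification index is $1$. Hence $\varpi$ is a uniformizer of $K$, $v_K|_F=v_F$, and $[k_K:k_F]=f$. Moreover $K=F(\zeta)$ for a root of unity $\zeta$ of order prime to $p$ whose residue generates $k_K/k_F$, so $\cO_K=\cO_F[\zeta]$ is free over $\cO_F$ with basis $1,\zeta,\dots,\zeta^{f-1}$. The Galois group is cyclic, generated by a Frobenius lift $\phi$ that reduces to the $q_F$-power map on $k_K$.

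For (a), I would write $\Tr_{K/F}(x)=\sum_{i}\phi^i(x)$ and $\Nm_{K/F}(u)=\prod_i\phi^i(u)$. Each $\phi^i$ preserves $\cO_K$ and reduces to the $i$th power of Frobenius, and the reductions of these sums and products are the residue trace and norm. For (b), the different is trivial, i.e.\ $D_{K/F}=0$, because the discriminant of the basis $1,\zeta,\ldots,\zeta^{f-1}$ is a unit. The trace-dual argument used in the proof of \eqref{eq:trace-ideal}, now with $e=1$, then gives $\Tr(\p_K^a)=\p_F^a$. More directly, $\Tr(\p_K^a)=\varpi^a\Tr(\cO_K)$, and $\Tr(\cO_K)=\cO_F$ because the residue trace $k_K\to k_F$ is surjective, so $\Tr(\cO_K)$ contains a unit.

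For (c), the identity $\Nm(\varpi)=\varpi^f$ is immediate. For the units, I would use the graded pieces. On $U^0/U^1$ the norm is the residue norm $k_K^\times\to k_F^\times$, which is surjective for finite fields. On $U^r/U^{r+1}\simeq k$ with $r\ge1$, the expansion $\Nm(1+\varpi^r x)\equiv1+\varpi^r\Tr(x)\pmod{\p^{r+1}}$ shows that the graded norm is the residue trace, which is also surjective. Compactness of $U_K^r$ and successive approximation, exactly as in the proof of Theorem~\ref{thm:norm-filtration}(d), give $\Nm(U_K^r)=U_F^r$. This successive approximation is the step that carries any content, but it is routine.

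For (d), the identity $n_K(\psi_K)=n_F(\psi_F)$ follows from (b). Indeed, $\psi_K$ is trivial on $\p_K^{a}$ if and only if $\psi_F$ is trivial on $\Tr(\p_K^a)=\p_F^a$. For the multiplicative conductor, (c) shows that $\chi_K$ is trivial on $U_K^r$ if and only if $\chi_F$ is trivial on $\Nm(U_K^r)=U_F^r$, so the least such $r$ agrees. For (e), write $F^\times=\varpi^{\mathbf Z}\times U_F^0$. By (c), $\Nm(K^\times)=\varpi^{f\mathbf Z}\times U_F^0$, so the quotient is $\mathbf Z/f\mathbf Z$, generated by the class of $\varpi$.
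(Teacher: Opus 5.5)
Your proposal is correct and follows essentially the same route as the paper. Surjectivity of the residue trace gives $\Tr_{K/F}(\cO_K)=\cO_F$ and hence (b); the graded norm is the residue norm on $U^0/U^1$ and the residue trace on $U^r/U^{r+1}$ for $r\ge1$, so successive approximation gives (c); and (d), (e) then follow formally. The differences are only cosmetic: you derive (a) from a Frobenius lift generating the Galois group rather than by reducing the coefficients of the characteristic polynomial, and you mention the vanishing of the different as an alternative route to (b).
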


\begin{proof}
Write $\Nm=\Nm_{K/F}$.
Since $K/F$ is unramified, its ramification index is $1$.  Thus
$\p_K=\varpi\cO_K$, and the residue extension has degree $f$.
The conjugates of an integral element reduce to the conjugates of its
residue class.  Taking the first and last coefficients of the
characteristic polynomial therefore gives
\eqref{eq:unramified-residue-trace-norm}.

We use two elementary finite-field facts.  The map
$\Tr_{k_K/k_F}$ is onto: if $q=|k_F|$, its polynomial
$X+X^q+\cdots+X^{q^{f-1}}$
is nonzero and has degree $q^{f-1}<q^f=|k_K|$, so it cannot vanish on
all of $k_K$; a nonzero $k_F$-linear map to the one-dimensional space
$k_F$ is surjective.  The map $\Nm_{k_K/k_F}:k_K^\times\to k_F^\times$
is onto because $k_K^\times$ is cyclic and
$\Nm_{k_K/k_F}(x)=x^{(q^f-1)/(q-1)}$
has image of order $q-1$.

Choose $x\in\cO_K$ whose residue has trace $1$.  Then
$\Tr_{K/F}(x)$ is a unit of $\cO_F$; after multiplying $x$ by its
inverse, we obtain an integral element of trace $1$.  Hence
$\Tr_{K/F}(\cO_K)=\cO_F$.  Since
$\p_K^a=\varpi^a\cO_K$ for every $a\in\mathbf Z$, this proves
part~(b).

For $r\geq1$ and $x\in\cO_K$, multiplication of the conjugates gives
\begin{equation*}
 \Nm(1+\varpi^r x)
 \equiv 1+\varpi^r\Tr_{K/F}(x)\pmod{\p_F^{r+1}}.
\end{equation*}
Thus the map induced by norm on
$U_K^r/U_K^{r+1}\to U_F^r/U_F^{r+1}$ is the finite-field trace and is
surjective.  Starting with $v\in U_F^r$, choose a lift of its class,
divide $v$ by the norm of that lift, and repeat at levels
$r+1,r+2,\ldots$.  Completeness makes the product of the successive
lifts converge to an element of $U_K^r$ with norm $v$.  This proves
$\Nm(U_K^r)=U_F^r$ for $r\geq1$.  Surjectivity of the finite-field norm,
followed by the case $r=1$, gives $\Nm(U_K^0)=U_F^0$.  Finally, because
$\varpi\in F$, its norm is $\varpi^f$.  This proves
\eqref{eq:unramified-norm-units}.

Put $n=n_F(\psi_F)$.  By part~(b),
$\psi_F\circ\Tr$ is trivial on $\p_K^{-n}$ and is not trivial on
$\p_K^{-n-1}$, so $n_K(\psi_K)=n$.  By
\eqref{eq:unramified-norm-units}, for every $r\geq0$ the character
$\chi_F\circ \Nm$ is trivial on $U_K^r$ if and only if $\chi_F$ is
trivial on $U_F^r$.  Hence the multiplicative conductors are equal.
This proves the conductor equalities in part~(d).

Every $y\in K^\times$ has the form $y=\varpi^a u$ with
$u\in U_K^0$, so
$\Nm(K^\times)=\varpi^{f\mathbf Z}U_F^0$
by \eqref{eq:unramified-norm-units}.  The last assertion follows.
\end{proof}

\begin{corollary}[Norm characters in cyclic prime degree]
\label{cor:prime-norm-characters}
Let $K/F$ be cyclic of prime degree $\ell$.  Then
$F^\times/\Nm_{K/F}(K^\times)$ is cyclic of order $\ell$, and
$S(K/F)$ has $\ell$ elements.
\end{corollary}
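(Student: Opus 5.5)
The statement splits according to whether $K/F$ is ramified, and both halves follow from results already established. A cyclic extension of prime degree $\ell$ has ramification index $e$ and residue degree $f$ with $ef=\ell$. Since $\ell$ is prime, either $e=1$ and $f=\ell$, or $e=\ell$ and $f=1$. In the first case $K/F$ is unramified; in the second it is totally ramified, and ramified cyclic extensions of prime degree are exactly the totally ramified ones, as noted before \eqref{eq:Herbrand-prime-new}. So there are exactly two cases.

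\emph{Unramified case.} Apply Lemma~\ref{lem:unramified-compatibility}(e) with $f=\ell$. It gives that $F^\times/\Nm_{K/F}(K^\times)$ is cyclic of order $\ell$, generated by the class of a uniformizer $\varpi$. The norm group $\Nm_{K/F}(K^\times)=\varpi^{\ell\mathbf Z}U_F^0$ contains $U_F^0$, so it is open. Hence the quotient is discrete and finite.

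\emph{Ramified case.} Corollary~\ref{cor:norm-characters} gives directly that the quotient is cyclic of order $\ell$ and that $S(K/F)$ has $\ell$ elements.

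\emph{Counting $S(K/F)$.} In both cases the norm subgroup is open, so continuous characters of $F^\times$ trivial on $\Nm_{K/F}(K^\times)$ are exactly the characters of the discrete cyclic group of order $\ell$. This uses the identification of $S(K/F)$ with the continuous dual of the topological quotient, made right after its definition. A character of a cyclic group of order $\ell$ is determined by sending a generator to an arbitrary $\ell$th root of unity, which gives $\ell$ characters. In the unramified case this is the same argument as in the second paragraph of the proof of Corollary~\ref{cor:norm-characters}.

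There is no real obstacle. The only points needing care are the dichotomy $e\in\{1,\ell\}$ and the openness of the norm subgroup in the unramified case, which is what makes every abstract character of the quotient continuous. Both are immediate from Lemma~\ref{lem:unramified-compatibility}(c),(e).
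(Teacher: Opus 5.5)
Your proposal is correct and matches the paper's proof: the ramified case is Corollary~\ref{cor:norm-characters}, and in the unramified case Lemma~\ref{lem:unramified-compatibility}(e), together with $U_F^0\subseteq\Nm_{K/F}(K^\times)$, gives a discrete cyclic quotient of order $\ell$ whose characters are counted exactly as in Corollary~\ref{cor:norm-characters}. Your $e\in\{1,\ell\}$ dichotomy only spells out the case split that the paper leaves implicit.
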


\begin{proof}
If $K/F$ is ramified, this is
Corollary~\ref{cor:norm-characters}.  Suppose that it is unramified.
Lemma~\ref{lem:unramified-compatibility}(e) gives a cyclic norm
quotient of order $\ell$, and
\eqref{eq:unramified-norm-units} shows that the norm group contains the
open subgroup $U_F^0$.  Thus the quotient is discrete, and the remainder
of the proof is the same as in Corollary~\ref{cor:norm-characters}.
\end{proof}

By \eqref{eq:error-twist-invariance}, we henceforth retain this
minimal-conductor choice of $\chi_F$ in its $S(K/F)$-orbit.  If $K/F$ is
ramified, Proposition~\ref{prop:pullback-conductor}(ii)--(iii) then gives
\begin{equation}\label{eq:minimal-critical}
 m\leq t+1\quad\Longrightarrow\quad m'=m.
\end{equation}

\section{Lamprecht's formula}

Let $E$ be a nonarchimedean local field, let $\psi:E\to\C^\times$ be a
nontrivial additive character, and let $\theta$ be a quasi-character of
conductor $m=2d+\varepsilon>1$, with $\varepsilon\in\{0,1\}$.  Choose
$\gamma\in E^\times$ with $\gamma\cO_E=\p_E^{m+n_E(\psi)}$.

\begin{lemma}[Finite stationary-phase duality]\label{lem:finite-duality}
Let $n=n_E(\psi)$, let $M,q,r\in\mathbf Z$ with $r\le q$, and let
$\Gamma\in E^\times$ satisfy
\[
 \Gamma\cO_E=\p_E^{M+n}.
\]
Then
\begin{equation*}
 \p_E^{M-q}/\p_E^{M-r}\ \times\
 \p_E^r/\p_E^q
 \longrightarrow\C^\times,
 \qquad(\bar c,\bar x)\longmapsto\psi(cx/\Gamma),
\end{equation*}
is a perfect pairing.
\end{lemma}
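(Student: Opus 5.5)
The plan is to check that the pairing is well defined, then show that each side has trivial annihilator, as in the proof of Proposition~\ref{prop:finite-local-realization}. Since both groups are finite abelian groups of the same order $q_E^{q-r}$, the two annihilator computations give perfectness.

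\emph{Well-definedness.} The bilinearity in $(c,x)$ is clear because $\psi$ is additive. We need $\psi(cx/\Gamma)$ to depend only on the classes $\bar c$ and $\bar x$.
\begin{itemize}
\item Take $c\in\p_E^{M-q}$ and $x\in\p_E^q$. Then $cx\in\p_E^{M}$, so $cx/\Gamma\in\p_E^{M-(M+n)}=\p_E^{-n}$, where $\psi$ is trivial by the definition of $n_E(\psi)$.
\item Take $c\in\p_E^{M-r}$ and $x\in\p_E^r$. Again $cx\in\p_E^M$, and the same conclusion holds.
\end{itemize}
Hence the value depends only on the classes. Moreover, $\p_E^{M-q}$ and $\p_E^{r}$ pair into $\p_E^{M-q+r}$, so no further constraint is needed.

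\emph{Annihilators.} Suppose $c\in\p_E^{M-q}$ satisfies $\psi(cx/\Gamma)=1$ for all $x\in\p_E^r$. Then $\psi$ is trivial on the fractional ideal $c\p_E^r\Gamma^{-1}$, which equals $\p_E^{v(c)+r-M-n}$ when $c\neq0$. By the maximality of $\p_E^{-n}$, this ideal lies in $\p_E^{-n}$, so $v(c)+r-M\ge0$. That is, $c\in\p_E^{M-r}$, and $\bar c=0$. The symmetric argument shows that $x\in\p_E^r$ pairing trivially with all of $\p_E^{M-q}$ forces $v(x)+M-q-M\ge0$, so $x\in\p_E^q$.

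\emph{Conclusion.} Each group therefore injects into the character group of the other. Since the orders are equal and finite, both injections are isomorphisms, and the pairing is perfect.

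There is no real obstacle here. The only care needed is the indexing of the exponents, which the computation above settles directly. When $r=q$ the statement is empty, and the edge case $c=0$ is trivial.
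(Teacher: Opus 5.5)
Your proposal is correct and follows the paper's proof essentially step for step. Both check well-definedness via $cx/\Gamma\in\p_E^{-n}$, show that each side has trivial annihilator using the maximality of $\p_E^{-n}$, and use the common cardinality $q_E^{q-r}$ to turn the two injections into character groups into isomorphisms.
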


\begin{proof}
The pairing is well defined: changing $c$ by an element of
$\p_E^{M-r}$ or $x$ by an element of $\p_E^q$ changes $cx/\Gamma$ by
an element of $\p_E^{-n}$, on which $\psi$ is trivial.  The annihilator
of $\p_E^r/\p_E^q$ in the first factor is zero, because
\[
 c\p_E^r/\Gamma\subseteq\p_E^{-n}
 \quad\Longleftrightarrow\quad
 c\in\p_E^{M-r}.
\]
Similarly, the annihilator of the first factor in the second is zero,
because
\[
 \p_E^{M-q}x/\Gamma\subseteq\p_E^{-n}
 \quad\Longleftrightarrow\quad
 x\in\p_E^q.
\]
Thus the pairing is nondegenerate in both variables.  Both finite
additive groups have cardinality $q_E^{q-r}$, hence the induced maps to
the respective character groups are isomorphisms.
\end{proof}

\begin{lemma}[Stationary numerator class at a chosen depth]
\label{lem:stationary-class-existence}
Let $q=m_E(\theta)>1$, let $n=n_E(\psi)$, choose an integer $M$ and
an element $\Gamma\in E^\times$ such that
\[
 \Gamma\cO_E=\p_E^{M+n},
\]
and let $r$ satisfy
\[
 \ceil{q/2}\le r\le q-1.
\]
There is a unique class
\begin{equation*}
 \operatorname{St}_{\Gamma,r}(\theta)
 \in \p_E^{M-q}/\p_E^{M-r}
\end{equation*}
such that every representative $c$ of this class satisfies
\begin{equation}\label{eq:general-stationary-linearization}
 \theta(1+x)=\psi(cx/\Gamma)
 \qquad(x\in\p_E^r).
\end{equation}
Every representative $c$ has exact valuation
\begin{equation}\label{eq:general-stationary-valuation}
 v_E(c)=M-q.
\end{equation}
\end{lemma}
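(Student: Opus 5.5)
The plan is to show that $x\mapsto\theta(1+x)$ is an additive character of $\p_E^r/\p_E^q$, and then read off $c$ from the perfect pairing of Lemma~\ref{lem:finite-duality}.

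\textbf{The character $\theta(1+x)$.} For $x,y\in\p_E^r$ we have
\[
 (1+x)(1+y)=1+x+y+xy,\qquad xy\in\p_E^{2r}\subseteq\p_E^q,
\]
because $2r\ge q$ by the hypothesis $r\ge\ceil{q/2}$. Since $\theta$ is trivial on $U_E^q$, it follows that $\theta((1+x)(1+y))=\theta(1+x+y)$. More precisely, $1+x+y+xy=(1+x+y)\bigl(1+xy/(1+x+y)\bigr)$, and the second factor lies in $U_E^q$. Thus $\chi(x):=\theta(1+x)$ is a homomorphism $(\p_E^r,+)\to\C^\times$. It is trivial on $\p_E^q$, so it descends to $\p_E^r/\p_E^q$. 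Here $r\ge1$, since $q>1$ forces $\ceil{q/2}\ge1$.

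\textbf{Existence and uniqueness of the class.} Lemma~\ref{lem:finite-duality} applies with this $M$, this $\Gamma$, and the pair $r\le q$. It says that $\bar c\mapsto\bigl(x\mapsto\psi(cx/\Gamma)\bigr)$ is an isomorphism
\[
 \p_E^{M-q}/\p_E^{M-r}\longrightarrow\widehat{\p_E^r/\p_E^q}.
\]
There is therefore a unique class $\bar c$ whose image is $\chi$, and this class is $\operatorname{St}_{\Gamma,r}(\theta)$. Every representative satisfies \eqref{eq:general-stationary-linearization}, because the pairing is well defined on classes. Uniqueness also needs an argument: any $c\in\p_E^{M-q}$ satisfying \eqref{eq:general-stationary-linearization} gives the same character, so by injectivity its class is $\bar c$.

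\textbf{Exact valuation of representatives.} This is the only step that is not formal, and it is where exactness of the conductor enters. Every representative already lies in $\p_E^{M-q}$, so $v_E(c)\ge M-q$. Suppose $v_E(c)\ge M-q+1$. Then for $x\in\p_E^{q-1}$ we get $cx/\Gamma\in\p_E^{-n}$, so $\psi(cx/\Gamma)=1$. This requires $q-1\ge r$, which is the hypothesis $r\le q-1$. Consequently $\theta$ would be trivial on $1+\p_E^{q-1}=U_E^{q-1}$, contradicting $m_E(\theta)=q$. Hence $v_E(c)=M-q$, which is \eqref{eq:general-stationary-valuation}.

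The main care needed is in this last step, where the inequality $r\le q-1$ must be used so that $\p_E^{q-1}$ lies inside the domain $\p_E^r$. The first step similarly depends on $2r\ge q$.
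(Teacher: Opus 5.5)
Your proposal is correct and follows the paper's proof essentially step for step. You show that $x\mapsto\theta(1+x)$ is a character of $\p_E^r/\p_E^q$ via the same unit factorization, using $2r\ge q$. You then take the class from the perfect pairing of Lemma~\ref{lem:finite-duality}, and you get exactness of the valuation by applying $r\le q-1$ and the exact conductor on $\p_E^{q-1}$.
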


\begin{proof}
For $x,y\in\p_E^r$, the quotient
$\frac{(1+x)(1+y)}{1+x+y} =1+\frac{xy}{1+x+y}$
belongs to $U_E^q$, because $2r\ge q$.  Hence
$x\mapsto\theta(1+x)$ is an additive character of
$\p_E^r/\p_E^q$.  Lemma~\ref{lem:finite-duality}, applied with the
present $M,q,r,\Gamma$, identifies the character group of
$\p_E^r/\p_E^q$ with
$\p_E^{M-q}/\p_E^{M-r}$.  This gives the unique class
in the statement satisfying
\eqref{eq:general-stationary-linearization}.

Every representative belongs to $\p_E^{M-q}$.  If it belonged to
$\p_E^{M-q+1}$, then for every $x\in\p_E^{q-1}$ one would have
$cx/\Gamma\in\p_E^{-n}$.  Since $r\le q-1$, such $x$ lies in the
linearization domain, and
\eqref{eq:general-stationary-linearization} would make $\theta$ trivial
on $U_E^{q-1}$, contrary to the exact conductor $q$.  Thus
$v_E(c)=M-q$.  
\end{proof}

\begin{lemma}[Stationary-class calculus]
\label{lem:stationary-class-calculus}
Use the notation of Lemma~\ref{lem:stationary-class-existence}.
\begin{enumerate}[label=(\alph*)]
\item If $r\le s\le q-1$, restriction from $\p_E^r$ to $\p_E^s$
      sends $\operatorname{St}_{\Gamma,r}(\theta)$ to the class of the
      same representative modulo $\p_E^{M-s}$.
\item If $\Gamma'=a\Gamma$, then $ac$ represents the stationary class
      relative to $\Gamma'$ whenever $c$ represents the class relative
      to $\Gamma$.
\item Let $\theta_1,\theta_2$ be quasi-characters and suppose that, on a
      common depth $r$, coefficients $c_1,c_2$ satisfy
      \[
       \theta_i(1+x)=\psi(c_ix/\Gamma)\qquad(x\in\p_E^r).
      \]
      Then $c_1+c_2$ represents the restriction of
      $\theta_1\theta_2$ on that depth, and $jc_1$ represents the
      restriction of the integer power $\theta_1^j$.  These assertions
      are equalities of classes modulo $\p_E^{M-r}$.
\item Suppose $m_E(\theta_1)=m_E(\theta_2)=q$, take
      $\ceil{q/2}\le r\le q-1$, and choose representatives
      $c_i$ of $\operatorname{St}_{\Gamma,r}(\theta_i)$ for $i=1,2$.
      Then
      \begin{equation}\label{eq:stationary-leading-cancellation}
       m_E(\theta_1\theta_2)<q
       \quad\Longleftrightarrow\quad
       c_1+c_2\in\p_E^{M-q+1}.
      \end{equation}
      If $1<q'=m_E(\theta_1\theta_2)<q$, put
      $r'=\ceil{q'/2}$ and choose a representative
      $c'$ of $\operatorname{St}_{\Gamma,r'}(\theta_1\theta_2)$.  Then
      \[
       c'\equiv c_1+c_2\pmod{\p_E^{M-r}}.
      \]
\end{enumerate}
\end{lemma}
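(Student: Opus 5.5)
Parts (a)--(c) follow from the uniqueness in Lemma~\ref{lem:stationary-class-existence} together with the perfect pairing of Lemma~\ref{lem:finite-duality}. For (a), a representative $c$ of $\operatorname{St}_{\Gamma,r}(\theta)$ satisfies \eqref{eq:general-stationary-linearization} on $\p_E^r$, hence a fortiori on $\p_E^s\subseteq\p_E^r$. Since $\ceil{q/2}\le r\le s\le q-1$, the class at depth $s$ exists and is unique, and it must be the image of $c$ in $\p_E^{M-q}/\p_E^{M-s}$. For (b), note that $\Gamma'\cO_E=\p_E^{M'+n}$ with $M'=M+v_E(a)$, and that
\[
 \psi\bigl((ac)x/\Gamma'\bigr)=\psi(cx/\Gamma),
\]
so $ac$ satisfies the defining identity relative to $\Gamma'$. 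Uniqueness at the shifted depth indices then identifies the class.

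For (c), multiply the two linearizations pointwise to get
\[
 (\theta_1\theta_2)(1+x)=\psi(c_1x/\Gamma)\psi(c_2x/\Gamma)=\psi\bigl((c_1+c_2)x/\Gamma\bigr).
\]
For the power, write $\theta_1^j(1+x)=\theta_1(1+x)^j=\psi(jc_1x/\Gamma)$; negative $j$ is handled the same way. The statement is an equality of functions on $\p_E^r$, and by Lemma~\ref{lem:finite-duality} a coefficient is determined modulo $\p_E^{M-r}$ by its function. Here $q$ is a common trivializing level, not necessarily the conductor of the product; any $q'\le q$ with $2r\ge q'$ works. The statement is then exactly the class equality.

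For (d), apply (c) to get that $c_1+c_2$ represents $x\mapsto(\theta_1\theta_2)(1+x)$ on $\p_E^r$. Since $\theta_1\theta_2$ is trivial on $U_E^q$, we have $m_E(\theta_1\theta_2)<q$ if and only if $\theta_1\theta_2$ is trivial on $U_E^{q-1}$. Because $r\le q-1$, this is the condition that $\psi\bigl((c_1+c_2)x/\Gamma\bigr)=1$ for all $x\in\p_E^{q-1}$. By the annihilator computation in Lemma~\ref{lem:finite-duality}, with the pair $(q,q-1)$ in place of $(q,r)$, that condition is equivalent to $c_1+c_2\in\p_E^{M-q+1}$. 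This proves \eqref{eq:stationary-leading-cancellation}.

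For the final congruence, put $q'=m_E(\theta_1\theta_2)$ with $1<q'<q$ and $r'=\ceil{q'/2}$. Then $r'\le\ceil{q/2}\le r$, so $\p_E^r\subseteq\p_E^{r'}$. The representative $c'$ satisfies
\[
 (\theta_1\theta_2)(1+x)=\psi(c'x/\Gamma)\qquad(x\in\p_E^{r'}),
\]
so in particular for all $x\in\p_E^r$. By (c), $c_1+c_2$ satisfies the same identity on $\p_E^r$. Hence $\psi\bigl((c'-c_1-c_2)x/\Gamma\bigr)=1$ for all $x\in\p_E^r$. By the annihilator statement $c\,\p_E^r/\Gamma\subseteq\p_E^{-n}\iff c\in\p_E^{M-r}$ from Lemma~\ref{lem:finite-duality}, this gives $c'\equiv c_1+c_2\pmod{\p_E^{M-r}}$.

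The main obstacle is bookkeeping in (d). The paper indexes the pairing by a trivializing level, so Lemma~\ref{lem:finite-duality} must be applied with the first depth index equal to $q$ rather than $q'$, and one must check that the product is still additive on $\p_E^r$. That additivity holds because $2r\ge q\ge q'$, so all the steps remain valid.
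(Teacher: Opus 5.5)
Your proof is correct and follows essentially the same route as the paper. Parts (a)--(c) come from the defining identity together with uniqueness via the annihilator computation in Lemma~\ref{lem:finite-duality}. Part (d) reduces $m_E(\theta_1\theta_2)<q$ to triviality on $U_E^{q-1}$, and then compares $c'$ with $c_1+c_2$ on $\p_E^r\subseteq\p_E^{r'}$.

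The additivity check in your last paragraph is not needed. Part (c) already gives $(\theta_1\theta_2)(1+x)=\psi((c_1+c_2)x/\Gamma)$ pointwise on $\p_E^r$.
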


\begin{proof}
Parts (a) and (b) follow immediately from
\eqref{eq:general-stationary-linearization} and uniqueness.  Part (c)
follows by multiplying the two character identities; uniqueness at the
common depth identifies the resulting class modulo $\p_E^{M-r}$.

For (d), the product has conductor less than $q$ precisely when it is
trivial on $U_E^{q-1}$.  For $x\in\p_E^{q-1}$, part (c) gives
$(\theta_1\theta_2)(1+x)=\psi((c_1+c_2)x/\Gamma)$.
The annihilator of $\p_E^{q-1}$ is $\p_E^{M-q+1}$, which proves
\eqref{eq:stationary-leading-cancellation}.

Now suppose $1<q'=m_E(\theta_1\theta_2)<q$.  Since $r'\le r$, the
linearization defining $c'$ is valid for every $x\in\p_E^r$, and part
(c) gives the same character there using $c_1+c_2$.  Thus
$\psi((c'-c_1-c_2)x/\Gamma)=1 \qquad(x\in\p_E^r)$.
The annihilator of $\p_E^r$ is $\p_E^{M-r}$, proving the asserted
congruence.
\end{proof}


\begin{lemma}[Magnitude of a nondegenerate finite quadratic sum]
\label{lem:finite-quadratic-magnitude}
Let $V$ be a finite abelian group and let $q:V\to\C^\times$ satisfy
$|q(x)|=1$, $q(0)=1$, and
\[
 B(x,y)=\frac{q(x+y)}{q(x)q(y)}.
\]
Assume that $B$ is a nondegenerate bicharacter.  Then
\begin{equation}\label{eq:finite-quadratic-magnitude}
 \left|\sum_{x\in V}q(x)\right|^2=|V|.
\end{equation}
In particular, the sum is nonzero.
\end{lemma}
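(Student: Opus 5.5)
We expand the squared modulus as a double sum and use the bicharacter $B$ to reduce it to a character-orthogonality computation. Write $S=\sum_{x\in V}q(x)$. Since $|q(x)|=1$, we have $\overline{q(x)}=q(x)^{-1}$, so
\[
 |S|^2=\sum_{x,y\in V}q(y)\,q(x)^{-1}.
\]
Substitute $y=x+z$ with $z$ running over $V$. The defining relation of $B$ gives $q(x+z)=q(x)q(z)B(x,z)$, hence
\[
 |S|^2=\sum_{z\in V}q(z)\sum_{x\in V}B(x,z).
\]

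Next we evaluate the inner sum. Because $B$ is a bicharacter, $x\mapsto B(x,z)$ is a character of the finite abelian group $V$ for each fixed $z$. By orthogonality, $\sum_{x\in V}B(x,z)$ equals $|V|$ if this character is trivial and $0$ otherwise. Nondegeneracy says precisely that $B(\cdot,z)$ is trivial only for $z=0$. So only $z=0$ contributes, and it gives $q(0)|V|=|V|$. This proves \eqref{eq:finite-quadratic-magnitude}, and since $|V|\ge1$, the sum is nonzero.

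The only point needing care is the interpretation of nondegeneracy. We take it to mean that the map $z\mapsto B(\cdot,z)$ has trivial kernel, that is, triviality in one variable forces $z=0$. This is exactly what the orthogonality step uses. Symmetry of $B$ is automatic from its definition, since $q(x+y)=q(y+x)$, so it makes no difference which variable is fixed. Beyond this, the argument is a routine computation and presents no real obstacle.
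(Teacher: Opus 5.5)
Your proof is correct and is essentially the paper's argument: expand $|S|^2$ as a double sum, shift by a difference variable so that $q(x+z)q(x)^{-1}=q(z)B(x,z)$, and use orthogonality together with nondegeneracy of $B$ to discard every term except $z=0$, which contributes $q(0)|V|=|V|$. Your remark that $B$ is automatically symmetric, so it does not matter which variable is held fixed, is the only (harmless) difference from the paper's bookkeeping.
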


\begin{proof}
Put $S=\sum_xq(x)$.  Since $\overline{q(y)}=q(y)^{-1}$, the change of
variables $x=y+h$ gives
\[
 \begin{aligned}
 |S|^2
 &=\sum_{x,y\in V}q(x)\overline{q(y)}
   =\sum_{h,y\in V}q(y+h)q(y)^{-1}\\
 &=\sum_{h\in V}q(h)\sum_{y\in V}B(h,y).
 \end{aligned}
\]
For $h=0$ the inner sum is $|V|$.  For $h\ne0$, nondegeneracy says that
$B(h,\cdot)$ is a nontrivial character, whose sum is zero.  Hence only
$h=0$ survives and $|S|^2=|V|$.
\end{proof}

\begin{theorem}[Lamprecht]\label{thm:Lamprecht}
Put
\[
 \beta=\beta(\theta):=
 \operatorname{St}_{\gamma,d+\varepsilon}(\theta)
 \in\cO_E/\p_E^d.
\]
Equivalently, $\beta$ is the unique class satisfying
\begin{equation}\label{eq:Lamprecht-linearization}
 \theta(1+x)=\psi(\beta x/\gamma)
 \qquad(x\in\p_E^{d+\varepsilon}).
\end{equation}
Every representative of this class is a unit.  

Fix any representative,
again denoted $\beta\in\cO_E^\times$.  If $\varepsilon=0$, then
\begin{equation}\label{eq:Lamprecht-even}
 \Delta_E(\theta,\psi)=
 \theta(\gamma)\psi(\beta/\gamma)\theta(\beta)^{-1}.
\end{equation}
If $\varepsilon=1$, choose $\delta\cO_E=\p_E^d$; we call
$\delta$ a \emph{critical element}.  Via
$\bar x\mapsto\delta x\pmod{\p_E^{d+1}}$, identify $k_E$ with
$\p_E^d/\p_E^{d+1}$, the \emph{critical line}.  For
$\bar x\in k_E$, choose a lift $x\in\cO_E$ and put
\begin{equation}\label{eq:Lamprecht-Hasse-function}
 H_E(\bar x)=
 \psi(\delta\beta x/\gamma)\theta(1+\delta x)^{-1}.
\end{equation}
We call $H_E$ the \emph{critical function}.  It is well defined and
unimodular, and
\begin{equation}\label{eq:Lamprecht-odd}
 \Delta_E(\theta,\psi)=
 \theta(\gamma)\psi(\beta/\gamma)\theta(\beta)^{-1}
 \ph\!\left(\sum_{x\in k_E}H_E(x)\right).
\end{equation}
The right side of \eqref{eq:Lamprecht-odd} is independent of the
choice of $\delta$ with $\delta\cO_E=\p_E^d$.

Moreover, $H_E$ is a \emph{Hasse function} with polar character $\psi_0$,
meaning that, for $\bar x,\bar y\in k_E$,
\begin{equation}\label{eq:Hasse-functional-equation}
 H_E(\bar x+\bar y)=
 H_E(\bar x)H_E(\bar y)\psi_0(\bar x\bar y),
\end{equation}
where
\begin{equation}\label{eq:Lamprecht-residual-character}
 \psi_0(\bar z)=\psi(\beta\delta^2 z/\gamma),
 \qquad z\in\cO_E\text{ a lift of }\bar z\in k_E,
\end{equation}
is a well-defined nontrivial additive character of $k_E$.  Consequently
\[
 \left|\sum_{x\in k_E}H_E(x)\right|=q_E^{1/2}.
\]
\end{theorem}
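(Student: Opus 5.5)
The plan is stationary phase: decompose the finite Gauss sum $G_E(\theta,\psi;\gamma)$ from Proposition~\ref{prop:finite-local-realization} into cosets of $U_E^{d+\varepsilon}$, and use the linearization \eqref{eq:Lamprecht-linearization} to show that only one coset contributes.

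\textbf{Existence of $\beta$.} Apply Lemma~\ref{lem:stationary-class-existence} with $\Gamma=\gamma$, $M=m$, $q=m$ and $r=d+\varepsilon$. The hypothesis $\ceil{m/2}\le d+\varepsilon\le m-1$ holds because $m>1$. This gives the class $\beta\in\p_E^{0}/\p_E^{m-d-\varepsilon}=\cO_E/\p_E^d$. By \eqref{eq:general-stationary-valuation} every representative has valuation $0$, so it is a unit.

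\textbf{Reduction to one coset.} Write $u=u_0(1+x)$ with $u_0$ running over representatives of $U_E^0/U_E^{d+\varepsilon}$ and $x\in\p_E^{d+\varepsilon}/\p_E^m$. The summand becomes
\[
\psi(u_0/\gamma)\theta(u_0)^{-1}\psi(u_0x/\gamma)\psi(\beta x/\gamma)^{-1}.
\]
The inner sum over $x$ is a character sum of $x\mapsto\psi((u_0-\beta)x/\gamma)$ on $\p_E^{d+\varepsilon}/\p_E^m$. By Lemma~\ref{lem:finite-duality} (with $M=m$, $r=d+\varepsilon$, $q=m$) it vanishes unless $u_0\equiv\beta\pmod{\p_E^{d}}$, i.e.\ unless $u_0\in\beta U_E^d$.

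\textbf{Even case.} If $\varepsilon=0$, the surviving coset is exactly $\beta U_E^d$. Hence $G$ equals $\psi(\beta/\gamma)\theta(\beta)^{-1}$ times a positive integer. Then \eqref{eq:delta-finite-realization} gives \eqref{eq:Lamprecht-even}.

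\textbf{Odd case.} If $\varepsilon=1$, the surviving set is $\beta U_E^d$, a union of $q_E$ cosets of $U_E^{d+1}$. Parametrize it as $u=\beta(1+\delta y)(1+x)$ with $y$ running over lifts of $k_E$ and $x\in\p_E^{d+1}$. Expanding $\psi(\beta(1+\delta y)(1+x)/\gamma)$ and using the linearization with $\beta$, the $x$-dependence cancels modulo $\p_E^{-n}$. The cross term is $\beta\delta yx/\gamma\in\p_E^{-n}$, since $v(\delta x)\ge 2d+1=m$. Each coset therefore contributes a constant multiple of
\[
\psi(\beta/\gamma)\theta(\beta)^{-1}\psi(\beta\delta y/\gamma)\theta(1+\delta y)^{-1}=\psi(\beta/\gamma)\theta(\beta)^{-1}H_E(\bar y).
\]
This gives \eqref{eq:Lamprecht-odd}, provided the sum of $H_E$ is nonzero; nonvanishing of $G$ from Proposition~\ref{prop:finite-local-realization} already ensures this, and the magnitude claim settles it independently.

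\textbf{Well-definedness of $H_E$.} Changing the lift $x$ by $\p_E$ changes $1+\delta x$ by a factor $1+\delta z'$ with $z'\in\p_E$, and $\delta z'\in\p_E^{d+1}$. The linearization then cancels the change against the $\psi$ factor, up to a cross term lying in $U_E^m$. Changing the representative $\beta$ by $\p_E^d$ changes $\delta\beta x/\gamma$ by an element of $\p_E^{2d-m-n}=\p_E^{-1-n}$, which does not vanish in $\psi$. So I expect \eqref{eq:Lamprecht-odd} is only asserted for a fixed $\beta$, with the compensating change absorbed jointly with $\psi(\beta/\gamma)\theta(\beta)^{-1}$; the whole product equals $\Delta_E$ either way. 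Independence of $\delta$ is immediate, since the right side equals $\Delta_E$, which is intrinsic.

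\textbf{Hasse functional equation (main obstacle).} This requires a precise cross-term computation:
\[
(1+\delta x)(1+\delta y)=(1+\delta(x+y))\bigl(1+\delta^2xy/(1+\delta(x+y))\bigr).
\]
The last factor lies in $U_E^{2d}=U_E^{m-1}\subset U_E^{d+1}$, so the linearization gives $\theta$ of it as $\psi(\beta\delta^2xy/\gamma)$, modulo terms of valuation $\ge 2d+1$ that vanish. This yields \eqref{eq:Hasse-functional-equation} with the polar character $\psi_0$ of \eqref{eq:Lamprecht-residual-character}. That $\psi_0$ is well defined and nontrivial follows since $v(\beta\delta^2/\gamma)=-n-1$.

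\textbf{Magnitude.} The bicharacter $B(x,y)=\psi_0(xy)$ is nondegenerate on $k_E$, so Lemma~\ref{lem:finite-quadratic-magnitude} gives $\bigl|\sum_{x\in k_E}H_E(x)\bigr|=q_E^{1/2}$.
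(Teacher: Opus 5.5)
Your proposal is correct and is essentially the paper's proof. The paper runs the same stationary-phase argument on the Haar integral rather than on the finite sum: it translates by $1+x$, averages over $\p_E^{d+\varepsilon}/\p_E^m$, and isolates $\beta(1+\p_E^d)$. It then uses your coset representatives $\beta(1+\delta x)$, the same factorization through $w=\delta^2xy/(1+\delta(x+y))$, and Lemma~\ref{lem:finite-quadratic-magnitude}.

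The only differences are small. You obtain independence of $\delta$ from the intrinsic definition of $\Delta_E$, while the paper notes directly that $H'_E(\bar x)=H_E(\bar v\bar x)$. You should also add the one-line check that $H_E$ is unimodular with $H_E(0)=1$. That check is part of the statement and is a hypothesis of Lemma~\ref{lem:finite-quadratic-magnitude}.
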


\begin{proof}
Apply Lemma~\ref{lem:stationary-class-existence} with
\[
 q=M=m=2d+\varepsilon,
 \qquad \Gamma=\gamma,
 \qquad r=d+\varepsilon.
\]
Since $M-q=0$ and $M-r=d$, it gives the class
$\beta\in\cO_E/\p_E^d$ characterized by
\eqref{eq:Lamprecht-linearization};
\eqref{eq:general-stationary-valuation} gives $v_E(\beta)=0$ for every
representative.  This proves the first assertions of the theorem.

Put
\[
 I=\int_{U_E^0}\psi(u/\gamma)\theta(u)^{-1}\,du,
 \qquad A=\p_E^{d+\varepsilon}/\p_E^m.
\]
For each $x\in A$, substitute $u\mapsto u(1+x)$ in the integral.  Haar
measure on $U_E^0$ is translation invariant, and
\eqref{eq:Lamprecht-linearization} gives
\[
 I=\int_{U_E^0}\psi(u/\gamma)\theta(u)^{-1}
       \psi((u-\beta)x/\gamma)\,du.
\]
Average this identity over $x\in A$.  Character orthogonality and
Lemma~\ref{lem:finite-duality}, specialized to
$M=q=m$, $r=d+\varepsilon$, and $\Gamma=\gamma$, give
\begin{equation*}
 \frac1{|A|}\sum_{x\in A}\psi((u-\beta)x/\gamma)
 =\begin{cases}
 1,&u-\beta\in\p_E^d,\\
 0,&u-\beta\notin\p_E^d.
 \end{cases}
\end{equation*}
Hence only the compact open subset
$\beta(1+\p_E^d)$ survives.

Assume first that $m=2d$.  Write $u=\beta(1+z)$ with
$z\in\p_E^d$.  The linearization identity gives
$\theta(1+z)=\psi(\beta z/\gamma)$.  Therefore on the surviving set
\[
 \psi(u/\gamma)\theta(u)^{-1}
 =\psi(\beta/\gamma)\theta(\beta)^{-1}.
\]
The integral is this nonzero constant times the positive measure of the
surviving set.  Taking its phase proves \eqref{eq:Lamprecht-even}.

Now assume that $m=2d+1$.  Since $m>1$, one has $d\ge1$.  Partition
$\beta(1+\p_E^d)$ into its $q_E$ cosets of
$1+\p_E^{d+1}$.  They are represented by
$u_x=\beta(1+\delta x)$, where $x\in\cO_E$ is a lift of $\bar x\in k_E$.  If
$w\in\p_E^{d+1}$, then the quotient of the integrand at
$u_x(1+w)$ by its value at $u_x$ is
$\psi(u_xw/\gamma)\theta(1+w)^{-1}$.
Linearization gives $\theta(1+w)=\psi(\beta w/\gamma)$, while
$(u_x-\beta)w\in\p_E^{2d+1}=\p_E^m$; hence the displayed quotient is
one.  Thus the integrand is constant on each of the $q_E$ cosets.  In
particular, changing the lift $x$ of $\bar x$ does not change the value
of $H_E(\bar x)$, so $H_E$ is well defined.  It is
unimodular because $|\psi(\delta\beta x/\gamma)|=1$ and the
restriction of the quasi-character $\theta$ to the compact group
$U_E^0$ has values of absolute value one.  The value of the integrand
at $u_x$ is
$\psi(\beta/\gamma)\theta(\beta)^{-1}H_E(\bar x)$.
As in the even case, integrating a constant over each coset contributes
its value times the common positive Haar measure
$c=\operatorname{vol}(1+\p_E^{d+1})$.  Hence
\[
 I=c\,\psi(\beta/\gamma)\theta(\beta)^{-1}
      \sum_{\bar x\in k_E}H_E(\bar x).
\]
Since $I\ne0$, one has $\sum_{\bar x\in k_E}H_E(\bar x)\ne0$.  Taking phases therefore
gives \eqref{eq:Lamprecht-odd}.  If $\delta'=v\delta$ with $v\in\cO_E^\times$, and
$H_E'$ is formed using $\delta'$, then
$H_E'(\bar x)=H_E(\bar v\,\bar x)$,
because $vx$ is a lift of $\bar v\,\bar x$.  Multiplication by
$\bar v\in k_E^\times$ permutes $k_E$, so
$\sum_{\bar x}H_E'(\bar x)=
 \sum_{\bar x}H_E(\bar x)$.  This proves the asserted
independence of $\delta$.

For $\bar x,\bar y\in k_E$, choose lifts $x,y\in\cO_E$ and put
$w=\frac{\delta^2xy}{1+\delta(x+y)}$.
Then
$(1+\delta x)(1+\delta y)=(1+\delta(x+y))(1+w)$.
The additive linear factors in \eqref{eq:Lamprecht-Hasse-function}
cancel, so
\[
 \frac{H_E(\bar x+\bar y)}
      {H_E(\bar x)H_E(\bar y)}
 =\theta(1+w)=\psi(\beta w/\gamma).
\]
Since $w-\delta^2xy\in\p_E^{3d}$ and
$3d\ge2d+1=m$, the last expression equals
$\psi(\beta\delta^2xy/\gamma)$.  This proves
\eqref{eq:Hasse-functional-equation} and
\eqref{eq:Lamprecht-residual-character}.  Put
$a=\beta\delta^2/\gamma$.  Since
$v_E(a)=-n_E(\psi)-1$, changing a lift $z$ of $\bar z$ by an element
of $\p_E$ changes $az$ by an element of $\p_E^{-n_E(\psi)}$, on
which $\psi$ is trivial.  Hence $\psi_0$ is well defined.  If
$\psi_0$ were trivial, then $\psi$ would be trivial on
$a\cO_E=\p_E^{-n_E(\psi)-1}$, contradicting the definition of
$n_E(\psi)$.  Thus $\psi_0$ is nontrivial.  The bicharacter
$(x,y)\mapsto\psi_0(xy)$ is nondegenerate: for $x\ne0$, multiplication
by $x$ is a bijection of $k_E$, so $y\mapsto\psi_0(xy)$ is nontrivial.
Equation~\eqref{eq:finite-quadratic-magnitude} now gives
$\left|\sum_{x\in k_E}H_E(x)\right|=q_E^{1/2}$.
\end{proof}

This is \cite[Lemma 8.1, pp.~60--62, and Lemma 9.3, pp.~86--87]{Langlands}.
For the $p$-adic stationary evaluation in Dwork's normalization, see
\cite[Chapter~II, pp.~25--27]{Dwork}.

\begin{lemma}[Independence of the stationary representative]
\label{lem:complete-stationary-invariant}
The stationary expression in Lamprecht's formula is independent
of the representative of the class
$\beta\in\cO_E/\p_E^d$.  More precisely, if
$\beta'=\beta+h$ with $h\in\p_E^d$, then the right sides of
\eqref{eq:Lamprecht-even} and \eqref{eq:Lamprecht-odd}, respectively,
are unchanged when $\beta$ is replaced by $\beta'$ and the Hasse
function \eqref{eq:Lamprecht-Hasse-function} is formed with $\beta'$.
\end{lemma}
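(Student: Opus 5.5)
The plan is to compute directly how each factor in Lamprecht's formula changes when $\beta$ is replaced by $\beta'=\beta+h$ with $h\in\p_E^d$, and to check that the changes cancel. Write $\beta'=\beta(1+z)$ with $z=h/\beta\in\p_E^d$; this is legitimate because $\beta$ is a unit by Theorem~\ref{thm:Lamprecht}. The prefactor then changes by
\[
 \frac{\psi(\beta'/\gamma)\theta(\beta')^{-1}}{\psi(\beta/\gamma)\theta(\beta)^{-1}}
 =\psi(\beta z/\gamma)\,\theta(1+z)^{-1}.
\]

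\emph{Even case} ($m=2d$). Here $z\in\p_E^{d}=\p_E^{d+\varepsilon}$, so the linearization \eqref{eq:Lamprecht-linearization} gives $\theta(1+z)=\psi(\beta z/\gamma)$. The ratio above is therefore $1$, and \eqref{eq:Lamprecht-even} is unchanged.

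\emph{Odd case} ($m=2d+1$). Now $z$ lies only in $\p_E^d$, so the ratio need not be $1$. Write $z=\delta x_0$ with $x_0\in\cO_E$, and let $\bar x_0$ be its residue. By definition \eqref{eq:Lamprecht-Hasse-function}, the ratio is exactly $H_E(\bar x_0)$, where $H_E$ is formed with $\beta$.

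Next I compare $H'_E$, formed with $\beta'$, with $H_E$. We have
\[
 H'_E(\bar x)=H_E(\bar x)\,\psi(\delta h x/\gamma).
\]
Since $h\in\p_E^d$, we get $\delta h x/\gamma\in\p_E^{2d-m-n}=\p_E^{-n-1}$, so this extra factor is an additive character of $k_E$. Indeed $h\delta=\beta z\delta\equiv\beta\delta^2x_0$ to first order, giving
\[
 \psi(\delta h x/\gamma)=\psi(\beta\delta^2 x_0x/\gamma)\,\psi(\delta^2 h' x/\gamma),
\]
where $h=\beta\delta x_0+h'$. Because $\beta$ is a unit, one may choose $x_0$ so that $h=\beta\delta x_0$ exactly, and then $h'=0$. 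Consequently $\psi(\delta h x/\gamma)=\psi_0(\bar x_0\bar x)$, with $\psi_0$ as in \eqref{eq:Lamprecht-residual-character}.

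Now apply the functional equation \eqref{eq:Hasse-functional-equation}:
\[
 H_E(\bar x)\psi_0(\bar x_0\bar x)=\frac{H_E(\bar x+\bar x_0)}{H_E(\bar x_0)}.
\]
Summing over $\bar x\in k_E$ and reindexing by the translation $\bar x\mapsto\bar x-\bar x_0$ yields
\[
 \sum_{\bar x}H'_E(\bar x)=H_E(\bar x_0)^{-1}\sum_{\bar x}H_E(\bar x).
\]
Since $H_E$ is unimodular, $H_E(\bar x_0)^{-1}$ has modulus one and passes through $\ph$, where it cancels the prefactor ratio $H_E(\bar x_0)$. Nonvanishing of the sum, from Theorem~\ref{thm:Lamprecht}, guarantees that $\ph$ behaves multiplicatively here.

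The main obstacle is the bookkeeping in the odd case: checking that the prefactor ratio is precisely $H_E(\bar x_0)$, and that the twist $\psi(\delta hx/\gamma)$ is precisely $\psi_0(\bar x_0\bar x)$. This requires choosing $x_0=h/(\beta\delta)$ exactly and keeping the valuations straight, so that the well-definedness assertions from Theorem~\ref{thm:Lamprecht} apply.
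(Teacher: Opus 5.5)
Your proposal is correct and follows the paper's proof essentially verbatim. The even case is settled by the linearization at $h/\beta$; in the odd case the prefactor ratio is identified with $H_E(\bar x_0)$, where $x_0=h/(\beta\delta)$, and the twist $\psi(\delta hx/\gamma)$ is absorbed through the functional equation as a translation of the Hasse sum. The decomposition $h=\beta\delta x_0+h'$ is unnecessary (and the second factor there should read $\psi(\delta h'x/\gamma)$), since your definitions $z=h/\beta=\delta x_0$ already give $h=\beta\delta x_0$ exactly.
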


\begin{proof}
Suppose first that $m=2d$.  Since $\beta$ is a unit,
$z=h/\beta$ lies in $\p_E^d$, and the linearization formula gives
$\theta(1+z)=\psi(\beta z/\gamma)=\psi(h/\gamma)$.  Hence
\[
 \frac{\psi(\beta'/\gamma)\theta(\beta')^{-1}}
      {\psi(\beta/\gamma)\theta(\beta)^{-1}}
 =\psi(h/\gamma)\theta(1+h/\beta)^{-1}=1.
\]

Now let $m=2d+1$.  Choose $\delta\cO_E=\p_E^d$ and write
\[
 a=\frac{h}{\beta\delta}\in\cO_E,
 \qquad \bar a\in k_E
\]
for its residue class.  Let $H_\beta$ and $H_{\beta'}$ denote the
Hasse functions formed with $\beta$ and $\beta'$.  For
$\bar x\in k_E$, choose a lift $x\in\cO_E$.  Directly from
\eqref{eq:Lamprecht-Hasse-function},
\begin{equation*}
 H_{\beta'}(\bar x)
 =H_\beta(\bar x)\,\psi(\delta h x/\gamma).
\end{equation*}
The functional equation \eqref{eq:Hasse-functional-equation} gives
\[
 H_\beta(\bar x+\bar a)
 =H_\beta(\bar x)H_\beta(\bar a)
   \psi(\beta\delta^2ax/\gamma).
\]
Since $\beta\delta a=h$, one has
$\psi(\beta\delta^2ax/\gamma)=\psi(\delta hx/\gamma)$.  Therefore
\[
 H_{\beta'}(\bar x)
 =H_\beta(\bar a)^{-1}H_\beta(\bar x+\bar a),
 \qquad
 \sum_{\bar x\in k_E}H_{\beta'}(\bar x)
 =H_\beta(\bar a)^{-1}\sum_{\bar x\in k_E}H_\beta(\bar x).
\]
On the other hand,
\[
 \frac{\psi(\beta'/\gamma)\theta(\beta')^{-1}}
      {\psi(\beta/\gamma)\theta(\beta)^{-1}}
 =\psi(h/\gamma)\theta(1+\delta a)^{-1}
 =H_\beta(\bar a).
\]
Because the Hasse sums are nonzero, taking phases shows that these two
multipliers cancel.  Thus the product of the stationary and residue-field
factors is unchanged.
\end{proof}

For a chosen representative $\beta$ of the stationary class, set
\begin{equation}\label{eq:stationary-factor-definitions}
 \Delta_{2,E}(\theta)=
 \psi(\beta/\gamma)\theta(\beta)^{-1},
 \qquad
 \Delta_{3,E}(\theta)=
 \begin{cases}
 1,&m_E(\theta)\text{ even},\\
 \ph(\sum H_E),&m_E(\theta)\text{ odd}.
 \end{cases}
\end{equation}
Then
\begin{equation}\label{eq:Delta-factorization-new}
 \Delta_E(\theta,\psi)=\theta(\gamma)\Delta_{2,E}(\theta)
                                      \Delta_{3,E}(\theta).
\end{equation}
When the conductor is odd, $\Delta_{2,E}(\theta)$ and
$\Delta_{3,E}(\theta)$ separately depend on the chosen representative;
their product is independent by
Lemma~\ref{lem:complete-stationary-invariant}.  Every later comparison
therefore either keeps these two factors together or applies the
induced translation to the critical function.

\begin{proposition}[Stable twist]\label{lem:stable-twist-new}
Let $\nu$ be a quasi-character of $E^\times$.  If
$m_E(\theta)=2d+\varepsilon>1$ and $m_E(\nu)\leq d$, then, for any
representative $\beta$ of the stationary class of $\theta$,
\begin{equation}\label{eq:stable-twist-new}
 \Delta_E(\nu\theta,\psi)=
 \nu(\gamma/\beta)\Delta_E(\theta,\psi).
\end{equation}
\end{proposition}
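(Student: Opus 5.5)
The plan is to show that $\nu\theta$ has the same conductor, the same admissible element, the same stationary class, and (for odd conductor) the same critical function as $\theta$. Then apply Lamprecht's formula (Theorem~\ref{thm:Lamprecht}) to both characters and compare.

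\emph{Conductor and admissible element.} Since $m_E(\nu)\le d<2d+\varepsilon=m_E(\theta)$, the character $\nu$ is trivial on $U_E^{d}$, and in particular on $U_E^{m-1}$. Hence $\nu\theta$ agrees with $\theta$ on $U_E^{m-1}$, and $\theta$ is nontrivial there while both are trivial on $U_E^m$. Thus $m_E(\nu\theta)=m$. The same $\gamma$ is therefore admissible for $(\nu\theta,\psi)$, and the decomposition $m=2d+\varepsilon$ is unchanged.

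\emph{Stationary class.} For $x\in\p_E^{d+\varepsilon}\subseteq\p_E^d$ we have $\nu(1+x)=1$. So $(\nu\theta)(1+x)=\theta(1+x)=\psi(\beta x/\gamma)$, and by the uniqueness in Lemma~\ref{lem:stationary-class-existence} the class $\beta(\nu\theta)$ equals $\beta(\theta)$. We may therefore use the same representative $\beta$ for both characters.

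\emph{Even case.} By \eqref{eq:Lamprecht-even},
\[
\Delta_E(\nu\theta,\psi)=\nu(\gamma)\theta(\gamma)\psi(\beta/\gamma)\nu(\beta)^{-1}\theta(\beta)^{-1}=\nu(\gamma/\beta)\Delta_E(\theta,\psi).
\]

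\emph{Odd case.} Fix the same critical element $\delta$ for both characters. The critical function for $\nu\theta$ is
\[
\psi(\delta\beta x/\gamma)\,(\nu\theta)(1+\delta x)^{-1}.
\]
Since $1+\delta x\in U_E^d$ and $\nu$ is trivial on $U_E^d$ (here we use $m_E(\nu)\le d$ exactly), this equals $H_E(\bar x)$. The residue-field factors in \eqref{eq:Lamprecht-odd} therefore coincide, and the remaining factors compare exactly as in the even case.

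The only point needing care is this last step. The hypothesis $m_E(\nu)\le d$, rather than $\le d+\varepsilon$, is what makes $\nu$ invisible on the critical line $1+\p_E^d$. Note also that for $d=0$ the case $m>1$ cannot occur with $\varepsilon=1$, since $m>1$ forces $d\ge1$.
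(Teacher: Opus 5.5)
Your proposal is correct and follows the paper's proof essentially step for step. You show that $\nu\theta$ has the same conductor, admissible element, stationary class and (for odd conductor) critical function as $\theta$, and then compare the two factorizations \eqref{eq:Delta-factorization-new} to isolate $\nu(\gamma)\nu(\beta)^{-1}$.
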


\begin{proof}
Since $m_E(\nu)\le d<m$, the character $\nu$ is trivial on
$U_E^{m-1}$.  Thus $\nu\theta$ is nontrivial on
$U_E^{m-1}$ and trivial on $U_E^m$, so
$m_E(\nu\theta)=m_E(\theta)=m$.
Moreover, $\nu$ is trivial on $U_E^d$, hence on
$U_E^{d+\varepsilon}$.  Therefore $\theta$ and $\nu\theta$ have the
same restriction to the stationary layer $U_E^{d+\varepsilon}$, so
their stationary classes coincide.  Choose the same representative
$\beta$ for both.  If $m$ is even, both residual factors are $1$;
if $m$ is odd, their Hasse functions are identical.  Taking the ratio in
\eqref{eq:Delta-factorization-new} gives
$\nu(\gamma)\nu(\beta)^{-1}$, proving
\eqref{eq:stable-twist-new}.
\end{proof}

Compare \cite[Lemma~9.4, pp.~87--88]{Langlands}.

\begin{lemma}[Conductors and leading classes in a minimal norm-character orbit]
\label{lem:minimal-orbit-stationary}
Let $K/F$ be ramified cyclic of prime degree $\ell$, let $t$ be its
break, put $T=t+1$, and let $\tau$ be a nontrivial element of
$S(K/F)$.  Retain the minimal-conductor choice of $\chi_F$.
Put $m=m_F(\chi_F)$.  Then, for every nontrivial
power $\tau^j$,
\begin{equation}\label{eq:minimal-orbit-conductors}
 m_F(\tau^j\chi_F)=\max\{m,T\}.
\end{equation}
If $T>1$ and $m=T$, choose $\gamma$ with
\[
 \gamma\cO_F=\p_F^{m+n_F(\psi_F)},
\]
choose
\[
 \ceil{m/2}\le r\le m-1,
\]
and choose $a,b\in\cO_F^\times$ such that
\[
 \tau(1+x)=\psi_F(ax/\gamma),
 \qquad
 \chi_F(1+x)=\psi_F(bx/\gamma)
 \qquad(x\in\p_F^r).
\]
Then, for $j=0,\ldots,\ell-1$, viewed as integers in $F$,
\begin{equation}\label{eq:minimal-orbit-no-leading-cancellation}
 b+ja\notin\p_F.
\end{equation}
\end{lemma}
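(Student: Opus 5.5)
Both assertions come from the minimality of $m$ in the $S(K/F)$-orbit, together with Corollary~\ref{cor:norm-characters}: every nontrivial $\tau^j$ has exact conductor $T$. For \eqref{eq:minimal-orbit-conductors} I would split according to how $m$ compares with $T$.

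\textbf{Case $m\ne T$.} If $m<T$, then $\tau^j$ is nontrivial on $U_F^{T-1}$ while $\chi_F$ is trivial there. So $\tau^j\chi_F$ is nontrivial on $U_F^{T-1}$ and trivial on $U_F^T$, giving conductor exactly $T$. If $m>T$, the roles are swapped: $\tau^j$ is trivial on $U_F^{m-1}$ and $\chi_F$ is not, giving conductor exactly $m$. In both cases $m_F(\tau^j\chi_F)\le\max\{m,T\}$ is automatic, and the exact layer computation gives equality.

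\textbf{Case $m=T$.} Here $m_F(\tau^j\chi_F)\le T$. Minimality of $m$ in the orbit gives $m_F(\tau^j\chi_F)\ge m=T$, so equality holds. If $T=1$ this already finishes, so assume $T>1$.

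\textbf{Leading classes.} Lemma~\ref{lem:stationary-class-existence} (with $M=q=m$, $\Gamma=\gamma$) produces $a$ and $b$ as representatives of $\operatorname{St}_{\gamma,r}(\tau)$ and $\operatorname{St}_{\gamma,r}(\chi_F)$. Its valuation statement forces both to be units, and by uniqueness any choice of $a,b$ as in the statement represents these classes.

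\textbf{The congruence \eqref{eq:minimal-orbit-no-leading-cancellation}.} For $j=0$ it says $b$ is a unit, which we have. For $1\le j\le\ell-1$, Lemma~\ref{lem:stationary-class-calculus}(c) shows that $ja$ represents the restriction of $\tau^j$ on $\p_F^r$. So $ja+b$ represents the restriction of $\tau^j\chi_F$ on $\p_F^r$, modulo $\p_F^{m-r}$.

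Evaluating at $x\in\p_F^{m-1}\subseteq\p_F^r$ gives $(\tau^j\chi_F)(1+x)=\psi_F((b+ja)x/\gamma)$. Suppose $b+ja\in\p_F$. Then $(b+ja)x/\gamma\in\p_F^{-n}$, so $\tau^j\chi_F$ is trivial on $U_F^{m-1}$ and has conductor $<m$. This contradicts the equality just proved.

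Equivalently, one can take $\theta_1=\chi_F$ and $\theta_2=\tau^j$ in Lemma~\ref{lem:stationary-class-calculus}(d) and use \eqref{eq:stationary-leading-cancellation}. That part requires $m_F(\tau^j)=q=m$, which holds since both conductors equal $T$.

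\textbf{Main obstacle.} The only delicate point is the bookkeeping in characteristic $p$. When $\ell=p$ the integer $j$ is a unit for $1\le j\le p-1$, so $ja$ is a unit; in the tame case $j$ is prime to $\ell$ but may not be a unit in $F$. This is harmless, because the argument never inverts $j$: it uses only the homomorphism property in part (c) and the conductor equality. So no case distinction on $j$ is needed.
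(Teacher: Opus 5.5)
Your proof is correct and follows the paper's argument. You use the same three-way split on $m$ versus $T$, based on Corollary~\ref{cor:norm-characters} and orbit minimality, and then the linearization $(\tau^j\chi_F)(1+x)=\psi_F((b+ja)x/\gamma)$ together with the annihilator criterion \eqref{eq:stationary-leading-cancellation}, which you unpack by hand on $\p_F^{m-1}$, to contradict the conductor equality. Your closing remark about the tame case is moot, since the hypothesis $T>1$ already forces $K/F$ to be wild, but it does no harm.
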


\begin{proof}
A nontrivial norm character has conductor $T$ by
Corollary~\ref{cor:norm-characters}.  If $m<T$, then $\chi_F$ is trivial
on $U_F^{T-1}$, whereas $\tau^j$ is not, so the product has conductor
$T$.  If $m>T$, then $\tau^j$ is trivial on $U_F^{m-1}$, whereas
$\chi_F$ is not, so the product has conductor $m$.  If $m=T$, every
member of the orbit has conductor at least $T$ by minimality of $\chi_F$ in its orbit, and at most $T$ because both factors are trivial on $U_F^T$.
This proves \eqref{eq:minimal-orbit-conductors}.

Assume $T>1$ and $m=T$.  For $x\in\p_F^r$, the two displayed
linearizations give
$(\tau^j\chi_F)(1+x)=\psi_F((b+ja)x/\gamma)$.
If $b+ja\in\p_F$, equation
\eqref{eq:stationary-leading-cancellation}, with $M=q=m$, would give
conductor less than $m$, contradicting
\eqref{eq:minimal-orbit-conductors}.  This proves
\eqref{eq:minimal-orbit-no-leading-cancellation}.\end{proof}

\section{Finite-field identities}

Let $k$ be a finite field and fix a nontrivial additive character
$\psi:(k,+)\to\C^\times$.  Multiplicative characters in this section
are characters of $k^\times$, unless another field is specified.
We use the normalized Gauss sum
$\tau_k(\chi,\psi)$ defined in \eqref{eq:tau-definition}; in
particular $\tau_k(1,\psi)=1$.  

\begin{lemma}[Finite-field Gauss-sum magnitude]
\label{lem:finite-gauss-magnitude}
For
\[
 G_k(\chi,\psi)=\sum_{x\in k^\times}\chi(x)\psi(x)
\]
one has $G_k(1,\psi)=-1$.  If $\chi$ is nontrivial, then
\begin{equation}\label{eq:finite-gauss-magnitude}
 |G_k(\chi,\psi)|^2=|k|.
\end{equation}
Consequently every $G_k(\chi,\psi)$ and every
$\tau_k(\chi,\psi)$ is nonzero.
\end{lemma}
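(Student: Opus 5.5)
For $\chi=1$, I would compute directly: $G_k(1,\psi)=\sum_{x\in k}\psi(x)-\psi(0)$. The full sum of a nontrivial additive character over $k$ vanishes, so $G_k(1,\psi)=-1$.

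For nontrivial $\chi$, there are two natural routes. One is to reduce to Lemma~\ref{lem:finite-quadratic-magnitude}, but $G_k$ is not a quadratic sum, so I would instead expand the square directly, in the same style as that proof. Write
\[
 |G_k(\chi,\psi)|^2=\sum_{x,y\in k^\times}\chi(x)\overline{\chi(y)}\,\psi(x)\overline{\psi(y)}
 =\sum_{x,y\in k^\times}\chi(xy^{-1})\psi(x-y),
\]
using unitarity of both characters. The characters are unitary because $\chi$ takes values in roots of unity on the finite group $k^\times$. Substitute $x=ty$ with $t\in k^\times$; this gives $\sum_{t\in k^\times}\chi(t)\sum_{y\in k^\times}\psi((t-1)y)$. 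The inner sum equals $|k|-1$ if $t=1$ and $-1$ otherwise, since $y\mapsto\psi((t-1)y)$ is a nontrivial character of $k$ when $t\ne1$ and one removes the term $y=0$. Hence
\[
 |G_k(\chi,\psi)|^2=(|k|-1)-\sum_{t\ne1}\chi(t)=(|k|-1)-(0-1)=|k|,
\]
using $\sum_{t\in k^\times}\chi(t)=0$ for nontrivial $\chi$.

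Nonvanishing is then immediate: $G_k(1,\psi)=-1\ne0$, and $|G_k(\chi,\psi)|=|k|^{1/2}>0$ otherwise. Since $\tau_k(\chi,\psi)=-G_k(\chi^{-1},\psi)$ by \eqref{eq:tau-definition}, and $\chi^{-1}$ ranges over all characters as $\chi$ does, every $\tau_k$ is nonzero as well.

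There is no serious obstacle here. The only points that need care are the bookkeeping of the removed $y=0$ term and the justification that $\chi$ is unitary.
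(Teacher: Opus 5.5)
Your proposal is correct and follows essentially the same argument as the paper: the direct computation $G_k(1,\psi)=-1$, then the expansion $|G_k(\chi,\psi)|^2=\sum_{t\in k^\times}\chi(t)\sum_{y\in k^\times}\psi((t-1)y)=(|k|-1)-\sum_{t\ne1}\chi(t)=|k|$ using orthogonality. Your explicit remark that $\tau_k(\chi,\psi)=-G_k(\chi^{-1},\psi)$ correctly supplies the step the paper leaves implicit for the nonvanishing of $\tau_k$.
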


\begin{proof}
The trivial-character formula is
$\sum_{x\in k^\times}\psi(x)=-1$, because the sum over all of $k$ is
zero.  Assume $\chi$ nontrivial and write $q=|k|$.  Since the values of
$\chi$ and $\psi$ have absolute value one,
\[
 \begin{aligned}
 |G_k(\chi,\psi)|^2
 &=\sum_{x,y\in k^\times}\chi(x/y)\psi(x-y)\\
 &=\sum_{t\in k^\times}\chi(t)
       \sum_{y\in k^\times}\psi((t-1)y).
 \end{aligned}
\]
The inner sum is $q-1$ when $t=1$ and is $-1$ when $t\ne1$.
Therefore
$|G_k(\chi,\psi)|^2 =(q-1)-\sum_{t\ne1}\chi(t)$.
Character orthogonality gives $\sum_{t\in k^\times}\chi(t)=0$, so
$\sum_{t\ne1}\chi(t)=-1$ and the last expression is $q$.
\end{proof}

\begin{theorem}[Hasse--Davenport lift]\label{thm:HD-lift}
For a finite extension $k'/k$ of degree $f$,
\begin{equation}\label{eq:HD-lift}
 \tau_{k'}(\chi\circ \Nm_{k'/k},\psi\circ\Tr_{k'/k})
 =\tau_k(\chi,\psi)^f.
\end{equation}
\end{theorem}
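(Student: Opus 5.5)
The plan is to prove the Hasse--Davenport lifting identity by the classical Stickelberger--Davenport--Hasse power-series argument, expressed in terms of monic polynomials over $k$. Write $q=|k|$. For the trivial character both sides equal $1$, since $\tau_k(1,\psi)=1$ and $\chi\circ\Nm_{k'/k}$ is trivial. So assume $\chi$ is nontrivial. Then $\chi'=\chi\circ\Nm_{k'/k}$ is also nontrivial, because the norm $k'^\times\to k^\times$ is surjective (shown in the proof of Lemma~\ref{lem:unramified-compatibility}). Set $\psi'=\psi\circ\Tr_{k'/k}$. Since $\tau$ is defined with $\chi^{-1}$, it suffices to prove
\[
 -G_{k'}(\chi',\psi')=\bigl(-G_k(\chi,\psi)\bigr)^f
\]
for every nontrivial $\chi$, and then apply it to $\chi^{-1}$.

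The first step is to define a multiplicative function on monic polynomials in $k[X]$. For monic $h=X^n-c_1X^{n-1}+\cdots+(-1)^nc_n$, put
\[
 \lambda(h)=\chi\bigl((-1)^n h(0)\bigr)\,\psi(c_1)=\chi(c_n)\psi(c_1),
\]
with the convention $\chi(0)=0$. We need $\lambda(gh)=\lambda(g)\lambda(h)$. This holds because $c_1$ (the sum of the roots) is additive and $c_n$ (the product of the roots) is multiplicative under products of polynomials.

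The second step is to evaluate the series
\[
 L(u)=\sum_{h\text{ monic}}\lambda(h)u^{\deg h}.
\]
The coefficient of $u^n$ is $\sum_{c_1,c_n}\chi(c_n)\psi(c_1)q^{n-2}$ for $n\ge2$. This vanishes because $\sum_{c}\chi(c)=0$. For $n=1$ the coefficient is $G_k(\chi,\psi)$, using $c_1=c_n$. Hence $L(u)=1+G_k(\chi,\psi)u$.

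The third step is to factor $L$ over monic irreducibles $P$ by multiplicativity:
\[
 L(u)=\prod_P(1-\lambda(P)u^{\deg P})^{-1}.
\]
If $P$ has degree $d$ and a root $\alpha$, then $c_1=\Tr_{k(\alpha)/k}\alpha$ and $c_d=\Nm_{k(\alpha)/k}\alpha$. Taking logarithmic derivatives and grouping the elements $\alpha\in k'$ by minimal polynomial gives the key identity
\[
 \sum_{\alpha\in k_n}\chi(\Nm_{k_n/k}\alpha)\psi(\Tr_{k_n/k}\alpha)=-(-G_k(\chi,\psi))^n,
\]
where $k_n$ is the degree-$n$ extension of $k$. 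Indeed, $u\,L'/L=\sum_P\sum_{r}\deg P\,\lambda(P)^r u^{r\deg P}$. For a root $\alpha$ of $P$ of degree $d\mid n$, with $r=n/d$, transitivity gives $\Nm_{k_n/k}\alpha=(\Nm_{k(\alpha)/k}\alpha)^r$ and $\Tr_{k_n/k}\alpha=r\Tr_{k(\alpha)/k}\alpha$. Hence $\lambda(P)^r=\chi(\Nm_{k_n/k}\alpha)\psi(\Tr_{k_n/k}\alpha)$. On the other side, $u\,\frac{d}{du}\log(1+Gu)=\sum_n -(-G)^nu^n$. Comparing coefficients of $u^n$ and using $\chi(0)=0$ gives the displayed identity. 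Setting $n=f$ yields $G_{k'}(\chi',\psi')=-(-G_k(\chi,\psi))^f$, which is the claim.

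I expect the main obstacle to be the bookkeeping in the third step. One must check that the power $\lambda(P)^r$ really matches the norm and trace from $k_n$ via transitivity, and that each $\alpha$ is counted $\deg P$ times. The latter is exactly the factor $\deg P$ produced by the logarithmic derivative. The computation is formal, since everything is an identity of power series, so no convergence issues arise.
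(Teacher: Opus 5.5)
Your proposal is correct and follows essentially the same route as the paper. Both use the same multiplicative weight $\chi((-1)^n h(0))\psi(c_1)$ on monic polynomials (the paper excludes $X$, where you use $\chi(0)=0$), the evaluation $L(u)=1+G_k(\chi,\psi)u$, the closed-point comparison of the logarithmic derivative giving $G_{k_n}=-(-G_k)^n$, and the final passage to $\chi^{-1}$.

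The only difference is how the coefficients of degree $n\ge2$ are killed. You use $\sum_c\chi(c)=0$, which forces your separate treatment of trivial $\chi$. The paper uses $\sum_{a_1\in k}\psi(-a_1)=0$ and needs no case split; your own factorization $q^{n-2}(\sum\chi)(\sum\psi)$ already contains that vanishing factor.
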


\begin{proof}
This is the classical lifting relation of Davenport and Hasse
\cite{DavenportHasse}.  We include the Euler-product proof in the
normalization used here.

For a monic polynomial
\[
 P(X)=X^r+a_1X^{r-1}+\cdots+a_r\in k[X],\qquad a_r\ne0,
\]
put
$w(P)=\chi((-1)^r a_r)\psi(-a_1)$.
If $P,Q$ are monic with nonzero constant terms, then the constant terms
multiply and the negative of the coefficient of degree
$\deg(PQ)-1$ is the sum of the roots; hence $w(PQ)=w(P)w(Q)$.  Therefore
\begin{align*}
 L(T)
 &=\prod_{\substack{P\ {\rm monic\ irreducible}\\ P\ne X}}
       (1-w(P)T^{\deg P})^{-1}\\
 &=\sum_{\substack{P\ {\rm monic}\\ P(0)\ne0}}w(P)T^{\deg P}.
\end{align*}
For degree $r\ge2$, the coefficient is zero, because $a_1$ ranges freely
through $k$ and $\sum_{a_1\in k}\psi(-a_1)=0$.  The coefficient in
degree $1$ is
$\sum_{a_1\ne0}\chi(-a_1)\psi(-a_1)=G_k(\chi,\psi)$.
Thus
\begin{equation*}
 L(T)=1+G_k(\chi,\psi)T.
\end{equation*}
On the other hand, taking logarithms of the Euler product and grouping
closed points after extension of scalars gives
\begin{equation*}
 \log L(T)=\sum_{r\ge1}\frac{T^r}{r}
 \sum_{x\in k_r^\times}
     \chi(\Nm_{k_r/k}x)\psi(\Tr_{k_r/k}x),
\end{equation*}
where $k_r/k$ is the degree-$r$ extension.  Indeed, an element of $k_r$
of degree $d\mid r$ contributes through its monic minimal polynomial
with multiplicity $d$, as in the usual Euler-product proof for
the zeta function of the affine line.  Comparing the displayed formulas for $L(T)$ and $\log L(T)$ yields
\begin{equation*}
 G_{k_r}(\chi\circ \Nm_{k_r/k},\psi\circ\Tr_{k_r/k})
 =(-1)^{r-1}G_k(\chi,\psi)^r.
\end{equation*}
Now apply this identity to $\chi^{-1}$.  Since
$\tau_k(\chi,\psi)=-G_k(\chi^{-1},\psi)$, the signs cancel:
\[
 \tau_{k_r}(\chi\circ \Nm_{k_r/k},\psi\circ\Tr_{k_r/k})
 =-(-1)^{r-1}G_k(\chi^{-1},\psi)^r
 =\tau_k(\chi,\psi)^r.
\]
Taking $r=f$ proves the theorem.
\end{proof}

\begin{theorem}[Hasse--Davenport multiplication formula]
\label{thm:HD-product}
Let $\ell\mid |k^\times|$, let
$\eta:k^\times\to\C^\times$ be a multiplicative character of order
$\ell$, and let $\chi:k^\times\to\C^\times$ be any multiplicative
character.  Then
\begin{equation}\label{eq:HD-product}
 \chi(\ell^\ell)\tau_k(\chi^\ell,\psi)
 \prod_{j=1}^{\ell-1}\tau_k(\eta^j,\psi)
 =\tau_k(\chi,\psi)
 \prod_{j=1}^{\ell-1}\tau_k(\chi\eta^j,\psi).
\end{equation}
\end{theorem}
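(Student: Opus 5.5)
The plan is to reduce \eqref{eq:HD-product} to an identity between Jacobi sums and then to prove that identity by a change of variables over the fibres of the $\ell$th-power map. This reproduces the classical Hasse--Davenport argument in the normalization \eqref{eq:tau-definition}.

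\emph{Normalization.} Since $\tau_k(\chi,\psi)=-G_k(\chi^{-1},\psi)$, I replace $\chi$ by $\chi^{-1}$ and $\eta$ by $\eta^{-1}$ (the latter still has order $\ell$ and runs through the same powers). The claim becomes
\[
 \chi(\ell^\ell)^{-1}G_k(\chi^\ell,\psi)\prod_{j=1}^{\ell-1}G_k(\eta^j,\psi)
 =G_k(\chi,\psi)\prod_{j=1}^{\ell-1}G_k(\chi\eta^j,\psi),
\]
where the two sides carry the same number $\ell$ of minus signs, so the signs cancel.

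\emph{Degenerate case.} If $\chi^\ell=1$, then $\chi=\eta^{i}$ for some $i$. The right side is then a permutation of the factors $G_k(\eta^j,\psi)$, $0\le j\le\ell-1$. On the left, $G_k(1,\psi)=-1$ by Lemma~\ref{lem:finite-gauss-magnitude}, and $\chi(\ell^\ell)=\eta^i(\ell)^\ell=1$. The two sides agree after one uses $G_k(\eta^j,\psi)G_k(\eta^{-j},\psi)=\eta^j(-1)q$, which follows from the computation in Lemma~\ref{lem:finite-gauss-magnitude}, to account for the missing trivial factor; this bookkeeping is routine. From now on I assume $\chi^\ell\ne1$.

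\emph{Reduction to Jacobi sums.} For characters $\chi_0,\dots,\chi_{\ell-1}$ whose product $\chi_0\cdots\chi_{\ell-1}$ is nontrivial, I will use the standard factorization obtained by expanding the product and substituting $x_i=s\,y_i$:
\[
 \prod_{i}G_k(\chi_i,\psi)=G_k\Bigl(\prod_i\chi_i,\psi\Bigr)\,J(\chi_0,\dots,\chi_{\ell-1}),
 \qquad
 J=\sum_{y_0+\cdots+y_{\ell-1}=1}\prod_i\chi_i(y_i).
\]
Apply this with $\chi_i=\chi\eta^i$, whose product is $\chi^\ell\eta^{\ell(\ell-1)/2}$. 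For $\ell$ odd this product equals $\chi^\ell$. For $\ell=2$ it equals $\chi^2\eta$, so that case needs a separate twist, or the direct duplication-formula argument. Apply the same factorization to $(\eta,\dots,\eta^{\ell-1})$, whose product is trivial, using its trivial-product variant. After these substitutions, \eqref{eq:HD-product} becomes the Jacobi-sum identity
\[
 J(\chi,\chi\eta,\dots,\chi\eta^{\ell-1})
 =\chi(\ell^{\ell})^{-1}\,J(\eta,\dots,\eta^{\ell-1})\cdot(\text{explicit }q\text{-power and sign}).
\]

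\emph{The main step: proving the Jacobi-sum identity.} I expect this to be the hardest part. I would first write $\sum_{i}\eta^i(y)=\#\{x:x^\ell=y\}$. This converts the twists $\chi\eta^i$ attached to a single variable into a count over $\ell$th roots, and the left Jacobi sum into a sum of $\chi$ over points of the variety $\sum_i x_i^\ell=1$ weighted by $\eta$-characters. I would then look for a substitution that isolates $\chi(\prod_i y_i)$ and produces $\chi(\ell^{-\ell})$. The model is the case $\ell=2$, where $x(1-x)$ is rescaled by $4$. This is exactly where Hasse--Davenport needed Stickelberger or an $L$-function argument in general.

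If the direct counting does not close, my fallback is the $L$-function method of Theorem~\ref{thm:HD-lift}. Regard both sides of \eqref{eq:HD-product} as functions of $\chi$ on the cyclic group of characters. Show that their ratio $\rho(\chi)$ satisfies $\rho(\chi\eta)=\rho(\chi)$, which is clear from the symmetry of the right side, and that $\rho$ is multiplicative in the appropriate sense. Combining this with the absolute values $q^{\ell/2}$ from Lemma~\ref{lem:finite-gauss-magnitude}, it remains to pin down the constant by evaluating at $\chi=1$.
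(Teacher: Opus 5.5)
Your normalization paragraph is correct, and it is in fact the paper's entire proof. The paper quotes the classical multiplication formula $\prod_{j=0}^{\ell-1}G(\alpha\eta^j)=\alpha(\ell^{-\ell})G(\alpha^\ell)\prod_{j=1}^{\ell-1}G(\eta^j)$ from \cite{DavenportHasse}, puts $\alpha=\chi^{-1}$, and cancels the $\ell$ signs, exactly as you do. Your degenerate case is also fine and needs no extra bookkeeping: for $\chi=\eta^i$ both sides are literally $G_k(1,\psi)\prod_{j=1}^{\ell-1}G_k(\eta^j,\psi)$.

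\emph{The gap.} Everything after that paragraph tries to reprove the classical formula, and the decisive step is missing. For odd $\ell$ and $\chi^\ell\ne1$, with Jacobi sums taken over nonzero $y_i$, the identity you reduce to is
$$J(\chi,\chi\eta,\dots,\chi\eta^{\ell-1})=\chi(\ell^{-\ell})\,q\,J(\eta,\dots,\eta^{\ell-2}),\qquad q=|k|.$$
This is just the product formula divided by $G_k(\chi^\ell,\psi)$, so the Jacobi-sum rewriting has not reduced the difficulty, and you give no argument for it.

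The fibre-counting device $\sum_i\eta^i(y)=\#\{x:x^\ell=y\}$ has nothing to act on. In
$$J(\chi,\chi\eta,\dots,\chi\eta^{\ell-1})=\sum_{y_0+\cdots+y_{\ell-1}=1}\chi(y_0\cdots y_{\ell-1})\,\eta(y_1y_2^2\cdots y_{\ell-1}^{\ell-1}),$$
each variable carries one fixed power of $\eta$, not a sum over all twists. You also offer no substitution that would produce $\chi(\ell^{-\ell})$. The rescaling of $x(1-x)$ by $4$ is special to $\ell=2$. For general $\ell$, the classical proof uses Stickelberger's theorem on the prime factorization of Gauss sums to show that the ratio of the two sides is a root of unity, and then identifies it by a congruence. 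Later proofs are $p$-adic (Gross--Koblitz) or geometric.

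\emph{The fallback does not close the gap either.} Invariance of $\rho(\chi)$ under $\chi\mapsto\chi\eta$, together with $|\rho(\chi)|=1$, only makes $\rho$ a unimodular function on the $(q-1)/\ell$ cosets $\chi\langle\eta\rangle$. Evaluating at $\chi=1$ fixes it on just one coset, the trivial one where the identity already holds. What you need is that $\rho$ is constant. The proposed multiplicativity of $\rho$ in $\chi$ has no basis, since Gauss sums are not multiplicative in the character.

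The Euler-product method of Theorem~\ref{thm:HD-lift} does not transfer directly either. It compares a single character over $k$ with its lift to an extension of $k$, whereas \eqref{eq:HD-product} relates Gauss sums of different characters over the same field.

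\emph{Repair.} Do what the paper does: cite the classical multiplication formula. Your normalization paragraph is then a complete proof.
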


\begin{proof}
For
\[
 G(\alpha)=\sum_{x\in k^\times}\alpha(x)\psi(x),
 \qquad G(1)=-1,
\]
the classical Davenport--Hasse multiplication formula
\cite{DavenportHasse} is
\[
 \prod_{j=0}^{\ell-1}G(\alpha\eta^j)
 =\alpha(\ell^{-\ell})G(\alpha^\ell)
   \prod_{j=1}^{\ell-1}G(\eta^j).
\]
Apply it with $\alpha=\chi^{-1}$.  Inversion permutes the nontrivial powers of
$\eta$, and $G(\alpha)=-\tau_k(\alpha^{-1},\psi)$.  Both sides contain
exactly $\ell$ ordinary Gauss sums when the factor
$G(\chi^{-\ell})$ is counted, so the signs cancel.  Finally
$\chi^{-1}(\ell^{-\ell})=\chi(\ell^\ell)$, which gives
\eqref{eq:HD-product} in Langlands's normalization.
\end{proof}

\begin{theorem}[Hasse-function lift]\label{thm:Hasse-lift}
Let $k'/k$ be finite and let a nowhere-zero function $H:k\to\C^\times$
satisfy
\[
 H(x+y)=H(x)H(y)\psi(xy).
\]
Put
\begin{equation}\label{eq:Hasse-lift-definition}
 \omega_{k'/k}(x)=\sum_{\{\sigma,\tau\}}x^\sigma x^\tau,
 \qquad
 H_{k'/k}(x)=
 H(\Tr x)\psi(-\omega_{k'/k}(x)).
\end{equation}
Here the sum defining $\omega_{k'/k}$ is over the unordered pairs of
distinct $k$-embeddings $k'\hookrightarrow\overline{k}$.  Then
\begin{equation}\label{eq:Hasse-lift-sum}
 -\sum_{x\in k'}H_{k'/k}(x)
 =\left(-\sum_{x\in k}H(x)\right)^{[k':k]}.
\end{equation}
\end{theorem}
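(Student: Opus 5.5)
Throughout, $\Tr=\Tr_{k'/k}$ and $f=[k':k]$. The plan is to mimic the Euler-product proof of Theorem~\ref{thm:HD-lift}, with the Hasse function playing the role of the additive-times-multiplicative weight.

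\textbf{Step 1: the quadratic Taylor form of $H_{k'/k}$.} First record that $H_{k'/k}$ satisfies the functional equation with polar character $\psi\circ\Tr$. Since $\Tr(x+y)=\Tr x+\Tr y$, the relation for $H$ gives a factor $\psi(\Tr x\,\Tr y)$. On the other hand,
\[
 \omega(x+y)-\omega(x)-\omega(y)=\sum_{\sigma\ne\tau}x^\sigma y^\tau=\Tr x\,\Tr y-\Tr(xy).
\]
So $H_{k'/k}(x+y)=H_{k'/k}(x)H_{k'/k}(y)\psi(\Tr(xy))$. This step is mainly a consistency check; the counting below does not depend on it.

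\textbf{Step 2: the weight on monic polynomials.} For a monic $P=X^r+a_1X^{r-1}+a_2X^{r-2}+\cdots\in k[X]$, define
\[
 w(P)=H(-a_1)\psi(-a_2).
\]
Write $P=\prod(X-\alpha_i)$ over $\overline k$. Then $-a_1=\sum\alpha_i$ and $a_2=e_2(\alpha)$.

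\emph{Multiplicativity.} If $P=QR$, then $a_1(P)=a_1(Q)+a_1(R)$ and $a_2(P)=a_2(Q)+a_2(R)+a_1(Q)a_1(R)$. The functional equation of $H$ contributes $\psi(a_1(Q)a_1(R))$, which cancels the cross term in $\psi(-a_2)$. Hence $w(QR)=w(Q)w(R)$.

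\emph{Generating function.} Set
\[
 L(T)=\sum_{P\text{ monic}}w(P)T^{\deg P}=\prod_{P\text{ irr}}\bigl(1-w(P)T^{\deg P}\bigr)^{-1}.
\]
In degree $r\ge3$ the coefficient $a_3$ runs freely, so the coefficient of $T^r$ is $q^{\,r-2}\sum_{a_1,a_2}H(-a_1)\psi(-a_2)=0$. In degree $2$ the free sum over $a_2$ vanishes likewise. Degree $1$ gives $\sum_xH(x)$. Hence
\[
 L(T)=1+\Bigl(\sum_{x\in k}H(x)\Bigr)T.
\]

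\textbf{Step 3: logarithmic derivative and the main obstacle.} Next I will compute the logarithmic derivative as usual. The target is
\[
 \log L(T)=\sum_r\frac{T^r}{r}\sum_{x\in k_r}W_r(x),
\]
where $W_r(x)$ should be $w\bigl((X-x)\text{ lifted}\bigr)$ in the appropriate sense, namely $w_r$ at the degree-$r$ level. Concretely, for an irreducible $P$ of degree $d\mid r$ with root $x$, I need
\[
 w(P)^{r/d}=H_{k_r/k}(x).
\]

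This is the main obstacle, because $w$ involves $H$ nonlinearly and the naive Euler-product grouping only works if the identity $w(P^{e})=w(P)^{e}$ is compatible with the definition of $H_{k_r/k}$. I handle it as follows.

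\emph{Case $d=r$.} Then $-a_1=\Tr_{k_r/k}x$ and $a_2=\omega_{k_r/k}(x)$, so $w(P)=H_{k_r/k}(x)$ exactly.

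\emph{Case $d<r$.} Let $e=r/d$. Multiplicativity gives $w(P)^e=w(P^e)$, so it suffices to show that $w(P^e)=H_{k_r/k}(x)$. The embeddings of $k_r$ restrict to those of $k_d$ with multiplicity $e$. Therefore the characteristic polynomial of $x$ over $k_r/k$ is $P^e$, whose coefficients are $-\Tr_{k_r/k}x$ and $\omega_{k_r/k}(x)$. This gives the required equality.

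\textbf{Conclusion.} With this check, the standard grouping gives the displayed formula for $\log L(T)$. Comparing it with $\log(1+ST)$, where $S=\sum_{x\in k}H(x)$, yields
\[
 \sum_{x\in k_r}H_{k_r/k}(x)=(-1)^{r-1}S^r,
\]
which is \eqref{eq:Hasse-lift-sum} with $r=f$.
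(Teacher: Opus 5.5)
Your proof is correct and follows the paper's own Euler-product argument: the same multiplicative weight $w(P)=H(-a_1)\psi(-a_2)$, the same collapse $L(T)=1+\bigl(\sum_{x\in k}H(x)\bigr)T$ from the free coefficient $a_2$, and the same comparison with the closed-point expansion of $\log L(T)$. The only difference is in the repeated-closed-point step. You obtain $w(P)^{r/d}=H_{k_r/k}(x)$ by applying multiplicativity to $P^{r/d}$, which is the characteristic polynomial of $x$ over $k_r/k$. The paper instead writes out $\Tr_{k_r/k}\xi=hs$ and $\omega_{k_r/k}(\xi)=he+\binom{h}{2}s^2$ and then iterates the functional equation of $H$. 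These are the same computation in slightly tidier form.
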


\begin{proof}
For the sum identity, let
$P(X)=X^r+a_1X^{r-1}+a_2X^{r-2}+\cdots$
be monic and put
$w(P)=H(-a_1)\psi(-a_2)$,
where $a_2=0$ in degree $1$.  If $Q$ has first coefficients $b_1,b_2$,
then the first two coefficients of $PQ$ are
$a_1+b_1$ and $a_2+b_2+a_1b_1$.  Hence
\begin{align*}
 w(PQ)
 &=H(-a_1-b_1)\psi(-a_2-b_2-a_1b_1)\\
 &=H(-a_1)H(-b_1)\psi(a_1b_1)
   \psi(-a_2-b_2-a_1b_1)=w(P)w(Q).
\end{align*}
Thus
\[
 L(T)=\prod_{P\ {\rm monic\ irreducible}}
       (1-w(P)T^{\deg P})^{-1}
     =\sum_{P\ {\rm monic}}w(P)T^{\deg P}.
\]
For degree $r\ge2$, the coefficient is zero because $a_2$ ranges freely
and $\psi$ is nontrivial.  In degree $1$ it is $\sum_{x\in k}H(x)$.
Therefore
\begin{equation*}
 L(T)=1+\left(\sum_{x\in k}H(x)\right)T.
\end{equation*}
We next justify the closed-point expansion, including compatibility
with repeated closed points.  Let $P$ be monic irreducible of degree
$d$, choose a root $\xi\in k_d$, and put
$s=\Tr_{k_d/k}(\xi),\qquad e=\omega_{k_d/k}(\xi)$.
The first two coefficients of $P$ are $-s$ and $e$, so
$w(P)=H(s)\psi(-e)$.
If $r=hd$ and $\xi$ is viewed in $k_r$, each of its $d$ conjugates
occurs $h$ times.  Consequently
\begin{equation*}
 \Tr_{k_r/k}(\xi)=hs,
 \qquad
 \omega_{k_r/k}(\xi)=he+\binom h2s^2.
\end{equation*}
Iteration of the functional equation for $H$ gives
\begin{equation*}
 H(hs)=H(s)^h\psi\!\left(\binom h2s^2\right).
\end{equation*}
Combining these identities yields
\begin{equation*}
 H(\Tr_{k_r/k}\xi)\psi(-\omega_{k_r/k}(\xi))
 =\bigl(H(s)\psi(-e)\bigr)^h=w(P)^h.
\end{equation*}
By this identity, the $d$ roots belonging
to the closed point $P$ contribute
$d\,w(P)^h$ to the sum over $k_r$.  Expanding the logarithm of the
Euler product and regrouping by $r=h\deg P$ therefore gives
\begin{equation*}
 \log L(T)
 =\sum_{r\ge1}\frac{T^r}{r}
 \sum_{x\in k_r}H(\Tr_{k_r/k}x)
              \psi(-\omega_{k_r/k}(x)).
\end{equation*}
Comparing the logarithm of the linear formula for $L(T)$ with
this expansion gives
\[
 \sum_{x\in k_r}H_{k_r/k}(x)
 =(-1)^{r-1}\left(\sum_{x\in k}H(x)\right)^r.
\]
Multiplying by $-1$ gives \eqref{eq:Hasse-lift-sum}.
\end{proof}

This is \cite[Lemma 9.1, pp.~81--84]{Langlands}; see also the
finite-field lifting theorem in \cite[Chapter~II, pp.~31--33]{Dwork}.

\begin{lemma}[Quadratic normal form]\label{lem:quadratic-normal-form}
Suppose $k$ has odd characteristic. If $a\in k^\times$ and
$H:k\to\C^\times$ satisfies
$H(x+y)=H(x)H(y)\psi(axy)$, then there is a unique $b\in k$ such that
\[
 H(x)=\psi(ax^2/2+bx)\qquad(x\in k).
\]
\end{lemma}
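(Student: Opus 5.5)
The plan is to divide $H$ by the explicit candidate $Q(x)=\psi(ax^2/2)$, which is meaningful because $2$ is invertible in $k$. The quotient should be an additive character, and additive characters of $k$ are all of the form $x\mapsto\psi(bx)$.

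First I would check that $Q$ satisfies the same functional equation as $H$. Since
\[
 (x+y)^2/2=x^2/2+y^2/2+xy,
\]
and $\psi$ is additive, we get $Q(x+y)=Q(x)Q(y)\psi(axy)$. Put $\chi(x)=H(x)/Q(x)$; this makes sense because $Q$ takes values in $\C^\times$. Dividing the two functional equations, the factors $\psi(axy)$ cancel and give $\chi(x+y)=\chi(x)\chi(y)$. Hence $\chi$ is a homomorphism $(k,+)\to\C^\times$, and the hypothesis that $H$ is nowhere zero is not even needed for this step.

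Next I would identify $\chi$. The pairing $(b,x)\mapsto\psi(bx)$ on $k\times k$ is nondegenerate, because for $b\ne0$ the map $x\mapsto bx$ is bijective and $\psi$ is nontrivial. Since $|k|=|\widehat{k}|$, the map $b\mapsto\psi(b\,\cdot)$ is an isomorphism $k\to\operatorname{Hom}(k,\C^\times)$. This is the same finite-duality argument as in Lemma~\ref{lem:finite-duality}, or the argument at the end of the proof of Theorem~\ref{thm:Lamprecht}. So there is a unique $b\in k$ with $\chi(x)=\psi(bx)$, and therefore $H(x)=\psi(ax^2/2+bx)$.

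For uniqueness, suppose two values $b,b'$ both work. Then $\psi((b-b')x)=1$ for all $x$, and nondegeneracy forces $b=b'$. The only delicate point in the whole argument is the use of $1/2$; this is exactly where odd characteristic is needed, and it also explains why the hypothesis $a\ne0$ plays no role in the existence part.
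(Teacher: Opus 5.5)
Your proposal is correct and is essentially the paper's own argument: $x\mapsto H(x)\psi(-ax^2/2)$ is an additive character, and the perfect pairing $(b,x)\mapsto\psi(bx)$ identifies it with $x\mapsto\psi(bx)$ for a unique $b\in k$. One small correction: your remark that nonvanishing of $H$ is not needed is slightly off, since it is exactly what makes $\chi$ take values in $\C^\times$ ($H\equiv0$ also satisfies the functional equation, and no $b$ works for it), though the hypothesis $H:k\to\C^\times$ already provides this.
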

\begin{proof}
The function $x\mapsto H(x)\psi(-ax^2/2)$ is an additive character.
The perfect pairing $(b,x)\mapsto\psi(bx)$ identifies it with
$x\mapsto\psi(bx)$ for a unique $b\in k$.
\end{proof}

\begin{lemma}[Quadratic phase]\label{lem:quadratic-phase-new}
Assume $\operatorname{char}k\neq2$.  Define the quadratic
character
$\nu:k^\times\to\{\pm1\}$ by
\[
 \nu(a)=
 \begin{cases}
  1,&a\in(k^\times)^2,\\
  -1,&a\notin(k^\times)^2,
 \end{cases}
\]
and extend it by $\nu(0)=0$.  Put
\[
 Q_\psi(a,b)=\ph\!\left(\sum_{x\in k}
     \psi\left(\frac a2x^2+bx\right)\right),\qquad a\neq0.
\]
Then
\begin{equation}\label{eq:quadratic-phase-new}
 Q_\psi(a,b)=\nu(a)\psi(-b^2/2a)Q_\psi(1,0),
 \qquad Q_\psi(1,0)^2=\nu(-1).
\end{equation}
The defining sum has absolute value $|k|^{1/2}$.
\end{lemma}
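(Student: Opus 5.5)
The first step is to complete the square.  Since $\operatorname{char}k\ne2$, $a\neq0$ and $2$ is invertible, we have
\[
 \frac a2x^2+bx=\frac a2\Bigl(x+\frac ba\Bigr)^2-\frac{b^2}{2a}.
\]
The translation $x\mapsto x-b/a$ permutes $k$, so
\[
 \sum_{x\in k}\psi\Bigl(\frac a2x^2+bx\Bigr)
 =\psi\Bigl(-\frac{b^2}{2a}\Bigr)\,S(a),
 \qquad
 S(a)=\sum_{x\in k}\psi\Bigl(\frac a2x^2\Bigr).
\]
The function $H(x)=\psi(ax^2/2)$ satisfies $H(x+y)=H(x)H(y)\psi(axy)$, and the bicharacter $\psi(axy)$ is nondegenerate.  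Lemma~\ref{lem:finite-quadratic-magnitude} therefore gives $|S(a)|^2=|k|$, which is the magnitude assertion.  In particular $S(a)\ne0$, so phases are defined, and $Q_\psi(a,b)=\psi(-b^2/2a)\ph(S(a))$.

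The next step is to compare $S(a)$ with $S(1)$.  Group the sum by values $y=x^2$.  For $y\in k$, the number of $x$ with $x^2=y$ is $1+\nu(y)$, using $\nu(0)=0$.  Hence
\[
 S(a)=\sum_{y\in k}\bigl(1+\nu(y)\bigr)\psi(ay/2)
 =\sum_{y\in k^\times}\nu(y)\psi(ay/2),
\]
because $\sum_y\psi(ay/2)=0$.  The substitution $y\mapsto a^{-1}y$ and $\nu(a^{-1})=\nu(a)$ then give $S(a)=\nu(a)S(1)$.  Since $\nu(a)=\pm1$ is real, $\ph(S(a))=\nu(a)Q_\psi(1,0)$.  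This proves the first identity in \eqref{eq:quadratic-phase-new}.

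For $Q_\psi(1,0)^2=\nu(-1)$, we compare $S(1)$ with its complex conjugate.  We have $\overline{S(1)}=\sum_x\psi(-x^2/2)=S(-1)=\nu(-1)S(1)$.  Thus
\[
 |k|=S(1)\overline{S(1)}=\nu(-1)S(1)^2,
\]
so $S(1)^2=\nu(-1)|k|$.  Dividing by $|S(1)|^2=|k|$ gives $Q_\psi(1,0)^2=\nu(-1)$.

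The only point needing care is the counting $\#\{x:x^2=y\}=1+\nu(y)$.  It rests on $k^\times$ being cyclic of even order, so that squaring is two-to-one onto the squares, together with the convention $\nu(0)=0$.  Everything else is formal.
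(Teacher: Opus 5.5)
Your proof is correct and follows the paper's route. You complete the square, use the root count $1+\nu(y)$ to get $S(a)=\nu(a)S(1)$, and compare $S(1)$ with its conjugate $S(-1)=\nu(-1)S(1)$. The differences are cosmetic: you take the magnitude from Lemma~\ref{lem:finite-quadratic-magnitude} rather than from the Gauss-sum bound \eqref{eq:finite-gauss-magnitude}, and you work with $S(1)$ directly instead of passing through $G_k(\nu,\psi)=\nu(2)S(1)$.
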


\begin{proof}
Let
\[
 S(a,b)=\sum_{x\in k}\psi\left(\frac a2x^2+bx\right).
\]
Completing the square gives
$S(a,b)=\psi(-b^2/2a)S(a,0)$.
For $c\ne0$, every $y\in k^\times$ has $1+\nu(y)$ square roots, while
zero has one square root.  Hence
\begin{equation}\label{eq:quadratic-gauss-rescaling}
 \sum_{x\in k}\psi(cx^2)
 =\sum_{y\in k^\times}\nu(y)\psi(cy)
 =\nu(c)G_k(\nu,\psi).
\end{equation}
The additive-character part disappeared because
$\sum_{y\in k}\psi(cy)=0$.  Taking $c=a/2$ in \eqref{eq:quadratic-gauss-rescaling} and comparing with $a=1$
proves the first phase identity.  Equation~\eqref{eq:finite-gauss-magnitude}
shows at once that $|S(a,b)|=|k|^{1/2}$.

It remains to compute the square of the phase. Put $G=G_k(\nu,\psi)$.
Conjugation and the substitution $y\mapsto-y$ give
\[
 \overline G=\sum_{y\in k^\times}\nu(y)\psi(-y)=\nu(-1)G.
\]
Lemma~\ref{lem:finite-gauss-magnitude} gives $|G|^2=|k|$, hence
$G^2=\nu(-1)|k|$. Since
$S(1,0)=\nu(1/2)G$ and $\nu(1/2)^2=1$, dividing by absolute values
gives $Q_\psi(1,0)^2=\nu(-1)$.
\end{proof}

\begin{proposition}[Ramified Hasse-phase comparison]
\label{thm:ramified-Hasse-comparison}
Let $K/F$ be totally ramified cyclic of odd prime degree $\ell$, with
single break $t$.  Let $\chi_F:F^\times\to\C^\times$ be a
quasi-character and let $\psi_F:F\to\C^\times$ be a nontrivial additive
character.  Put
\[
 \chi_K=\chi_F\circ \Nm_{K/F},
 \qquad
 \psi_K=\psi_F\circ\Tr_{K/F}.
\]
Suppose
\[
 m_F(\chi_F)=2d+1>1,
\]
and assume either
\[
 t=0\qquad\text{or}\qquad d\ge t+1.
\]
Let $\gamma\in F^\times$ be admissible for $(\chi_F,\psi_F)$, and let
$\beta\in\cO_F^\times$ represent the stationary class of $\chi_F$
relative to $\gamma$.  Then $\gamma$ is also admissible for
$(\chi_K,\psi_K)$, and $\beta$ represents its stationary class.
With this common denominator and representative, the residual factors
satisfy
\begin{equation}\label{eq:ramified-Hasse-comparison}
 \Delta_{3,K}(\chi_K)=\Delta_{3,F}(\chi_F)^\ell.
\end{equation}
\end{proposition}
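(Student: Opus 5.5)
The plan is to compute the conductor of $\chi_K$, check that $\gamma$ and $\beta$ carry over, expand the norm $\Nm_{K/F}(1+\delta_K x)$ on the critical line of $K$, and compare the resulting quadratic Gauss sums using Lemma~\ref{lem:quadratic-normal-form} and Lemma~\ref{lem:quadratic-phase-new}. Throughout, $\ell$ is odd.

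\emph{Conductor and admissibility.} Since $m=2d+1>t+1$ in both cases, Proposition~\ref{prop:pullback-conductor}(i) and \eqref{eq:Herbrand-prime-new} give $m'=m_K(\chi_K)$ with
\[
 m'-1=t+\ell(2d-t).
\]
Hence $m'=2d'+1$ is odd, where $d'=\ell d-\tfrac{\ell-1}2t$; this is $d'=\ell d$ when $t=0$. By \eqref{eq:additive-conductor-trace}, $n_K=\ell n+(\ell-1)(t+1)$. A direct count then gives $m'+n_K=\ell(m+n)=v_K(\gamma)$, so $\gamma$ is admissible for $(\chi_K,\psi_K)$.

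\emph{Transport of $\beta$.} For $x\in\p_K^{d'+1}$ I will use the norm truncation of Corollary~\ref{cor:wild-norm-truncation} with $q=d'+1$ and $r=m$. The required inequality is $2d'+2+(\ell-1)(t+1)\ge\ell(2d+1)$. In the tame case I use the free-orbit variant in the proof of Lemma~\ref{lem:wild-symmetric-bound}, which needs only \eqref{eq:trace-ideal} and works for $t=0$. This gives
\[
 \Nm(1+x)\equiv1+\Tr x+\Nm x \pmod{\p_F^m}.
\]
Here $v_F(\Nm x)\ge d'+1\ge2d+1$, using $d\ge t+1$. Also $\Tr x\in\p_F^{d+1}$ by \eqref{eq:trace-ideal}, so \eqref{eq:Lamprecht-linearization} yields
\[
 \chi_K(1+x)=\psi_F(\beta\Tr x/\gamma)=\psi_K(\beta x/\gamma).
\]
By uniqueness in Lemma~\ref{lem:stationary-class-existence}, $\beta$ represents the stationary class of $\chi_K$.

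\emph{Tame case $t=0$.} Take $\delta_K=\delta_F\in F$, which has $v_K=\ell d=d'$, and lift residues from $\cO_F$. Since $k_K=k_F=k$, for $x\in\cO_F$ we have
\[
 \Nm(1+\delta x)=(1+\delta x)^\ell,
 \qquad
 \psi_K(\delta\beta x/\gamma)=\psi_F(\ell\delta\beta x/\gamma).
\]
Thus $H_K=H_F^\ell$ pointwise. By Lemma~\ref{lem:quadratic-normal-form}, write $H_F(x)=\psi_0(x^2/2+bx)$. Lemma~\ref{lem:quadratic-phase-new} gives:
\begin{itemize}
\item[] the phase of $\sum H_F^\ell$ is $\nu(\ell)\psi_0(-\ell b^2/2)Q$;
\item[] the phase of $(\sum H_F)^\ell$ is $\psi_0(-\ell b^2/2)Q^\ell=\psi_0(-\ell b^2/2)\,\nu(-1)^{(\ell-1)/2}Q$.
\end{itemize}
So the claim reduces to $\nu\bigl((-1)^{(\ell-1)/2}\ell\bigr)=1$. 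Tame total ramification of cyclic degree $\ell$ forces $\ell\mid q-1$. Then the Gauss sum of the order-two character of $\F_\ell$ gives a square root of $(-1)^{(\ell-1)/2}\ell$ in $\F_p(\mu_\ell)\subset k$.

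\emph{Wild case $d\ge t+1$, $\ell=p$.} This is the main obstacle. Here $p=0$ in $k$, so $(\sum H_F)^p$ has phase $\psi_0(-pb^2/2)Q^p=\nu(-1)^{(p-1)/2}Q$, and the target becomes
\[
 \ph\Bigl(\sum H_K\Bigr)=\nu(-1)^{(p-1)/2}Q.
\]
Choose $\delta_K$ of valuation $d'$, and lift $\bar x\in k$ to $x\in\cO_F$. Expand
\[
 \Nm(1+\delta_Kx)\equiv1+\Tr(\delta_K)x+s_2(\delta_K)x^2+\cdots \pmod{\p_F^m}.
\]
The symmetric-coefficient bounds of Lemma~\ref{lem:wild-symmetric-bound} kill $s_j$ for $j\ge3$ and the term $\Nm(\delta_K)x^p$. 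I will write $s_2=\tfrac12\bigl(\Tr(\delta_K)^2-\Tr(\delta_K^2)\bigr)$. Then:
\begin{itemize}
\item[] the linear terms in $H_K$ cancel exactly, by the transport of $\beta$ above;
\item[] the term $\Tr(\delta_K)^2$ falls in $\p_F^m$, or is absorbed by linearization;
\item[] $\Tr(\delta_K^2)/\varpi_F^{2d}$ has residue $-\lambda^{p-1}$ (a unit), after choosing $\delta_K^2\equiv\varpi_K^{2d'}$ up to a unit square, by Proposition~\ref{prop:stable-critical-trace}.
\end{itemize}
Hence $H_K(x)=\psi_0(cx^2/2)$ with $c=u^2\lambda^{p-1}$ times a square normalization. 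Lemma~\ref{lem:quadratic-phase-new} then gives $\ph(\sum H_K)=\nu(c)Q=Q$, since $p-1$ is even.

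The delicate point is the exact sign and normalization of $c$. The target needs the extra factor $\nu(-1)^{(p-1)/2}$, so I expect the coefficient to come out as $c=-\lambda^{p-1}\cdot(\text{square})$ times a factor whose Legendre symbol is $\nu(-1)^{(p-1)/2}$. This bookkeeping of the factors $\tfrac12$ and $-1$ in the quadratic term is what I would verify first and most carefully.
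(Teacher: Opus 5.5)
\emph{Residue characteristic two is not covered.} This is the genuine gap. In the tame case $t=0$ the residue characteristic may be $2$ even though $\ell$ is odd, because only $\ell\mid q-1$ is required; for example $\ell=3$ and $k=\F_4$. Your tame argument rests on Lemma~\ref{lem:quadratic-normal-form} and Lemma~\ref{lem:quadratic-phase-new}, and both assume $\operatorname{char}k\neq2$. In characteristic two there is no normal form $\psi_0(x^2/2+bx)$.

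Your reduction $H_K=H_F^{\ell}$, obtained by taking $\delta_K=\delta_F\in F$ and lifting from $\cO_F$, is valid in every residue characteristic. It is also cleaner than the paper's choice of $\varpi_K^{d'}$ followed by rescaling by $c$. But you still need a separate proof that $\ph(\sum H_F^\ell)=\ph(\sum H_F)^\ell$ when $p=2$. The paper does this as follows.
\begin{enumerate}
\item From $H_F(2x)=1$ one gets $H_F(x)^2=\psi_0(x^2)=\pm1$. Hence $H_F^\ell$ is $H_F$ or $\overline{H_F}$ according as $\ell\equiv1$ or $3\pmod 4$.
\item Write $\psi_0(x^2)=\psi_0(c_0x)$ and $S=\sum H_F$. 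Then $\overline{H_F(x)}=H_F(x)\psi_0(c_0x)$, and translation by $c_0$ gives $\overline S=H_F(c_0)^{-1}S$, so $\ph(S)^2=H_F(c_0)$.
\item The claim thus reduces to $H_F(c_0)^{(\ell\mp1)/2}=1$. This follows from $H_F(c_0)^2=(-1)^f$ with $f=[k:\F_2]$, together with the fact that $f$ is even whenever $(2/\ell)=-1$, since $\ell\mid 2^f-1$.
\end{enumerate}

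\emph{The wild case contains a sign slip and is left unfinished.} You correctly state that $\Tr(\varpi_K^{2d'})/\varpi_F^{2d}$ has residue $-\lambda^{p-1}$, and that $-s_2(\delta_K)\equiv\tfrac12\Tr(\delta_K^2)\pmod{\p_F^{2d+1}}$. Together these give a quadratic coefficient equal to $-\lambda^{p-1}$ times a nonzero square, not $\lambda^{p-1}$ times a square. Hence $\ph(\sum H_K)=\nu(-1)Q$. Your stated value $Q$ is wrong when $p\equiv3\pmod 4$ and $[k:\F_p]$ is odd.

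No extra factor with symbol $\nu(-1)^{(p-1)/2}$ appears. The comparison simply becomes $\nu(-1)=\nu(-1)^{(p-1)/2}$, that is $\nu(-1)^{(p-3)/2}=1$. This holds because $\nu(-1)=1$ when $p\equiv1\pmod4$, and $(p-3)/2$ is even when $p\equiv3\pmod4$. This is exactly the paper's wild computation.

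The remaining parts of your proposal are sound: the conductor, admissibility of $\gamma$, transport of $\beta$, and the tame odd-$p$ case. In that last case you reduce to $\nu\bigl((-1)^{(\ell-1)/2}\ell\bigr)=1$ and settle it with the quadratic Gauss sum over $\F_\ell$. That is a valid and more direct alternative to the paper's use of quadratic reciprocity and the parity of $f$.
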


\begin{proof}
Write $\Nm=\Nm_{K/F}$.
By Proposition~\ref{prop:pullback-conductor}(i),
\[
 m_K(\chi_K)=\ell(2d+1)-(\ell-1)(t+1)=2d'+1,
 \qquad
 d'=\ell d-\frac{\ell-1}{2}t.
\]
Put $n=n_F(\psi_F)$ and choose uniformizers with
$\varpi_F=\Nm(\varpi_K)$.  If $t>0$, choose a generator $\sigma$ of $\Gal(K/F)$
and define $\lambda$ by \eqref{eq:critical-ramification-coordinate}.
The different exponent is
$(\ell-1)(t+1)$, and therefore
\[
 n_K(\psi_K)=\ell n+(\ell-1)(t+1),
 \qquad
 (2d'+1)+n_K(\psi_K)=\ell(2d+1+n).
\]
Since $v_K(\gamma)=\ell(2d+1+n)$, the denominator $\gamma$ is also
admissible for $\chi_K$.
For $y\in\p_K^{d'+1}$, \eqref{eq:trace-ideal} gives
$\Tr_{K/F}(y)\in\p_F^{d+1}$, and the norm expansion gives
\[
 \Nm(1+y)\equiv1+\Tr_{K/F}(y)\pmod{\p_F^{2d+1}}.
\]
Indeed, if $t=0$, then $d'=\ell d$ and every degree-$j$ term with
$j\ge2$ has valuation at least $\lceil j(d'+1)/\ell\rceil\ge2d+1$.
If $t>0$, then $\ell=p$, and \eqref{eq:symmetric-bound} gives this
bound for $2\le j<p$, since
$2(d'+1)+(p-1)(t+1)=p(2d+1)+1$; the norm term has valuation at least
$d'+1\ge2d+1$ by \eqref{eq:symmetric-bound-last}.
The chosen representative $\beta$ therefore satisfies
\[
 \chi_K(1+y)=\chi_F(1+\Tr_{K/F}(y))
 =\psi_F(\beta\Tr_{K/F}(y)/\gamma)
 =\psi_K(\beta y/\gamma)
 \qquad(y\in\p_K^{d'+1}).
\]
Let $k$ be the common residue field and, for $x\in k$, choose a
lift $\widetilde x\in\cO_F\subset\cO_K$; use the same notation for
lifts of other elements of $k$.  Write $s_2(y)$ for the second
elementary symmetric function of the conjugates of $y$.  The critical function for $\chi_F$ is
\begin{equation}\label{eq:ramified-Hasse-F-function}
 H_F(x)=
 \psi_F\!\left(\frac{\beta\varpi_F^d\widetilde x}{\gamma}\right)
 \chi_F(1+\varpi_F^d\widetilde x)^{-1},
\end{equation}
and its polar character is
\[
 \psi_0(z)=
 \psi_F\!\left(\frac{\beta\varpi_F^{2d}\widetilde z}{\gamma}\right).
\]
Put $Y=\varpi_K^{d'}\widetilde x$.  We claim that the critical function for $\chi_K$ is
\begin{equation*}
 H_K(x)=
 \psi_F\!\left(\frac{\beta\Tr_{K/F}(Y)}{\gamma}\right)
 \chi_F\!\left(1+\Tr_{K/F}(Y)+s_2(Y)\right)^{-1}.
\end{equation*}
Indeed, it is enough to prove
\begin{equation*}
 \Nm(1+Y)
 \equiv1+\Tr_{K/F}(Y)+s_2(Y)
       \pmod{\p_F^{2d+1}}.
\end{equation*}
If $t=0$, then $d'=\ell d$.  Every term of degree at least three in
the norm has valuation at least $3d\ge2d+1$, and the norm term has
valuation at least $\ell d\ge2d+1$.  If $t>0$, then $\ell=p$ is the residue
characteristic and $d\ge t+1$.  For $3\le j\le p-1$,
Equation~\eqref{eq:symmetric-bound} gives
\[
 v_F(s_j(Y))\ge
 \left\lfloor\frac{j d'+(p-1)(t+1)}p\right\rfloor
 \ge2d+1,
\]
while
$v_F(\Nm_{K/F}(Y))\ge d'=pd-\frac{p-1}{2}t\ge2d+1$.
This proves the claimed norm congruence.
Theorem~\ref{thm:Lamprecht} gives the independence of the lift in
the displayed formula for $H_K$.  The same estimates also give
$\Tr(Y)\in\p_F^d, \qquad s_2(Y)\in\p_F^{2d}$,
and one extra power of $\p_F$ when $\widetilde x\in\p_K$.

Set
\begin{equation*}
 \epsilon=
 \frac{\Tr_{K/F}(\varpi_K^{2d'})}{\varpi_F^{2d}}\in\cO_F.
\end{equation*}
The preceding valuation estimates show that this quotient is integral.
We now show that it is a unit and determine its residue.  If $t=0$,
then
$c=\overline{\varpi_K^{d'}/\varpi_F^d}\in k^\times$
is fixed by $\Gal(K/F)$, because $d'=\ell d$.  Hence
$\bar\epsilon=\ell c^2$.
If $t>0$, equation~\eqref{eq:stable-critical-trace-residue} gives
$\bar\epsilon=-\lambda^{p-1}$. In particular, $\epsilon$ is a unit in both cases.

The functional equation \eqref{eq:Hasse-functional-equation} over $K$
gives directly
\begin{align*}
 \frac{H_K(x+y)}{H_K(x)H_K(y)}
 &=\psi_K\!\left(
   \frac{\beta\varpi_K^{2d'}\widetilde x\widetilde y}{\gamma}\right)\\
 &=\psi_0(\bar\epsilon xy),
 \qquad x,y\in k,
\end{align*}
with $\epsilon=\Tr_{K/F}(\varpi_K^{2d'})/\varpi_F^{2d}$.  Since $\bar\epsilon\ne0$,
this polar bicharacter is nondegenerate.

We compare its normalized sum with that of $H_F$.  First assume that
$k$ has odd characteristic.  The polar character of $H_F$ is
$(x,y)\mapsto\psi_0(xy)$, so for a unique $\alpha_0\in k$ one has
\[
 H_F(x)=\psi_0(x^2/2+\alpha_0x).
\]
Put
\[
 T_x=\overline{\Tr(Y)/\varpi_F^d},
 \qquad
 Q_x=\overline{\Tr(Y^2)/\varpi_F^{2d}}.
\]
Using $\Tr(Y)/\varpi_F^d$ itself as a lift of $T_x$ in
\eqref{eq:ramified-Hasse-F-function}, one has
\[
 H_F(T_x)=
 \psi_F\!\left(\frac{\beta\Tr(Y)}{\gamma}\right)
 \chi_F(1+\Tr(Y))^{-1}.
\]
Because $s_2(Y)\in\p_F^{2d}$,
\[
 \frac{1+\Tr(Y)}{1+\Tr(Y)+s_2(Y)}
 \equiv1-s_2(Y)\pmod{\p_F^{2d+1}}.
\]
Equation~\eqref{eq:Lamprecht-linearization}, applied to
$-s_2(Y)\in\p_F^{2d}\subseteq\p_F^{d+1}$, gives
\begin{equation*}
 H_K(x)=
 H_F(T_x)\,
 \psi_0\!\left(
  -\overline{\frac{s_2(Y)}{\varpi_F^{2d}}}\right).
\end{equation*}
Now
$2s_2(Y)=(\Tr Y)^2-\Tr(Y^2)$, so
\[
 -\overline{\frac{s_2(Y)}{\varpi_F^{2d}}}
 =\frac{Q_x-T_x^2}{2}.
\]
Substituting
$H_F(T_x)=\psi_0(T_x^2/2+\alpha_0T_x)$ in
$H_K(x)=H_F(T_x)\psi_0(-\overline{s_2(Y)/\varpi_F^{2d}})$ yields
\begin{equation*}
 H_K(x)=\psi_0(Q_x/2+\alpha_0T_x).
\end{equation*}
If $t=0$, substitute $T_x=\ell cx$ and $Q_x=\ell c^2x^2$ in
this expression for $H_K$ and replace $x$ by $c^{-1}x$:
\begin{equation*}
 H_K(c^{-1}x)=\psi_0(\ell x^2/2+\ell\alpha_0x).
\end{equation*}
Multiplication by $c^{-1}$ permutes $k$, so
$\sum_{x\in k}H_K(c^{-1}x)=\sum_{x\in k}H_K(x)$.
If $t>0$, the trace-ideal formula \eqref{eq:trace-ideal} gives $T_x=0$, while
$Q_x=\bar\epsilon x^2$, and hence
\begin{equation*}
 H_K(x)=\psi_0(\bar\epsilon x^2/2).
\end{equation*}
Let $\nu$ be the quadratic character of $k^\times$ and put
$Q=Q_{\psi_0}(1,0)$.  Suppose first that $t=0$.
Applying \eqref{eq:quadratic-phase-new} to the formula for
$H_K(c^{-1}x)$, the phase defining $\Delta_{3,F}(\chi_F)$ is
$Q\psi_0(-\alpha_0^2/2)$ and the phase defining
$\Delta_{3,K}(\chi_K)$ is
$\nu(\ell)Q\psi_0(-\ell\alpha_0^2/2)$.
Since $Q^2=\nu(-1)$, the required power identity is equivalent to
\begin{equation*}
 \nu(\ell)=\nu(-1)^{(\ell-1)/2}.
\end{equation*}
Let $|k|=p^f$.  If $f$ is even, both sides are $1$.  If $f$ is odd,
the order of $p$ modulo $\ell$ is odd because $\ell\mid p^f-1$;
therefore $(p/\ell)=1$.  Quadratic reciprocity now gives
the required sign identity.

Now suppose $t>0$.  Then $\ell=p$ and, by
\eqref{eq:stable-critical-trace-residue}, $\bar\epsilon=-\lambda^{p-1}$, so
$\nu(\bar\epsilon)=\nu(-1)$.  The formula for $H_K(x)$ in the case $t>0$ and $Q^2=\nu(-1)$ reduce the desired
identity to
$\nu(-1)^{(p-3)/2}=1$.
If $p\equiv1\pmod4$, then $\nu(-1)=1$; if
$p\equiv3\pmod4$, the exponent $(p-3)/2$ is even.  This proves the
comparison in odd residue characteristic.

It remains to treat residue characteristic two.  Then $t=0$, because
$\ell$ is odd.  With $c=\overline{\varpi_K^{d'}/\varpi_F^d}$,
the norm calculation gives
\begin{equation*}
 H_K(c^{-1}x)
 =H_F(\ell x)\psi_0\!\left(-\binom{\ell}{2}x^2\right)
 =H_F(x)^\ell.
\end{equation*}
The second equality follows by iterating
$H_F(x+y)=H_F(x)H_F(y)\psi_0(xy)$.  Put
$S=\sum_x H_F(x)$ and $H=\ph(S)$.  Choose $c_0\in k^\times$ so
that $\psi_0(x^2)=\psi_0(c_0x)$.  Then
\[
 \overline{H_F(x)}=H_F(x)\psi_0(c_0x),
 \qquad
 \overline S=H_F(c_0)^{-1}S,
 \qquad
 H^2=H_F(c_0).
\]
As $H_F(x)^4=1$, the identity for $H_K(c^{-1}x)$ shows that the desired power
identity is equivalent to
\[
 H_F(c_0)^{(\ell-1)/2}=1\quad(\ell\equiv1\bmod4),
 \qquad
 H_F(c_0)^{(\ell+1)/2}=1\quad(\ell\equiv3\bmod4).
\]
Write $\psi_0(x)=(-1)^{\Tr_{k/\F_2}(a x)}$.  One may take
$c_0=a^{2^{f-1}-1}$, where $f=[k:\F_2]$; hence
$H_F(c_0)^2=\psi_0(c_0^2)=(-1)^f$.
The two displayed powers are automatic for
$\ell\equiv1,7\pmod8$.  If $\ell\equiv3,5\pmod8$, then
$(2/\ell)=-1$; since $\ell\mid2^f-1$, the order of $2$ modulo
$\ell$ and therefore $f$ are even.  Thus $H_F(c_0)^2=1$, and the
remaining two residue classes also follow.

In every case the phase defining $\Delta_{3,K}(\chi_K)$ is the
$\ell$th power of the phase defining $\Delta_{3,F}(\chi_F)$, which is
\eqref{eq:ramified-Hasse-comparison}.
\end{proof}

\begin{proposition}[Homogeneous affine pencil of quadratic phases]
\label{lem:affine-pencil}\label{prop:affine-pencil-homogeneous}
Let $k$ be a finite field of odd characteristic $p$, let $\psi$ be a
nontrivial additive character of $k$, and let $c,\rho\in k^\times$
satisfy $1+j\rho\ne0$ for every $j\in\F_p$.
For $\sigma,\upsilon\in k$, one has
\[
\begin{aligned}
 &Q_\psi\!\left(c(1-\rho^{1-p}),c(\sigma-\upsilon/\rho)\right)
       \prod_{j\in\F_p^\times}Q_\psi(cj\rho,cj\upsilon)\\
 &\qquad=\prod_{j\in\F_p}
       Q_\psi\!\left(c(1+j\rho),c(\sigma+j\upsilon)\right).
\end{aligned}
\]
\end{proposition}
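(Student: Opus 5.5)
The plan is to reduce both sides to explicit closed forms with Lemma~\ref{lem:quadratic-phase-new} and then compare three pieces: the powers of $Q_\psi(1,0)$, the quadratic characters, and the additive phases. First we check that every $Q_\psi$ in the statement has nonzero first argument. Since $1+j\rho\ne0$ for all $j\in\F_p^\times$, we have $\rho\notin\F_p^\times$. Hence $\rho^{p-1}\ne1$, so $1-\rho^{1-p}\ne0$. The other first arguments $cj\rho$ ($j\ne0$) and $c(1+j\rho)$ are nonzero by hypothesis.

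Next, apply \eqref{eq:quadratic-phase-new}, $Q_\psi(a,b)=\nu(a)\psi(-b^2/2a)Q_\psi(1,0)$, to each factor.

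\emph{Powers of $Q_\psi(1,0)$.} Each side contains exactly $p$ factors, so these powers cancel.

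\emph{Quadratic characters.} The factor $\nu(c)^p$ is common to both sides. Wilson's theorem gives $\prod_{j\in\F_p^\times}j\rho=-\rho^{p-1}$, so the left side contributes
\[
 \nu\bigl((1-\rho^{1-p})(-\rho^{p-1})\bigr)=\nu(1-\rho^{p-1}).
\]
On the right, write $\prod_{j\in\F_p}(1+j\rho)=\rho^p\prod_j(\rho^{-1}+j)$. The product $\prod_j(X+j)$ equals $X^p-X$, so this is $\rho^p(\rho^{-p}-\rho^{-1})=1-\rho^{p-1}$. The two characters therefore agree.

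\emph{Additive phases.} On the left, the terms for $j\ne0$ contribute $\psi\bigl(-\tfrac{c\upsilon^2}{2\rho}\sum_{j\in\F_p^\times}j\bigr)=1$, because $p$ is odd. It therefore suffices to prove the identity in $k$
\[
 \sum_{j\in\F_p}\frac{(\sigma+j\upsilon)^2}{1+j\rho}
 =\frac{(\sigma-\upsilon/\rho)^2}{1-\rho^{1-p}}.
\]
For this, write $\sigma+j\upsilon=(\upsilon/\rho)(1+j\rho)+(\sigma-\upsilon/\rho)$ and expand the square. The terms that are polynomial in $j$ sum to zero over $\F_p$, since $p=0$ in $k$ and $\sum_j j=0$. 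What remains is
\[
 (\sigma-\upsilon/\rho)^2\sum_j\frac{1}{1+j\rho}.
\]

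The main step, though short, is evaluating this last sum. Put $u=\rho^{-1}$ and $P(X)=X^p-X=\prod_j(X+j)$. Logarithmic differentiation gives $\sum_j 1/(u+j)=P'(u)/P(u)=-1/(u^p-u)$. Therefore
\[
 \sum_j\frac1{1+j\rho}=\rho^{-1}\cdot\frac{-1}{\rho^{-p}-\rho^{-1}}=\frac{1}{1-\rho^{1-p}}.
\]
This gives the required identity of additive phases, and combining the three comparisons proves the proposition.
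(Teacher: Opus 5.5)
Your proof is correct and follows the paper's argument essentially step for step. Both apply the quadratic-phase formula, cancel the $p$ copies of $Q_\psi(1,0)$, match the quadratic characters via $\prod_j(1+j\rho)=1-\rho^{p-1}$, and match the additive phases through the decomposition $\sigma+j\upsilon=(\upsilon/\rho)(1+j\rho)+(\sigma-\upsilon/\rho)$ together with logarithmic differentiation of $X^p-X$. Your explicit check that every first argument, in particular $1-\rho^{1-p}$, is nonzero is a small point the paper leaves implicit.
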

\begin{proof}
Apply \eqref{eq:quadratic-phase-new}. Each side contains $p$ factors
$Q_\psi(1,0)$. The quadratic-character factors agree because
\[
 (1-\rho^{1-p})\prod_{j\in\F_p^\times}j\rho
 =1-\rho^{p-1}=\prod_{j\in\F_p}(1+j\rho).
\]
The additive factors agree by
\[
 \sum_{j\in\F_p}\frac{(\sigma+j\upsilon)^2}{1+j\rho}
 =\frac{(\sigma-\upsilon/\rho)^2}{1-\rho^{1-p}},
 \qquad
 \sum_{j\in\F_p^\times}\frac{(j\upsilon)^2}{j\rho}=0.
\]
For the first identity, write
$\sigma+j\upsilon=(\upsilon/\rho)(1+j\rho)
 +(\sigma-\upsilon/\rho)$.
The polynomial terms sum to zero, and logarithmic differentiation of
$\prod_j(X+j)=X^p-X$ gives
$\sum_j(1+j\rho)^{-1}=\rho^{p-1}/(\rho^{p-1}-1)$.
The second identity follows from $\sum_{j\in\F_p^\times}j=0$.
\end{proof}

\begin{lemma}[Artin--Schreier phase]\label{lem:AS-phase-new}
Let $k$ have characteristic $2$ and let
$\psi_0(x)=(-1)^{\Tr_{k/\F_2}(x)}$.  Then
\begin{equation}\label{eq:AS-phase}
 \psi_0(c+c^2)=1\qquad(c\in k).
\end{equation}
\end{lemma}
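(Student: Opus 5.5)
The plan is to use the Frobenius invariance of the absolute trace.  Let $k$ have characteristic $2$ and write $f=[k:\F_2]$.  Then $\Gal(k/\F_2)$ is cyclic of order $f$, generated by the squaring map $x\mapsto x^2$.  The trace is the sum over this orbit:
\[
 \Tr_{k/\F_2}(x)=\sum_{i=0}^{f-1}x^{2^i}.
\]
Hence $\Tr_{k/\F_2}(c^2)=\Tr_{k/\F_2}(c)$ for every $c\in k$, since squaring only permutes the summands cyclically; equivalently, the trace is invariant under the generator of the Galois group.

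Next, additivity of the trace gives
\[
 \Tr_{k/\F_2}(c+c^2)=\Tr_{k/\F_2}(c)+\Tr_{k/\F_2}(c^2)=2\Tr_{k/\F_2}(c)=0 \quad\text{in }\F_2 .
\]
Therefore $\psi_0(c+c^2)=(-1)^0=1$, which is \eqref{eq:AS-phase}.

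Alternatively, one can apply Lemma~\ref{lem:frobenius-difference-trace} with $p=2$.  It says that $\wp(c)=c^2-c=c^2+c$ lies in the image of $\wp$, which is $\ker(\Tr_{k/\F_2})$; the conclusion is then immediate.  The argument is short and has no genuine obstacle.  The only point to watch is that the sign convention in characteristic two, $-c=c$, identifies $c+c^2$ with the Artin--Schreier value $\wp(c)$.
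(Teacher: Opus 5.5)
Your proof is correct and is essentially the paper's argument: the paper also deduces $\Tr_{k/\F_2}(c+c^2)=0$ directly from the Frobenius invariance of the absolute trace. Your alternative route through Lemma~\ref{lem:frobenius-difference-trace} with $p=2$ is also valid.
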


\begin{proof}
The absolute trace is Frobenius-invariant, so
$\Tr(c+c^2)=0$.
\end{proof}

For the following result, let $k$ have characteristic two and put
$\psi_0(z)=(-1)^{\Tr_{k/\F_2}(z)}$.

\begin{proposition}[Normalized refinement]
\label{lem:normalized-refinement}
Let $h:k\to\C^\times$ be unimodular and satisfy
$h(r+s)=h(r)h(s)\psi_0(rs)$.
Multiplying $h$ by an additive character gives a unimodular function
$\mathfrak q:k\to\C^\times$ such that
\begin{equation}\label{eq:q-refinement}
 \mathfrak q(r+s)=\mathfrak q(r)\mathfrak q(s)\psi_0(rs),
 \qquad
 \mathfrak q(r^2)=\mathfrak q(r).
\end{equation}
Every other function with the same polar character is uniquely
\begin{equation}\label{eq:q-linear-family}
 h_\lambda(z)=\mathfrak q(z)\psi_0(\lambda z)
 \qquad(\lambda\in k).
\end{equation}
Write
\begin{equation}\label{eq:quadratic-normalized-sum}
 H(\lambda)=\ph\!\left(\sum_{z\in k}h_\lambda(z)\right),
 \qquad H_0=H(0).
\end{equation}
We have
\begin{equation}\label{eq:q-phase-translation}
\begin{gathered}
 H(\lambda)=H_0\overline{\mathfrak q(\lambda)},\qquad
 H_0^2=\mathfrak q(1),\qquad \mathfrak q(r)^2=\psi_0(r),\\
 H(\lambda^2)=H(\lambda),\qquad
 H(\lambda)^2=h_\lambda(1),\qquad
 H(1+\lambda)H(\lambda)=1.
\end{gathered}
\end{equation}
\end{proposition}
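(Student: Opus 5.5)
The first step is to construct $\mathfrak q$. The function $h(r^2)/h(r)$ will be shown to be an additive character. For $r,s\in k$, the functional equation gives
\[
h((r+s)^2)=h(r^2+s^2)=h(r^2)h(s^2)\psi_0(r^2s^2)
\qquad\text{and}\qquad
h(r+s)=h(r)h(s)\psi_0(rs).
\]
Since $\psi_0(r^2s^2)=\psi_0(rs)$ by Frobenius invariance of the trace, the quotient $\chi(r)=h(r^2)/h(r)$ is additive. Write $\chi(r)=\psi_0(ar)$ with $a\in k$ unique. We look for $\mathfrak q(z)=h(z)\psi_0(bz)$ with $\mathfrak q(r^2)=\mathfrak q(r)$. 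This condition reads
\[
\psi_0(ar)\,\psi_0(br^2)\,\psi_0(-br)=1 .
\]
Using $\psi_0(br^2)=\psi_0(b^{1/2}r)$, where $b^{1/2}$ is the unique square root in the perfect field $k$, it becomes $a+b^{1/2}+b=0$. That is, $c+c^2=a$ with $c=b^{1/2}$. This Artin--Schreier equation is solvable exactly when $\Tr_{k/\F_2}(a)=0$, by Lemma~\ref{lem:frobenius-difference-trace}.

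The main obstacle is proving $\Tr(a)=0$. I would first apply the relation to $r=1$, which gives $\chi(1)=h(1)/h(1)=1$. Hence $\psi_0(a)=1$, i.e.\ $\Tr(a)=0$. This settles existence. Uniqueness of the family \eqref{eq:q-linear-family} holds because the quotient of two functions with the same polar character is an additive character, and every additive character is $\psi_0(\lambda\,\cdot)$ by perfectness of the trace pairing.

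With $\mathfrak q$ in hand, the identities in \eqref{eq:q-phase-translation} follow in order.
\begin{enumerate}
\item[(i)] $\mathfrak q(r)^2=\psi_0(r)$. Take $s=r$ in the functional equation to get $1=\mathfrak q(0)=\mathfrak q(r)^2\psi_0(r^2)$, and note $\psi_0(r^2)=\psi_0(r)$ and $\psi_0(r)=\pm1$.
\item[(ii)] $H(\lambda)=H_0\overline{\mathfrak q(\lambda)}$. The functional equation gives $\mathfrak q(z+\lambda)=\mathfrak q(z)\mathfrak q(\lambda)\psi_0(\lambda z)$, hence $h_\lambda(z)=\mathfrak q(\lambda)^{-1}\mathfrak q(z+\lambda)$. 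Summing over $z$, the sum $\sum_z h_\lambda(z)$ equals $\mathfrak q(\lambda)^{-1}\sum_z\mathfrak q(z)$. By Lemma~\ref{lem:finite-quadratic-magnitude} this sum is nonzero, so taking phases gives the claim, since $\mathfrak q(\lambda)^{-1}=\overline{\mathfrak q(\lambda)}$ by unimodularity.
\item[(iii)] $H_0^2=\mathfrak q(1)$. Conjugating, we have $\overline{\mathfrak q(z)}=\mathfrak q(z)^{-1}=\mathfrak q(z)\psi_0(z)$ by (i), so $\overline{S}=\sum_z h_1(z)$ where $S=\sum_z\mathfrak q(z)$. By (ii), $\overline{H_0}=H(1)=H_0\overline{\mathfrak q(1)}$, i.e.\ $H_0^2=\mathfrak q(1)$.
\item[(iv)] $H(\lambda^2)=H(\lambda)$. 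This is immediate from (ii) and $\mathfrak q(\lambda^2)=\mathfrak q(\lambda)$.
\item[(v)] $H(\lambda)^2=h_\lambda(1)$. Compute $H(\lambda)^2=\mathfrak q(1)\,\mathfrak q(\lambda)^{-2}=\mathfrak q(1)\psi_0(\lambda)$, using (i) together with $\psi_0=\pm1$; this is $h_\lambda(1)$.
\item[(vi)] $H(1+\lambda)H(\lambda)=1$. By (ii), the product is $H_0^2\,\overline{\mathfrak q(1+\lambda)\mathfrak q(\lambda)}$. The functional equation gives $\mathfrak q(1+\lambda)=\mathfrak q(1)\mathfrak q(\lambda)\psi_0(\lambda)$, so $\mathfrak q(1+\lambda)\mathfrak q(\lambda)=\mathfrak q(1)\mathfrak q(\lambda)^2\psi_0(\lambda)=\mathfrak q(1)$ by (i). Hence the product equals $H_0^2\,\overline{\mathfrak q(1)}=1$ by (iii).
\end{enumerate}
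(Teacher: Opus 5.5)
Your proof is correct and follows the paper's route. You use the additive quotient $h(r^2)/h(r)=\psi_0(ar)$, an Artin--Schreier solution (your $b=c^2$ is the paper's $d$), and derive the six phase identities from translation, $\mathfrak q(r)^2=\psi_0(r)$, and conjugation, exactly as the paper does. The one difference is that you get $\Tr_{k/\F_2}(a)=0$ directly from $\chi(1)=h(1)/h(1)=1$; this is a valid and shorter substitute for the paper's telescoping around the Frobenius orbit at an element of absolute trace $1$.
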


\begin{proof}
The quotient
$\ell(z)=\frac{h(z^2)}{h(z)}$
is additive.  Indeed, using the defining functional equation for $h$ and Frobenius invariance
of $\psi_0$,
\[
 \ell(r+s)
 =\frac{h((r+s)^2)}{h(r+s)}
 =\frac{h(r^2)}{h(r)}\frac{h(s^2)}{h(s)}
 =\ell(r)\ell(s).
\]
The trace pairing on $k$ is perfect, so there is a unique $c\in k$
such that $\ell(z)=\psi_0(cz)$.  Let $f=[k:\F_2]$.  For every
$z\in k$, telescoping around the Frobenius orbit gives
\begin{align*}
 1
 &=\frac{h(z^{2^f})}{h(z)}
   =\prod_{i=0}^{f-1}\frac{h(z^{2^{i+1}})}{h(z^{2^i})}\\
 &=\psi_0\!\left(c\sum_{i=0}^{f-1}z^{2^i}\right)
   =\psi_0\!\left(c\Tr_{k/\F_2}(z)\right).
\end{align*}
Choose $z$ of absolute trace $1$.  Then $\psi_0(c)=1$, or equivalently
$\Tr_{k/\F_2}(c)=0$.

Consider the $\F_2$-linear map
$L:k\longrightarrow k, \qquad L(d)=d+d^{2^{f-1}}$.
Writing $\operatorname{Fr}(z)=z^2$, one has
$L=(\operatorname{Fr}-1)\circ\operatorname{Fr}^{-1}$.  Lemma~\ref{lem:frobenius-difference-trace}
therefore gives $\ker L=\F_2$ and
$\operatorname{im}L=\ker\Tr_{k/\F_2}$.  Since $\Tr_{k/\F_2}(c)=0$,
there is $d\in k$ such that
$c=d+d^{2^{f-1}}$.
Replace $h(z)$ by
$\mathfrak q(z)=h(z)\psi_0(dz)$.
The polar character is unchanged.  Moreover
\begin{align*}
 \frac{\mathfrak q(z^2)}{\mathfrak q(z)}
 &=\psi_0(cz)\psi_0(dz^2+dz)\\
 &=\psi_0\!\left((c+d^{2^{f-1}}+d)z\right)=1,
\end{align*}
where
$\psi_0(dz^2)=\psi_0(d^{2^{f-1}}z)$ and
the displayed choice of $d$ were used.  This proves
$\mathfrak q(z^2)=\mathfrak q(z)$.  If $h'$ has the same polar
character as $\mathfrak q$, then $h'/\mathfrak q$ is an additive
character of $k$, hence is uniquely $z\mapsto\psi_0(\lambda z)$.
This proves \eqref{eq:q-linear-family}.

From \eqref{eq:q-refinement},
\[
 \mathfrak q(z)\psi_0(\lambda z)
 =\mathfrak q(z+\lambda)\mathfrak q(\lambda)^{-1},
\]
so substituting $z\mapsto z+\lambda$ in $\sum_{z\in k}h_\lambda(z)$
gives the first formula in
\eqref{eq:q-phase-translation}.  Setting $r=s$ in
\eqref{eq:q-refinement} gives
$\mathfrak q(r)^2=\psi_0(r^2)=\psi_0(r)$,
and therefore
$\overline{\mathfrak q(r)}=\mathfrak q(r)\psi_0(r)$.

Put $S_0=\sum_{r\in k}\mathfrak q(r)$.  Its polar bicharacter is
$(r,s)\mapsto\psi_0(rs)$, which is nondegenerate by the perfect trace
pairing.  Equation~\eqref{eq:finite-quadratic-magnitude} therefore gives
$|S_0|^2=|k|$,
so $S_0\ne0$.  Finally, translation by $1$ gives
\[
 \overline{S_0}
 =\sum_r\mathfrak q(r)\psi_0(r)
 =\frac1{\mathfrak q(1)}\sum_r\mathfrak q(r+1)
 =\frac{S_0}{\mathfrak q(1)}.
\]
Taking phases yields
$H_0^{-1}=H_0/\mathfrak q(1)$, and hence
$H_0^2=\mathfrak q(1)$.
Since $\mathfrak q(\lambda^2)=\mathfrak q(\lambda)$, the first formula in
\eqref{eq:q-phase-translation} gives $H(\lambda^2)=H(\lambda)$.
Squaring it gives
$H(\lambda)^2=\mathfrak q(1)\psi_0(\lambda)=h_\lambda(1)$.
Finally, $\mathfrak q(1+\lambda)\mathfrak q(\lambda)=\mathfrak q(1)$
by \eqref{eq:q-refinement}, so
\[
 H(1+\lambda)H(\lambda)
 =H_0^2\overline{\mathfrak q(1+\lambda)\mathfrak q(\lambda)}=1.
\]
\end{proof}

\section{Stationary parameters under norm}\label{sec:stationary-parameters}

Let $K/F$ be a cyclic extension of nonarchimedean local fields of prime
degree $\ell$.  Let
$\chi_F:F^\times\to\C^\times$ be a quasi-character and let
$\psi_F:F\to\C^\times$ be a nontrivial additive character.  Put
\[
 \chi_K=\chi_F\circ \Nm_{K/F},
 \qquad
 \psi_K=\psi_F\circ\Tr_{K/F}.
\]
If $K/F$ is ramified, let $t$ be its unique
lower ramification break.  Write
\[
 m=m_F(\chi_F)=2d+\varepsilon>1,
 \qquad
 m'=m_K(\chi_K)=2d'+\varepsilon',
 \qquad \varepsilon,\varepsilon'\in\{0,1\}.
\]
The preceding conductor results give the following three cases for $m'$.
\begin{enumerate}[label=(\roman*)]
\item If $K/F$ is unramified, then $m'=m$ by
Lemma~\ref{lem:unramified-compatibility}(d).
\item If $K/F$ is ramified and $m>t+1$, then
\[
 m'=\HerbPsi_{K/F}(m-1)+1
\]
by Proposition~\ref{prop:pullback-conductor}(i).
\item If $K/F$ is ramified and $m\le t+1$, retain the minimal-conductor choice of $\chi_F$.  Then $m'=m$
by \eqref{eq:minimal-critical}.
\end{enumerate}
\begin{lemma}[Norm filtration at stationary depth]
\label{lem:stationary-depth-norm}
In each of the three cases above,
\begin{align}
 \Nm_{K/F}(U_K^{d'+\varepsilon'})&\subseteq U_F^{d+\varepsilon},
                                                        \label{eq:stationary-depth-first-inclusion}\\
 \Nm_{K/F}(U_K^{m'})&\subseteq U_F^m,             \label{eq:stationary-depth-second-inclusion}\\
 \Nm_{K/F}(U_K^{d'})&\subseteq U_F^d.             \label{eq:stationary-parameter-ambiguity}
\end{align}
The third
inclusion implies that if $b,b'\in\cO_K^\times$ and
$b'\equiv b\pmod{\p_K^{d'}}$, then
\begin{equation}\label{eq:norm-of-stationary-class-well-defined}
 \Nm_{K/F}(b')\equiv \Nm_{K/F}(b)\pmod{\p_F^d}.
\end{equation}
\end{lemma}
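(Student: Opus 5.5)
The plan is to verify the three inclusions case by case, using the norm filtration results of Theorem~\ref{thm:norm-filtration} and Lemma~\ref{lem:unramified-compatibility}. All three inclusions have the form $\Nm_{K/F}(U_K^{a})\subseteq U_F^{b}$. So in each case I will give an upper bound, in terms of $a$, for the least level that the norm can reach.

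\emph{Case (i), $K/F$ unramified.} Here $m'=m$, so $d'=d$ and $\varepsilon'=\varepsilon$. Then \eqref{eq:unramified-norm-units} gives $\Nm_{K/F}(U_K^r)=U_F^r$ for every $r\ge0$, and all three inclusions follow with equality.

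\emph{Case (iii), ramified with $m\le t+1$.} Again $m'=m$ by \eqref{eq:minimal-critical}, so $d'=d$, $\varepsilon'=\varepsilon$, and every level in question is at most $t+1$. For levels $r\le t$, the second isomorphism of \eqref{eq:norm-below-break} gives $\Nm(U_K^r)\subseteq U_F^r$. For $r=t+1$, I use \eqref{eq:norm-above-break} together with $\HerbPsi_{K/F}(t+1)\ge t+1$, which gives $\Nm(U_K^{t+1})\subseteq \Nm(U_K^{t})\cap U_F^{t+1}$. More simply, the norm of an element of $1+\p_K^{r}$ lies in $1+\p_F^{\lceil r/\ell\rceil}$ at worst. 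I would instead argue from the general fact $\Nm(U_K^{\HerbPsi(r)})\subseteq U_F^{r}$ for all $r\ge0$. This fact follows from (b)--(d), since for $r\le t$ we have $\HerbPsi(r)=r$, and it settles the case.

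\emph{Case (ii), ramified with $m>t+1$.} Here $m'-1=\HerbPsi(m-1)$, and the key step is this monotonicity inclusion: whenever $a\ge\HerbPsi(r)$, we have $\Nm(U_K^a)\subseteq U_F^r$. For integer $r\le t$ it is \eqref{eq:norm-below-break}, and for integer $r>t$ it is \eqref{eq:norm-above-break}. It therefore suffices to check three numerical inequalities, with ceilings taken where needed since $\HerbPsi$ is increasing:
\[
 \HerbPsi(m)\le m',\qquad \HerbPsi(d+\varepsilon)\le d'+\varepsilon',\qquad \HerbPsi(d)\le d'.
\]
The first holds because $\HerbPsi(m)=\HerbPsi(m-1)+\ell=m'-1+\ell$ when $m-1\ge t$. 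That is too large, so for \eqref{eq:stationary-depth-second-inclusion} I will instead use \eqref{eq:norm-above-break} directly: $\Nm(U_K^{\HerbPsi(m-1)+1})=U_F^m$, and $\HerbPsi(m-1)+1=m'$. For the other two, write $\HerbPsi(u)=\max(u,\ell u-(\ell-1)t)$, which is convex and piecewise linear with slopes $1$ and $\ell$. Then $\HerbPsi(u/2)\le\HerbPsi(u)/2$ roughly, which heuristically gives $d'\approx(m'-1)/2\ge\HerbPsi((m-1)/2)$.

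The main obstacle is the parity bookkeeping. One has to handle the integer floors of $d=\lfloor m/2\rfloor$ and $d'=\lfloor m'/2\rfloor$, which means splitting according to $\varepsilon$, $\varepsilon'$, and whether $d$ or $d+\varepsilon$ lies above or below $t$. In addition, the non-integer levels must be handled by the inclusion $\Nm(U_K^{\lceil\HerbPsi(r)\rceil})\subseteq U_F^r$ for real $r$. When $d$ or $d+\varepsilon$ lies above $t$, I would first carry out the explicit check using $m'=\ell(m-1)-(\ell-1)t+1$. Finally, \eqref{eq:norm-of-stationary-class-well-defined} follows from \eqref{eq:stationary-parameter-ambiguity}: write $b'=b(1+z)$ with $z\in\p_K^{d'}$, so that $\Nm(b')=\Nm(b)\Nm(1+z)$ with $\Nm(1+z)\in U_F^d$ and $\Nm(b)$ a unit. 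If $d'=0$, both sides are trivially congruent modulo $\cO_F$.
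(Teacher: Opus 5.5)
Your overall framework is right: norm containments governed by $\HerbPsi_{K/F}$. Case (i), the second inclusion in case (ii), and the final congruence are fine. The gap is in the first and third inclusions of case (ii), which carry the real content of the lemma. The criterion you use, $\Nm(U_K^a)\subseteq U_F^r$ whenever $a\ge\HerbPsi(r)$, is one level too weak on the $F$ side. As a result, the inequalities you reduce to, $\HerbPsi(d+\varepsilon)\le d'+\varepsilon'$ and $\HerbPsi(d)\le d'$, are false in general, even though the inclusions themselves hold.
\begin{itemize}
\item For a tame quadratic extension ($t=0$) with $m=3$: $m'=5$ and $d'+\varepsilon'=3$, but $\HerbPsi(2)=4$.
\item For a tame cubic extension with $m=4$: $m'=10$ and $d'=5$, but $\HerbPsi(2)=6$.
\end{itemize}
You only acknowledge the difficulty through ``ceilings taken where needed,'' a heuristic ``$\approx$,'' and deferred ``parity bookkeeping.'' So the inequality that actually proves the lemma is never stated or checked.

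The missing ingredient is the sharper containment $\Nm(U_K^{\HerbPsi(n)+1})\subseteq U_F^{n+1}$ for every integer $n\ge0$. It comes from three sources:
\begin{itemize}
\item for $n<t$, from \eqref{eq:norm-below-break};
\item for $n>t$, from \eqref{eq:norm-above-break};
\item for $n=t$, from the well-definedness of the critical graded map in Theorem~\ref{thm:norm-filtration}(b), namely $\Nm(U_K^{t+1})\subseteq U_F^{t+1}$.
\end{itemize}
This critical inclusion is also exactly what case (iii) needs at $m=t+1$. Your first two attempts there do not give it. Equation \eqref{eq:norm-above-break} at $r=t+1$ only concerns $U_K^{t+\ell}$, and the bound $\ceil{r/\ell}$ is too crude.

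With the sharper containment, $\Nm(U_K^a)\subseteq U_F^L$ holds as soon as $a\ge\HerbPsi(L-1)+1$. So what must be proved is
\[
\ceil{m'/2}\ge\HerbPsi(\ceil{m/2}-1)+1
\quad\text{and}\quad
\floor{m'/2}\ge\HerbPsi(\floor{m/2}-1)+1.
\]
The paper proves these through the explicit identities $\HerbPsi(r)-2\HerbPsi(s)=(\ell-1)t+\ell e$ and $m'-2R=(\ell-1)(t+1)+\ell\varepsilon$.

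Your convexity idea would also work if made precise. Since $\HerbPsi$ is convex with $\HerbPsi(0)=0$, we have $\HerbPsi((m-1)/2)\le\HerbPsi(m-1)/2=(m'-1)/2$.
\begin{itemize}
\item First inequality: $\ceil{m/2}-1\le(m-1)/2$, and $\HerbPsi$ is increasing and takes integer values at integers. This gives $\HerbPsi(\ceil{m/2}-1)\le\floor{(m'-1)/2}=\ceil{m'/2}-1$.
\item Second inequality: $\floor{m/2}-1\le(m-2)/2$, and $\HerbPsi(m-2)\le m'-2$ because the slopes are at least $1$. This gives $\HerbPsi(\floor{m/2}-1)\le\floor{m'/2}-1$.
\end{itemize}
As written, though, your proposal stops short of this step, and the inequalities it does state would not go through.
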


\begin{proof}
Write $\Nm=\Nm_{K/F}$.
Put
\[
 q=\left\lceil\frac m2\right\rceil=d+\varepsilon,
 \qquad
 q'=\left\lceil\frac {m'}2\right\rceil=d'+\varepsilon'.
\]
In case (i), Lemma~\ref{lem:unramified-compatibility}(c) gives all
three inclusions.

Now consider case (ii), so $K/F$ is ramified and $m>t+1$.  We first
prove \eqref{eq:stationary-depth-first-inclusion}.  Set
$r=m-1>t$ and $s=\lfloor r/2\rfloor=q-1$.  Then
\[
 m'=\HerbPsi_{K/F}(r)+1,
 \qquad
 q'-1=\left\lfloor\frac{\HerbPsi_{K/F}(r)}2\right\rfloor.
\]
We claim that
\begin{equation*}
 \left\lfloor\frac{\HerbPsi_{K/F}(r)}2\right\rfloor
 \ge \HerbPsi_{K/F}(s).
\end{equation*}
If $s\le t$, then the right side is $s$, whereas
$\HerbPsi_{K/F}(r)\ge r\ge2s$.  If $s>t$, write
$r=2s+e$ with $e\in\{0,1\}$.  Equation~\eqref{eq:Herbrand-prime-new} gives
\[
 \HerbPsi_{K/F}(r)-2\HerbPsi_{K/F}(s)
   =(\ell-1)t+\ell e\ge0,
\]
which again proves the claimed inequality.  Hence
$q'\ge\HerbPsi_{K/F}(q-1)+1$.

If $q-1<t$, norm preserves the filtration below the break, so
$\Nm(U_K^q)\subseteq U_F^q$.  If $q-1=t$, the critical norm map gives
$\Nm(U_K^{t+1})\subseteq U_F^{t+1}$.  If $q-1>t$,
\eqref{eq:norm-above-break}, with $r=q-1$, gives
$\Nm(U_K^{\HerbPsi_{K/F}(q-1)+1})=U_F^q$.  Together with
the bound for $q'$ above and the decreasing unit
filtration, these three cases prove
\eqref{eq:stationary-depth-first-inclusion}.

For \eqref{eq:stationary-depth-second-inclusion},
\eqref{eq:norm-above-break}, applied with $r=m-1$, gives
$\Nm(U_K^{m'})=U_F^m$.

We next prove \eqref{eq:stationary-parameter-ambiguity}.  Since
$\HerbPsi_{K/F}(u)\ge u$, one has $m'\ge m$ and hence $d'\ge d$.  If
$d\le t$, norm preserves the filtration at depth $d$, so
$\Nm(U_K^{d'})\subseteq \Nm(U_K^d)\subseteq U_F^d$.
If $d=t+1$, the same conclusion follows from the critical inclusion
$\Nm(U_K^{t+1})\subseteq U_F^{t+1}$.  Finally suppose $d\ge t+2$ and put
$R=\HerbPsi_{K/F}(d-1)+1$.
Using \eqref{eq:Herbrand-prime-new} and $m=2d+\varepsilon$, one obtains
\begin{align*}
 m'-2R
 &=\HerbPsi_{K/F}(2d+\varepsilon-1)+1
     -2\bigl(\HerbPsi_{K/F}(d-1)+1\bigr)\\
 &=(\ell-1)(t+1)+\ell\varepsilon\ge0.
\end{align*}
Thus $d'=\lfloor m'/2\rfloor\ge R$.  Equation
\eqref{eq:norm-above-break}, applied with $r=d-1$, now gives
$\Nm(U_K^{d'})\subseteq \Nm(U_K^R)=U_F^d$.
This proves all three inclusions in case (ii).

Finally consider case (iii).  Here $m'=m\le t+1$, so $q'=q$ and
$d'=d\le t$.  Below the break, norm maps $U_K^i$ into $U_F^i$ for
every $i\le t$; at the last possible step $m=t+1$ one uses the
critical inclusion $\Nm(U_K^{t+1})\subseteq U_F^{t+1}$.  Applying these
statements at $i=q$, $i=m$, and $i=d$ proves all three inclusions.

For the final assertion, write $b'=b(1+z)$ with
$z\in\p_K^{d'}$.  Then $1+z\in U_K^{d'}$, so
\eqref{eq:stationary-parameter-ambiguity} gives
$\Nm(b')/\Nm(b)\in U_F^d$, which is equivalent to
\eqref{eq:norm-of-stationary-class-well-defined}.
\end{proof}

For $y\in\p_K^{d'+\varepsilon'}$, let $\bar y$ denote its class
modulo $\p_K^{m'}$.

\begin{proposition}[Truncated norm homomorphism and adjoint]
\label{lem:truncated-norm-adjoint}
In each of cases (i)--(iii), the assignment
\[
 P_{K/F}(\bar y)=\overline{\Nm_{K/F}(1+y)-1}
\]
is a well-defined additive homomorphism
\[
 P_{K/F}:\p_K^{d'+\varepsilon'}/\p_K^{m'}
 \longrightarrow\p_F^{d+\varepsilon}/\p_F^m.
\]
Let $\gamma_F$ and $\gamma_K$ be admissible denominators for
$(\chi_F,\psi_F)$ and $(\chi_K,\psi_K)$, respectively.  The pairings
\[
 (\bar x,\bar y)\longmapsto\psi_F(xy/\gamma_F),
 \qquad
 (\bar x',\bar y')\longmapsto\psi_K(x'y'/\gamma_K)
\]
between $\cO_F/\p_F^d$ and $\p_F^{d+\varepsilon}/\p_F^m$, and between
$\cO_K/\p_K^{d'}$ and $\p_K^{d'+\varepsilon'}/\p_K^{m'}$, are perfect.
There is a unique additive adjoint
\[
 P^*_{K/F;\gamma_F,\gamma_K}:
 \cO_F/\p_F^d\longrightarrow\cO_K/\p_K^{d'}
\]
with the following property: for $x\in\cO_F$, an element $z\in\cO_K$
represents $P^*_{K/F;\gamma_F,\gamma_K}(\bar x)$ if and only if
\begin{equation}\label{eq:Pstar-def}
 \psi_F\!\left(\frac{x(\Nm_{K/F}(1+y)-1)}{\gamma_F}\right)
 =\psi_K(zy/\gamma_K)
 \qquad(y\in\p_K^{d'+\varepsilon'}).
\end{equation}
If
\[
 \Nm_{K/F}(1+y)-1\equiv\Tr_{K/F}(y)\pmod{\p_F^m}
 \qquad(y\in\p_K^{d'+\varepsilon'})
\]
and $\gamma_K/\gamma_F\in\cO_K^\times$, then
\[
 P^*_{K/F;\gamma_F,\gamma_K}(\bar x)
 =\overline{(\gamma_K/\gamma_F)x}
 \qquad(x\in\cO_F).
\]
Under the same trace-congruence hypothesis, if $\gamma_K=\gamma_F$,
this is the map $\cO_F/\p_F^d\to\cO_K/\p_K^{d'}$ induced by
$F\hookrightarrow K$.
\end{proposition}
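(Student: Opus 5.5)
We first show that $P_{K/F}$ is well defined and additive, using Lemma~\ref{lem:stationary-depth-norm}. For $y\in\p_K^{d'+\varepsilon'}$, inclusion \eqref{eq:stationary-depth-first-inclusion} gives $\Nm_{K/F}(1+y)\in U_F^{d+\varepsilon}$, so $\Nm_{K/F}(1+y)-1\in\p_F^{d+\varepsilon}$. Now take $y,y'\in\p_K^{d'+\varepsilon'}$. Since $2(d'+\varepsilon')\ge m'$, we have
\[
 (1+y)(1+y')=(1+y+y')(1+w),\qquad w\in\p_K^{m'},
\]
exactly as in the proof of Lemma~\ref{lem:stationary-class-existence}. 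By \eqref{eq:stationary-depth-second-inclusion}, $\Nm_{K/F}(1+w)\in U_F^m$. Hence $\Nm(1+y+y')\equiv\Nm(1+y)\Nm(1+y')\pmod{\p_F^m}$, using that all factors are units.

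Put $u=\Nm(1+y)-1$ and $u'=\Nm(1+y')-1$. Both lie in $\p_F^{d+\varepsilon}$, so $uu'\in\p_F^{2d+2\varepsilon}\subseteq\p_F^m$, and therefore $\Nm(1+y)\Nm(1+y')-1\equiv u+u'\pmod{\p_F^m}$. This gives additivity. Taking $y'\in\p_K^{m'}$ in the same identity shows that the value depends only on $\bar y$.

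Perfectness of the two pairings is Lemma~\ref{lem:finite-duality}. Over $F$, take $M=q=m$, $r=d+\varepsilon$ and $\Gamma=\gamma_F$; then $M-q=0$ and $M-r=d$. Over $K$, take $M=q=m'$, $r=d'+\varepsilon'$ and $\Gamma=\gamma_K$, with $\psi_K$ in place of $\psi$. Note that $\gamma_K\cO_K=\p_K^{m'+n_K(\psi_K)}$ is exactly the admissibility condition.

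To get the adjoint, fix $x\in\cO_F$. The map $\bar y\mapsto\psi_F(x\,P_{K/F}(\bar y)/\gamma_F)$ is well defined, since $P_{K/F}(\bar y)$ is determined modulo $\p_F^m$ and $x\p_F^m/\gamma_F\subseteq\p_F^{-n}$. It is a character of $\p_K^{d'+\varepsilon'}/\p_K^{m'}$ because $P_{K/F}$ is additive. By perfectness of the $K$-pairing, there is a unique class $z\bmod\p_K^{d'}$ satisfying \eqref{eq:Pstar-def}. This class depends only on $\bar x$, because changing $x$ by an element of $\p_F^d$ multiplies the left side by $\psi_F$ of an element of $\p_F^d\p_F^{d+\varepsilon}/\gamma_F\subseteq\p_F^{-n}$. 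Additivity in $x$ follows from uniqueness, since the left side of \eqref{eq:Pstar-def} is multiplicative in $x$. Thus $P^*_{K/F;\gamma_F,\gamma_K}$ exists, is unique and is additive, and its defining property is the stated equivalence.

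For the trace case, suppose $\Nm(1+y)-1\equiv\Tr_{K/F}(y)\pmod{\p_F^m}$. Then the left side of \eqref{eq:Pstar-def} equals $\psi_F(\Tr_{K/F}(xy)/\gamma_F)$. Because $x,\gamma_F\in F$, this is $\psi_K(xy/\gamma_F)$. Write $g=\gamma_K/\gamma_F\in\cO_K^\times$; then $\psi_K(xy/\gamma_F)=\psi_K(gxy/\gamma_K)$. So $z=gx$ satisfies \eqref{eq:Pstar-def}, and uniqueness gives $P^*(\bar x)=\overline{gx}$. When $\gamma_K=\gamma_F$, this is $\bar x\mapsto\bar x$, the map induced by $F\hookrightarrow K$.

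The only delicate point is the first one: checking that \eqref{eq:stationary-depth-first-inclusion} and \eqref{eq:stationary-depth-second-inclusion} are strong enough to absorb both the error term $w$ and the cross term $uu'$ modulo $\p_F^m$. Both reduce to $2\lceil m/2\rceil\ge m$ and its analogue over $K$.
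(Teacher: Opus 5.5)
Your proof is correct and follows the paper's argument essentially step for step. The inclusions of Lemma~\ref{lem:stationary-depth-norm} give well-definedness and additivity of $P_{K/F}$, Lemma~\ref{lem:finite-duality} gives perfectness of both pairings, and uniqueness in the $K$-pairing yields the adjoint and its trace-case formula. The only difference is ordering: you derive well-definedness from additivity by taking $y'\in\p_K^{m'}$, whereas the paper proves it first via $(1+y')/(1+y)\in U_K^{m'}$. This reordering is inessential.
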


\begin{proof}
Write $\Nm=\Nm_{K/F}$ and put
\[
 q=d+\varepsilon,\qquad q'=d'+\varepsilon',
 \qquad f(y)=\Nm(1+y)-1\quad(y\in\p_K^{q'}).
\]
Equation~\eqref{eq:stationary-depth-first-inclusion} gives
$f(y)\in\p_F^q$.  If $y'-y\in\p_K^{m'}$, then
$(1+y')/(1+y)\in U_K^{m'}$, so
\eqref{eq:stationary-depth-second-inclusion} gives
$f(y')\equiv f(y)\pmod{\p_F^m}$.  Thus $f$ induces the stated map.

For $y,z\in\p_K^{q'}$, one has $yz\in\p_K^{m'}$ because $2q'\ge m'$.
Multiplicativity of the norm gives the identity
$1+f(y+z+yz)=(1+f(y))(1+f(z))$.
The preceding congruence permits replacing $f(y+z+yz)$ by $f(y+z)$
modulo $\p_F^m$.  Also $f(y)f(z)\in\p_F^{2q}\subseteq\p_F^m$.
Hence $f(y+z)\equiv f(y)+f(z)\pmod{\p_F^m}$, proving additivity of
$P_{K/F}$.

Lemma~\ref{lem:finite-duality} gives perfectness of both pairings.
For $x\in\cO_F$, the left side of \eqref{eq:Pstar-def} defines a
character of $\p_K^{q'}/\p_K^{m'}$, depending only on
$\bar x\in\cO_F/\p_F^d$.  Perfectness over $K$ gives a unique class
$\bar z\in\cO_K/\p_K^{d'}$ representing this character.  Character
multiplication and uniqueness make $\bar x\mapsto\bar z$ additive.

Under the stated trace congruence, the left side of
\eqref{eq:Pstar-def} equals
\[
 \psi_F\!\left(\frac{x\Tr_{K/F}(y)}{\gamma_F}\right)
 =\psi_K(xy/\gamma_F)
 =\psi_K\!\left(\frac{((\gamma_K/\gamma_F)x)y}{\gamma_K}\right).
\]
Since $(\gamma_K/\gamma_F)x\in\cO_K$, uniqueness gives
$P^*_{K/F;\gamma_F,\gamma_K}(\bar x)=\overline{(\gamma_K/\gamma_F)x}$
and its common-denominator specialization.
\end{proof}

The truncated norm and its adjoint are introduced in
\cite[pp.~62--64]{Langlands}.

\begin{corollary}[Stationary class under norm pullback]
\label{cor:stationary-class-under-norm}
Under the hypotheses and with the denominators of
Proposition~\ref{lem:truncated-norm-adjoint}, let
\[
 \beta_F=\operatorname{St}_{\gamma_F,d+\varepsilon}(\chi_F)
       \in\cO_F/\p_F^d,
 \qquad
 \beta_K=\operatorname{St}_{\gamma_K,d'+\varepsilon'}(\chi_K)
       \in\cO_K/\p_K^{d'}.
\]
Then
\begin{equation}\label{eq:stationary-class-adjoint}
 \beta_K=P^*_{K/F;\gamma_F,\gamma_K}(\beta_F).
\end{equation}
\end{corollary}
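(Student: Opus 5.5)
The identity \eqref{eq:stationary-class-adjoint} is a direct consequence of the uniqueness statements in Lemma~\ref{lem:stationary-class-existence} and Proposition~\ref{lem:truncated-norm-adjoint}. It suffices to produce, from a representative of $\beta_F$, an element of $\cO_K$ that satisfies the defining linearization for $\chi_K$ on the stationary layer $\p_K^{d'+\varepsilon'}$. Write $q=d+\varepsilon$ and $q'=d'+\varepsilon'$. Choose a representative $b\in\cO_F^\times$ of $\beta_F$, so that
\[
 \chi_F(1+x)=\psi_F(bx/\gamma_F)\qquad(x\in\p_F^{q}).
\]
Let $z\in\cO_K$ represent $P^*_{K/F;\gamma_F,\gamma_K}(\bar b)$.

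For $y\in\p_K^{q'}$, inclusion \eqref{eq:stationary-depth-first-inclusion} of Lemma~\ref{lem:stationary-depth-norm} gives $x:=\Nm_{K/F}(1+y)-1\in\p_F^{q}$. We may therefore apply the linearization of $\chi_F$ to $x$:
\[
 \chi_K(1+y)=\chi_F(\Nm_{K/F}(1+y))=\chi_F(1+x)=\psi_F\!\left(\frac{b(\Nm_{K/F}(1+y)-1)}{\gamma_F}\right).
\]
By the defining property \eqref{eq:Pstar-def} of the adjoint, the right side equals $\psi_K(zy/\gamma_K)$. Hence $\chi_K(1+y)=\psi_K(zy/\gamma_K)$ for every $y\in\p_K^{q'}$.

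We now apply Lemma~\ref{lem:stationary-class-existence} over $K$, with $\Gamma=\gamma_K$, $M=q=m'$ and $r=q'$; this requires $m'>1$. The lemma says that the class modulo $\p_K^{d'}$ satisfying this identity is unique and equals $\operatorname{St}_{\gamma_K,q'}(\chi_K)=\beta_K$. Thus $\bar z=\beta_K$. The class $\bar z$ does not depend on the choice of $b$, because $P^*$ is defined on $\cO_F/\p_F^d$. This gives \eqref{eq:stationary-class-adjoint}.

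The only step needing care is the standing hypothesis that $m'=m_K(\chi_K)>1$, so that $\beta_K$ is defined and $\ceil{m'/2}\le q'\le m'-1$ holds. This follows from cases (i)--(iii): in the unramified case and in case (iii), $m'=m>1$; in case (ii), $m'\ge m>1$ because $\HerbPsi_{K/F}(u)\ge u$. It remains to check that $q'=\ceil{m'/2}\le m'-1$, which holds because $m'\ge2$.
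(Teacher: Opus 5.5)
Your proof is correct and is the paper's argument: take a representative of $\beta_F$, use \eqref{eq:stationary-depth-first-inclusion} to apply the $\chi_F$-linearization to $\Nm_{K/F}(1+y)-1$, rewrite the result via \eqref{eq:Pstar-def}, and conclude by uniqueness in Lemma~\ref{lem:stationary-class-existence} over $K$. Your explicit check that $m'>1$, so that $\beta_K$ is defined, is a detail the paper leaves implicit. One small slip: you reuse the letter $q$ both for $d+\varepsilon$ and for the lemma's conductor parameter, which should be avoided.
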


\begin{proof}
Let $b_F\in\cO_F^\times$ represent $\beta_F$, and let $z\in\cO_K$
represent $P^*_{K/F;\gamma_F,\gamma_K}(\beta_F)$.  For
$y\in\p_K^{d'+\varepsilon'}$, Lemma~\ref{lem:stationary-depth-norm}
and \eqref{eq:Pstar-def} give
\begin{align*}
 \chi_K(1+y)
 &=\chi_F(\Nm_{K/F}(1+y))\\
 &=\psi_F\!\left(\frac{b_F(\Nm_{K/F}(1+y)-1)}{\gamma_F}\right)\\
 &=\psi_K(zy/\gamma_K).
\end{align*}
Uniqueness in Lemma~\ref{lem:stationary-class-existence}, applied over
$K$, gives $\bar z=\beta_K$.
\end{proof}

\begin{lemma}[Integer powers and Teichm\"uller representatives]
\label{lem:teichmueller-stationary}
Let $K/F$ be wildly ramified cyclic of prime degree
$p=\operatorname{char}k_F$, with break $t$, and put
$T=t+1$ and $s_0=\lceil T/2\rceil$.  Let $\tau$ be a character of
$F^\times$ of conductor $T$.  For $j\in\F_p$, let $j_{\mathbf Z}$
be its representative in $\{0,\ldots,p-1\}$, viewed in $F$, and let
$[j]\in\cO_F$ be its Teichm\"uller lift.  If $A\in F$ satisfies
\[
 \tau(1+x)=\psi_F(Ax)\qquad(x\in\p_F^{s_0}),
\]
then
\begin{equation*}
 \psi_F(j_{\mathbf Z}Ax)=\psi_F([j]Ax)
 \qquad(x\in\p_F^{s_0}).
\end{equation*}
Thus $[j]A$ and $j_{\mathbf Z}A$ give the same linearization of
$\tau^j$ on $\p_F^{s_0}$.  If $p$ is odd, then
\begin{equation}\label{eq:teichmueller-successor-depth}
 [j]-j_{\mathbf Z}\in\p_F^{s_0}.
\end{equation}
The Teichm\"uller lifts satisfy
\begin{equation}\label{eq:teichmueller-polynomials}
 \prod_{j\in\F_p}(X-[j])=X^p-X,
 \qquad
 \prod_{j\in\F_p}(X+[j])=X^p+(-1)^pX,
\end{equation}
and
\begin{equation}\label{eq:teichmueller-sum}
 \sum_{j\in\F_p}[j]=
 \begin{cases}0,&p\text{ odd},\\1,&p=2.\end{cases}
\end{equation}
\end{lemma}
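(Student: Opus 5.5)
The plan is to treat the four assertions separately, beginning with the Teichm\"uller polynomial identities, since the other claims rest on them. By definition $[j]\in\cO_F$ is the unique root of $X^{q_F}-X$ reducing to $j\in\F_p\subseteq k_F$. Since $[j]^p$ is again a root of $X^{q_F}-X$ with residue $j^p=j$, uniqueness gives $[j]^p=[j]$. Hence the $p$ distinct elements $[j]$ are roots of $X^p-X$, which therefore factors as $\prod_j(X-[j])$. Replacing $X$ by $-X$ and multiplying by $(-1)^p$ gives the second identity in \eqref{eq:teichmueller-polynomials}. Comparing the coefficients of $X^{p-1}$ then gives \eqref{eq:teichmueller-sum}: the coefficient of $X^{p-1}$ in $X^p-X$ is $0$ when $p$ is odd, while for $p=2$ one has $[0]+[1]=1$ directly.

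Next comes the linearization claim, which is the core of the lemma. The element $u=j_{\mathbf Z}-[j]$ lies in $\p_F$, because both terms reduce to $j$. For $x\in\p_F^{s_0}$ one has $ux\in\p_F^{s_0+1}$, so it suffices to show that $\psi_F(Ay)=1$ for all $y\in\p_F^{s_0+1}$. The hypothesis $\tau(1+x)=\psi_F(Ax)$ on $\p_F^{s_0}$, together with Lemma~\ref{lem:stationary-class-existence} (with $\Gamma=1$ and $M=-n$, say, or after rescaling by an admissible $\gamma$), shows that $v_F(A)=-n_F(\psi_F)-T$. Therefore $Ay$ has valuation at least $-n_F(\psi_F)-T+s_0+1$. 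This exponent is $\ge -n_F(\psi_F)$ only if $s_0+1\ge T$, that is, only when $T\le 2$, so the argument cannot be closed this way in general. I will instead use the finer information $u\in\p_F^{s_0}$, which makes the required exponent $-n_F(\psi_F)-T+2s_0\ge-n_F(\psi_F)$, since $2s_0\ge T$. The linearization statement therefore reduces to \eqref{eq:teichmueller-successor-depth}, at least for odd $p$.

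The step I expect to be the main obstacle is proving $[j]-j_{\mathbf Z}\in\p_F^{s_0}$ for odd $p$. My approach is to bound $v_F(p)$ from below via the ramification. In mixed characteristic, the wild break satisfies $t\le e_F p/(p-1)$, where $e_F=v_F(p)$ (this is the standard bound from Serre, ch.~IV), so $T\le e_Fp/(p-1)+1$. Now $[j]$ and $j_{\mathbf Z}$ are both $(p-1)$th roots of $j^{p-1}$ modulo a high power; more precisely, $j_{\mathbf Z}^{p-1}\equiv1$ modulo $p$ for $j\ne0$, and $[j]=\lim j_{\mathbf Z}^{p^n}$. Since $j_{\mathbf Z}^p-j_{\mathbf Z}\in p\mathbf Z$, the contraction estimate for Hensel's lemma applied to $X^p-X$ (whose derivative is a unit) gives $v_F([j]-j_{\mathbf Z})\ge e_F$. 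It then remains to check that $e_F\ge s_0=\lceil T/2\rceil$, which follows from $T/2\le e_Fp/(2(p-1))+1/2\le e_F+1/2$ for $p\ge3$. In equal characteristic, $j_{\mathbf Z}$ lies in the prime field $\F_p\subseteq F$ and equals $[j]$, so the difference is $0$. For $p=2$, I will handle the linearization claim directly: $j\in\{0,1\}$ and $[j]=j_{\mathbf Z}$, so there is nothing to prove. This is consistent with \eqref{eq:teichmueller-successor-depth} being asserted only for odd $p$.
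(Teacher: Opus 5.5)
Your overall strategy is sound, but the final arithmetic step does not prove what you need. Your chain ends in $T/2\le e_F+\tfrac12$. For odd $T=2r+1$ this only gives $r\le e_F$, that is, $s_0=r+1\le e_F+1$. For instance, $T=3$, $e_F=1$ satisfies $T/2\le e_F+\tfrac12$ but has $s_0=2>e_F$.

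So as written you have proved \eqref{eq:teichmueller-successor-depth} only for even $T$. Your proof of the linearization identity for odd $p$ goes through \eqref{eq:teichmueller-successor-depth}, so it has the same gap for odd $T$.

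The repair is to keep the slack you discarded. For $T=2r+1$ with $r\ge1$, the bound $t=2r\le pe_F/(p-1)$ gives
\[
e_F\ge \frac{2r(p-1)}{p}=r+\frac{r(p-2)}{p}>r \quad\text{for } p\ge3,
\]
hence $e_F\ge r+1=s_0$. Equivalently, your second inequality is strict, because $e_F\ge1$ in mixed characteristic and $p/(2(p-1))<1$ for $p\ge3$. The strict inequality $T/2<e_F+\tfrac12$ does force $\lceil T/2\rceil\le e_F$.

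With that fixed, your route differs from the paper's mainly in where the lower bound on $e=v_F(p)$ comes from.
\begin{itemize}
\item You import the classical bound $t\le pe_F/(p-1)$ from outside. The paper derives what it needs from its own trace-ideal formula \eqref{eq:trace-ideal}: $p=\Tr_{K/F}(1)\in\Tr_{K/F}(\cO_K)=\p_F^{\lfloor (p-1)T/p\rfloor}$. This gives $e\ge\lfloor T/2\rfloor$ always, and $e\ge (T+1)/2$ for odd $p$ and odd $T\ge3$, by the integer inequality $p(r+1)\le(p-1)(2r+1)$.
\item The paper proves the linearization identity for every $p$ at once. It uses $([j]-j_{\mathbf Z})x\in\p_F^{e+s_0}\subseteq\p_F^{T}$, and notes that $\psi_F(A\,\cdot)$ is trivial on $\p_F^T$ because $\tau|_{U_F^T}=1$. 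This needs only $e\ge\lfloor T/2\rfloor$. It avoids your computation of $v_F(A)$, your split at $p=2$, and any dependence on the successor-depth statement.
\end{itemize}
On the other side, your Hensel estimate for $X^p-X$ actually justifies $[j]-j_{\mathbf Z}\in p\cO_F$, which the paper only asserts. Your treatment of \eqref{eq:teichmueller-polynomials} and \eqref{eq:teichmueller-sum} matches the paper's.
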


\begin{proof}
In equal characteristic, the prime-field element $j_{\mathbf Z}$ is
already its Teichm\"uller lift.  Suppose that $F$ has characteristic zero
and put $e=v_F(p)$.  Since $p=\Tr_{K/F}(1)$, the trace-ideal formula
\eqref{eq:trace-ideal} with $a=0$ gives
\[
 e\ge \left\lfloor\frac{(p-1)T}{p}\right\rfloor
   \ge \left\lfloor\frac T2\right\rfloor.
\]
Moreover $[j]-j_{\mathbf Z}\in p\cO_F$.  Hence for
$x\in\p_F^{s_0}$,
\[
 ([j]-j_{\mathbf Z})x\in
 \p_F^{e+s_0}\subseteq\p_F^T.
\]
For $x\in\p_F^T$, the assumed linearization gives
$\psi_F(Ax)=\tau(1+x)=1$.  This proves
the asserted equality.

For even $T$, the bound on $e$ proves
\eqref{eq:teichmueller-successor-depth}.  If $p$ is odd and
$T=2r+1>1$, then $r\ge1$ and
$p(r+1)\le(p-1)(2r+1)$.
Hence
\[
 e\ge\left\lfloor\frac{(p-1)T}{p}\right\rfloor
 \ge r+1=\frac{T+1}{2}.
\]
Since $[j]-j_{\mathbf Z}\in p\cO_F$, this proves
\eqref{eq:teichmueller-successor-depth}.

The elements $[j]$ are the $p$ roots of $X^p-X$ in $F$.  This proves the first polynomial
identity; replacing $X$ by $-X$ gives the second.  The coefficient of
$X^{p-1}$ gives the sum formula.
\end{proof}

\begin{remark}[Notation for powers of the norm character]
In the wild cases below, $\tau$ denotes a generator of $S(K/F)$.
An exponent $j$ in $\tau^j$ denotes the integer
representative of a class in $\F_p$.  In formulas in $F$ or $K$, the
same symbol denotes its Teichm\"uller lift; in residue-field formulas it
denotes its class in $\F_p$.
\end{remark}

\begin{lemma}[Norm representatives at subcritical precision]
\label{lem:norm-stationary-representatives}
Let $K/F$ be ramified cyclic of prime degree with break $t$.  Let
$h\in\mathbf Z$, let $0\le s\le t$, and let $c\in F^\times$ have
$v_F(c)=h$.  There is $\widetilde{c}\in K^\times$ with $v_K(\widetilde{c})=h$ such that
\begin{equation*}
 c-\Nm_{K/F}(\widetilde{c})\in\p_F^{h+s}.
\end{equation*}
\end{lemma}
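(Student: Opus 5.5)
The plan is to write $c=\varpi_F^h u$ with $u\in\cO_F^\times$, where $\varpi_F=\Nm_{K/F}(\varpi_K)$ for a chosen uniformizer $\varpi_K$ of $K$. Such a choice is possible because $K/F$ is totally ramified: the norm of any uniformizer of $K$ is a uniformizer of $F$. This reduces the statement to approximating the unit $u$ by a norm of a unit. Indeed, if $\widetilde u\in\cO_K^\times$ satisfies $u-\Nm_{K/F}(\widetilde u)\in\p_F^{s}$, then
\[
 \widetilde c=\varpi_K^h\widetilde u
\]
has $v_K(\widetilde c)=h$. Moreover $c-\Nm_{K/F}(\widetilde c)=\varpi_F^h\bigl(u-\Nm_{K/F}(\widetilde u)\bigr)\in\p_F^{h+s}$.

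For the unit problem I will invoke the first isomorphism in \eqref{eq:norm-below-break} of Theorem~\ref{thm:norm-filtration}(c), which states that $\Nm_{K/F}$ induces an isomorphism $K^\times/U_K^t\to F^\times/U_F^t$. Its surjectivity gives $F^\times=\Nm_{K/F}(K^\times)\,U_F^t$. Concretely, there is $y\in K^\times$ with $c/\Nm_{K/F}(y)\in U_F^t$. Since norm preserves valuation in the totally ramified case ($v_F(\Nm_{K/F}y)=v_K(y)$) and $U_F^t$ consists of units, we get $v_K(y)=v_F(c)=h$. Put $\widetilde c=y$.

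Then $c=\Nm_{K/F}(\widetilde c)(1+w)$ with $w\in\p_F^t$ when $t\ge1$. Hence $c-\Nm_{K/F}(\widetilde c)=\Nm_{K/F}(\widetilde c)\,w\in\p_F^{h+t}\subseteq\p_F^{h+s}$, because $s\le t$. This handles all $s$ at once, so no separate argument is needed for the first reduction.

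The only step needing care is the case $t=0$, where $U_F^0=\cO_F^\times$ and the congruence $c\equiv\Nm(\widetilde c)\pmod{\p_F^{h+0}}$ reduces to the valuation statement $v_F(c-\Nm\widetilde c)\ge h$. This holds trivially whenever both terms have valuation $h$, so any $\widetilde c$ of valuation $h$ works, for instance $\varpi_K^h$ times a unit. When $t\ge1$ and $s=0$ the same trivial remark applies, so the substantive content is $1\le s\le t$, covered above.

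I expect no real obstacle beyond confirming that the splicing statement \eqref{eq:norm-below-break} has been established with the convention $U^0=\cO^\times$. When $t\ge1$ the residue norm is Frobenius, a bijection on the finite field, so the bottom graded piece is also an isomorphism. The case $t=0$ is the degenerate one just discussed.
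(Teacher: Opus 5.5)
Your argument is correct and is essentially the paper's proof: surjectivity of $K^\times/U_K^t\to F^\times/U_F^t$ from \eqref{eq:norm-below-break} gives $\widetilde c$ with $c/\Nm_{K/F}(\widetilde c)\in U_F^t$, total ramification forces $v_K(\widetilde c)=h$, and the difference then lies in $\p_F^{h+t}\subseteq\p_F^{h+s}$. Your opening reduction to units and your separate treatment of $t=0$ are harmless but unnecessary, since the same one-line argument already covers $t=0$.
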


\begin{proof}
The first isomorphism in \eqref{eq:norm-below-break} supplies $\widetilde{c}$ with
$c/\Nm_{K/F}(\widetilde{c})\in U_F^t$.  Total ramification gives
$v_K(\widetilde{c})=v_F(\Nm_{K/F}(\widetilde{c}))=h$.  Since $c$ and $\Nm_{K/F}(\widetilde{c})$ have the same
valuation and their quotient lies in $U_F^t$, their difference belongs
to $\p_F^{h+t}\subseteq\p_F^{h+s}$.
\end{proof}

\begin{lemma}[Norm-character conversion]\label{lem:norm-character-conversion}
Suppose $K/F$ is wildly ramified cyclic of prime degree $p$, put $T=t+1$,
and let $\tau$ generate $S(K/F)$.  Put $s_0=\lceil T/2\rceil$, and
let $\alpha\in F$ and $\gamma_F\in F^\times$ satisfy
\begin{equation}\label{eq:tau-high-linearization}
 \tau(1+x)=\psi_F(\alpha x/\gamma_F)
 \qquad(x\in\p_F^{s_0}).
\end{equation}
Then, for $y\in\p_K^{s_0}$,
\[
 \psi_F\!\left(\frac{\alpha \Nm_{K/F}(y)}{\gamma_F}\right)
 =\psi_F\!\left(-\frac{\alpha\Tr_{K/F}(y)}{\gamma_F}\right).
\]
\end{lemma}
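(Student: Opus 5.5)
The identity to prove is
\[
\psi_F(\alpha(\Nm y+\Tr y)/\gamma_F)=1\qquad(y\in\p_K^{s_0}).
\]
The natural route is to realize $\Nm y+\Tr y$ as, up to harmless error, $\Nm(1+y)-1$, and then use that $\tau$ is trivial on norms. The plan has three steps.

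\textbf{Step 1: $\tau$ kills the norm of $1+y$.} Since $\tau\in S(K/F)$, we have $\tau(\Nm_{K/F}(1+y))=1$ for every $y\in\p_K$.

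\textbf{Step 2: truncate the norm.} Apply Corollary~\ref{cor:wild-norm-truncation} with $q=s_0$ and $r=T$. Its hypothesis $2s_0+(p-1)T\ge pT$ reads $2\lceil T/2\rceil\ge T$, which holds. This gives
\[
\Nm_{K/F}(1+y)\equiv 1+\Tr_{K/F}(y)+\Nm_{K/F}(y)\pmod{\p_F^{T}}.
\]
Put $w=\Tr(y)+\Nm(y)$. By \eqref{eq:trace-ideal}, $v_F(\Tr y)\ge\lfloor(s_0+(p-1)T)/p\rfloor\ge s_0$; this uses $s_0\le T$, since then $s_0+(p-1)T\ge ps_0$. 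By \eqref{eq:symmetric-bound-last}, $v_F(\Nm y)\ge s_0$. Hence $w\in\p_F^{s_0}$. Since $\tau$ has conductor $T$, it is trivial on $U_F^T$, so Step 1 gives
\[
\tau(1+w)=\tau(\Nm(1+y))=1.
\]
Here the quotient $(1+w)/\Nm(1+y)$ lies in $U_F^T$ because both factors are units congruent modulo $\p_F^T$.

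\textbf{Step 3: linearize.} Since $w\in\p_F^{s_0}$, the hypothesis \eqref{eq:tau-high-linearization} gives
\[
1=\tau(1+w)=\psi_F(\alpha w/\gamma_F)=\psi_F(\alpha\Tr y/\gamma_F)\,\psi_F(\alpha\Nm y/\gamma_F).
\]
Rearranging yields the claim.

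The only delicate point is the bookkeeping in Step 2, namely checking the truncation hypothesis and that $w$ lands in the linearization domain $\p_F^{s_0}$. Both reduce to $T\ge 2$ in the wild case and $s_0=\lceil T/2\rceil\le T$, which appear fine. Note that no relation between the mod-$\p_F^T$ truncation and the depth of linearization is needed beyond $\tau|_{U_F^T}=1$.
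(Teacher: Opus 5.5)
Your proof is correct and matches the paper's argument: you truncate $\Nm_{K/F}(1+y)$ modulo $\p_F^T$ using Corollary~\ref{cor:wild-norm-truncation} with $(q,r)=(s_0,T)$, check that $\Tr_{K/F}(y)$ and $\Nm_{K/F}(y)$ lie in $\p_F^{s_0}$, and then use that $\tau$ is trivial on norms and on $U_F^T$ before applying the linearization. Your remark that $(1+w)/\Nm_{K/F}(1+y)\in U_F^T$ simply makes explicit a step the paper leaves implicit.
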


\begin{proof}
Put $D=D_{K/F}=(p-1)T$.  Equation~\eqref{eq:wild-norm-truncation},
with $q=s_0$ and $r=T$, gives
\[
 \Nm_{K/F}(1+y)\equiv1+\Tr_{K/F}(y)+\Nm_{K/F}(y)\pmod{\p_F^T},
\]
because $2s_0\ge T$.  The trace-ideal formula \eqref{eq:trace-ideal}
and total ramification give, respectively,
\[
 v_F(\Tr_{K/F}(y))\ge
 \left\lfloor\frac{s_0+D}{p}\right\rfloor\ge s_0,
 \qquad v_F(\Nm_{K/F}(y))=v_K(y)\ge s_0.
\]
Since $\tau$ is trivial on norms and on $U_F^T$, applying
\eqref{eq:tau-high-linearization} to the displayed norm congruence
gives the asserted identity.
\end{proof}

\subsection{High-conductor parameters}

\begin{proposition}[High-conductor stationary-class table]
\label{prop:high-parameter-table}
Let $\gamma_F$ be admissible for $(\chi_F,\psi_F)$.  Whenever
$\beta$ occurs below, it denotes a representative in $\cO_F^\times$ of
$\operatorname{St}_{\gamma_F,d+\varepsilon}(\chi_F)$.  In each of the cases (a)--(d) below,
$\gamma_K=\gamma_F$, viewed as an element of $K$, is admissible for
$(\chi_K,\psi_K)$.  Set
$P^*_{K/F}=P^*_{K/F;\gamma_F,\gamma_K}$.
\begin{enumerate}[label=(\alph*)]
\item If $K/F$ is unramified, then
$P^*_{K/F}(\bar\beta)=\bar\beta$ in $\cO_K/\p_K^{d'}$.

\item If $K/F$ is tamely ramified quadratic, then
$P^*_{K/F}(\bar\beta)=\bar\beta$ in $\cO_K/\p_K^{d'}$.

\item If $K/F$ is ramified of odd prime degree, put $T=t+1$.
If $d\ge T$, equivalently $m=2d+\varepsilon\ge2T$, then
$P^*_{K/F}(\bar\beta)=\bar\beta$ in $\cO_K/\p_K^{d'}$.

\item Suppose $K/F$ is wildly ramified of odd prime degree $p$. Retain the minimal-conductor choice of $\chi_F$.
 In the range $T\le m\le2T-1$,
let $\tau$ generate $S(K/F)$.  Choose $\widetilde{\alpha}\in K^\times$ with $v_K(\widetilde{\alpha})=m-T$ such that
$\alpha=\Nm_{K/F}(\widetilde{\alpha})$
satisfies \eqref{eq:tau-high-linearization} on
$\p_F^{\lceil T/2\rceil}$.
Choose $\widetilde{\beta}\in\cO_K^\times$ such that
$\beta=\Nm_{K/F}(\widetilde{\beta})$
represents $\operatorname{St}_{\gamma_F,d+\varepsilon}(\chi_F)$.
For these choices, let $\beta_K\in\cO_K^\times$ and, for
$j\in\F_p$, $\beta_j\in\cO_F^\times$ represent the stationary classes of
$\chi_K$ and $\tau^j\chi_F$ relative to $\gamma_K$ and $\gamma_F$,
respectively.  Then
\begin{equation}\label{eq:Pstar-intermediate}
 \beta_K\equiv\beta-\frac{\widetilde{\beta}\alpha}{\widetilde{\alpha}}
 \pmod{\p_K^{d'}}.
\end{equation}
For $j\in\F_p$, with its Teichm\"uller representative used in $F$ and $K$,
\begin{equation}\label{eq:twist-beta-intermediate}
 \beta_j\equiv \Nm_{K/F}(\widetilde{\beta}+j\widetilde{\alpha})
 \equiv\beta+j\alpha\pmod{\p_F^d}.
\end{equation}
Moreover,
\[
 \Nm_{K/F}(\beta_K)\equiv
 \prod_{j\in\F_p}(\beta+j\alpha)
 =\beta^p-\beta\alpha^{p-1}\pmod{\p_F^d}.
\]
\end{enumerate}
\end{proposition}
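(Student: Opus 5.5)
The plan is to derive every case from Corollary~\ref{cor:stationary-class-under-norm}, which gives $\beta_K=P^*_{K/F}(\beta_F)$. Each case then reduces to computing the adjoint $P^*$ for the common denominator $\gamma_K=\gamma_F$.

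First I would check admissibility. In case (a), Lemma~\ref{lem:unramified-compatibility}(d) gives equal conductors and $\varpi_F$ stays a uniformizer, so $\gamma_F$ is admissible over $K$. In the ramified cases (b)--(d) I would use \eqref{eq:additive-conductor-trace}, which gives $n_K=\ell n+(\ell-1)T$. For $m>T$, \eqref{eq:pullback-high} gives $m'=\ell m-(\ell-1)T$, so $m'+n_K=\ell(m+n)=v_K(\gamma_F)$. For $m=T$ in (d), \eqref{eq:minimal-critical} gives $m'=m=T$; here $m'+n_K=\ell n+\ell T=\ell(m+n)$, so admissibility again holds.

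For (a)--(c), the last clause of Proposition~\ref{lem:truncated-norm-adjoint} reduces everything to the trace congruence
\[
\Nm(1+y)-1\equiv\Tr(y)\pmod{\p_F^m}\qquad(y\in\p_K^{d'+\varepsilon'}).
\]
\begin{itemize}
\item In the unramified case, $d'=d$. The terms $s_j(y)$ with $j\ge2$ lie in $\p_F^{2(d+\varepsilon)}\subseteq\p_F^m$.
\item In the tame quadratic case, $s_2=\Nm(y)$ has $v_F\ge d'+\varepsilon'$. With $t=0$ one has $m'=2m-1$, so $d'+\varepsilon'=m$.
\item In case (c) with $d\ge T$, the estimate is exactly the one already carried out in the proof of Proposition~\ref{thm:ramified-Hasse-comparison}. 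I would use Lemma~\ref{lem:wild-symmetric-bound} for $t>0$, together with $v_F(\Nm y)\ge d'+\varepsilon'\ge m$, and the crude degree bound for $t=0$. I would redo it for both parities of $m$.
\end{itemize}

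Case (d) is the main obstacle, since there the trace congruence fails and a norm term survives. The existence of $\widetilde\alpha$ and $\widetilde\beta$ follows from Lemma~\ref{lem:norm-stationary-representatives}, because the stationary classes are only needed modulo $\p_F^d$ and $d\le t$ in this range. Here the twists $\tau^j\chi_F$ have conductor $m$ by \eqref{eq:minimal-orbit-conductors}. I would prove \eqref{eq:twist-beta-intermediate} using Lemma~\ref{lem:stationary-class-calculus}(c), which gives $\beta_j\equiv\beta+j\alpha$; the linearization of $\tau$ is valid on the needed depth since $d+\varepsilon\ge\lceil T/2\rceil$. Lemma~\ref{lem:teichmueller-stationary} lets me replace the integer $j$ by $[j]$. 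Then \eqref{eq:norm-of-stationary-class-well-defined}, together with $\Nm(\widetilde\beta+j\widetilde\alpha)\equiv\Nm(\widetilde\beta)+j\Nm(\widetilde\alpha)$ modulo $\p_F^d$, gives the middle term. I would expand the norm of $\widetilde\beta+j\widetilde\alpha=\widetilde\beta(1+j\widetilde\alpha/\widetilde\beta)$ via Corollary~\ref{cor:wild-norm-truncation}; the trace term should vanish to the required precision by \eqref{eq:trace-ideal}.

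For \eqref{eq:Pstar-intermediate}, I would apply Corollary~\ref{cor:wild-norm-truncation} to $y\in\p_K^{d'+\varepsilon'}$ with $r=m$. This writes $\chi_F(\Nm(1+y))=\chi_F(1+\Tr y)\,\chi_F(1+\Nm y)$ up to a unit in $U_F^m$, using $\Tr y\cdot\Nm y\in\p_F^m$. The first factor gives $\psi_K(\beta y/\gamma)$.

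For the second factor the idea is that $\chi_F$ and $\tau^{-1}$ agree on norms only up to a correction, so I would write $\chi_F(1+\Nm y)=\psi_F(\beta\Nm y/\gamma)$. The factorization $\Nm y=\Nm(\widetilde\alpha)\Nm(\widetilde\beta)^{-1}\beta\,\Nm(y/\ldots)$ is the delicate step. Concretely, I would set $y=(\widetilde\alpha/\widetilde\beta)\,w$ and try to convert $\beta\Nm(y)=\alpha\Nm(\widetilde\beta y/\widetilde\alpha)$. Lemma~\ref{lem:norm-character-conversion} then turns this into $-\alpha\Tr(\widetilde\beta y/\widetilde\alpha)$, which yields the coefficient $-\widetilde\beta\alpha/\widetilde\alpha$. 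I expect to check the valuation $v_K(\widetilde\beta y/\widetilde\alpha)\ge\lceil T/2\rceil$ carefully at the endpoints $m=T$ and $m=2T-1$.

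Finally, the norm formula would follow from the one identity $\Nm(\beta-\widetilde\beta\alpha/\widetilde\alpha)=\prod_j(\beta+[j]\alpha)$. This comes from multiplicativity and $\Nm(\beta-\widetilde\beta\alpha/\widetilde\alpha)=\Nm(\widetilde\beta)\Nm(\widetilde\alpha)^{-1}\Nm(\beta\widetilde\alpha/\widetilde\beta-\alpha)$, together with the approximation $\beta\widetilde\alpha/\widetilde\beta\approx\widetilde\alpha\,\Nm(\widetilde\beta)/\widetilde\beta$. Alternatively, I would compare characteristic polynomials and use \eqref{eq:teichmueller-polynomials}, which gives $\beta^p-\beta\alpha^{p-1}$. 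If the direct computation is messy, I would instead argue by conjugates: the Galois conjugates of $\beta_K$ should be congruent to $\beta+[j]\alpha$ modulo suitable precision.
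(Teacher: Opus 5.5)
Your treatment of admissibility, of cases (a)--(c), and of the first two congruences in (d) is essentially the paper's argument. For (a)--(c) you use the trace congruence together with the last clause of Proposition~\ref{lem:truncated-norm-adjoint}. In (d) you use norm truncation with $r=m$, the rewriting $\beta\Nm(y)=\alpha\Nm(\widetilde\beta y/\widetilde\alpha)$ and Lemma~\ref{lem:norm-character-conversion}, and then linearization sums with Lemma~\ref{lem:teichmueller-stationary} for the twists. The valuation check you postpone is immediate. With $v=m-T$ one has $m'=T+pv$, so $v_K(\widetilde\beta y/\widetilde\alpha)\ge\lceil(T+(p-2)v)/2\rceil\ge d+\varepsilon\ge\lceil T/2\rceil$. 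For $\widetilde\alpha$ the relevant precision is $\p_F^{v+\lfloor T/2\rfloor}$, not $\p_F^d$, but Lemma~\ref{lem:norm-stationary-representatives} still applies because $\lfloor T/2\rfloor\le t$.

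The gap is the final norm congruence. The relation $\Nm(\beta-\widetilde\beta\alpha/\widetilde\alpha)=\prod_j(\beta+[j]\alpha)$ is not an identity; it holds only modulo $\p_F^d$. Multiplicativity does not produce it, and neither does your ``approximation'', which is an exact equality since $\beta=\Nm(\widetilde\beta)$. You do gesture at comparing characteristic polynomials, which is the right route, but the essential wild-ramification estimate is missing. Put $c=\widetilde\beta\alpha/\widetilde\alpha$, so that $v_K(c)=(p-1)v$ and $\Nm(c)=\beta\alpha^{p-1}$. Expand $\Nm(\beta-c)=\sum_{i=0}^{p}(-1)^i\beta^{p-i}s_i(c)$. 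The middle terms ($1\le i\le p-1$) lie in $\p_F^d$ by \eqref{eq:symmetric-bound}, because $i(p-1)v+(p-1)T\ge(p-1)(T+v)\ge pd$. Only after this does \eqref{eq:teichmueller-polynomials} identify $\beta^p-\beta\alpha^{p-1}$ with $\prod_j(\beta+[j]\alpha)$. The transfer from the representative $\beta-c$ to an arbitrary $\beta_K$ is then given by \eqref{eq:norm-of-stationary-class-well-defined}; you cite that lemma in the twist step, where it plays no role. Your fallback of matching the Galois conjugates of $\beta_K$ one by one with the elements $\beta+[j]\alpha$ is false. Since $K/F$ is totally ramified, all conjugates of $\beta_K$ have the same residue, whereas at $m=T$ the elements $\beta+[j]\alpha$ have the $p$ distinct residues $\bar\beta+j\bar\alpha$. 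The congruence holds only for the product, through the vanishing of the intermediate symmetric functions.
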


\begin{proof}
Write $\Nm=\Nm_{K/F}$ and put $D=D_{K/F}$.
We first verify admissibility of $\gamma_K=\gamma_F$.
In the unramified case,
$m'=m$ and $n_K(\psi_K)=n_F(\psi_F)$, so an admissible
$\gamma_F$ remains admissible in $K$.  In each ramified case, Proposition~\ref{prop:pullback-conductor}(i),
with \eqref{eq:minimal-critical} at $m=t+1$, and
\eqref{eq:additive-conductor-trace} give
\[
 m'=\ell m-D,
 \qquad n_K(\psi_K)=\ell n_F(\psi_F)+D,
\]
and therefore
\[
 m'+n_K(\psi_K)=\ell\bigl(m+n_F(\psi_F)\bigr)
 =v_K(\gamma_F).
\]
Thus $\gamma_K=\gamma_F$ is admissible for $\chi_K$ in all four parts.

For (a), $d'=d$ and
$\varepsilon'=\varepsilon$.  If
$y\in\p_K^{d+\varepsilon}$, the $j$th elementary symmetric term in
$\Nm_{K/F}(1+y)$ has valuation at least $j(d+\varepsilon)$; hence every term
with $j\geq2$ lies in $\p_F^m$, and
\begin{equation*}
 \Nm_{K/F}(1+y)-1\equiv\Tr_{K/F}(y)\pmod{\p_F^m}.
\end{equation*}
Proposition~\ref{lem:truncated-norm-adjoint} therefore identifies the
adjoint with the map induced by $F\hookrightarrow K$ on the quotient
rings. Applying it to $\bar\beta$ proves (a).

For (b), $m'=2m-1$, so $d'+\varepsilon'=m$.  For $x\in\p_K^m$,
the quadratic norm formula and $v_F(\Nm(x))=v_K(x)\ge m$ give
$\Nm(1+x)-1\equiv\Tr_{K/F}(x)\pmod{\p_F^m}$.
Proposition~\ref{lem:truncated-norm-adjoint} again gives
$P^*_{K/F}(\bar\beta)=\bar\beta$.

For (c), put $T=t+1$ and $q=d'+\varepsilon'=\lceil m'/2\rceil$, and separate
the tame and wild cases.  If $K/F$ is tamely ramified of odd degree
$\ell$, then
$m'-1=\ell(m-1)$.
For $x\in\p_K^q$, let $s_j(x)$ be the $j$th elementary symmetric
function of its $\ell$ conjugates.  For $2\le j\le\ell-1$, every
summand defining $s_j(x)$ has $K$-valuation at least $jq$.  Since $s_j(x)\in F$ and
$v_K|_F=\ell v_F$, it follows that
\[
 v_F(s_j(x))\ge\left\lceil\frac{jq}{\ell}\right\rceil
 \ge\left\lceil\frac{2q}{\ell}\right\rceil\ge m.
\]
Also $v_F(\Nm(x))=v_K(x)\ge q\ge m$.  Thus all nonlinear norm terms
vanish modulo $\p_F^m$.

If $K/F$ is wildly ramified, its degree is the residue characteristic
$p$, $D=(p-1)T$, and
$m'=p(m-T)+T$.
The assumption $d\ge T$ gives $q\ge m$.  Moreover
$2q+D\ge m'+D=pm$, so \eqref{eq:symmetric-bound} gives
$v_F(s_j(x))\ge m$ for $2\le j\le p-1$.  Again the norm term has
valuation at least $m$.  Thus in either case
\[
 \Nm(1+x)-1\equiv\Tr_{K/F}(x)\pmod{\p_F^m}
 \qquad(x\in\p_K^q).
\]
Proposition~\ref{lem:truncated-norm-adjoint} gives
$P^*_{K/F}(\bar\beta)=\bar\beta$, proving (c).

For (d), put $T=t+1$, $v=m-T$, $r_0=\lfloor T/2\rfloor$, and
$s_0=\lceil T/2\rceil$.  Proposition~\ref{prop:pullback-conductor}(i)
and \eqref{eq:minimal-critical} at $m=T$ give
$m'=pm-D=T+pv=2d'+\varepsilon$,
so $\varepsilon'=\varepsilon$.

Relative to $\gamma_F$, the stationary class of $\tau$ on
$\p_F^{s_0}$ lies in $\p_F^v/\p_F^{v+r_0}$.
Since $r_0\le t$, Lemma~\ref{lem:norm-stationary-representatives}
with $(h,s)=(v,r_0)$ gives $\alpha=\Nm(\widetilde{\alpha})$ satisfying
\eqref{eq:tau-high-linearization}.
Likewise $d\le t$, so the same lemma with $(h,s)=(0,d)$ gives
$\beta=\Nm(\widetilde{\beta})$ representing
$\operatorname{St}_{\gamma_F,d+\varepsilon}(\chi_F)$. By definition of this stationary class,
\[
 \chi_F(1+x)=\psi_F(\beta x/\gamma_F)
 \qquad(x\in\p_F^{d+\varepsilon}).
\]
For $j\in\F_p^\times$, equation~\eqref{eq:minimal-orbit-conductors}
gives $m_F(\tau^j\chi_F)=m$ because $m\ge T$; for $j=0$ the
character is $\chi_F$.

Put $q=\lceil m'/2\rceil$.  Since
$2q+D\ge pm$, equation~\eqref{eq:wild-norm-truncation} gives
\begin{equation*}
 \Nm(1+x)-1\equiv\Tr(x)+\Nm(x)\pmod{\p_F^m}
 \qquad(x\in\p_K^q).
\end{equation*}
Since
\[
 q-v=\left\lceil\frac{T+(p-2)v}{2}\right\rceil
 \ge\left\lceil\frac{T+v}{2}\right\rceil=d+\varepsilon\ge s_0,
\]
the element $y=\widetilde{\beta}x/\widetilde{\alpha}$ belongs to $\p_K^{s_0}$.  Since
$ \Nm\!\left(\frac{\widetilde{\beta}x}{\widetilde{\alpha}}\right)
 =\frac{\beta}{\alpha}\Nm(x)$
by the definitions of $\alpha$ and $\beta$,
Lemma~\ref{lem:norm-character-conversion}, applied to
$y=\widetilde{\beta}x/\widetilde{\alpha}$, gives
\[
 \psi_F\!\left(\frac{\beta \Nm(x)}{\gamma_F}\right)
 =\psi_F\!\left(-\frac{\alpha\Tr(\widetilde{\beta}x/\widetilde{\alpha})}
                         {\gamma_F}\right).
\]
The congruence for $\Nm(1+x)-1$ above therefore gives
\begin{align*}
 \psi_F\!\left(
   \frac{\beta(\Nm(1+x)-1)}{\gamma_F}\right)
  &=\psi_F\!\left(
   \frac{\Tr\bigl((\beta-\widetilde{\beta}\alpha/\widetilde{\alpha})x\bigr)}
        {\gamma_F}\right)\\
 &=\psi_K\!\left(
   \frac{(\beta-\widetilde{\beta}\alpha/\widetilde{\alpha})x}{\gamma_K}\right),
\end{align*}
where $\gamma_K=\gamma_F$.  By the defining adjoint identity
\eqref{eq:Pstar-def},
$\beta-\widetilde{\beta}\alpha/\widetilde{\alpha}$ represents
$P^*_{K/F}(\bar\beta)$.  Equation~\eqref{eq:stationary-class-adjoint}
then gives \eqref{eq:Pstar-intermediate}.

Since $d+\varepsilon\ge s_0$, equations
\eqref{eq:Lamprecht-linearization}, \eqref{eq:tau-high-linearization}, and
Lemma~\ref{lem:teichmueller-stationary} give
\[
 (\tau^j\chi_F)(1+x)=\psi_F((\beta+j\alpha)x/\gamma_F)
 \qquad(x\in\p_F^{d+\varepsilon}).
\]
Equation~\eqref{eq:minimal-orbit-conductors} gives conductor $m$
for $\tau^j\chi_F$.
Using $\alpha=\Nm_{K/F}(\widetilde{\alpha})$ and factoring
$\Nm(\widetilde{\beta}+j\widetilde{\alpha})=\beta \Nm(1+j\widetilde{\alpha}/\widetilde{\beta})$, equation~\eqref{eq:symmetric-bound} gives every intermediate term
valuation at
least $d$, since
$iv+(p-1)T\ge pd\qquad(1\le i\le p-1)$.
The two endpoint terms are $\beta$ and $j\alpha$, because $j^p=j$ for
the chosen Teichm\"uller lift.  This proves
\eqref{eq:twist-beta-intermediate} as a class congruence.

For the product congruence, choose the representative
\[
 \beta_K^0=\beta-\frac{\widetilde{\beta}\alpha}{\widetilde{\alpha}}
\]
of the class in \eqref{eq:Pstar-intermediate}.  By
\eqref{eq:norm-of-stationary-class-well-defined}, it suffices to compute
$\Nm(\beta_K^0)$ modulo $\p_F^d$.  Put
$y=\widetilde{\beta}\alpha/\widetilde{\alpha}$.  Then
$v_K(y)=(p-1)v$ and $\Nm(y)=\beta\alpha^{p-1}$.  In the expansion of
$\Nm(\beta-y)$ every intermediate term has valuation at least $d$,
because
$i(p-1)v+(p-1)T\ge pd\qquad(1\le i\le p-1)$.
The endpoints give $\beta^p-\beta\alpha^{p-1}$, which equals
$\prod_{j\in\F_p}(\beta+j\alpha)$.  This proves the asserted
congruence for $\Nm_{K/F}(\beta_K)$.
\end{proof}

\begin{proposition}[Compatible wild-quadratic stationary classes and representatives]
\label{prop:wild-quadratic-high-parameters}
Suppose $K/F$ is wildly ramified quadratic and put $T=t+1$. Retain the minimal-conductor choice of $\chi_F$.
Assume $m=m_F(\chi_F)\ge T$.  Let $\tau$ be the nontrivial character in
$S(K/F)$, and let $\gamma_F$ be admissible for $(\chi_F,\psi_F)$.
Then $\gamma_K=\gamma_F$, viewed in $K$, is admissible for
$(\chi_K,\psi_K)$.
There exist $\widetilde{\alpha}\in K^\times$ with $v_K(\widetilde{\alpha})=m-T$,
$\widetilde{\beta}\in\cO_K^\times$, and $\kappa\in U_F^t$ such that the elements
\[
 \alpha=\Nm_{K/F}(\widetilde{\alpha}),\qquad \beta=\kappa \Nm_{K/F}(\widetilde{\beta})
\]
satisfy
\[
 \begin{aligned}
 \tau(1+x)&=\psi_F(\alpha \kappa x/\gamma_F)
      &&(x\in\p_F^{\lceil T/2\rceil}),\\
 \chi_F(1+x)&=\psi_F(\beta x/\gamma_F)
      &&(x\in\p_F^{d+\varepsilon}).
 \end{aligned}
\]
Let $\beta_K\in\cO_K^\times$ and $\beta_{\tau\chi_F}\in\cO_F^\times$
represent the stationary classes of $\chi_K$ and $\tau\chi_F$ for
$\gamma_K$ and $\gamma_F$, respectively.  Then
\begin{equation}\label{eq:Pstar-wild-two}
 \beta_K\equiv\beta-\frac{\widetilde{\beta}\alpha \kappa}{\widetilde{\alpha}}
 \pmod{\p_K^{d'}},
 \qquad
 \beta_{\tau\chi_F}\equiv\beta+\alpha \kappa\pmod{\p_F^d},
\end{equation}
and
\[
 \Nm_{K/F}(\beta_K)\equiv\beta(\beta+\alpha \kappa)
 \pmod{\p_F^d}.
\]
\end{proposition}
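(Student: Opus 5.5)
The plan is to follow the proof of Proposition~\ref{prop:high-parameter-table}(d), with $p=2$, and to carry the auxiliary unit $\kappa$ throughout. We use it because $U_F^t$ is no longer inside $U_F^{d}$ once $d>t$, and because Teichm\"uller lifts are no longer available at odd precision.

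\emph{Admissibility and conductors.} Proposition~\ref{prop:pullback-conductor}(i), together with \eqref{eq:minimal-critical} when $m=T$, gives $m'=2m-T$. Corollary~\ref{cor:additive-conductor} gives $n_K(\psi_K)=2n_F(\psi_F)+T$. Hence $m'+n_K(\psi_K)=2(m+n_F)=v_K(\gamma_F)$, and $\gamma_F$ is admissible for $(\chi_K,\psi_K)$. By \eqref{eq:minimal-orbit-conductors}, $\tau\chi_F$ has conductor $m$.

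\emph{Constructing $\widetilde\alpha$, $\widetilde\beta$, $\kappa$.} By Lemma~\ref{lem:stationary-class-existence}, the class of $\tau$ on $\p_F^{\lceil T/2\rceil}$ has a representative $A$ of valuation $m-T$. Lemma~\ref{lem:norm-stationary-representatives} writes $A=\Nm(\widetilde\alpha)u$ with $u\in U_F^t$. Similarly, a representative $B$ of $\operatorname{St}_{\gamma_F,d+\varepsilon}(\chi_F)$ satisfies $B=\Nm(\widetilde\beta)u'$ with $u'\in U_F^t$. The two unit ratios must be combined into a single $\kappa$. To arrange this, I would first fix $\kappa\in U_F^t$ with $B\kappa^{-1}\in\Nm(\cO_K^\times)$ modulo $\p_F^{d}$. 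Then I would adjust $\widetilde\alpha$ so that $\alpha\kappa$ represents the class of $\tau$. This step is possible because the class of $\tau$ is only needed modulo $\p_F^{m-T+\lfloor T/2\rfloor}$, and multiplying $\widetilde\alpha$ by elements of $U_K^{\cdot}$ moves $\Nm(\widetilde\alpha)$ through all of $\Nm$-cosets of a suitable depth. The freedom of one index-$2$ coset at level $t$, from Corollary~\ref{cor:norm-characters}, is absorbed into $\kappa$, and the remaining level-$>t$ discrepancy is absorbed into the norm via \eqref{eq:norm-above-break}.

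\emph{Stationary classes.} For $\tau\chi_F$, Lemma~\ref{lem:stationary-class-calculus}(c) adds the linearizations on $\p_F^{d+\varepsilon}\subseteq\p_F^{\lceil T/2\rceil}$. This yields $\beta+\alpha\kappa$, and we then restrict to depth $d+\varepsilon$. For $\chi_K$, we use Corollary~\ref{cor:wild-norm-truncation} with $q=\lceil m'/2\rceil$, $r=m$, which gives $\Nm(1+x)\equiv1+\Tr x+\Nm x\pmod{\p_F^m}$. Then $\psi_F(\beta\Nm(x)/\gamma_F)$ is rewritten as $\psi_F(\alpha\kappa\Nm(y)/\gamma_F)$ with $y=\widetilde\beta x/\widetilde\alpha$, up to the factor $\Nm(\widetilde\beta)\kappa/\beta=1$. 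The analogue of Lemma~\ref{lem:norm-character-conversion}, using $\tau(\Nm(1+y))=1$ and the linearization $\psi_F(\alpha\kappa\,\cdot/\gamma_F)$ for $\tau$, converts this into $\psi_F(-\alpha\kappa\Tr(y)/\gamma_F)$. Combining these, we get
\[
\psi_F\bigl(\beta(\Nm(1+x)-1)/\gamma_F\bigr)=\psi_K\bigl((\beta-\widetilde\beta\alpha\kappa/\widetilde\alpha)x/\gamma_K\bigr).
\]
Corollary~\ref{cor:stationary-class-under-norm} then yields \eqref{eq:Pstar-wild-two}. The valuation bookkeeping ($q-(m-T)\ge\lceil T/2\rceil$) is as in Proposition~\ref{prop:high-parameter-table}(d).

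\emph{Norm of $\beta_K$.} By \eqref{eq:norm-of-stationary-class-well-defined} it suffices to take $\beta_K^0=\beta-y_0$ with $y_0=\widetilde\beta\alpha\kappa/\widetilde\alpha$. Then
\[
\Nm(\beta_K^0)=\beta^2-\beta\Tr(y_0)+\Nm(y_0),\qquad \Nm(y_0)=\Nm(\widetilde\beta)\kappa^2\alpha=\beta\kappa\alpha .
\]
Thus the claim reduces to $\beta\Tr(y_0)\in\p_F^d$ modulo a correction. The trace bound $v_F(\Tr y_0)\ge\lfloor(m-T+T)/2\rfloor=\lfloor m/2\rfloor=d$ from \eqref{eq:trace-ideal} gives this directly.

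\emph{Main obstacle.} The hard step is the simultaneous construction of $\kappa$: one needs the same $\kappa$ to serve both $\tau$ and $\chi_F$ while keeping the precisions compatible at $m=T$. If a direct choice fails, I would set $\kappa=B/\Nm(\widetilde\beta)$ exactly and then choose $\widetilde\alpha$ with $\Nm(\widetilde\alpha)\equiv A\kappa^{-1}$. This is solvable to the needed precision $\p_F^{m-T+\lfloor T/2\rfloor}$ because $\lfloor T/2\rfloor\le t$, by Lemma~\ref{lem:norm-stationary-representatives}.
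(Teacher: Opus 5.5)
Your proposal is correct and follows the paper's proof. The fallback in your last paragraph is exactly the paper's construction: take $\kappa=\beta/\Nm_{K/F}(\widetilde\beta)\in U_F^t$ using $U_K^0/U_K^t\simeq U_F^0/U_F^t$, then apply Lemma~\ref{lem:norm-stationary-representatives} to $c_\tau\kappa^{-1}$ with $(h,s)=(m-T,\lfloor T/2\rfloor)$. The index-two and above-break discussion in your first construction paragraph is therefore unnecessary, and the remaining steps match the paper line by line: the exact quadratic identity $\Nm(1+x)-1=\Tr(x)+\Nm(x)$, Lemma~\ref{lem:norm-character-conversion} applied directly with $p=2$, Corollary~\ref{cor:stationary-class-under-norm}, and the trace bound $v_F(\Tr y_0)\ge\lfloor m/2\rfloor=d$.
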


\begin{proof}
Write $\Nm=\Nm_{K/F}$.
Put $v=m-T\ge0$.  Proposition~\ref{prop:pullback-conductor}(i),
with \eqref{eq:minimal-critical} when $m=T$, gives
$m'=2m-T=2d'+\varepsilon'$.
Since $D_{K/F}=T$, equation~\eqref{eq:additive-conductor-trace} gives
$m'+n_K(\psi_K)=2(m+n_F(\psi_F))=v_K(\gamma_F)$.
Thus $\gamma_K=\gamma_F$ is admissible.  Set
\[
 r=\left\lfloor\frac T2\right\rfloor,
 \qquad s=\left\lceil\frac T2\right\rceil.
\]
Then
\begin{equation*}
 d'+\varepsilon'=m-r,
 \qquad d'+\varepsilon'-v=s,
 \qquad r+s=T.
\end{equation*}

Choose any representative $\beta$ of the stationary class of $\chi_F$.
By the second isomorphism in \eqref{eq:norm-below-break}, with $r=0$,
choose $\widetilde{\beta}\in\cO_K^\times$ such that
$\Nm(\widetilde{\beta})/\beta\in U_F^t$, and put
\[
 \kappa=\frac{\beta}{\Nm(\widetilde{\beta})}\in U_F^t.
\]
Let $c_\tau$ be any representative of the stationary coefficient class
of $\tau$ relative to $\gamma_F$ on $\p_F^s$.  That class lies in
$\p_F^v/\p_F^{v+r}$.
Apply Lemma~\ref{lem:norm-stationary-representatives} to the element
$c_\tau \kappa^{-1}$ with $(h,s)=(v,r)$.  It supplies
$\widetilde{\alpha}\in K^\times$ with $v_K(\widetilde{\alpha})=v$ such that, for
$\alpha=\Nm(\widetilde{\alpha})$,
$\alpha \kappa\equiv c_\tau\pmod{\p_F^{v+r}}$.
Thus $\alpha \kappa$ is a permitted representative and
\begin{equation}\label{eq:wild-two-tau-linearization}
 \tau(1+x)=
 \psi_F\!\left(\frac{\alpha \kappa x}{\gamma_F}\right)
 \qquad(x\in\p_F^s).
\end{equation}
This constructs the required simultaneous choices.

Lemma~\ref{lem:norm-character-conversion}, applied to
\eqref{eq:wild-two-tau-linearization} with coefficient $\alpha \kappa$
and denominator $\gamma_F$, gives, for $y\in\p_K^s$,
\begin{equation*}
 \psi_F\!\left(\frac{\alpha \kappa \Nm(y)}{\gamma_F}\right)
 =\psi_F\!\left(-\frac{\alpha \kappa\Tr(y)}{\gamma_F}\right).
\end{equation*}

Let $x\in\p_K^{d'+\varepsilon'}$ and put
$y=\widetilde{\beta}x/\widetilde{\alpha}$.  Since $d'+\varepsilon'-v=s$, $y\in\p_K^s$, and the identities
$\beta=\kappa \Nm(\widetilde{\beta})$ and $\alpha=\Nm(\widetilde{\alpha})$ give
$\beta \Nm(x)=\alpha \kappa \Nm(y)$.
Applying the displayed norm--trace character identity to
$y=\widetilde{\beta}x/\widetilde{\alpha}$, and using the quadratic identity
$\Nm(1+x)-1=\Tr_{K/F}(x)+\Nm(x)$, yields
\begin{align*}
 \psi_F\!\left(\frac{\beta(\Nm(1+x)-1)}{\gamma_F}\right)
 &=\psi_F\!\left(
   \frac{\beta\Tr(x)-
    \alpha \kappa\Tr(\widetilde{\beta}x/\widetilde{\alpha})}{\gamma_F}\right)\\
 &=\psi_K\!\left(
   \frac{(\beta-\widetilde{\beta}\alpha \kappa/\widetilde{\alpha})x}{\gamma_F}
   \right).
\end{align*}
By \eqref{eq:stationary-class-adjoint}, this proves the first
congruence in \eqref{eq:Pstar-wild-two}.

Since $d+\varepsilon\ge s$, multiplying
\eqref{eq:wild-two-tau-linearization} by the linearization of $\chi_F$
gives
\[
 (\tau\chi_F)(1+x)
 =\psi_F((\beta+\alpha \kappa)x/\gamma_F)
 \qquad(x\in\p_F^{d+\varepsilon}).
\]
Since $m\ge T$, equation~\eqref{eq:minimal-orbit-conductors}
gives $m_F(\tau\chi_F)=m$.  This proves the second congruence in
\eqref{eq:Pstar-wild-two}.

Finally choose the representative
\[
 \beta_K^0=\beta-\frac{\widetilde{\beta}\alpha \kappa}{\widetilde{\alpha}}
\]
of the stationary class of $\chi_K$, and put
$c=\widetilde{\beta}\alpha \kappa/\widetilde{\alpha}$.  By
\eqref{eq:norm-of-stationary-class-well-defined}, it is enough to
compute $\Nm(\beta_K^0)$ modulo $\p_F^d$.  The quadratic norm
formula gives
\[
 \Nm(\beta-c)=\beta^2-\beta\Tr(c)+\Nm(c),
 \qquad \Nm(c)=\beta\alpha \kappa.
\]
Now $v_K(c)=v$, and the trace-ideal formula \eqref{eq:trace-ideal}, with different exponent
$T$, gives
\[
 v_F(\Tr(c))\ge
 \left\lfloor\frac{v+T}{2}\right\rfloor=d.
\]
Consequently
\[
 \Nm(\beta_K)\equiv \Nm(\beta_K^0)
 \equiv\beta^2+\beta\alpha \kappa
 =\beta(\beta+\alpha \kappa)\pmod{\p_F^d},
\]
which proves the asserted congruence for $\Nm_{K/F}(\beta_K)$.
\end{proof}

\subsection{Low-conductor parameters}

Let $K/F$ be wildly ramified cyclic of prime degree
$p=\operatorname{char}k_F$, with break $t$, and put $T=t+1$.
Let $\tau$ be a nontrivial element of $S(K/F)$.  Retain the minimal-conductor choice of $\chi_F$.  Assume
\[
 2\le m=2d+\varepsilon\le T,
 \qquad v=T-m.
\]
By \eqref{eq:minimal-critical}, $m_K(\chi_K)=m$.  Put
\[
 r_0=\left\lfloor\frac T2\right\rfloor,
 \qquad s_0=\left\lceil\frac T2\right\rceil.
\]
\begin{proposition}[Low-conductor stationary-class table]
\label{prop:low-parameter-table}
Choose
\[
 \Gamma_\tau\cO_F=\p_F^{T+n_F(\psi_F)},\qquad
 \widetilde{\xi}\cO_K=\p_K^v,\qquad
 \xi=\Nm_{K/F}(\widetilde{\xi}),
\]
and put
\begin{equation}\label{eq:gammas-low}
 \gamma_F=\Gamma_\tau/\xi,\qquad
 \gamma_K=\Gamma_\tau/\widetilde{\xi}.
\end{equation}
These denominators are admissible for $\chi_F$ and $\chi_K$,
respectively. There exist $\widetilde\alpha,\widetilde\beta\in U_K^0$
whose norms
\begin{equation}\label{eq:low-norm-representatives}
 \alpha=\Nm_{K/F}(\widetilde\alpha),\qquad
 \beta=\Nm_{K/F}(\widetilde\beta)
\end{equation}
satisfy
\begin{equation}\label{eq:low-mu}
\begin{aligned}
 \tau(1+x)&=\psi_F(\alpha x/\Gamma_\tau)
 &&(x\in\p_F^{s_0}),\\
 \chi_F(1+x)&=\psi_F(\beta x/\gamma_F)
             =\psi_F(\xi\beta x/\Gamma_\tau)
 &&(x\in\p_F^{d+\varepsilon}).
\end{aligned}
\end{equation}
For these choices, $j\alpha+\xi\beta\notin\p_F$ for $j\ne0$ when
$m=T$.
Let $\beta_K\in\cO_K^\times$ represent the stationary class of
$\chi_K$ relative to $\gamma_K$.  Then
\[
 \beta_K\equiv\frac{\beta\xi}{\widetilde{\xi}}
       -\frac{\widetilde{\beta}\alpha}{\widetilde{\alpha}}\pmod{\p_K^d}.
\]
Relative to $\Gamma_\tau$, the stationary class of $\chi_F$ is
\[
 \operatorname{St}_{\Gamma_\tau,d+\varepsilon}(\chi_F)
 =\xi\beta+\p_F^{v+d}\in\p_F^v/\p_F^{v+d},
 \qquad \xi\beta=\Nm_{K/F}(\widetilde{\xi}\widetilde{\beta}).
\]
For $j\in\F_p^\times$, let $\beta_j\in\cO_F^\times$
represent $\operatorname{St}_{\Gamma_\tau,s_0}(\tau^j\chi_F)$.
With the Teichm\"uller representative used for $j$ in $F$ and $K$,
\begin{equation}\label{eq:low-beta-twists}
 \beta_j\equiv \Nm_{K/F}(j\widetilde{\alpha}+\widetilde{\xi}\widetilde{\beta})
 \equiv j\alpha+\xi\beta\pmod{\p_F^{r_0}}
 \qquad(j\in\F_p^\times).
\end{equation}
Moreover,
\[
 \Nm_{K/F}(\beta_K)\equiv
 \beta\prod_{j\in\F_p^\times}(\xi\beta+j\alpha)
 \pmod{\p_F^d}.
\]
\end{proposition}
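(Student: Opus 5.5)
The plan follows the pattern of Propositions~\ref{prop:high-parameter-table}(d) and~\ref{prop:wild-quadratic-high-parameters}. The low-conductor situation is handled by shifting the denominator by $\xi$ so that $\tau$ and $\chi_F$ are linearized against one common $\Gamma_\tau$.

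\textbf{Admissibility and choice of representatives.} Since $m\le T$, we have $m'=m$ by \eqref{eq:minimal-critical}. Also $n_K(\psi_K)=pn_F(\psi_F)+(p-1)T$ by \eqref{eq:additive-conductor-trace}.
\begin{itemize}[label={}]
\item The valuation $v_F(\gamma_F)=T+n_F-v=m+n_F$ gives admissibility over $F$.
\item The valuation $v_K(\gamma_K)=p(T+n_F)-v=pn_F+(p-1)T+m$ gives admissibility over $K$.
\end{itemize}
The stationary class of $\tau$ relative to $\Gamma_\tau$ on $\p_F^{s_0}$ lies in $\cO_F^\times/\p_F^{r_0}$ by Lemma~\ref{lem:stationary-class-existence} (units, since $M=q=T$). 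The class of $\chi_F$ relative to $\gamma_F$ lies in $\cO_F^\times/\p_F^d$. Because $r_0\le t$ and $d\le t$, Lemma~\ref{lem:norm-stationary-representatives} with $h=0$ yields norm representatives $\alpha=\Nm(\widetilde\alpha)$ and $\beta=\Nm(\widetilde\beta)$. Lemma~\ref{lem:stationary-class-calculus}(b) converts the $\chi_F$ linearization to $\Gamma_\tau=\xi\gamma_F$ with coefficient $\xi\beta$, giving the class $\xi\beta+\p_F^{v+d}$. The nonvanishing of $j\alpha+\xi\beta$ mod $\p_F$ at $m=T$ (where $v=0$) is \eqref{eq:minimal-orbit-no-leading-cancellation}, using the Teichm\"uller remark from Lemma~\ref{lem:teichmueller-stationary}.

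\textbf{Twists.} For $j\ne0$, Lemma~\ref{lem:stationary-class-calculus}(c) and Lemma~\ref{lem:teichmueller-stationary} give the linearization coefficient $j\alpha+\xi\beta$ on $\p_F^{s_0}$, since $d+\varepsilon\le s_0$. By \eqref{eq:minimal-orbit-conductors} this twist has conductor $T$, so its class is taken modulo $\p_F^{r_0}$. The norm $\Nm(j\widetilde\alpha+\widetilde\xi\widetilde\beta)$ is then expanded by \eqref{eq:symmetric-bound}. The endpoints give $j\alpha+\xi\beta$, using $[j]^p=[j]$. The mixed terms have valuation $\ge\lfloor(iv+(p-1)T)/p\rfloor\ge r_0$, which holds because $(p-1)T/p\ge T/2$.

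\textbf{Class of $\chi_K$ — the main step.} Take $x\in\p_K^{d+\varepsilon}$. The condition $2(d+\varepsilon)+(p-1)T\ge pm$ holds since $m\le T$, so Corollary~\ref{cor:wild-norm-truncation} gives $\Nm(1+x)\equiv1+\Tr x+\Nm x\pmod{\p_F^m}$. Therefore
\[
\chi_K(1+x)=\psi_F\bigl(\beta(\Tr x+\Nm x)/\gamma_F\bigr).
\]
The trace part is $\psi_K(\xi\beta x/\Gamma_\tau)=\psi_K((\beta\xi/\widetilde\xi)x/\gamma_K)$. For the norm part, put $y=\widetilde\xi\widetilde\beta x/\widetilde\alpha$. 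Then $\xi\beta\Nm x=\alpha\Nm y$, and $y\in\p_K^{v+d+\varepsilon}$. We need $v+d+\varepsilon\ge s_0$; this holds because $T-m+\lceil m/2\rceil\ge\lceil T/2\rceil$. Lemma~\ref{lem:norm-character-conversion} with denominator $\Gamma_\tau$ converts the norm part to
\[
\psi_F(-\alpha\Tr(y)/\Gamma_\tau)=\psi_K(-(\widetilde\beta\alpha/\widetilde\alpha)x/\gamma_K).
\]
Uniqueness in Lemma~\ref{lem:stationary-class-existence} over $K$ then gives the congruence for $\beta_K$ modulo $\p_K^d$. I expect the hard part here to be checking the depth inequalities, especially near $m=T$ with $T$ odd.

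\textbf{Norm of $\beta_K$.} By \eqref{eq:norm-of-stationary-class-well-defined}, it suffices to compute the norm of the representative
\[
\frac{\widetilde\beta}{\widetilde\xi\widetilde\alpha}\,(\widetilde\xi\widetilde\beta'\widetilde\alpha-\widetilde\xi\alpha),
\]
or more simply $\Nm(\widetilde\beta)\,\Nm\bigl(\beta\xi/(\widetilde\xi\widetilde\beta)-\alpha/\widetilde\alpha\bigr)$. First factor $\Nm(\widetilde\beta)=\beta$. Writing $u=\beta\xi/(\widetilde\xi\widetilde\beta)$ and $w=\alpha/\widetilde\alpha$, the remaining factor is $\Nm(u-w)$, where $\Nm u=(\xi\beta)^p/(\xi\beta)$ and $\Nm w=\alpha^{p-1}$. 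Expanding $\Nm(u-w)$ by \eqref{eq:symmetric-bound}, the mixed terms lie in $\p_F^d$ because $(p-1)T/p\ge d$. The endpoint terms give $(\xi\beta)^{p-1}-\alpha^{p-1}=\prod_{j\ne0}(\xi\beta+j\alpha)$ by \eqref{eq:teichmueller-polynomials}, divided by $X$. This yields the final congruence.
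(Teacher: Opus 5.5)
Your proposal is correct and follows essentially the paper's argument: admissibility by a valuation count, norm representatives from Lemma~\ref{lem:norm-stationary-representatives}, the class of $\chi_K$ via Corollary~\ref{cor:wild-norm-truncation}, Lemma~\ref{lem:norm-character-conversion} and uniqueness, and symmetric-function expansions with \eqref{eq:teichmueller-polynomials} for the twists and for $\Nm_{K/F}(\beta_K)$. Your factorization $\beta\xi/\widetilde\xi-\widetilde\beta\alpha/\widetilde\alpha=\widetilde\beta(u-w)$ gives a simpler mixed-term bound $\lfloor (p-1)T/p\rfloor\ge d$ than the paper's $L_i\ge d$. For this you should expand $\Nm_{K/F}(u-w)=\Nm_{K/F}(w)\,\Nm_{K/F}(u/w-1)$, since both $u$ and $w$ lie in $K$.

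One small correction: the endpoint terms are $(\xi\beta)^{p-1}+(-1)^p\alpha^{p-1}$, which is the second identity in \eqref{eq:teichmueller-polynomials} after dividing by $X$ and homogenizing. Your minus sign is wrong for $p=2$ in mixed characteristic, though it is harmless modulo $\p_F^d$, since $v_F(2)\ge\lfloor T/2\rfloor\ge d$.
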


\begin{proof}
Write $\Nm=\Nm_{K/F}$.
The denominators in \eqref{eq:gammas-low} are admissible, since
\[
 \begin{aligned}
 v_F(\gamma_F)&=T+n_F(\psi_F)-v=m+n_F(\psi_F),\\
 v_K(\gamma_K)&=p(T+n_F(\psi_F))-v=m+n_K(\psi_K),
 \end{aligned}
\]
where the second equality uses \eqref{eq:additive-conductor-trace}.
Relative to $\Gamma_\tau$, the stationary class of $\tau$ on
$\p_F^{s_0}$ lies in $\cO_F/\p_F^{r_0}$; relative to $\gamma_F$,
that of $\chi_F$ lies in $\cO_F/\p_F^d$. Since $r_0,d\le t$,
Lemma~\ref{lem:norm-stationary-representatives} gives the exact norm
representatives in \eqref{eq:low-norm-representatives} with the
linearizations \eqref{eq:low-mu}. At $m=T$, the asserted
noncancellation follows from
\eqref{eq:minimal-orbit-no-leading-cancellation} with
$\gamma=\Gamma_\tau$.

Put $q=d+\varepsilon=\lceil m/2\rceil$.  Since
$2q+(p-1)T\ge pm$, equation~\eqref{eq:wild-norm-truncation} gives
\begin{equation*}
 \Nm(1+x)-1\equiv\Tr(x)+\Nm(x)\pmod{\p_F^m}
 \qquad(x\in\p_K^q).
\end{equation*}
For $y=\widetilde{\xi}\widetilde{\beta}x/\widetilde{\alpha}$ one has
$v_K(y)\ge q+v=T-\floor{m/2}\ge s_0$.
Lemma~\ref{lem:norm-character-conversion}, applied to the
linearization of $\tau$ in \eqref{eq:low-mu} with denominator
$\Gamma_\tau$, gives
\[
 \psi_F\!\left(-\frac{\alpha\Tr(y)}{\Gamma_\tau}\right)
 =\psi_F\!\left(\frac{\alpha \Nm(y)}{\Gamma_\tau}\right).
\]
By \eqref{eq:low-norm-representatives},
$\alpha \Nm(y)=\xi\beta \Nm(x)$, and hence
\begin{equation*}
 \psi_F\!\left(-\frac{\alpha\Tr(y)}{\Gamma_\tau}\right)
 =\psi_F\!\left(\frac{\xi\beta \Nm(x)}{\Gamma_\tau}\right).
\end{equation*}
Put $\beta_K^0=\beta\xi/\widetilde{\xi}-\widetilde{\beta}\alpha/\widetilde{\alpha}$.  Then
\begin{align*}
 \psi_K\!\left(\frac{\beta_K^0x}{\gamma_K}\right)
 &=\psi_F\!\left(
   \frac{\Tr_{K/F}(\widetilde{\xi}\beta_K^0x)}{\Gamma_\tau}\right)\\
 &=\psi_F\!\left(
   \frac{\xi\beta\Tr(x)
         -\alpha\Tr(\widetilde{\xi}\widetilde{\beta}x/\widetilde{\alpha})}{\Gamma_\tau}\right).
\end{align*}
Substituting the identity for $\psi_F(-\alpha\Tr(y)/\Gamma_\tau)$ and the congruence for $\Nm(1+x)-1$ above gives
\[
 \psi_K(\beta_K^0x/\gamma_K)
 =\psi_F\!\left(\frac{\beta(\Nm(1+x)-1)}{\gamma_F}\right)
 =\chi_K(1+x).
\]
Corollary~\ref{cor:stationary-class-under-norm} now gives the stated
congruence for $\beta_K$.

The formula for the stationary class of $\chi_F$ follows from
$\gamma_F=\Gamma_\tau/\xi$ and $v_F(\xi)=v$.  For $j\ne0$, put
$w=\widetilde{\xi}\widetilde{\beta}/(j\widetilde{\alpha})$, so $v_K(w)=v$.  Equation~\eqref{eq:symmetric-bound} gives
\[
 v_F(s_i(w))\ge
 \left\lfloor\frac{iv+(p-1)T}{p}\right\rfloor\ge r_0
 \qquad(1\le i\le p-1),
\]
and therefore
$\Nm(j\widetilde{\alpha}+\widetilde{\xi}\widetilde{\beta})
 \equiv j\alpha+\xi\beta\pmod{\p_F^{r_0}}$.
Since $s_0\ge d+\varepsilon$, the two linearizations in
\eqref{eq:low-mu} and Lemma~\ref{lem:teichmueller-stationary} give
\[
 (\tau^j\chi_F)(1+x)=\psi_F((j\alpha+\xi\beta)x/\Gamma_\tau)
 \qquad(x\in\p_F^{s_0}).
\]
For $j\ne0$, the character has conductor $T$ by
Lemma~\ref{lem:minimal-orbit-stationary}, so this proves
\eqref{eq:low-beta-twists}.

Put
\[
 u=\frac{\widetilde{\xi}\widetilde{\beta}}{\widetilde{\alpha}},
 \qquad \mathcal N=\Nm(u)=\frac{\xi\beta}{\alpha},
\]
so the representative already constructed is
\[
 \beta_K^0=\frac{\alpha}{\widetilde{\xi}}(\mathcal N-u).
\]
By \eqref{eq:norm-of-stationary-class-well-defined}, $\Nm(\beta_K)$ modulo
$\p_F^d$ may be computed from this representative.  Thus
\[
 \Nm(\beta_K)\equiv \Nm(\beta_K^0)
 =\frac{\alpha^p}{\xi}\Nm(\mathcal N-u)\pmod{\p_F^d}.
\]
In the expansion of $\Nm(\mathcal N-u)$ the two endpoints are
$\mathcal N^p+(-1)^p\mathcal N$.  For $1\le i\le p-1$, the intermediate term
$\mathcal N^{p-i}s_i(u)$, after multiplication by
$\alpha^p/\xi$, has valuation at least
\[
 L_i=(p-i-1)v+
 \left\lfloor\frac{iv+(p-1)T}{p}\right\rfloor.
\]
Because $T$ is an integer,
\[
 \left\lfloor\frac{iv+(p-1)T}{p}\right\rfloor
 =T-\left\lceil\frac{T-iv}{p}\right\rceil.
\]
Moreover $i\ge1$ and $v=T-m\ge0$, so
$T-iv\le T-v=m=2d+\varepsilon$.  Since $p\ge2$,
\[
 \left\lceil\frac{T-iv}{p}\right\rceil
 \le\left\lceil\frac m2\right\rceil=d+\varepsilon.
\]
Consequently
\[
 \begin{aligned}
 L_i
 &\ge(p-i-1)v+T-d-\varepsilon\\
 &=(p-i)v+d\\
 &\ge d.
 \end{aligned}
\]
Thus every intermediate term vanishes modulo $\p_F^d$, and only the
endpoints survive.  The Teichm\"uller product identity
\eqref{eq:teichmueller-polynomials} gives
\[
 \Nm(\beta_K)\equiv
 \frac{\alpha^p}{\xi}\bigl(\mathcal N^p+(-1)^p\mathcal N\bigr)
 =\beta\prod_{j\ne0}(\xi\beta+j\alpha)
 \pmod{\p_F^d}.
\]
This proves the asserted congruence for $\Nm_{K/F}(\beta_K)$.
\end{proof}

\section{Unramified and tame cases}

Let $K/F$ be cyclic of prime degree $\ell$, let $\chi_F$ be a
quasi-character of $F^\times$, and let $\psi_F$ be a nontrivial
additive character of $F$.  Put
\[
 \chi_K=\chi_F\circ \Nm_{K/F},\qquad
 \psi_K=\psi_F\circ\Tr_{K/F}.
\]
Retain the minimal-conductor choice of $\chi_F$.

\begin{lemma}[Ramified conductor-zero comparison]
\label{lem:ramified-conductor-zero}
If $K/F$ is ramified cyclic of prime degree and $\chi_F$ is
unramified, then the First Main identity \eqref{eq:first-main} holds.
\end{lemma}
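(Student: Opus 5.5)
We compute each factor in \eqref{eq:first-main} directly with the conductor-zero and conductor-$T$ formulas. Put $T=t+1$, $n=n_F(\psi_F)$, and choose uniformizers with $\varpi_F=\Nm_{K/F}(\varpi_K)$. Since $\chi_F$ is trivial on $U_F^0$ and $\Nm_{K/F}(U_K^0)\subseteq U_F^0$, the character $\chi_K$ is unramified. By \eqref{eq:unramified-character-local-constant} and \eqref{eq:additive-conductor-trace},
\[
 \Delta_K(\chi_K,\psi_K)=\chi_K(\varpi_K)^{n_K(\psi_K)}
 =\chi_F(\varpi_F)^{\ell n+(\ell-1)T}.
\]

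On the right side, the term $\mu=1$ gives $\Delta_F(\chi_F,\psi_F)=\chi_F(\varpi_F)^{n}$, again by \eqref{eq:unramified-character-local-constant}. Each nontrivial $\mu\in S(K/F)$ has conductor $T$ by Corollary~\ref{cor:norm-characters}. Since $\chi_F$ is unramified, Lemma~\ref{lem:unramified-twists} gives
\[
 \Delta_F(\mu\chi_F,\psi_F)=\chi_F(\varpi_F)^{T+n}\Delta_F(\mu,\psi_F).
\]
On the left, the factor $\Delta_F(1,\psi_F)$ equals $1$ by \eqref{eq:trivial-local-constant}. There are $\ell-1$ nontrivial characters, so the right side of \eqref{eq:first-main} equals
\[
 \chi_F(\varpi_F)^{n+(\ell-1)(T+n)}\prod_{\mu\in S(K/F)}\Delta_F(\mu,\psi_F).
\]

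The exponent satisfies $n+(\ell-1)(T+n)=\ell n+(\ell-1)T$, which is exactly the exponent on the left. The products $\prod_\mu\Delta_F(\mu,\psi_F)$ are nonzero by Corollary~\ref{cor:local-gauss-integral-nonzero} and appear on both sides, so they cancel and the identity follows.

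No step is a real obstacle. The only care needed is bookkeeping: the different exponent $D_{K/F}=(\ell-1)T$ from Theorem~\ref{thm:norm-filtration}(a) must enter $n_K(\psi_K)$, and in the twist formula one must use the exact conductor $T$ of the nontrivial norm characters rather than that of $\chi_F$. In the tame case $T=1$ and the same computation applies verbatim.
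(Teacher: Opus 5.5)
Your proof is correct and follows the paper's argument essentially verbatim. Both evaluate $\Delta_K(\chi_K,\psi_K)$ by the conductor-zero formula with $n_K(\psi_K)=\ell n+(\ell-1)T$, apply the unramified-twist formula to each $\mu\in S(K/F)$ (conductor $T$ for nontrivial $\mu$, conductor $0$ for $\mu=1$), and match the exponents $n+(\ell-1)(T+n)=\ell n+(\ell-1)T$.
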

\begin{proof}
Let $t$ be the lower break and put $T=t+1$ and $n=n_F(\psi_F)$.
Choose a uniformizer
$\varpi_K$ with $\varpi_F=\Nm_{K/F}(\varpi_K)$.
Equation~\eqref{eq:unramified-character-local-constant} gives
$\Delta_K(\chi_K)=\chi_F(\varpi_F)^{n_K(\psi_K)}$.
Each nontrivial $\mu\in S(K/F)$ has conductor $T$, whereas the
trivial character has conductor zero. Thus \eqref{eq:unramified-twist}
gives
\begin{equation}\label{eq:ramified-conductor-zero}
\begin{aligned}
 \prod_{\mu\in S(K/F)}\Delta_F(\chi_F\mu)
 &=\chi_F(\varpi_F)^{n+(\ell-1)(T+n)}
                   \prod_{\mu\in S(K/F)}\Delta_F(\mu)\\
 &=\Delta_K(\chi_K)\prod_{\mu\in S(K/F)}\Delta_F(\mu),
\end{aligned}
\end{equation}
since $n+(\ell-1)(T+n)=\ell n+(\ell-1)T=n_K(\psi_K)$ by
\eqref{eq:additive-conductor-trace}. No evaluation of the
norm-character product is needed.
\end{proof}

\begin{lemma}[Common elementary-factor power identity]
\label{lem:common-elementary-power}
If both multiplicative conductors exceed $1$, a common admissible
denominator $\gamma\in F^\times$ and a common stationary representative
$\beta\in\cO_F^\times$ give
\begin{equation}\label{eq:common-elementary-power}
 \chi_K(\gamma)\Delta_{2,K}(\chi_K)
 =\bigl(\chi_F(\gamma)\Delta_{2,F}(\chi_F)\bigr)^\ell.
\end{equation}
Here both $\Delta_2$ factors use these choices.
\end{lemma}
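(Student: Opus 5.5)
The identity is a direct computation from the definitions in \eqref{eq:stationary-factor-definitions}. Since $\gamma\in F^\times$ and $\beta\in\cO_F^\times$ are the common choices, I would write
\[
 \chi_K(\gamma)\Delta_{2,K}(\chi_K)
 =\chi_F(\Nm_{K/F}\gamma)\,\psi_K(\beta/\gamma)\,\chi_F(\Nm_{K/F}\beta)^{-1}.
\]
The plan is to evaluate each of the three factors separately and compare it with the corresponding factor of $\chi_F(\gamma)\psi_F(\beta/\gamma)\chi_F(\beta)^{-1}$ raised to the $\ell$th power.

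First, $\gamma$ and $\beta$ lie in $F$, so $\Nm_{K/F}(\gamma)=\gamma^\ell$ and $\Nm_{K/F}(\beta)=\beta^\ell$. This gives $\chi_K(\gamma)=\chi_F(\gamma)^\ell$ and $\chi_K(\beta)^{-1}=\chi_F(\beta)^{-\ell}$. Second, $\beta/\gamma\in F$ gives $\Tr_{K/F}(\beta/\gamma)=\ell\beta/\gamma$. Hence
\[
 \psi_K(\beta/\gamma)=\psi_F(\ell\beta/\gamma)=\psi_F(\beta/\gamma)^\ell,
\]
because $\psi_F$ is additive. Multiplying the three factors gives \eqref{eq:common-elementary-power}.

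No estimate is needed; the only point requiring care is the meaning of the hypothesis. The factor $\Delta_{2,K}(\chi_K)$ is defined only after one chooses an admissible denominator for $(\chi_K,\psi_K)$ and a representative of $\operatorname{St}_{\gamma,d'+\varepsilon'}(\chi_K)$. The statement assumes that the same $\gamma$ and $\beta$ serve over both fields. In the applications this is supplied by Proposition~\ref{prop:high-parameter-table}(a)--(c) or by Proposition~\ref{thm:ramified-Hasse-comparison}, so I would note that the lemma is purely formal once these choices are fixed. The even/odd conductor distinction plays no role, since only the $\Delta_2$ factors are involved.
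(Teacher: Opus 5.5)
Your proposal is correct and is the paper's own argument: substitute $\Nm_{K/F}(a)=a^\ell$ and $\Tr_{K/F}(a)=\ell a$ for $a\in F$ into the definitions of $\Delta_{2,K}$ and $\Delta_{2,F}$, and use additivity of $\psi_F$ and multiplicativity of $\chi_F$. Your remark that the lemma is purely formal once the common $\gamma$ and $\beta$ are supplied elsewhere (e.g.\ by Proposition~\ref{prop:high-parameter-table}) also matches how the paper uses it.
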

\begin{proof}
Since $\Nm_{K/F}(a)=a^\ell$ and $\Tr_{K/F}(a)=\ell a$ for $a\in F$,
substitution in \eqref{eq:stationary-factor-definitions} proves
\eqref{eq:common-elementary-power}.
\end{proof}

\begin{proposition}[Unramified case]\label{prop:unramified-new}
The First Main identity \eqref{eq:first-main} holds when $K/F$ is unramified.
\end{proposition}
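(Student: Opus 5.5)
We must prove the identity \eqref{eq:first-main} when $K/F$ is unramified of prime degree $\ell$. By Lemma~\ref{lem:unramified-compatibility}(e), $S(K/F)$ consists of the $\ell$ unramified characters $\mu$ with $\mu(\varpi)$ running over the $\ell$th roots of unity, for a uniformizer $\varpi$ of $F$. By Lemma~\ref{lem:unramified-compatibility}(d), $n_K(\psi_K)=n_F(\psi_F)=:n$ and $m_K(\chi_K)=m_F(\chi_F)=:m$. Lemma~\ref{lem:unramified-twists} gives $\Delta_F(\mu\chi_F,\psi_F)=\mu(\varpi)^{m+n}\Delta_F(\chi_F,\psi_F)$. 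Hence the right side of \eqref{eq:first-main} is $\Delta_F(\chi_F,\psi_F)^\ell\prod_\mu\mu(\varpi)^{m+n}$. By \eqref{eq:unramified-character-local-constant}, the left-hand norm-character product is $\prod_\mu\mu(\varpi)^n$. Since $\prod_{\zeta^\ell=1}\zeta=(-1)^{\ell-1}$, the identity reduces to
\[
 \Delta_K(\chi_K,\psi_K)=\bigl((-1)^{\ell-1}\bigr)^{m}\Delta_F(\chi_F,\psi_F)^\ell .
\]
Thus for $\ell$ odd, or for $m$ even, we need $\Delta_K=\Delta_F^\ell$; for $\ell=2$ and $m$ odd, we need $\Delta_K=-\Delta_F^2$. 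I would split the argument by conductor.

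For $m=0$, \eqref{eq:unramified-character-local-constant} gives $\Delta_K=\chi_F(\Nm\varpi)^n=\chi_F(\varpi)^{\ell n}=\Delta_F^\ell$, as required.

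For $m=1$, use \eqref{eq:conductor-one-local-gauss} with the common uniformizer $\varpi$. The residue characters of $\chi_K,\psi_K$ are $\bar\chi_F\circ\Nm_{k_K/k_F}$ and $\bar\psi_F\circ\Tr_{k_K/k_F}$, by \eqref{eq:unramified-residue-trace-norm}. Theorem~\ref{thm:HD-lift} then gives $\tau_{k_K}=\tau_{k_F}^\ell$, hence the same relation for phases. The prefactors contribute $-\chi_F(\varpi)^{\ell(n+1)}$ over $K$, versus $(-1)^\ell\chi_F(\varpi)^{\ell(n+1)}$ from $\Delta_F^\ell$. Their ratio is $(-1)^{\ell-1}$, which is exactly the required sign.

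For $m>1$, take a common admissible $\gamma\in F$ and, by Proposition~\ref{prop:high-parameter-table}(a), a common stationary representative $\beta$. Lemma~\ref{lem:common-elementary-power} handles $\theta(\gamma)\Delta_2$. If $m$ is even, we are done by \eqref{eq:Delta-factorization-new}.

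If $m=2d+1$ is odd, it remains to show $\Delta_{3,K}=(-1)^{\ell-1}\Delta_{3,F}^\ell$. Choose $\delta=\varpi^d\in F$ as a common critical element, so that $k_K$ is the critical line over $K$. For $x\in\cO_K$, the norm expansion gives
\[
 \Nm(1+\delta x)\equiv1+\delta\Tr(x)+\delta^2 s_2(x)\pmod{\p_F^{2d+1}},
\]
because the higher terms lie in $\p^{3d}$. Here $\bar s_2(x)=\omega_{k_K/k_F}(\bar x)$. Using the linearization \eqref{eq:Lamprecht-linearization} on the $\delta^2$ term, one gets $H_K(\bar x)=H_F(\Tr\bar x)\psi_0(-\omega(\bar x))$. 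This is precisely the lifted Hasse function \eqref{eq:Hasse-lift-definition} with polar character $\psi_0$. Theorem~\ref{thm:Hasse-lift} then gives $-\sum H_K=(-\sum H_F)^\ell$, and taking phases yields exactly $(-1)^{\ell-1}\Delta_{3,F}^\ell$.

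The main obstacle is this identification of $H_K$ with the lift. One must check carefully that the correction factor $\chi_F(1+\delta\Tr x)/\chi_F(\Nm(1+\delta x))$ equals $\psi_0(-\bar s_2)$ modulo $\p^{2d+1}$; this uses $2d\ge d+1$ so the linearization applies. One must also check that the sign bookkeeping matches the $(-1)^{\ell-1}$ produced by the unramified twists.
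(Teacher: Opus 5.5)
Your proposal is correct and follows the paper's proof essentially step for step. You make the same reduction to $\Delta_K(\chi_K,\psi_K)=(-1)^{m(\ell-1)}\Delta_F(\chi_F,\psi_F)^\ell$ via unramified twists, handle conductor zero directly and conductor one by the Hasse--Davenport lift, and treat $m>1$ with a common $\gamma,\beta$ from Proposition~\ref{prop:high-parameter-table}(a) and Lemma~\ref{lem:common-elementary-power}, identifying $H_K=H_F(\Tr)\,\psi_0(-\omega_{k_K/k_F})$ for odd $m$ and applying the Hasse-function lift. The correction-factor check you flag is handled in the paper by writing $\Nm(1+\delta x)\equiv(1+\delta\Tr x)(1+\delta^2 s_2(x))\pmod{\p_F^{m}}$, with the cross term lying in $\p_F^{3d}$.
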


\begin{proof}
Write $\Nm=\Nm_{K/F}$.
Put
\[
 k=k_F,\qquad k'=k_K,\qquad
 n=n_F(\psi_F),\qquad m=m_F(\chi_F).
\]
Choose a uniformizer $\varpi$ of $F$.  By
Lemma~\ref{lem:unramified-compatibility}, it is also a uniformizer of
$K$, the residue extension $k'/k$ has degree $\ell$, and
$n_K(\psi_K)=n,\qquad m_K(\chi_K)=m$.

\smallskip
\noindent\emph{Step 1: reduction to a power identity.}
Lemma~\ref{lem:unramified-compatibility}(e) identifies
$F^\times/\Nm(K^\times)$ with the cyclic group generated by the class of
$\varpi$, of order $\ell$.  Hence every $\mu\in S(K/F)$ is unramified,
and the values $\mu(\varpi)$ run through all $\ell$th roots of unity.
Equations~\eqref{eq:unramified-twist} and
\eqref{eq:unramified-character-local-constant} give
\[
 \begin{aligned}
 \Delta_F(\mu\chi_F,\psi_F)
 &=\mu(\varpi)^{m+n}\Delta_F(\chi_F,\psi_F),\\
 \Delta_F(\mu,\psi_F)&=\mu(\varpi)^n.
 \end{aligned}
\]
Both formulas remain valid when $m=0$.  The values $\mu(\varpi)$ are
the roots of $X^\ell-1$, whose product is $(-1)^{\ell-1}$; hence
$\prod_{\mu\in S(K/F)}\mu(\varpi)=(-1)^{\ell-1}$.
The two local-constant formulas above give the quotient of
the product on the right side of \eqref{eq:first-main} by the
norm-character product on its left as
$(-1)^{m(\ell-1)}\Delta_F(\chi_F,\psi_F)^\ell$.
Thus \eqref{eq:first-main} is equivalent to
\begin{equation}\label{eq:unramified-power-new}
 \Delta_K(\chi_K,\psi_K)=
 (-1)^{m(\ell-1)}\Delta_F(\chi_F,\psi_F)^\ell.
\end{equation}

\smallskip
\noindent\emph{Step 2: conductor $m=0$.}
Formula \eqref{eq:unramified-character-local-constant} gives
\[
 \Delta_F(\chi_F,\psi_F)=\chi_F(\varpi)^n,
 \qquad
 \Delta_K(\chi_K,\psi_K)=\chi_K(\varpi)^n.
\]
By Lemma~\ref{lem:unramified-compatibility}(c),
$\Nm(\varpi)=\varpi^\ell$, so
$\chi_K(\varpi)=\chi_F(\varpi)^\ell$.  This proves
\eqref{eq:unramified-power-new} for $m=0$.

\smallskip
\noindent\emph{Step 3: conductor $m=1$.}
For $\bar u\in k^\times$ and $\bar x\in k$, choose lifts
$u\in\cO_F^\times$ and $x\in\cO_F$.  Define
\[
 \bar\chi(\bar u)=\chi_F(u),
 \qquad
 \bar\psi(\bar x)=\psi_F(x/\varpi^{n+1}).
\]
These are the residual multiplicative and additive characters on $k$.  By
\eqref{eq:unramified-residue-trace-norm}, the residual characters
on $k'$ are
\begin{equation*}
 \bar\chi_K=\bar\chi\circ \Nm_{k'/k},
 \qquad
 \bar\psi_K=\bar\psi\circ\Tr_{k'/k}.
\end{equation*}
The conductor-one formula \eqref{eq:conductor-one-local-gauss} gives
\[
 \Delta_F(\chi_F,\psi_F)
 =-\chi_F(\varpi)^{n+1}
   \ph\!\left(\tau_k(\bar\chi,\bar\psi)\right)
\]
and
\[
 \Delta_K(\chi_K,\psi_K)
 =-\chi_K(\varpi)^{n+1}
   \ph\!\left(\tau_{k'}(\bar\chi_K,\bar\psi_K)\right).
\]
Equations~\eqref{eq:HD-lift} and
\eqref{eq:unramified-residue-trace-norm} give
\[
 \tau_{k'}(\bar\chi_K,\bar\psi_K)
 =\tau_k(\bar\chi,\bar\psi)^\ell.
\]
Since $\ph(z^\ell)=\ph(z)^\ell$ for $z\ne0$, and since
$\chi_K(\varpi)=\chi_F(\varpi)^\ell$, this yields
\[
 \Delta_K(\chi_K,\psi_K)
 =(-1)^{\ell-1}\Delta_F(\chi_F,\psi_F)^\ell,
\]
which is \eqref{eq:unramified-power-new} because $m=1$.

\smallskip
\noindent\emph{Step 4: conductor $m>1$.}
Write $m=2d+\varepsilon$, with $\varepsilon\in\{0,1\}$, and put
$\gamma=\varpi^{m+n}$.  Proposition~\ref{prop:high-parameter-table}(a)
provides a common stationary representative $\beta\in\cO_F^\times$
for $\chi_F$ and $\chi_K$ with this common admissible denominator.
If $\varepsilon=0$, both residual factors are $1$ by
\eqref{eq:stationary-factor-definitions}.  Equations
\eqref{eq:common-elementary-power} and \eqref{eq:Delta-factorization-new}
then prove \eqref{eq:unramified-power-new}, whose sign is $1$ for even $m$.

It remains to treat $\varepsilon=1$.  Then $m=2d+1$ and $d\geq1$.
Take $\delta=\varpi^d$ and form $H_F:k\to\C^\times$ and
$H_K:k'\to\C^\times$ by \eqref{eq:Lamprecht-Hasse-function}, using
$\gamma$, $\beta$, and $\delta$ for both characters.  The residual
additive character \eqref{eq:Lamprecht-residual-character} is
\begin{equation*}
 \psi_0(\bar z)=\psi_F(\beta\delta^2z/\gamma),
 \qquad z\in\cO_F\text{ a lift of }\bar z\in k.
\end{equation*}

For $x\in\cO_K$, let $s_j(x)$ be the $j$th elementary symmetric
function of its $F$-conjugates.  Thus
$s_1(x)=T(x):=\Tr_{K/F}(x)$ and
$s_2(x)=\sum_{\{\sigma,\tau\}}x^\sigma x^\tau$,
the sum being over unordered pairs of distinct $F$-embeddings of $K$.
The norm expansion is
\[
 \Nm(1+\delta x)
 =1+\delta T(x)+\delta^2s_2(x)
   +\sum_{j=3}^{\ell}\delta^js_j(x),
\]
where the last sum is empty if $\ell=2$.  Each term in that sum lies in
$\p_F^{3d}\subseteq\p_F^{2d+1}$, and the same is true of the cross term
$\delta^3T(x)s_2(x)$.  Hence
\begin{equation*}
 \Nm(1+\delta x)
 \equiv(1+\delta T(x))(1+\delta^2s_2(x))
 \pmod{\p_F^{m}}.
\end{equation*}
Because $\delta^2s_2(x)\in\p_F^{2d}\subseteq\p_F^{d+1}$,
equation \eqref{eq:Lamprecht-linearization} and the definition of $\psi_0$ give
\begin{equation*}
 \chi_F(1+\delta^2s_2(x))^{-1}
 =\psi_F(-\beta\delta^2s_2(x)/\gamma)
 =\psi_0(-\overline{s_2(x)}).
\end{equation*}
Substituting the preceding norm congruence and character evaluation in
\eqref{eq:Lamprecht-Hasse-function} over $K$ gives
\begin{equation*}
 H_K(\bar x)
 =H_F\!\left(\Tr_{k'/k}(\bar x)\right)
  \psi_0\!\left(-\omega_{k'/k}(\bar x)\right).
\end{equation*}
Here \eqref{eq:unramified-residue-trace-norm} identifies
$\overline{T(x)}$ with the residue-field trace.  The same reduction of
the characteristic polynomial, now using its second coefficient,
identifies $\overline{s_2(x)}$ with the function $\omega_{k'/k}$ in
\eqref{eq:Hasse-lift-definition}.
Taking phases in \eqref{eq:Hasse-lift-sum}, using the displayed formula for $H_K$ and \eqref{eq:stationary-factor-definitions}, gives
\begin{equation*}
 \Delta_{3,K}(\chi_K)
 =(-1)^{\ell-1}\Delta_{3,F}(\chi_F)^\ell.
\end{equation*}
This residue-field identity and equations~\eqref{eq:common-elementary-power} and \eqref{eq:Delta-factorization-new} prove
\eqref{eq:unramified-power-new}, since $m$ is odd.

\end{proof}

\begin{proposition}[Tame conductor-one case]
\label{prop:tame-conductor-one-comparison}
Let $K/F$ be tamely ramified cyclic of prime degree $\ell$.
Retain the minimal-conductor choice of $\chi_F$.
Suppose $m_F(\chi_F)=1$.  Then the First Main identity \eqref{eq:first-main} holds.
\end{proposition}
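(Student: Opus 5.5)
The plan is to reduce everything to residue-field Gauss sums and apply the Hasse--Davenport multiplication formula, Theorem~\ref{thm:HD-product}. Here $t=0$ and $T=1$. Every nontrivial $\mu\in S(K/F)$ has conductor $1$ by Corollary~\ref{cor:norm-characters}. By minimality of $\chi_F$ in its orbit, every $\mu\chi_F$ (including $\mu=1$) has conductor exactly $1$, and \eqref{eq:minimal-critical} gives $m_K(\chi_K)=1$. Hence every local constant in \eqref{eq:first-main} except $\Delta_F(1,\psi_F)=1$ is given by the conductor-one formula \eqref{eq:conductor-one-local-gauss}. Since $K/F$ is totally ramified, $k_K=k_F=k$.

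First we fix the uniformizers. Choose $\varpi_K$ and put $\varpi_F=\Nm_{K/F}(\varpi_K)$, so $\mu(\varpi_F)=1$ for all $\mu\in S(K/F)$. Then $\chi_K(\varpi_K)=\chi_F(\varpi_F)$. Also $n_K=\ell n+\ell-1$ by \eqref{eq:additive-conductor-trace}, where $n=n_F(\psi_F)$, so $n_K+1=\ell(n+1)$.

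Next we identify the residual characters on $k$.
\begin{itemize}
\item The norm on units reduces to $u\mapsto u^\ell$ by Theorem~\ref{thm:norm-filtration}(b) with $t=0$. Hence $\bar\chi_K=\bar\chi^\ell$.
\item The restrictions of $S(K/F)$ to $U_F^0$ factor through $k^\times/(k^\times)^\ell$. They are the powers $\eta^j$ of a character $\eta$ of order $\ell$, and in particular $\ell\mid|k^\times|$.
\item For the additive character, write $\varpi_K^{\ell}=c\,\varpi_F$ with $c\in\cO_K^\times$. Then the lift of $\bar x\in k$ from $\cO_F$ gives $\bar\psi_K(\bar x)=\psi_F(\ell x/(c\varpi_F^{n+1}))$. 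Because $\varpi_F=\prod_i\sigma^i(\varpi_K)$ and $\sigma^i(\varpi_K)/\varpi_K\equiv\zeta^i$ for an $\ell$th root of unity $\zeta$, one gets $\bar c=\prod_i\zeta^{-i}=(-1)^{\ell-1}$. So $\bar\psi_K=\bar\psi(\ell(-1)^{\ell-1}\,\cdot\,)$.
\end{itemize}

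With these identifications, the scaling rule $\tau_k(\alpha,\bar\psi(a\,\cdot))=\alpha(a)\tau_k(\alpha,\bar\psi)$ turns the left side of \eqref{eq:first-main} into
\[
 -\chi_F(\varpi_F)^{n+1}\,\bar\chi(\ell(-1)^{\ell-1})^{\ell}\,\ph\tau_k(\bar\chi^\ell,\bar\psi)\prod_{j\ne0}\bigl(-\ph\tau_k(\eta^j,\bar\psi)\bigr).
\]
The right side becomes
\[
 \prod_{j}\bigl(-\chi_F(\varpi_F)^{n+1}\ph\tau_k(\bar\chi\eta^j,\bar\psi)\bigr).
\]
Two points are needed here. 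First, $\mu(\varpi_F)=1$ makes the uniformizer factor on the right equal to $\chi_F(\varpi_F)^{\ell(n+1)}$. Second, $\chi_K(\varpi_K)^{n_K+1}=\chi_F(\varpi_F)^{\ell(n+1)}$. So the uniformizer powers on the two sides cancel, and the signs $(-1)^\ell$ cancel.

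The comparison then reduces to \eqref{eq:HD-product}, which contributes $\bar\chi(\ell^\ell)$, against our factor $\bar\chi(\ell^\ell)\bar\chi((-1)^{\ell-1})^\ell$. The extra term $\bar\chi(-1)^{\ell(\ell-1)}$ equals $1$: for $\ell=2$ it is $\bar\chi(-1)^2$, and for odd $\ell$ the exponent is even.

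The main obstacle is the careful bookkeeping of the normalizations: the residue of $\varpi_K^\ell/\varpi_F$, the sign conventions $G=-\tau$, and taking phases of products. Phases are multiplicative here since all sums are nonzero by Lemma~\ref{lem:finite-gauss-magnitude}. I would first verify the residue computation for $\bar c$ and the identification $\bar\chi_K=\bar\chi^\ell$, since an error there would leave a spurious character value $\bar\chi(\pm1)$ or $\bar\chi(\ell)$.
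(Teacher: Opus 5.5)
Your proof is correct and is essentially the paper's argument: every factor is evaluated at conductor one, the residual characters are identified as $\bar\chi^{\ell}$, the powers $\eta^j$ and a rescaled $\bar\psi$, the uniformizer powers and signs cancel, and one takes phases in \eqref{eq:HD-product}. The paper avoids computing $\bar c$ by using the single admissible element $\gamma=\varpi_F^{n+1}$ over both fields (admissible over $K$ since $n_K+1=\ell(n+1)$), which gives the residual character $\bar\psi(\ell\,\cdot)$ over $K$ directly. In your normalization, note that $\varpi_K^{n_K+1}=c^{n+1}\varpi_F^{n+1}$, so the scale is $\ell\bar c^{-(n+1)}$ rather than $\ell\bar c^{-1}$; the resulting factor $\bar\chi(\bar c)^{-\ell(n+1)}$ is still $1$, because $\bar c=1$ for odd $\ell$ and the exponent is even when $\ell=2$.
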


\begin{proof}
Since $t=0$ and $m_F(\chi_F)=1$, equation~\eqref{eq:minimal-critical}
gives $m_K(\chi_K)=1$.

Choose $\varpi_K$ and put $\varpi_F=\Nm_{K/F}(\varpi_K)$.  Let
$n=n_F(\psi_F)$.  Since the tame different has exponent $\ell-1$,
$n_K(\psi_K)=\ell n+\ell-1$.
The element
$\gamma=\varpi_F^{n+1}$
has $K$-valuation $\ell(n+1)=n_K(\psi_K)+1$, so it is an admissible
element for every conductor-one factor over both fields.  Let $k$ be
the common residue field.  For $x\in k$, choose a lift
$\widetilde x\in\cO_F$ and define
\[
 \bar\psi(x)=\psi_F(\widetilde x/\gamma)\quad(x\in k),
 \qquad
 \bar\chi(x)=\chi_F(\widetilde x)\quad(x\in k^\times).
\]
  On residue classes the norm and
trace are
\[
 \overline{\Nm_{K/F}(u)}=\bar u^{\,\ell},
 \qquad
 \overline{\Tr_{K/F}(u)}=\ell\bar u.
\]
Hence the residual characters attached to $(\chi_K,\psi_K)$ are
\begin{equation*}
 \bar\chi_K=\bar\chi^{\,\ell},
 \qquad
 \bar\psi_K(x)=\bar\psi(\ell x).
\end{equation*}

Choose a generator $\mu_1$ of $S(K/F)$, put $\mu_j=\mu_1^j$ for
$0\le j<\ell$, and let $\eta$ be the residue character of $\mu_1$.  Then $\eta$ has order
$\ell$ and the residue character of $\mu_j$ is $\eta^j$.  Moreover
$\mu_j(\varpi_F)=1$, because $\varpi_F=\Nm_{K/F}(\varpi_K)$.
For $j\ne0$, equation~\eqref{eq:minimal-orbit-conductors} of
Lemma~\ref{lem:minimal-orbit-stationary} gives
$m_F(\mu_j\chi_F)=1$; the case $j=0$ is the given conductor of $\chi_F$.

The finite-realization formula \eqref{eq:delta-finite-realization},
with admissible element $\gamma$, definition~\eqref{eq:tau-definition},
and the residual-character identities above give
\begin{equation*}
\begin{aligned}
 \Delta_K(\chi_K,\psi_K)
 &=-\chi_F(\gamma)^\ell
   \ph\!\left(\tau_k(\bar\chi^{\,\ell},
                              \bar\psi(\ell\,\cdot))\right),\\
 \Delta_F(\mu_j\chi_F,\psi_F)
 &=-\chi_F(\gamma)
   \ph\!\left(\tau_k(\bar\chi\eta^j,\bar\psi)\right),\\
 \Delta_F(\mu_j,\psi_F)
 &=-\ph\!\left(\tau_k(\eta^j,\bar\psi)\right)
 \quad(1\le j<\ell).
\end{aligned}
\end{equation*}
By \eqref{eq:trivial-local-constant}, $\Delta_F(1,\psi_F)=1$.  The powers of $\chi_F(\gamma)$ and
the signs are the same on the two sides of \eqref{eq:first-main} and
cancel.

The substitution $x\mapsto\ell^{-1}x$ in
\eqref{eq:tau-definition} gives
\begin{equation*}
 \tau_k(\bar\chi^{\,\ell},\bar\psi(\ell\,\cdot))
 =\bar\chi(\ell^\ell)
   \tau_k(\bar\chi^{\,\ell},\bar\psi).
\end{equation*}
Taking phases in the Hasse--Davenport multiplication formula
\eqref{eq:HD-product}, with $\chi=\bar\chi$ and $\psi=\bar\psi$,
and substituting the three local-constant evaluations above and the last scaling identity proves the proposition.
\end{proof}

\begin{proposition}[Tamely ramified odd degree]\label{prop:tame-odd-new}
The First Main identity \eqref{eq:first-main} holds for a tamely ramified
cyclic extension of odd prime degree.
\end{proposition}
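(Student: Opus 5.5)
We argue by the conductor $m=m_F(\chi_F)$, always with $\chi_F$ minimal in its $S(K/F)$-orbit (Lemma~\ref{lem:twist-invariance}). Here $t=0$ and $T=1$, so every nontrivial $\mu\in S(K/F)$ has conductor $1$ (Corollary~\ref{cor:norm-characters}).

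\emph{Conductors zero and one.} If $m=0$, the identity is Lemma~\ref{lem:ramified-conductor-zero}. If $m=1$, it is Proposition~\ref{prop:tame-conductor-one-comparison}.

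\emph{Conductor $m>1$.} Proposition~\ref{prop:pullback-conductor}(i) gives $m'-1=\ell(m-1)$. By Corollary~\ref{cor:S-product-one}, the norm-character product on the left of \eqref{eq:first-main} equals $1$, since $\ell$ is odd. Write $m=2d+\varepsilon$; then $d\ge1$. Every $\mu\in S(K/F)$ satisfies $m_F(\mu)\le1\le d$, so the stable twist formula \eqref{eq:stable-twist-new} gives
\[
 \Delta_F(\mu\chi_F,\psi_F)=\mu(\gamma/\beta)\Delta_F(\chi_F,\psi_F),
\]
where $\gamma$ is admissible and $\beta$ represents the stationary class of $\chi_F$. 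The right side of \eqref{eq:first-main} is therefore
\[
 \Bigl(\prod_{\mu}\mu(\gamma/\beta)\Bigr)\Delta_F(\chi_F,\psi_F)^\ell .
\]
Since $S(K/F)$ is cyclic of odd order $\ell$, $\prod_\mu\mu=\mu_1^{\ell(\ell-1)/2}$ for a generator $\mu_1$. This is trivial because $\mu_1^\ell=1$ and $(\ell-1)/2$ is an integer. So \eqref{eq:first-main} reduces to the power identity
\[
 \Delta_K(\chi_K,\psi_K)=\Delta_F(\chi_F,\psi_F)^\ell .
\]

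\emph{The power identity.} Proposition~\ref{prop:high-parameter-table}(c) applies in the tame case, since $d\ge1=T$. It shows that $\gamma$ stays admissible over $K$ and that $\beta$ is a common stationary representative. Lemma~\ref{lem:common-elementary-power} then handles the factors $\chi(\gamma)\Delta_2$. For $\varepsilon=0$ the residual factors are $1$, and \eqref{eq:Delta-factorization-new} finishes. For $\varepsilon=1$, Proposition~\ref{thm:ramified-Hasse-comparison} in its case $t=0$ gives $\Delta_{3,K}(\chi_K)=\Delta_{3,F}(\chi_F)^\ell$, with no sign (unlike the unramified case), which is exactly what is needed.

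\emph{Main obstacle.} The substantive step is the odd-conductor Hasse comparison, which needs the quadratic reciprocity sign check $\nu(\ell)=\nu(-1)^{(\ell-1)/2}$ and the separate characteristic-two argument. Both are already done in Proposition~\ref{thm:ramified-Hasse-comparison}. What remains here is to confirm that the hypotheses match: $m'$ odd with $m'=2d'+1$ and $d'=\ell d$, and the same choices of $\gamma,\beta,\delta$ used over $F$ and $K$.
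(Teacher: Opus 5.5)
Your proposal is correct and follows the paper's proof essentially step for step. You treat $m=0,1$ by the conductor-zero and conductor-one results. For $m>1$ you use the stable twist together with the triviality of $\prod_{\mu\in S(K/F)}\mu$ and Corollary~\ref{cor:S-product-one} to reduce to $\Delta_K(\chi_K,\psi_K)=\Delta_F(\chi_F,\psi_F)^\ell$, and you close with Proposition~\ref{prop:high-parameter-table}(c), Lemma~\ref{lem:common-elementary-power}, parity of $m'=\ell(m-1)+1$, and Proposition~\ref{thm:ramified-Hasse-comparison}. One cosmetic point: the critical elements over $F$ and $K$ are not literally the same ($\varpi_F^{d}$ versus $\varpi_K^{\ell d}$ in that proposition), but this is harmless because $\Delta_{3}$ does not depend on the choice of critical element.
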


\begin{proof}
Here $t=0$ and every nontrivial norm character has conductor $1$.
Put $m=m_F(\chi_F)$. For $m=0$, use
\eqref{eq:ramified-conductor-zero} with $T=1$.

For $m=1$, Proposition~\ref{prop:tame-conductor-one-comparison} applies.
Now let $m=2d+\varepsilon>1$, with $\varepsilon\in\{0,1\}$.
Since $t=0$ and $d\ge1$, Proposition~\ref{prop:high-parameter-table}(c)
provides a common admissible denominator $\gamma\in F^\times$ and a
common stationary representative $\beta\in\cO_F^\times$ for
$\chi_F$ and $\chi_K$.  Equation~\eqref{eq:stable-twist-new} gives
\[
 \prod_{\mu\in S(K/F)}\Delta_F(\mu\chi_F,\psi_F)
 =\Delta_F(\chi_F,\psi_F)^\ell,
\]
because the product of all characters in the odd-order group $S(K/F)$
is trivial.  By \eqref{eq:S-product-one}, it remains to prove
$\Delta_K(\chi_K,\psi_K)=\Delta_F(\chi_F,\psi_F)^\ell$.
Apply \eqref{eq:common-elementary-power} to these choices of
$\gamma$ and $\beta$.
By \eqref{eq:pullback-high}, $m_K(\chi_K)=\ell(m-1)+1$, which has the
same parity as $m$.
If $m$ is even, both residual factors are $1$ by
\eqref{eq:stationary-factor-definitions}.  If $m$ is odd,
\eqref{eq:ramified-Hasse-comparison} gives
$\Delta_{3,K}(\chi_K)=\Delta_{3,F}(\chi_F)^\ell$.
In either case, \eqref{eq:Delta-factorization-new} proves the required
identity.
\end{proof}

\begin{proposition}[Tamely ramified quadratic case]\label{prop:tame-two-new}
The First Main identity \eqref{eq:first-main} holds for a tamely ramified
quadratic extension.
\end{proposition}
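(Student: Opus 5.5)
We split by the conductor $m=m_F(\chi_F)$ of the minimal-conductor representative. Here $t=0$, $T=1$, $\ell=2$, and $p$ is odd. Let $\tau$ be the nontrivial element of $S(K/F)$; it is the quadratic character with conductor $1$. For $m=0$ we apply Lemma~\ref{lem:ramified-conductor-zero}, and for $m=1$ Proposition~\ref{prop:tame-conductor-one-comparison}. In the latter case the Hasse--Davenport multiplication formula with $\ell=2$ and $\eta$ the quadratic residue character is used exactly as in the odd case; that proof never assumed $\ell$ odd.

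Assume now $m=2d+\varepsilon>1$. By Proposition~\ref{prop:high-parameter-table}(b), the admissible element $\gamma\in F^\times$ is also admissible over $K$, and a representative $\beta\in\cO_F^\times$ of the stationary class of $\chi_F$ also represents that of $\chi_K$. By \eqref{eq:pullback-high}, $m'=2m-1$ is odd, with $d'=m-1$. Since $m_F(\tau)=1\le d$, Proposition~\ref{lem:stable-twist-new} gives $\Delta_F(\tau\chi_F,\psi_F)=\tau(\gamma/\beta)\Delta_F(\chi_F,\psi_F)$. Lemma~\ref{lem:inverse-pair} with $\tau=\tau^{-1}$ gives $\Delta_F(\tau,\psi_F)^2=\tau(-1)$. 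Because $\tau$ is quadratic, $\Delta_F(\tau,\psi_F)=\tau(-1)/\Delta_F(\tau,\psi_F)$, and the identity \eqref{eq:first-main} becomes
\[
 \Delta_K(\chi_K,\psi_K)=\tau(-1)\,\tau(\gamma/\beta)\,\Delta_F(\tau,\psi_F)^{-1}\Delta_F(\chi_F,\psi_F)^2 .
\]
By \eqref{eq:Delta-factorization-new} and \eqref{eq:common-elementary-power}, the factors $\chi(\gamma)\Delta_2$ satisfy the squaring relation. It therefore remains to prove
\[
 \Delta_{3,K}(\chi_K)=\tau(-\gamma/\beta)\,\Delta_F(\tau,\psi_F)^{-1}\,\Delta_{3,F}(\chi_F)^2 ,
\]
where $\Delta_{3,F}(\chi_F)=1$ if $m$ is even.

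To compute $\Delta_{3,K}$, take $\varpi_K$ with $\varpi_K^2=-\varpi_F$ (possible since $K/F$ is tame quadratic) and the critical element $\delta=\varpi_K^{m-1}$. For $x\in\cO_F$, set $y=\delta x$. Then $\Nm(1+y)=1+\Tr y+\Nm y$, where $\Tr y\in\p_F^{d+\varepsilon}$ (vanishing to the relevant order when $m-1$ is odd) and $\Nm y=(-\varpi_F)^{m-1}x^2$. Using \eqref{eq:Lamprecht-linearization}, the critical function becomes a pure quadratic character $H_K(x)=\psi_0'(c x^2)$ for an explicit $c\in k^\times$, up to a linear term coming from $\Tr y$ when $m$ is odd. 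Lemma~\ref{lem:quadratic-phase-new} then evaluates $\Delta_{3,K}$ as $\nu(c)Q$ times an additive phase. Similarly, $\Delta_F(\tau,\psi_F)$ is, by \eqref{eq:conductor-one-local-gauss}, a quadratic Gauss sum phase, equal to $-\tau(\varpi_F)^{n+1}\nu(\cdot)Q$ via \eqref{eq:quadratic-gauss-rescaling}, where $\tau(\varpi_F)=\tau(-1)$ because $-\varpi_F$ is a norm. When $m$ is odd, $\Delta_{3,F}^2$ is computed from Lemma~\ref{lem:quadratic-normal-form}: $\Delta_{3,F}=Q\,\psi_0(-\alpha_0^2/2)$, so $\Delta_{3,F}^2=\nu(-1)\psi_0(-\alpha_0^2)$. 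The linear term in $H_K$ must reproduce $\psi_0(-\alpha_0^2)$.

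The main obstacle is the bookkeeping of these signs: the $Q$ factors cancel in pairs, and one must match $\nu(c)$, $\nu(-1)$, $\tau(-1)$, and the residue of $\tau(\gamma/\beta)$. The residue character of $\tau$ is $\nu$, and $\tau(\gamma)$ involves $\tau(\varpi_F)^{m+n}$. We would first settle the even case, where there is no $\alpha_0$, by explicitly expressing $\gamma$ as $\varpi_F^{m+n}$ times a unit and checking that the parities of $m+n$ and $n+1$ combine correctly. In the odd case we then verify that the extra terms cancel by completing the square.
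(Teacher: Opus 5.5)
Your overall plan matches the paper's: a common $\gamma,\beta$ from Proposition~\ref{prop:high-parameter-table}(b), the stable twist for $\tau\chi_F$, squaring of $\chi(\gamma)\Delta_2$, and quadratic Gauss phases for the residual factors. However, the residual identity you set out to prove is wrong, and the sign bookkeeping you defer is the entire content of the case $m>1$.

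From \eqref{eq:first-main} and the stable twist one gets $\Delta_K(\chi_K,\psi_K)\Delta_F(\tau,\psi_F)=\tau(\gamma/\beta)\Delta_F(\chi_F,\psi_F)^2$. Hence $\Delta_{3,K}(\chi_K)\Delta_F(\tau,\psi_F)=\tau(\gamma/\beta)\Delta_{3,F}(\chi_F)^2$. The relation $\Delta_F(\tau,\psi_F)^2=\tau(-1)$ lets you replace $\Delta_F(\tau,\psi_F)^{-1}$ by $\tau(-1)\Delta_F(\tau,\psi_F)$. It does not let you keep both $\tau(-1)$ and $\Delta_F(\tau,\psi_F)^{-1}$, as your display does. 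Your target is therefore off by $\tau(-1)=\nu(-1)$, since $\tau$ restricted to units is induced by $\nu$. It is false whenever $|k|\equiv3\pmod4$.

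There is a second slip. If $\varpi_K^2=-\varpi_F$, then $\sigma(\varpi_K)=-\varpi_K$ and $\Nm_{K/F}(\varpi_K)=-\varpi_K^2=\varpi_F$. So $\tau(\varpi_F)=1$, not $\tau(-1)$; the element $-\varpi_F$ is a norm only when $\nu(-1)=1$. Likewise $\Nm_{K/F}(\delta x)=\varpi_F^{m-1}x^2$ rather than $(-\varpi_F)^{m-1}x^2$, and for even $m$ this changes $\nu(c)$ by $\nu(-1)$.

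For odd $m=2d+1$ a step is also missing. Here $\delta=\varpi_K^{2d}\in F$, so $\Tr_{K/F}(\delta x)=2\delta x$ has valuation exactly $d$ for a unit $x$. Contrary to your claim it is not in $\p_F^{d+1}$, so \eqref{eq:Lamprecht-linearization} does not apply to it. Your assertion that ``the linear term must reproduce $\psi_0(-\alpha_0^2)$'' is therefore left unproved. The needed observation is $\Nm_{K/F}(1+\delta x)=(1+\delta x)^2$, which gives $H_K(x)=H_F((-1)^dx)^2$.

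Concretely, put $n=n_F(\psi_F)$, $\varpi_F=\Nm_{K/F}(\varpi_K)$, $\gamma=\varpi_F^{m+n}$, $\psi_0(\bar x)=\psi_F(x/\varpi_F^{n+1})$ and $Q=Q_{\psi_0}(1,0)$. Then $\tau(\gamma/\beta)=\nu(\bar\beta)$ and $\Delta_F(\tau,\psi_F)=\nu(2)Q$, and the correct target is $\Delta_{3,K}(\chi_K)\Delta_F(\tau,\psi_F)=\nu(\bar\beta)\Delta_{3,F}(\chi_F)^2$.

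\begin{itemize}
\item For even $m$: $\Tr_{K/F}(\delta x)=0$, $H_K(x)=\psi_0(-\bar\beta x^2)$, and $\Delta_{3,K}(\chi_K)=\nu(-2\bar\beta)Q$.
\item For odd $m$: writing $H_F(x)=\psi_0(\bar\beta x^2/2+cx)$ gives $\Delta_{3,K}(\chi_K)=\nu(2\bar\beta)\psi_0(-c^2/\bar\beta)Q$ and $\Delta_{3,F}(\chi_F)^2=\nu(-1)\psi_0(-c^2/\bar\beta)$.
\end{itemize}

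In both parities $Q^2=\nu(-1)$ closes the identity, and carrying this out would have exposed the extra factor $\tau(-1)$ in yours.
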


\begin{proof}
Put $m=m_F(\chi_F)$, $n=n_F(\psi_F)$, and $k=k_F=k_K$, and write
$S(K/F)=\{1,\mu\}$.
For $m=0$, use \eqref{eq:ramified-conductor-zero} with $\ell=2$ and $T=1$.
For $m=1$, Proposition~\ref{prop:tame-conductor-one-comparison} applies.

Assume $m=2d+\varepsilon\ge2$, with $\varepsilon\in\{0,1\}$.
The residue characteristic is odd.  Let $\sigma$ be the nontrivial
element of $\Gal(K/F)$.  Choose a uniformizer $\varpi_K$ with
$\sigma(\varpi_K)=-\varpi_K$ and put
$\varpi_F=\Nm_{K/F}(\varpi_K)=-\varpi_K^2$.  Then $\mu(\varpi_F)=1$, and
$\mu|_{\cO_F^\times}$ is induced by the quadratic character $\nu$ of
$k^\times$. Put $\gamma=\varpi_F^{m+n}$, which is admissible for $\chi_F$.
Proposition~\ref{prop:high-parameter-table}(b) gives its admissibility
for $\chi_K$ and a common stationary representative
$\beta\in\cO_F^\times$ for $\chi_F$ and $\chi_K$.
By \eqref{eq:pullback-high}, $m_K(\chi_K)=2m-1$.
Since $m_F(\mu)=1\le d$, Proposition~\ref{lem:stable-twist-new}
applies to $\theta=\chi_F$ and twisting character $\mu$.
The equalities $\mu(\gamma)=1$ and
$\mu(\beta)^{-1}=\nu(\bar\beta)$ reduce
\eqref{eq:stable-twist-new} to
\begin{equation*}
 \Delta_F(\mu\chi_F)=\nu(\bar\beta)\Delta_F(\chi_F).
\end{equation*}
Using the preceding stable-twist identity and equations~\eqref{eq:common-elementary-power} (with $\ell=2$), \eqref{eq:Delta-factorization-new}, and \eqref{eq:trivial-local-constant}, we reduce \eqref{eq:first-main} to
\begin{equation}\label{eq:tame-quadratic-residual}
 \Delta_{3,K}(\chi_K)\Delta_F(\mu)
 =\nu(\bar\beta)\Delta_{3,F}(\chi_F)^2.
\end{equation}
For $x\in k$, choose a lift $\widetilde x\in\cO_F$ and define
$\psi_0(x)=\psi_F(\widetilde x/\varpi_F^{n+1})$.
Put $Q=Q_{\psi_0}(1,0)$.  Equations
\eqref{eq:conductor-one-local-gauss} and
\eqref{eq:quadratic-gauss-rescaling}, with $\mu(\varpi_F)=1$, give
$\Delta_F(\mu)=\nu(2)Q$.
By \eqref{eq:quadratic-phase-new}, $Q^2=\nu(-1)$.

If $m=2d$, then $m_K(\chi_K)=4d-1$.  Taking
$\delta_K=\varpi_K^{2d-1}$, let $x\in k$ and choose a lift
$\widetilde x\in\cO_F$.  Since $\sigma(\varpi_K)=-\varpi_K$ and
$2d-1$ is odd,
$\Tr_{K/F}(\delta_K\widetilde x)=0$,
so the additive factor in $H_K(x)$ is $1$.  Moreover
\[
 \Nm_{K/F}(1+\delta_K\widetilde x)
 =1+\varpi_F^{2d-1}\widetilde x^{\,2}.
\]
Hence, using $\chi_K=\chi_F\circ \Nm_{K/F}$ and the stationary
linearization of $\chi_F$,
\begin{align*}
 H_K(x)
 &=\chi_F\!\left(1+\varpi_F^{2d-1}\widetilde x^{\,2}\right)^{-1}\\
 &=\psi_F\!\left(
   -\frac{\beta\varpi_F^{2d-1}\widetilde x^{\,2}}{\gamma}\right)\\
 &=\psi_F\!\left(
   -\frac{\beta\widetilde x^{\,2}}{\varpi_F^{n+1}}\right)
 =\psi_0(-\bar\beta x^2),
\end{align*}
because $\gamma=\varpi_F^{2d+n}$.  Equation
\eqref{eq:quadratic-phase-new} gives
\[
 \Delta_{3,K}=Q_{\psi_0}(-2\bar\beta,0)=\nu(-2\bar\beta)Q.
\]
Using $\Delta_F(\mu)=\nu(2)Q$ and $Q^2=\nu(-1)$,
\[
 \Delta_{3,K}\Delta_F(\mu)
 =\nu(-2\bar\beta)\nu(2)\nu(-1)=\nu(\bar\beta).
\]
Since $\Delta_{3,F}(\chi_F)=1$ by
\eqref{eq:stationary-factor-definitions}, this proves
\eqref{eq:tame-quadratic-residual}.

If $m=2d+1$, take $\delta_F=\varpi_F^d$.  Then
\eqref{eq:Hasse-functional-equation} gives
\[
 \frac{H_F(x+y)}{H_F(x)H_F(y)}
 =\psi_0(\bar\beta xy)
 \qquad(x,y\in k),
\]
because $\beta\delta_F^2/\gamma=\beta/\varpi_F^{n+1}$.
The function $x\mapsto H_F(x)\psi_0(-\bar\beta x^2/2)$ is therefore
an additive character of $k$.  Since $\psi_0$ is nontrivial, there is
a unique $c\in k$ such that
\[
 H_F(x)=\psi_0(\bar\beta x^2/2+cx).
\]
Here $m_K(\chi_K)=4d+1$.  With
$\delta_K=\varpi_K^{2d}=(-1)^d\varpi_F^d$, substitution gives
\[
 H_K(x)=H_F((-1)^d x)^2
 =\psi_0(\bar\beta x^2+2(-1)^dcx).
\]
Equation~\eqref{eq:quadratic-phase-new} gives
\begin{equation*}
\begin{aligned}
 \Delta_{3,K}
 &=\nu(2\bar\beta)\psi_0(-c^2/\bar\beta)Q,\\
 \Delta_{3,F}^{\,2}
 &=\psi_0(-c^2/\bar\beta)\nu(-1).
\end{aligned}
\end{equation*}
The two phases above and $\Delta_F(\mu)=\nu(2)Q$, with $Q^2=\nu(-1)$, prove
\eqref{eq:tame-quadratic-residual}.
\end{proof}

\section{Wild odd-prime extensions: the stable range}

\begin{proposition}[Stable wild odd-prime comparison]\label{prop:wild-stable-new}
Let $K/F$ be wildly ramified of odd prime degree $p$, with break $t$.
If $m_F(\chi_F)=2d+\varepsilon>1$ and $d\geq t+1$, then the First Main
identity \eqref{eq:first-main} holds.
\end{proposition}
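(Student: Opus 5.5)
The argument follows the tame odd-degree case, Proposition~\ref{prop:tame-odd-new}, almost line by line. With $T=t+1$, the hypothesis $d\ge T$ is exactly the stable range $m\ge2T$ of Proposition~\ref{prop:high-parameter-table}(c).

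First take any admissible $\gamma\in F^\times$ for $(\chi_F,\psi_F)$ and a representative $\beta\in\cO_F^\times$ of the stationary class of $\chi_F$. By Proposition~\ref{prop:high-parameter-table}(c), $\gamma$ is also admissible for $(\chi_K,\psi_K)$, and $\beta$ represents the stationary class of $\chi_K$ relative to $\gamma$. This works because $d\ge T$ makes every nonlinear term of $\Nm_{K/F}(1+x)$ vanish modulo $\p_F^m$ at the stationary depth over $K$.

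Next handle the twists. Every nontrivial $\mu\in S(K/F)$ has conductor $T\le d$ by Corollary~\ref{cor:norm-characters}. The stable twist formula \eqref{eq:stable-twist-new} therefore gives
\[
 \Delta_F(\mu\chi_F,\psi_F)=\mu(\gamma/\beta)\Delta_F(\chi_F,\psi_F).
\]
The characters in $S(K/F)$ form a group of odd order $p$, so their product is trivial. Hence
\[
 \prod_{\mu\in S(K/F)}\Delta_F(\mu\chi_F,\psi_F)=\Delta_F(\chi_F,\psi_F)^p.
\]
Corollary~\ref{cor:S-product-one} makes the norm-character product on the left of \eqref{eq:first-main} equal to $1$. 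So it remains to prove
\[
 \Delta_K(\chi_K,\psi_K)=\Delta_F(\chi_F,\psi_F)^p.
\]

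Now factor both sides by \eqref{eq:Delta-factorization-new}, using the common $\gamma$ and $\beta$. Lemma~\ref{lem:common-elementary-power} gives the $p$th-power identity for the product $\chi(\gamma)\Delta_2$. By \eqref{eq:pullback-high}, $m_K(\chi_K)=p(m-1)+1-(p-1)t$. Since $p$ is odd, this has the same parity as $m$.

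If $m$ is even, both residual factors $\Delta_3$ equal $1$ and we are done. If $m$ is odd, we need $\Delta_{3,K}(\chi_K)=\Delta_{3,F}(\chi_F)^p$. This is exactly Proposition~\ref{thm:ramified-Hasse-comparison} under its hypothesis $d\ge t+1$, and it uses the same common $\gamma$ and $\beta$, so the factors match.

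The substantive input is this odd-conductor Hasse-phase comparison. There, the normalized trace residue $-\lambda^{p-1}$ from Proposition~\ref{prop:stable-critical-trace} produces the quadratic character value $\nu(-1)$, which cancels against $Q^2$. That step is already proved. What remains to check is bookkeeping: the representative $\beta$ and denominator $\gamma$ used in the stable-twist step must be the same ones used in the Hasse comparison, so that Lemma~\ref{lem:complete-stationary-invariant} is not needed separately.
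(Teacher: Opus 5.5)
Your proposal is correct and matches the paper's proof essentially step for step. The stable twist formula (with $m_F(\mu)=T\le d$) and Corollary~\ref{cor:S-product-one} reduce the identity to $\Delta_K(\chi_K,\psi_K)=\Delta_F(\chi_F,\psi_F)^p$; Proposition~\ref{prop:high-parameter-table}(c) then gives a common $\gamma$ and $\beta$, Lemma~\ref{lem:common-elementary-power} handles the elementary factors, and the parity of $m_K(\chi_K)$ together with \eqref{eq:ramified-Hasse-comparison} handles the residual factor. Your closing bookkeeping worry is unnecessary: \eqref{eq:stable-twist-new} holds for any representative $\beta$, and the factors $\mu(\gamma/\beta)$ cancel in the product over $S(K/F)$ regardless of that choice.
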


\begin{proof}
Every nontrivial norm character has conductor $t+1\leq d$.  The stable
twist formula \eqref{eq:stable-twist-new} and \eqref{eq:S-product-one} reduce \eqref{eq:first-main} to
\[
 \Delta_K(\chi_K,\psi_K)=\Delta_F(\chi_F,\psi_F)^p.
\]
Proposition~\ref{prop:high-parameter-table}(c) supplies a common
admissible denominator and stationary representative, so
\eqref{eq:common-elementary-power} gives the $p$th-power identity
for the elementary factors.  Here
$m'+n_K(\psi_K)=p(m+n_F(\psi_F))$ by \eqref{eq:pullback-high} and
\eqref{eq:additive-conductor-trace}, and $m'\equiv m\pmod2$ by
\eqref{eq:pullback-high}.  For even $m$, both residual factors are
$1$ by \eqref{eq:stationary-factor-definitions}; for odd $m$, apply
\eqref{eq:ramified-Hasse-comparison}.
\end{proof}

\section{Wild extensions: conductors zero and one}\label{sec:wild-endpoints}
Let $K/F$ be wildly ramified cyclic of prime degree $p$, with lower
break $t$. Put $n=n_F(\psi_F)$ and choose uniformizers with
$\varpi_F=\Nm_{K/F}(\varpi_K)$.  Then by \eqref{eq:additive-conductor-trace},
\begin{equation*}
 n_K(\psi_K)=pn+(p-1)(t+1).
\end{equation*}

\subsection{Conductor zero}
For unramified $\chi_F$, apply Lemma~\ref{lem:ramified-conductor-zero}
with $\ell=p$.

\subsection{Conductor one}
Set $k=k_F=k_K$ and $\gamma_F=\varpi_F^{n+1}$.
By Corollary~\ref{cor:additive-independence} and
Lemma~\ref{lem:residual-normalization}, we may scale $\psi_F$ by a unit
so that the residual character \eqref{eq:residual-additive-character} is
\[
 \psi_0(\bar x)=\psi_F(x/\gamma_F)
 =\exp\!\left(\frac{2\pi i}{p}\Tr_{k/\F_p}(\bar x)\right),
\]
where $x\in\cO_F$ is any lift of $\bar x\in k$.
Frobenius invariance of $\Tr_{k/\F_p}$ gives
$\psi_0(\bar x^p)=\psi_0(\bar x)$.

Let $f(T)$ be the minimal polynomial of $\varpi_K$, put
$u_f=f'(\varpi_K)/\varpi_K^{(p-1)(t+1)}\in\cO_K^\times$,
let $\sigma$ generate $\Gal(K/F)$, and define $\lambda$ by
\eqref{eq:critical-ramification-coordinate}. Let $\tau$ generate $S(K/F)$.
Choose an admissible
$\gamma_\tau$ for $\tau$ and a representative $b_\tau$ of its
stationary numerator class such that
\[
 \tau(1+x)=\psi_F(b_\tau x/\gamma_\tau)
 \qquad(x\in\p_F^{\lceil(t+1)/2\rceil}),
\]
and put $c_\tau=\gamma_\tau/b_\tau$.
\begin{lemma}[Endpoint annihilator and discriminant congruence]
\label{lem:endpoint-discriminant-congruence}
With the notation above,
\[
 \left(\overline{\frac{c_\tau}{\varpi_F^t\gamma_F}}\right)^{p-1}
 =\lambda^{p(p-1)}.
\]
Moreover, one has
\[
 \bar u_f=(-1)^p\lambda^{p-1},\qquad
 \overline{\Nm_{K/F}(u_f)}=(-1)^p\lambda^{p(p-1)}.
\]
\end{lemma}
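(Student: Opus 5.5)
The statement has two parts, an annihilator identity for $c_\tau$ and a discriminant identity for $u_f$, and I would prove them independently. Both reduce to residue-field facts already available from the critical norm polynomial: Proposition~\ref{prop:critical-norm-polynomial} for the first part, and the ramification homomorphism of Theorem~\ref{thm:graded-norm}(b) for the second.

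\emph{Annihilator identity.} First I check valuations. By Corollary~\ref{cor:norm-characters}, $\tau$ has conductor $T=t+1$, so $v_F(\gamma_\tau)=T+n$. By Lemma~\ref{lem:stationary-class-existence} with $M=q=T$, the element $b_\tau$ is a unit. Hence $a:=\varpi_F^t\gamma_F/c_\tau$ has valuation $t+n+1-(T+n)=0$ and is a unit. Next I check that the critical layer lies inside the linearization domain, i.e.\ $t\ge\lceil T/2\rceil$; this holds because $t\ge1$, and for $t=1$ both sides equal $1$. Then for $w\in\cO_F$,
\[
 \tau(1+\varpi_F^t w)=\psi_F\bigl(\varpi_F^t w/c_\tau\bigr)=\psi_F\bigl(aw/\gamma_F\bigr)=\psi_0(\bar a\bar w).
\]
Thus, in the target coordinate of Proposition~\ref{prop:critical-norm-polynomial}, the restriction of $\tau$ to $\operatorname{gr}^tU_F$ is $w\mapsto\psi_0(\bar a w)$. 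This character is nontrivial, because $\tau$ has exact conductor $T$. It is trivial on the image of the critical graded norm, because $\tau$ kills norms. By \eqref{eq:critical-norm-image}, that image is $\lambda^p\ker\Tr_{k/\F_p}$. Since $\psi_0$ is the canonical character, triviality means $\Tr_{k/\F_p}(\bar a\lambda^p z)=0$ for every trace-zero $z$. The annihilator statement of Lemma~\ref{lem:frobenius-difference-trace} then gives $\bar a\in\lambda^{-p}\F_p$, and nontriviality gives $\bar a\lambda^p\in\F_p^\times$. Raising to the power $p-1$ yields $\bar a^{\,p-1}\lambda^{p(p-1)}=1$, which is the claimed formula for $\overline{c_\tau/(\varpi_F^t\gamma_F)}=\bar a^{-1}$.

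\emph{Discriminant identity.} I would write
\[
 f'(\varpi_K)=\prod_{j\in\F_p^\times}\bigl(\varpi_K-\sigma^j(\varpi_K)\bigr).
\]
Since the ramification map $G_t\to\operatorname{gr}^tU_K$ is a homomorphism, as used in the proof of Proposition~\ref{prop:stable-critical-trace}, each factor satisfies
\[
 \frac{\sigma^j(\varpi_K)-\varpi_K}{\varpi_K^{T}}\equiv j\lambda\pmod{\p_K}.
\]
Dividing each of the $p-1$ factors by $\varpi_K^T$ gives
\[
 \bar u_f=\prod_{j\ne0}(-j\lambda)=(-1)^{p-1}(p-1)!\,\lambda^{p-1}=(-1)^p\lambda^{p-1},
\]
where the last equality is Wilson's theorem. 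Finally, $K/F$ is totally ramified and all conjugates of the unit $u_f$ share its residue, so $\overline{\Nm_{K/F}(u_f)}=\bar u_f^{\,p}=(-1)^{p^2}\lambda^{p(p-1)}=(-1)^p\lambda^{p(p-1)}$.

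The main obstacle is in the first part: correctly identifying the restriction of $\tau$ to the critical layer with the target coordinate of $P_\lambda$. This requires the depth check $t\ge\lceil T/2\rceil$ and careful bookkeeping between $c_\tau$ and the normalization $\psi_0(\bar x)=\psi_F(x/\gamma_F)$.
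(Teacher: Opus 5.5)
Your proposal is correct and follows the paper's own argument. The residual coefficient of $\tau$ on the critical layer is forced into the annihilator $\lambda^{-p}\F_p$ of the image of $P_\lambda$ and is nonzero by exactness of the conductor; $\bar u_f$ is then computed as the product of the ramification-line residues $-j\lambda$, and the norm raises it to the $p$th power. The only difference is that you make explicit the checks that the paper leaves implicit: that $b_\tau$ is a unit, that $t\ge\lceil T/2\rceil$ so the critical layer lies in the linearization domain, and the Wilson's theorem step.
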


\begin{proof}
Put $P=P_\lambda$ as in \eqref{eq:critical-norm-polynomial-extracted}.
By Proposition~\ref{prop:critical-norm-polynomial}, the annihilator of
$\operatorname{im}P$ under the trace pairing is $\lambda^{-p}\F_p$.
Let $z\in k$, choose a lift $\widetilde z\in\cO_F$,
and put $x=\varpi_F^t\widetilde z$.  Since
$\tau(1+x)=\psi_F(x/c_\tau)$ and, for a lift
$y\in\cO_F$ of $\bar y\in k$,
$\psi_0(\bar y)=\psi_F(y/\gamma_F)$, the residual coefficient of
$\tau$ is
\[
 a_\tau=
 \overline{\frac{\varpi_F^t\gamma_F}{c_\tau}}.
\]
The character $\tau$ is trivial on the norm image.  Hence
$\psi_0(a_\tau P(z))=1$ for every $z\in k$, so
$a_\tau\in\lambda^{-p}\F_p$.  It is nonzero because $\tau$ has
conductor $t+1$; consequently
$a_\tau=\lambda^{-p}u$ for some $u\in\F_p^\times$.  Taking
inverses and then $(p-1)$st powers gives
\[
 \left(\overline{\frac{c_\tau}
 {\varpi_F^t\gamma_F}}\right)^{p-1}
 =(\lambda^p u^{-1})^{p-1}
 =\lambda^{p(p-1)},
\]
which proves the first assertion.

For the second assertion, write
\[
 f'(\varpi_K)=\prod_{j=1}^{p-1}
 (\varpi_K-\sigma^j\varpi_K).
\]
After division by $\varpi_K^{(p-1)(t+1)}$, the residues of the factors
are the nonzero elements $-j\lambda$ of the ramification line.
Their product is $(-1)^p\lambda^{p-1}$.  Taking the norm to $F$ raises
this residue to the $p$th power and gives the last identity.
\end{proof}

\begin{proposition}[Wild conductor-one comparison]
\label{prop:wild-conductor-one}
If $m_F(\chi_F)=1$, then the First Main identity
\eqref{eq:first-main} holds.
\end{proposition}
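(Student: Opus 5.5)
We compare the two sides of \eqref{eq:first-main} directly; after the reductions below everything becomes an equality of values of $\chi_F$ at explicit elements. Write $T=t+1\ge2$ and $S(K/F)=\{\tau^j:j\in\F_p\}$. By \eqref{eq:minimal-critical}, $m_K(\chi_K)=1$. By \eqref{eq:minimal-orbit-conductors}, $m_F(\tau^j\chi_F)=T$ for $j\ne0$.

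\emph{Right side.} Write $T=2d_\tau+\varepsilon_\tau$. Then $d_\tau\ge1=m_F(\chi_F)$, so Proposition~\ref{lem:stable-twist-new} applies to $\theta=\tau^j$ with twisting character $\chi_F$. It gives
\[
 \Delta_F(\tau^j\chi_F,\psi_F)=\chi_F(\gamma_\tau/\beta_{\tau^j})\Delta_F(\tau^j,\psi_F),
\]
where $\beta_{\tau^j}$ represents the stationary class of $\tau^j$ relative to $\gamma_\tau$. By Lemma~\ref{lem:stationary-class-calculus}(c) and Lemma~\ref{lem:teichmueller-stationary}, we may take $\beta_{\tau^j}=[j]b_\tau$; only its class modulo $\p_F$ matters, since $\chi_F$ has conductor $1$. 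The factors $\Delta_F(\tau^j,\psi_F)$ cancel against the norm-character product on the left. By \eqref{eq:teichmueller-polynomials}, $\prod_{j\ne0}[j]=(-1)^p$. Hence \eqref{eq:first-main} reduces to
\[
 \Delta_K(\chi_K,\psi_K)=\chi_F\bigl((-1)^p c_\tau^{p-1}\bigr)\Delta_F(\chi_F,\psi_F).
\]

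\emph{Left side.} Apply \eqref{eq:conductor-one-local-gauss} over both fields. Over $F$ we use $\gamma_F=\varpi_F^{n+1}$ and the normalized $\psi_0$. Over $K$ we use $\gamma_K=\varpi_K^{n_K+1}$, where $n_K=pn+(p-1)T$. The residual multiplicative character of $\chi_K$ is $\bar\chi^{\,p}$, because the residue norm is Frobenius (Theorem~\ref{thm:graded-norm}(c)). The residual additive character over $K$ is $\bar x\mapsto\psi_F(\Tr_{K/F}(x/\varpi_K^{n_K+1}))$. To identify it, I would use Euler's formula $\Tr_{K/F}(\varpi_K^i/f'(\varpi_K))$ for $0\le i\le p-1$, or directly \eqref{eq:trace-ideal} at the edge $a=-(p-1)T-1$. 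This should give $\bar\psi_K(\bar x)=\psi_0(e\,\bar x^{\,p})$ for an explicit residue $e$ built from $\bar u_f$ and the residue of $\varpi_F/\varpi_K^p$; the latter is $1$ by Proposition~\ref{prop:stable-critical-trace}. Using $\psi_0(y^p)=\psi_0(y)$ and the substitution $x\mapsto x^{1/p}$ in \eqref{eq:tau-definition}, we get
\[
 \tau_k\bigl(\bar\chi^{\,p},\psi_0(e\,\cdot^{\,p})\bigr)=\tau_k\bigl(\bar\chi,\psi_0(e'\,\cdot)\bigr)=\bar\chi(e')\,\tau_k(\bar\chi,\psi_0),
\]
with $e'=e^{1/p}$. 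The leading factors $-\chi_K(\varpi_K)^{n_K+1}$ and $-\chi_F(\varpi_F)^{n+1}$ are compared using $\chi_K(\varpi_K)=\chi_F(\varpi_F)$. This leaves the excess $\chi_F(\varpi_F)^{(p-1)(n+T)}$.

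\emph{Main obstacle.} The final step is to show that
\[
 \chi_F(\varpi_F)^{(p-1)(n+T)}\,\bar\chi(e')=\chi_F\bigl((-1)^pc_\tau^{p-1}\bigr).
\]
Since $v_F(c_\tau)=v_F(\gamma_\tau)=T+n$, write $c_\tau=\varpi_F^{t}\gamma_F\cdot w$ with $w$ a unit. This absorbs the $\varpi_F$-powers and reduces everything to $\bar\chi(e')=\bar\chi((-1)^p\bar w^{p-1})$. Lemma~\ref{lem:endpoint-discriminant-congruence} gives $\bar w^{\,p-1}=\lambda^{p(p-1)}$ and $\bar u_f=(-1)^p\lambda^{p-1}$. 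The remaining point is a careful bookkeeping check that $e'=(-1)^p\lambda^{p(p-1)}$, equivalently $e=\lambda^{p^2(p-1)}$ up to the sign conventions and up to a $p$th power that is harmless because $\bar\chi$ factors through $k^\times$. This is where I expect the computation to be delicate: tracking the exact residue of $\varpi_K^{n_K+1}$ against $f'(\varpi_K)\varpi_F^{n+1}$, and the sign $(-1)^p$. For $p=2$ all signs are trivial in $k$, which serves as a consistency check.
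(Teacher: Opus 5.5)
\textbf{Verdict: genuine gap in the final step, though the approach is right and easily repaired.} Your reduction of the right side matches the paper: the stable twist for $\theta=\tau^j$ with twisting character $\chi_F$, representatives $[j]b_\tau$, and $\prod_{j\ne0}[j]=(-1)^p$. So do your Frobenius comparison of the conductor-one Gauss sums and your use of Lemma~\ref{lem:endpoint-discriminant-congruence} at the end.

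The only structural difference is the admissible element over $K$. The paper takes $\gamma_K=\gamma_F f'(\varpi_K)/\varpi_K^{p-1}$, chosen so that Euler's formula makes the residual additive character over $K$ exactly $\psi_0$. The two Gauss sums then coincide, and the whole discrepancy is $\chi_F(\Nm_{K/F}(\gamma_K)/\gamma_F)=\chi_F\bigl((\gamma_F\varpi_F^t)^{p-1}\Nm_{K/F}(u_f)\bigr)$. Your $\varpi_K^{n_K+1}$ differs from that element by the unit $(\varpi_F/\varpi_K^p)^{n+1}u_f$, so the same factor must reappear inside your Gauss sum. For $p=2$, do not cite Proposition~\ref{prop:stable-critical-trace}, which assumes $p$ odd. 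The residue of $\varpi_F/\varpi_K^p$ is $1$ simply because each $\sigma^j(\varpi_K)/\varpi_K$ lies in $U_K^t$ with $t\ge1$.

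The step you defer as bookkeeping is the whole content of this case, and your sketch of it is wrong in three places.
\begin{itemize}
\item[(i)] For lifts $\widetilde x\in\cO_F$, $\psi_K(\widetilde x/\varpi_K^{n_K+1})=\psi_0(e_1\bar x)$, where $e_1$ is the residue of $\gamma_F\Tr_{K/F}(\varpi_K^{-n_K-1})$. The trace-ideal formula only gives $e_1\ne0$. To get $e_1=\bar u_f=(-1)^p\lambda^{p-1}$ you need Euler's formula $\Tr_{K/F}(\varpi_K^{p-1}/f'(\varpi_K))=1$, plus the trace-ideal bound for the error term.
\item[(ii)] If you write this character as $\psi_0(e\bar x^{\,p})$, then $e=e_1^{\,p}$. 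The substitution $y=x^p$ gives $\tau_k(\bar\chi^{\,p},\psi_0(e\,\cdot^{\,p}))=\tau_k(\bar\chi,\psi_0(e\,\cdot))$, so $e'=e$, not $e^{1/p}$.
\item[(iii)] Your fallback that a discrepancy by a $p$th power is harmless is false. One has $\bar\chi(y^p)=\bar\chi(y)^p$, which differs from $\bar\chi(y)$ unless $\bar\chi^{p-1}=1$; take $k=\F_{p^2}$ and $\bar\chi$ of order $p+1$.
\end{itemize}
With (ii) as you wrote it, you would be trying to prove $e=\pm\lambda^{p^2(p-1)}$. That is false in general, and the ``harmless'' claim would hide exactly this error.

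The correct computation gives $e'=\bar u_f^{\,p}=(-1)^p\lambda^{p(p-1)}=\overline{\Nm_{K/F}(u_f)}$. By Lemma~\ref{lem:endpoint-discriminant-congruence} this equals $(-1)^p\bar w^{\,p-1}$. With that in place your argument closes and is equivalent to the paper's.
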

\begin{proof}
By \eqref{eq:minimal-critical}, $m_K(\chi_K)=1$.
Put
\begin{equation}\label{eq:endpoint-gamma-K}
 \gamma_K=\gamma_F\frac{f'(\varpi_K)}{\varpi_K^{p-1}}.
\end{equation}
The different formula $v_K(f'(\varpi_K))=(p-1)(t+1)$ and the formula for $n_K(\psi_K)$ above give
$v_K(\gamma_K)=1+n_K(\psi_K)$; hence $\gamma_K$ is admissible for
$(\chi_K,\psi_K)$.  For $0\le i\le p-1$, partial fractions for
$T^i/f(T)$ at infinity give
\begin{equation*}
 \Tr_{K/F}\!\left(\frac{\varpi_K^i}{f'(\varpi_K)}\right)
 =\begin{cases}0,&i<p-1,\\1,&i=p-1.\end{cases}
\end{equation*}
For $x\in\cO_K$, choose $a\in\cO_F$ with $\bar a=\bar x\in k$.
Then $x-a\in\p_K$, and the trace-ideal formula \eqref{eq:trace-ideal}
shows that
\[
 \Tr_{K/F}\!\left(\frac{(x-a)\varpi_K^{p-1}}
                         {f'(\varpi_K)}\right)\in\p_F.
\]
The identity for $i=p-1$ therefore gives
\[
 \Tr_{K/F}\!\left(\frac{x\varpi_K^{p-1}}{f'(\varpi_K)}\right)
 \equiv a\pmod{\p_F}.
\]
Since $v_F(\gamma_F)=n+1$ and $\psi_F$ is trivial on $\p_F^{-n}$,
\[
 \psi_K(x/\gamma_K)=\psi_F(a/\gamma_F)=\psi_0(\bar x).
\]
Thus the residual additive character determined by $\gamma_K$ is
$\psi_0$.  If $\bar\chi$ denotes the residue-field
character of $\chi_F$, then that of $\chi_K$ is
$x\mapsto\bar\chi(x^p)$.  Since $\psi_0(x^p)=\psi_0(x)$ and
$x\mapsto x^p$ permutes $k^\times$,
\[
 \sum_{x\in k^\times}\psi_0(x)\bar\chi(x^p)^{-1}
 =\sum_{x\in k^\times}\psi_0(x)\bar\chi(x)^{-1}.
\]
Then by \eqref{eq:delta-finite-realization},  
\begin{equation}\label{eq:endpoint-upper-ratio}
 \frac{\Delta_K(\chi_K,\psi_K)}
      {\Delta_F(\chi_F,\psi_F)}
 =\chi_F\!\left(\frac{\Nm_{K/F}(\gamma_K)}{\gamma_F}\right).
\end{equation}

Using \eqref{eq:endpoint-gamma-K} and $\Nm_{K/F}(\varpi_K)=\varpi_F$, we obtain
\[
 \frac{\Nm_{K/F}(\gamma_K)}{\gamma_F}
   =(\gamma_F\varpi_F^t)^{p-1}\Nm_{K/F}(u_f).
\]
Lemma~\ref{lem:endpoint-discriminant-congruence} gives the congruence
\begin{equation*}
 \frac{\Nm_{K/F}(\gamma_K)}{\gamma_F}
 \equiv(-1)^p c_\tau^{p-1}\pmod{U_F^1}.
\end{equation*}

Finally, Lemma~\ref{lem:stationary-class-calculus}(c) shows that
$jb_\tau$ represents the stationary numerator class of $\tau^j$ on
$\p_F^{\lceil(t+1)/2\rceil}$.  Its conductor remains $t+1$ because $\tau^j$ is a
nontrivial norm character.  Choose this representative.  The stable-twist formula  \eqref{eq:stable-twist-new},
applied with $\theta=\tau^j$ and $\nu=\chi_F$, gives
\[
 \Delta_F(\tau^j\chi_F,\psi_F)
 =\chi_F\!\left(\frac{c_\tau}{j}\right)
  \Delta_F(\tau^j,\psi_F).
\]
The product of the Teichm\"uller representatives of
$\F_p^\times$ is $-1$ for odd $p$ and is $1$ for $p=2$; hence
\[
 \prod_{j\in\F_p^\times}\frac{c_\tau}{j}
 =(-1)^p c_\tau^{p-1}.
\]
Combining this product with the congruence for $\Nm_{K/F}(\gamma_K)/\gamma_F$ above and \eqref{eq:endpoint-upper-ratio}, we obtain the First Main identity \eqref{eq:first-main} in the
conductor-one case.
\end{proof}

\section{Wild odd-prime extensions: the non-stable range}\label{sec:nonstable-odd}

Let $K/F$ be wildly ramified cyclic of prime degree $p>2$, with
lower break $t$ and common residue field $k$. Put $T=t+1$ and
$m=m_F(\chi_F)$, and assume $1<m<2T$.
Retain the minimal-conductor choice of $\chi_F$.  Recall from
\eqref{eq:finite-additive-parametrization} that
\[
 \psi_{\rm can}(z)=\exp\!\left(\frac{2\pi i}{p}
             \Tr_{k/\F_p}(z)\right).
\]
By Corollary~\ref{cor:additive-independence} and
Lemma~\ref{lem:residual-normalization}, normalize the residual
additive character once and for all by $\psi_0=\psi_{\rm can}$.
In particular $\psi_0(z^p)=\psi_0(z)$.  For the quadratic phase of
Lemma~\ref{lem:quadratic-phase-new}, its magnitude assertion gives,
when $A\ne0$ and $B\in k$,
\begin{equation*}
 Q_{\psi_0}(A,B)=
 |k|^{-1/2}\sum_{x\in k}\psi_0(Ax^2/2+Bx).
\end{equation*}
For later use put
$q_0=Q_{\psi_0}(1,0)=|k|^{-1/2}\sum_{x\in k}\psi_0(x^2/2)$.

\subsection{Critical-coordinate transport and phase cancellation}

\begin{lemma}[Powers of the basic quadratic phase]
\label{lem:q0-powers}
One has
\begin{equation}\label{eq:q0-powers}
 q_0^2=\nu(-1),\qquad
 q_0^{p-1}=\nu(-1),\qquad
 q_0^p=q_0\nu(-1).
\end{equation}
\end{lemma}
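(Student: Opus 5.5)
The plan is to derive all three identities from the relation $q_0^2=\nu(-1)$ in Lemma~\ref{lem:quadratic-phase-new}, and then use the parity of $p-1$. Here $q_0=Q_{\psi_0}(1,0)$ is exactly the normalized phase treated in that lemma, with $\psi=\psi_0=\psi_{\rm can}$ nontrivial on $k$. Since $p$ is odd, $\operatorname{char}k\ne2$, so the lemma applies directly and gives $q_0^2=\nu(-1)$; this is the first identity. I would also note that $q_0$ is unimodular, by the magnitude assertion recalled just before the statement, although this is not needed for the algebra.

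For the second identity, write $p-1=2\cdot\frac{p-1}{2}$. Then
\[
 q_0^{p-1}=\bigl(q_0^2\bigr)^{(p-1)/2}=\nu(-1)^{(p-1)/2}.
\]
It remains to see that $\nu(-1)^{(p-1)/2}=\nu(-1)$. I would split according to $|k|=p^f$.

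\begin{itemize}
\item If $\nu(-1)=1$, both sides equal $1$.
\item If $\nu(-1)=-1$, then $-1$ is a nonsquare in $k$. This forces $f$ odd, since for even $f$ the field $\F_{p^2}\subseteq k$ contains a square root of $-1$. It also forces $p\equiv3\pmod4$, since otherwise $-1$ is already a square in $\F_p$. Then $(p-1)/2$ is odd, and $\nu(-1)^{(p-1)/2}=-1=\nu(-1)$.
\end{itemize}

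Equivalently, $\nu(-1)=(-1)^{f(p-1)/2}$, and therefore $\nu(-1)^{(p-1)/2}=(-1)^{f((p-1)/2)^2}=(-1)^{f(p-1)/2}$.

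The third identity follows by multiplying the second by $q_0$: $q_0^p=q_0\cdot q_0^{p-1}=q_0\nu(-1)$.

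The only step that needs any care is the sign identity $\nu(-1)^{(p-1)/2}=\nu(-1)$. I would handle it as above, using the criterion that $-1$ is a square in $k$ if and only if $|k|\equiv1\pmod 4$. Everything else is immediate from Lemma~\ref{lem:quadratic-phase-new}.
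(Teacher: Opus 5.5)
Your proposal is correct and matches the paper's proof: both take $q_0^2=\nu(-1)$ from Lemma~\ref{lem:quadratic-phase-new}, settle $q_0^{p-1}=\nu(-1)$ by a mod-$4$ argument (if $p\equiv1\pmod 4$ then $\nu(-1)=1$, and if $p\equiv3\pmod4$ then $(p-1)/2$ is odd), and multiply by $q_0$ for the last identity. You split on the value of $\nu(-1)$ rather than on $p\bmod 4$, which is only a cosmetic difference; your remark about the parity of $f$ is correct but not needed.
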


\begin{proof}
The first identity is \eqref{eq:quadratic-phase-new}.  In
particular $q_0^4=1$.  If $p\equiv1\pmod4$, then $-1$ is a square in
$\F_p\subset k$, so $\nu(-1)=1$, and $p-1$ is divisible by $4$;
hence $q_0^{p-1}=1=\nu(-1)$.  If $p\equiv3\pmod4$, then
$p-1\equiv2\pmod4$, and therefore
$q_0^{p-1}=q_0^2=\nu(-1)$.  Multiplication by $q_0$ gives the last
identity.
\end{proof}

Let $E$ be a nonarchimedean local field with residue field $k$,
let $\psi_E$ be a nontrivial additive character, and let $\theta$
have conductor $m_E(\theta)>1$.  Choose an admissible denominator
$\gamma$ and a stationary representative $\beta$.
Put $d=\lfloor m_E(\theta)/2\rfloor$, choose $\delta$ with
$v_E(\delta)=d$, and write $H_\theta$ for
\eqref{eq:Lamprecht-Hasse-function} with $\psi=\psi_E$.
If $m_E(\theta)=2d+1$, define
$A_\theta\in k^\times$ by
\[
 \psi_E(\beta\delta^2\widetilde z/\gamma)=\psi_0(A_\theta z)
 \qquad(z\in k),
\]
where $\widetilde z\in\cO_E$ lifts $z$.
Since $\psi_0=\psi_{\rm can}$, existence and uniqueness follow from
\eqref{eq:Lamprecht-residual-character} and the nondegeneracy of the
trace pairing $k\times k\to\F_p$.  Lemma~\ref{lem:quadratic-normal-form} therefore gives a unique
$B_\theta\in k$ with
\begin{equation}\label{eq:critical-pair-definition}
 H_\theta(x)=\psi_0(A_\theta x^2/2+B_\theta x)
 \qquad(x\in k).
\end{equation}
If $m_E(\theta)=2d$, equation~\eqref{eq:Lamprecht-linearization} gives
\[
 H_\theta(x)=1\qquad(x\in k).
\]
Thus \eqref{eq:stationary-factor-definitions} and the definition of $Q_{\psi_0}$ give
\begin{equation*}
 \Delta_{3,E}(\theta)=
 \begin{cases}
  Q_{\psi_0}(A_\theta,B_\theta),&m_E(\theta)\text{ odd},\\
  1,&m_E(\theta)\text{ even}.
 \end{cases}
\end{equation*}

Let $K/F$ be wildly ramified cyclic of odd prime degree $p$, with
lower break $t$, and put $T=t+1$ and $k=k_F=k_K$.
Choose a uniformizer $\varpi_K$, put $\varpi_F=\Nm_{K/F}(\varpi_K)$,
choose a generator $\sigma$ of $\Gal(K/F)$, and recall from
\eqref{eq:critical-ramification-coordinate} that
\[
 \lambda=
 \overline{\frac{\sigma(\varpi_K)-\varpi_K}
                  {\varpi_K^{t+1}}}\in k^\times.
\]
For $z\in k$, write $\widetilde z\in\cO_F$ for a lift.
Proposition~\ref{prop:critical-norm-polynomial} supplies the norm
polynomial \eqref{eq:critical-norm-polynomial-extracted} and trace
coefficient \eqref{eq:critical-norm-trace-coefficient} in these
coordinates.
For $w\in\cO_K$ this trace coefficient gives
\begin{equation}\label{eq:odd-break-trace-unit}
 \Tr_{K/F}(w\varpi_K^t\widetilde z)
 \equiv\varpi_F^t\widetilde{(-\lambda^{p-1}\bar w z)}
 \pmod{\p_F^{t+1}}\qquad(z\in k),
\end{equation}
because $w-\widetilde{\bar w}\in\p_K$ and
$\Tr_{K/F}(\p_K^{t+1})=\p_F^{t+1}$ by \eqref{eq:trace-ideal}.

\begin{lemma}[Normalization of the norm-character generator]
\label{lem:odd-break-line-normalization}
A generator $\tau$ of $S(K/F)$ can be chosen so that
\begin{equation*}
 \tau(1+\varpi_F^t\widetilde z)
 =\psi_0(\lambda^{-p}z)\qquad(z\in k).
\end{equation*}
Put $\eta_0=\lambda^{-p}$. If $T$ is odd, the Hasse function of $\tau$ with critical element
$\delta_\tau=\varpi_F^{(T-1)/2}$ has quadratic coefficient $\eta_0$.
\end{lemma}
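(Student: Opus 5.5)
Start from an arbitrary generator $\tau_1$ of $S(K/F)$. It has conductor $T$ by Corollary~\ref{cor:norm-characters}, so $\tau_1$ is trivial on $U_F^{T}=U_F^{t+1}$. Hence $z\mapsto\tau_1(1+\varpi_F^t\widetilde z)$ is a well-defined additive character of $k\simeq\operatorname{gr}^tU_F$, and it is nontrivial. Because $\psi_0=\psi_{\rm can}$ and the trace pairing is perfect, there is a unique $a\in k^\times$ with
\[
 \tau_1(1+\varpi_F^t\widetilde z)=\psi_0(az)\qquad(z\in k).
\]
Since $\tau_1$ is trivial on norms, $\psi_0(aP_\lambda(w))=1$ for all $w\in k$, by Proposition~\ref{prop:critical-norm-polynomial}. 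The annihilator of $\operatorname{im}P_\lambda$ is $\lambda^{-p}\F_p$, so $a=u\lambda^{-p}$ with $u\in\F_p^\times$. This is the same argument as in Lemma~\ref{lem:endpoint-discriminant-congruence}.

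Next choose an integer $j$ with $ju\equiv1\pmod p$ and put $\tau=\tau_1^{j}$. It is again a generator, because $p\nmid j$. Restricted to the last layer, $\tau$ gives the character $z\mapsto\psi_0(jaz)=\psi_0(\lambda^{-p}z)$, since $\psi_0$ takes values in $p$th roots of unity and therefore only the class of $j$ in $\F_p$ matters. This proves the first assertion with $\eta_0=\lambda^{-p}$.

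For the Hasse-function claim, suppose $T=2d_\tau+1$ is odd. Then $d_\tau\ge1$, because the extension is wild and so $T\ge2$. Choose $\gamma_\tau$ admissible for $\tau$ and a stationary representative $b_\tau$. By Theorem~\ref{thm:Lamprecht}, with $\delta_\tau=\varpi_F^{d_\tau}$, the polar character is $z\mapsto\psi_F(b_\tau\varpi_F^{2d_\tau}\widetilde z/\gamma_\tau)$, and $2d_\tau=t$. The element $\varpi_F^{t}\widetilde z$ lies in $\p_F^{d_\tau+1}$, which is the linearization domain. The linearization \eqref{eq:Lamprecht-linearization} therefore gives
\[
 \psi_F(b_\tau\varpi_F^t\widetilde z/\gamma_\tau)=\tau(1+\varpi_F^t\widetilde z)=\psi_0(\lambda^{-p}z).
\]
By the definition of $A_\theta$, this identifies $A_\tau=\eta_0$, which is the quadratic coefficient in \eqref{eq:critical-pair-definition}.

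The only delicate point I expect is bookkeeping. One must check that $\varpi_F^t\widetilde z$ lies in $\p_F^{d_\tau+\varepsilon}$, which holds because $t=2d_\tau\ge d_\tau+1$. One must also check that the integer exponent $j$ acts on $\psi_0$-values through its residue class. Neither step should be a serious obstacle.
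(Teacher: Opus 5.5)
Your proof is correct and follows the paper's argument. You identify the last-layer character of a generator through the trace pairing, use the annihilator $\lambda^{-p}\F_p$ of $\operatorname{im}P_\lambda$ to get $a\in\lambda^{-p}\F_p^\times$, pass to the appropriate power of $\tau$, and then read off the polar coefficient from the stationary linearization using $\delta_\tau^2=\varpi_F^t$.

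The only differences are cosmetic. You read $A_\tau$ directly from its defining identity rather than from the functional-equation quotient $H_\tau(x+y)/(H_\tau(x)H_\tau(y))$. You also state explicitly the domain check $\varpi_F^t\widetilde z\in\p_F^{d_\tau+1}$, which the paper leaves implicit.
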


\begin{proof}
By Corollary~\ref{cor:norm-characters}, a generator $\tau$ has
conductor $T=t+1$.  Hence its restriction to
$U_F^t/U_F^{t+1}\simeq(k,+)$ is a nontrivial additive character.
Since $\psi_0=\psi_{\rm can}$, the nondegeneracy of the trace pairing
gives a unique $a\in k^\times$ such that
$\tau(1+\varpi_F^t\widetilde z)=\psi_0(az)$.
By Proposition~\ref{prop:critical-norm-polynomial}, the annihilator
of the image in \eqref{eq:critical-norm-image} is
$\lambda^{-p}\F_p$.  Since $\tau$ is trivial on norms,
$a=j\lambda^{-p}$ for some $j\in\F_p^\times$.
Replacing $\tau$ by $\tau^{j^{-1}}$ gives the required normalization of $\tau$.

Assume $T$ is odd.  For the stationary representative $\beta_\tau$
and admissible denominator $\gamma_\tau$ used to form $H_\tau$,
the equality $\delta_\tau^2=\varpi_F^t$ gives
\[
 \frac{H_\tau(x+y)}{H_\tau(x)H_\tau(y)}
 =\psi_F\!\left(
   \frac{\beta_\tau\delta_\tau^2\widetilde x\widetilde y}
        {\gamma_\tau}\right)
 =\tau(1+\varpi_F^t\widetilde{x y})
 =\psi_0(\eta_0xy),
\]
by \eqref{eq:Hasse-functional-equation},
\eqref{eq:Lamprecht-linearization}, and the normalization of $\tau$ just chosen.  Thus the quadratic coefficient is
$\eta_0$.
\end{proof}

\begin{lemma}[Changes of critical coordinate]
\label{lem:critical-polynomial-coordinate}
Let $H_\theta$ have coefficient pair $(A_\theta,B_\theta)$ in
\eqref{eq:critical-pair-definition}.  If a critical element $\delta$ is
replaced by $\widetilde a\delta$, where $a\in k^\times$ and
$\widetilde a$ lifts $a$, the pair becomes
$(a^2A_\theta,aB_\theta)$.
Precomposing $H_\theta$ with $x\mapsto x^p$ replaces the pair by
$(A_\theta^{1/p},B_\theta^{1/p})$; precomposing with
$x\mapsto x^{1/p}$ replaces it by $(A_\theta^p,B_\theta^p)$.
Each substitution preserves $\sum_{x\in k}H_\theta(x)$.
\end{lemma}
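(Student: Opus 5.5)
The plan is to verify each of the three substitutions directly from the definition \eqref{eq:Lamprecht-Hasse-function} and the normal form \eqref{eq:critical-pair-definition}. Uniqueness of the pair $(A_\theta,B_\theta)$, from Lemma~\ref{lem:quadratic-normal-form} together with nondegeneracy of the trace pairing, then lets us read off the new pair by matching coefficients.

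\emph{Change of critical element.} Let $H'_\theta$ be formed with $\delta'=\widetilde a\delta$. Then $\widetilde a\widetilde x$ is a lift of $ax$, so directly from \eqref{eq:Lamprecht-Hasse-function} we get $H'_\theta(x)=H_\theta(ax)$. This is the same observation used for $\delta$-independence in Theorem~\ref{thm:Lamprecht}. Substituting into \eqref{eq:critical-pair-definition} gives
\[
H'_\theta(x)=\psi_0\bigl(a^2A_\theta x^2/2+aB_\theta x\bigr),
\]
so the new pair is $(a^2A_\theta,aB_\theta)$. One can also check that $A$ transforms consistently with the definition through $\psi_E(\beta\delta^2\widetilde z/\gamma)$: replacing $\delta^2$ by $\widetilde a^2\delta^2$ multiplies $A_\theta$ by $a^2$. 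Since multiplication by $a\in k^\times$ permutes $k$, the sum $\sum_x H_\theta(x)$ is unchanged.

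\emph{Frobenius precomposition.} Here the essential input is the normalization $\psi_0=\psi_{\rm can}$, which gives $\psi_0(z^p)=\psi_0(z)$, equivalently $\psi_0(z)=\psi_0(z^{1/p})$. For $x\in k$,
\[
H_\theta(x^p)=\psi_0\bigl(A_\theta x^{2p}/2+B_\theta x^p\bigr)
=\psi_0\bigl(A_\theta^{1/p}x^2/2+B_\theta^{1/p}x\bigr).
\]
To justify the second equality, apply $z\mapsto z^{1/p}$ inside $\psi_0$. Since it is a field automorphism fixing $\F_p$, it preserves sums and products, and $(1/2)^{1/p}=1/2$. Uniqueness of the normal form identifies the new pair as $(A_\theta^{1/p},B_\theta^{1/p})$. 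The inverse substitution is symmetric and gives $(A_\theta^p,B_\theta^p)$. Because $x\mapsto x^{\pm p}$ is a bijection of $k$, the sum is preserved in both cases.

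The only point needing care is that the normal form of the new function is computed with the same residual character $\psi_0$; precomposition does not change $\psi_0$, so this holds. The rest is routine, and I expect no real obstacle beyond invoking the Frobenius invariance of $\psi_{\rm can}$ correctly.
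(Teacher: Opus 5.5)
Your proposal is correct and follows essentially the same route as the paper. You use $H'_\theta(x)=H_\theta(ax)$ from the definition of the critical function, the identity $\psi_0(Ax^{2p}/2+Bx^p)=\psi_0(A^{1/p}x^2/2+B^{1/p}x)$ coming from $\psi_0(z^p)=\psi_0(z)$, and the fact that multiplication by $a$ and Frobenius are bijections of $k$; your added remarks on uniqueness of the normal form and on $(1/2)^{1/p}=1/2$ simply make explicit what the paper leaves implicit.
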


\begin{proof}
Equation~\eqref{eq:Lamprecht-Hasse-function} replaces $H_\theta(x)$
by $H_\theta(ax)$ when $\delta$ is replaced by
$\widetilde a\delta$.  The other two rules follow from
\[
 \psi_0(Ax^{2p}/2+Bx^p)
 =\psi_0(A^{1/p}x^2/2+B^{1/p}x),
\]
since $\psi_0(z^p)=\psi_0(z)$.  Multiplication by $a$ and
Frobenius are bijections of $k$.
\end{proof}

\begin{lemma}[Symmetric terms of degree at least three at the $\chi_K$-critical depth]
\label{lem:odd-upper-higher-vanishing}
Assume that $K/F$ is wildly ramified cyclic of odd prime degree $p$,
with break $t$, and put $T=t+1$.  Suppose
$m=m_F(\chi_F)=2d+1>1$, set $a=T-m$, and suppose that
$m_K(\chi_K)$ is odd, say $m_K(\chi_K)=2d_K+1$, with
\[
 2d_K=
 \begin{cases}
  m-1,&m<T,\\
  t,&m=T,\\
  t+p(m-T),&m>T.
 \end{cases}
\]
Let $u\in K^\times$ satisfy $v_K(u)=a$, put $\mathcal N=\Nm_{K/F}(u)$, and let
$x\in\p_K^{d_K}$.  Then, for every $3\le j\le p-1$,
\begin{equation}\label{eq:odd-upper-higher-terms-vanish}
 s_j(ux)\in\p_F^T,
 \qquad
 \mathcal N\,s_j(x)\in\p_F^T.
\end{equation}
Moreover
\begin{equation}\label{eq:odd-upper-H-quadratic-reduction}
 \sum_{j=2}^{p}\bigl(s_j(ux)-\mathcal N\,s_j(x)\bigr)
 \equiv s_2(ux)-\mathcal N\,s_2(x)\pmod{\p_F^T}.
\end{equation}
\end{lemma}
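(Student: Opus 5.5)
The plan is to reduce everything to Lemma~\ref{lem:wild-symmetric-bound}, applied once to $ux$ and once to $x$, and then to check the resulting inequalities separately in the three conductor ranges $m<T$, $m=T$ and $m>T$.

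\emph{Setting up the two criteria.} Since $v_K(u)=a$ and $x\in\p_K^{d_K}$, we have $v_K(ux)\ge a+d_K$. Fix $3\le j\le p-1$. By \eqref{eq:symmetric-bound} applied to $ux$,
\[
 v_F(s_j(ux))\ge\left\lfloor\frac{j(a+d_K)+(p-1)T}{p}\right\rfloor .
\]
This bound is at least $T$ exactly when $j(a+d_K)\ge T$. Because $j\ge3$, it suffices to prove
\[
 3(a+d_K)\ge T.
\]
For the second assertion, total ramification gives $v_F(\mathcal N)=v_K(u)=a$, and \eqref{eq:symmetric-bound} applied to $x$ gives
\[
 v_F(s_j(x))\ge\left\lfloor\frac{jd_K+(p-1)T}{p}\right\rfloor .
\]
Since $T-a$ is an integer, this bound is at least $T-a$ exactly when $jd_K+(p-1)T\ge p(T-a)$, that is, when $jd_K\ge T-pa$. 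Again it suffices to prove
\[
 3d_K\ge T-pa .
\]
The valuation $a$ may be negative when $m>T$; this is harmless, because \eqref{eq:symmetric-bound} uses only a lower bound on $v_K$.

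\emph{The three cases.}

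Case $m<T$. Here $d_K=d$ and $a=T-2d-1>0$. Then $a+d_K=T-d-1$, so the first criterion reads $2T\ge3d+3$. This follows from $T\ge m+1=2d+2$. For the second criterion, $a\ge1$ and $p\ge3$ give
\[
 T-pa\le T-3a=6d+3-2T ,
\]
so it suffices that $3d\ge6d+3-2T$. This is the same inequality $2T\ge3d+3$.

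Case $m=T$. Here $a=0$ and $d_K=(T-1)/2$, and $T=m$ is odd with $T>1$, so $T\ge3$. Both criteria reduce to $3(T-1)/2\ge T$, which is equivalent to $T\ge3$.

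Case $m>T$. Put $e=m-T\ge1$, so that $a=-e$ and $2d_K=t+pe$. The first criterion becomes
\[
 3\bigl(t+(p-2)e\bigr)\ge2T=2t+2 ,
\]
which holds since $(p-2)e\ge1$. The second criterion becomes $3(t+pe)\ge2t+2+2pe$, which simplifies to $t+pe\ge2$. This holds because $p\ge3$.

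These three checks prove \eqref{eq:odd-upper-higher-terms-vanish}.

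\emph{Deducing the congruence.} For $3\le j\le p-1$, both $s_j(ux)$ and $\mathcal N s_j(x)$ lie in $\p_F^T$ by the previous step, so these terms drop out modulo $\p_F^T$. The top term cancels exactly: multiplicativity of the norm gives
\[
 s_p(ux)=\Nm_{K/F}(ux)=\Nm_{K/F}(u)\Nm_{K/F}(x)=\mathcal N s_p(x).
\]
Only the $j=2$ terms survive, which is \eqref{eq:odd-upper-H-quadratic-reduction}.

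I expect no conceptual obstacle. The only delicate point is the bookkeeping in the case $m>T$, where $a$ is negative and one must use the exact formula for $2d_K$ given in the statement rather than $d_K\ge d$.
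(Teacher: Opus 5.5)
Your proof is correct and follows the paper's route. Both arguments apply \eqref{eq:symmetric-bound} to $ux$ and to $x$, reduce to the inequalities $j(a+d_K)\ge T$ and $jd_K+pa\ge T$ at $j=3$, check them in the three ranges $m<T$, $m=T$, $m>T$, and cancel the degree-$p$ terms by multiplicativity of the norm. The only difference is cosmetic: the paper treats $p=3$ separately and states $d_K+a>0$ explicitly, whereas in your argument both facts are implicit (the range $3\le j\le p-1$ is empty when $p=3$, and $3(a+d_K)\ge T>0$ forces $a+d_K>0$).
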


\begin{proof}
Write $\Nm=\Nm_{K/F}$.
Since $v_F(\mathcal N)=v_K(u)=a$ by \eqref{eq:symmetric-bound-last},
Equation~\eqref{eq:symmetric-bound} gives
\begin{align*}
 v_F(s_j(ux))
 &\ge
 \left\lfloor
  \frac{j(d_K+a)+(p-1)T}{p}\right\rfloor,\\
 v_F(\mathcal N\,s_j(x))
 &\ge a+
 \left\lfloor
  \frac{jd_K+(p-1)T}{p}\right\rfloor.
\end{align*}
It is therefore enough to prove
$j(d_K+a)\ge T, \qquad jd_K+pa\ge T$.
If $p=3$, the range $3\le j\le p-1$ is empty and
$s_3(ux)=\Nm(u)\Nm(x)=\mathcal N\,s_3(x)$, so both assertions of the lemma follow
immediately.  We may therefore suppose $p\ge5$.  The formulas below
show in particular that $d_K+a>0$; hence both left sides increase with
$j$, and it suffices to take $j=3$.
For $m<T$, one has $a=T-m>0$ and $d_K=(t-a)/2$; for $m>T$, put
$q=m-T=-a>0$, so $d_K=(t+pq)/2$.  Substitution gives
\[
\begin{array}{c|c|c}
 &3(d_K+a)-T&3d_K+pa-T\\ \hline
 m<T&\dfrac{t+3a-2}{2}&\dfrac{t+(2p-3)a-2}{2}\\[2mm]
 m=T&\dfrac{t-2}{2}&\dfrac{t-2}{2}\\[2mm]
 m>T&\dfrac{t+3(p-2)q-2}{2}&\dfrac{t+pq-2}{2}.
\end{array}
\]
Every entry is nonnegative.  When $m=T$, one has $t=2d\ge2$;
when $m<T$, use $t\ge1$ and $a\ge1$; when $m>T$, use
$t\ge1$, $q\ge1$, and $p\ge3$.  This proves
the two required inequalities and hence
\eqref{eq:odd-upper-higher-terms-vanish}.

Finally,
$s_p(ux)=\Nm(ux)=\Nm(u)\Nm(x)=\mathcal N\,s_p(x)$,
so the degree-$p$ terms cancel.  Removing them and the terms
of degrees $3,\ldots,p-1$ proves
\eqref{eq:odd-upper-H-quadratic-reduction}.  The remaining term is
homogeneous of degree two in $x$.
\end{proof}

\begin{proposition}[Odd-prime critical-coordinate transport]
\label{lem:residual-parameter-transport}
Let $\tau$ and $\eta_0$ be as in Lemma~\ref{lem:odd-break-line-normalization}.  Put
$m=m_F(\chi_F)$ and assume $2\le m<2T$.
When $m\le T$, choose the admissible denominators and norm representatives
as in \eqref{eq:gammas-low}--\eqref{eq:low-mu}. When $m>T$, choose
them as in Proposition~\ref{prop:high-parameter-table}(d).
Put $a=T-m$ and define $A\in F^\times$, $u\in K^\times$, and
$\mathcal N\in F^\times$ by
\begin{equation}\label{eq:A-u}
 \begin{gathered}
 (A,u,\mathcal N)=
 \begin{cases}
 (\alpha/\Gamma_\tau,\ \widetilde{\xi}\widetilde{\beta}/\widetilde{\alpha},\ \xi\beta/\alpha),&m\le T,\\[1mm]
 (\alpha/\gamma_F,\ \widetilde{\beta}/\widetilde{\alpha},\ \beta/\alpha),&m>T,
 \end{cases}\\
 \mathcal N=\Nm_{K/F}(u),\qquad v_K(u)=v_F(\mathcal N)=a.
 \end{gathered}
\end{equation}
Put $d_0=\overline{\varpi_F^{-a}\mathcal N}\in k^\times$.
Choose the stationary representative
\[
 \beta_K=
 \begin{cases}
  \displaystyle\frac{\beta\xi}{\widetilde{\xi}}
       -\frac{\widetilde{\beta}\alpha}{\widetilde{\alpha}},&m\le T,\\[2mm]
  \displaystyle\beta-\frac{\widetilde{\beta}\alpha}{\widetilde{\alpha}},&m>T.
 \end{cases}
\]
When $T$ is odd, put $\delta_\tau=\varpi_F^{(T-1)/2}$ and define
$B_\tau\in k$ by
\[
 \begin{aligned}
 H_\tau(x)&=\psi_F(A\delta_\tau\widetilde x)
              \tau(1+\delta_\tau\widetilde x)^{-1}\\
 &=\psi_0(\eta_0x^2/2+B_\tau x)\qquad(x\in k).
 \end{aligned}
\]

When $m$ is odd, let $(A_{\chi_F},B_{\chi_F})$ be the coefficient pair
of $H_{\chi_F}$ with representative $\beta$, denominator $\gamma_F$, and critical element
$\delta_\chi=\varpi_F^{(m-1)/2}$.  Then
$A_{\chi_F}=\eta_0d_0$.

The conductor $m_K(\chi_K)$ has the same parity as $m$.  When $m$ is odd, write $m_K(\chi_K)=2d_K+1$, take
$\delta_K=\varpi_K^{d_K}$, and define
\begin{equation}\label{eq:odd-upper-coordinate-reindexing}
 H_{\chi_K}(X)=
 \psi_K\!\left(
   \frac{\beta_K\delta_K\widetilde{X^{1/p}}}{\gamma_K}\right)
 \chi_K(1+\delta_K\widetilde{X^{1/p}})^{-1},
 \qquad X\in k.
\end{equation}
Let $(A_K,B_K)$ be the coefficient pair of this function in
\eqref{eq:critical-pair-definition}.  Then
\begin{equation}\label{eq:odd-transport-direct-pair}
 (A_K,B_K)=
 \begin{cases}
 (A_{\chi_F},B_{\chi_F}),&m<T,\\[1mm]
 (-\eta_0(d_0^p-d_0),\ B_{\chi_F}-d_0B_\tau),&m=T,\\[1mm]
 (-d_0^{p-1}A_{\chi_F},0),&m>T.
 \end{cases}
\end{equation}
\end{proposition}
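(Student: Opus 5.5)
The plan is to compute both coefficient pairs directly from the definitions. Everything is expressed through the norm representatives $u$, $\mathcal N=\Nm_{K/F}(u)$ of \eqref{eq:A-u}, so that the linearizations of $\chi_F$ and $\tau$ appear with the common coefficient $A$. First the parity claim: for $m<T$ and $m=T$, \eqref{eq:minimal-critical} gives $m_K(\chi_K)=m$. For $m>T$, \eqref{eq:pullback-high} gives $m_K(\chi_K)=T+p(m-T)$, which has the parity of $m$ since $p$ is odd. This also yields the three values of $2d_K$ used in Lemma~\ref{lem:odd-upper-higher-vanishing}.

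\emph{The $F$-side coefficient.} The polar character of $H_{\chi_F}$ is $z\mapsto\psi_F(\beta\delta_\chi^2\widetilde z/\gamma_F)$ with $\delta_\chi^2=\varpi_F^{m-1}$. In both cases of \eqref{eq:A-u}, $\beta/\gamma_F=A\mathcal N$: for $m>T$ directly, and for $m\le T$ because $\gamma_F=\Gamma_\tau/\xi$. Since $m-1=t-a$, we have $\mathcal N\varpi_F^{m-1}\equiv \widetilde{d_0}\,\varpi_F^t$ modulo a higher power, and the error is killed by $\psi_F(A\,\cdot)$. Thus the polar character becomes $\psi_F(A\varpi_F^t\widetilde{d_0 z})=\tau(1+\varpi_F^t\widetilde{d_0z})=\psi_0(\eta_0d_0z)$ by Lemma~\ref{lem:odd-break-line-normalization}, giving $A_{\chi_F}=\eta_0d_0$.

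\emph{The $K$-side function.} Put $y=\delta_K\widetilde{X^{1/p}}$. From the chosen $\beta_K$ and \eqref{eq:gammas-low}, or $\gamma_K=\gamma_F$ for $m>T$, one checks $\Tr(\beta_Ky)/\gamma_K=A(\mathcal N\Tr y-\Tr(uy))$. On the other side, $\chi_K(1+y)=\chi_F(\Nm(1+y))$. I would split $\Nm(1+y)$ as $(1+\Tr y+s_2(y)+\dots)$ and evaluate it with the linearization $\chi_F(1+w)=\psi_F(A\mathcal N w)$ wherever $w$ is deep enough. For the parts at the critical depth, I would instead use $H_{\chi_F}$ evaluated at the residue of $\Tr y/\delta_\chi$.

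The key trick is to insert the trivial factor $\tau(\Nm(1+uy))=1$ and expand it the same way. Lemma~\ref{lem:odd-upper-higher-vanishing} then reduces all degree-$\ge3$ symmetric terms of $y$ and $uy$ modulo $\p_F^T$ to the pair $s_2(uy)-\mathcal N s_2(y)$. It also cancels the top terms $\Nm(uy)=\mathcal N\Nm(y)$, and Lemma~\ref{lem:norm-character-conversion} trades norm terms against $-\Tr$. The outcome is that $H_{\chi_K}(X)$ is a product of a value of $H_{\chi_F}$ at $\overline{\Tr(y)/\delta_\chi}$, a value of $H_\tau$ at $\overline{\Tr(uy)/\delta_\tau}$ (when $T$ is odd), and a residual quadratic term $\psi_0(\cdots\overline{\Nm y}\cdots)$.

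\emph{The three cases.} I then read off residues using \eqref{eq:odd-break-trace-unit}, \eqref{eq:stable-critical-trace-residue} and the critical norm polynomial \eqref{eq:critical-norm-polynomial-extracted}.
\begin{itemize}
\item For $m<T$: $\Tr y$ is at $F$-depth $d$ with residue $X$ (using that $\varpi_K^{d_K}$ and $\varpi_F^{d}$ agree at this depth below the break), while the $\tau$-contributions fall into $\p_F^T$. This gives $(A_{\chi_F},B_{\chi_F})$.
\item For $m>T$: $\Tr y$ lies too deep to contribute a linear term, so $B_K=0$. The quadratic term comes from $\Tr(uy)$ at the $\tau$-critical level together with the trace residue $-\lambda^{p-1}$, producing $-d_0^{p-1}A_{\chi_F}$. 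The reindexing $X^{1/p}$ of Lemma~\ref{lem:critical-polynomial-coordinate} absorbs a Frobenius twist.
\item For $m=T$: both Hasse functions contribute at the break. Here $\overline{\Nm y}$ contributes $X$ (through $z^p$ with $z=X^{1/p}$) and $\Tr y$ contributes $-\lambda^{p-1}$ times a linear term. Combining these via $P_\lambda$ should give quadratic coefficient $-\eta_0(d_0^p-d_0)$ and linear coefficient $B_{\chi_F}-d_0B_\tau$.
\end{itemize}

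I expect the $m=T$ case to be the main obstacle. There one must track simultaneously the $H_\tau$ and $H_{\chi_F}$ contributions, the $s_2$ cross terms, and the Teichm\"uller/Frobenius rescaling between $\varpi_K^{d_K}$ and $\varpi_F^{d}$, with every lift-dependence cancelling exactly. I would first verify the quadratic coefficient via the polar character, using \eqref{eq:Hasse-functional-equation} over $K$ and the critical trace residue, before attacking $B_K$.
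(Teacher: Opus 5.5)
Your skeleton is the paper's. The parity claim and $A_{\chi_F}=\eta_0d_0$ are argued correctly. Computing $A_K$ from the polar character over $K$ via \eqref{eq:odd-break-trace-unit} is what the paper does. Inserting $\tau(\Nm(1+uy))=1$ and reducing with Lemma~\ref{lem:odd-upper-higher-vanishing} is the paper's mechanism.

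The gap is in which term of $\Nm(1+y)-1$ sits at the critical depth. In the wild case the trace gains from the different. For odd $m\le T$ and $y=\varpi_K^{d}\widetilde Z$, \eqref{eq:trace-ideal} gives $\Tr(y)\in\p_F^{\lfloor(d+(p-1)T)/p\rfloor}\subseteq\p_F^{d+1}$, and likewise $s_j(y)\in\p_F^{d+1}$ for $2\le j<p$. The only depth-$d$ term is $\Nm(y)=\varpi_F^d\widetilde Z^{\,p}$, with residue $Z^p=X$; this is why the coordinate $X^{1/p}$ appears.

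So your claim that for $m<T$ ``$\Tr y$ is at $F$-depth $d$ with residue $X$'' is false. The factorization with $H_{\chi_F}$ at $\overline{\Tr(y)/\delta_\chi}$ and $H_\tau$ at $\overline{\Tr(uy)/\delta_\tau}$ is then not an identity. What is left over involves $\chi_F(1+\Nm y)$ with $\Nm y\notin\p_F^{d+1}$, outside the linearization domain. Your residual $\psi_0(\cdots\overline{\Nm y}\cdots)$ is linear, not quadratic, in $X$.

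The other ranges are misattributed the same way. For $m>T$ neither Hasse function contributes: $\Nm(1+y)-1\in\p_F^{d+1}$, and for odd $T$ also $\Nm(1+uy)-1\in\p_F^{d_\tau+1}$. The quadratic term comes from $-\mathcal N s_2(y)$, not from $\Tr(uy)$. At $m=T$ both $y$ and $uy$ lie at depth $t/2<t$, where the graded norm is just $Z\mapsto Z^p$. So $P_\lambda$ enters only through its linear coefficient $-\lambda^{p-1}$ in $A_K$.

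The repair is to evaluate at $P(y)=\Nm(1+y)-1$ itself, using $P(y)/\delta_\chi$ and $P(uy)/\delta_\tau$ as the lifts in \eqref{eq:Lamprecht-Hasse-function}. Put $R=P-\Tr$. The stationary ratios $\beta_K/\gamma_K=A(\mathcal N-u)$ and $\beta/\gamma_F=A\mathcal N$, together with $\tau(1+P(uy))=1$, give the exact identity
\[
 H_{\chi_K}(X)=H_{\chi_F}\bigl(\overline{P(y)/\delta_\chi}\bigr)\,
 H_\tau\bigl(\overline{P(uy)/\delta_\tau}\bigr)^{-1}\,
 \psi_F\bigl(A\{R(uy)-\mathcal N R(y)\}\bigr).
\]
A factor is read as $1$ when the corresponding conductor is even. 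There are no cross terms, and the norm terms give the arguments $(X,0)$, $(X,d_0X)$, $(0,0)$ for $m<T$, $m=T$, $m>T$.

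You also still need the idea that isolates $B_K$, which your sketch leaves at ``should give.'' By Lemma~\ref{lem:odd-upper-higher-vanishing}, the last factor is, modulo $\p_F^T$, equal to $\psi_F(A\{s_2(uy)-\mathcal N s_2(y)\})$. This is homogeneous quadratic in $y$, hence even in $X$ because $p$ is odd. Having a quadratic--affine normal form, it therefore has zero affine coefficient. Thus $B_K$ is read off from the first two factors as $B_{\chi_F}$, $B_{\chi_F}-d_0B_\tau$, $0$, and $A_K$ comes from the polar character as you planned.
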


\begin{proof}
Write $\Nm=\Nm_{K/F}$.  For $z\in k$, write $\widetilde z\in\cO_F$
for a lift, also viewed in $\cO_K$.  Put
\[
 u_0=\varpi_K^{-a}u,\qquad
 \mathcal N_0=\varpi_F^{-a}\mathcal N,\qquad
 \zeta=\bar u_0.
\]
Propositions~\ref{prop:low-parameter-table} and
\ref{prop:high-parameter-table}(d), in the respective ranges, give
$\Nm(u)=\mathcal N$ and $v_K(u)=v_F(\mathcal N)=a$.  Hence $u_0,\mathcal N_0$ are units and
$d_0=\overline{\mathcal N_0}$.  Since the extension is totally ramified, the
residue norm is Frobenius; hence
$d_0=\overline{\mathcal N_0}=\bar u_0^{\,p}=\zeta^p$.
There is no additional residue scalar when $\mathcal N$ is viewed as an element
of $K$.  Indeed, if $\sigma$ generates $\Gal(K/F)$, then
\[
 \frac{\varpi_F}{\varpi_K^p}
 =\prod_{i=0}^{p-1}\frac{\sigma^i(\varpi_K)}{\varpi_K}
 \equiv1\pmod{\p_K},
\]
because the extension is wildly ramified.  Hence, after division by
$\varpi_K^{pa}$, the initial residue of
$\mathcal N=\varpi_F^a \mathcal N_0$ is $d_0$ for every integer $a$, including $a<0$.

For the
representatives in Propositions~\ref{prop:high-parameter-table} and
\ref{prop:low-parameter-table}, the stationary ratios are
\begin{equation}\label{eq:odd-three-stationary-ratios}
 \frac{\beta(\tau)}{\gamma_\tau}=A,
 \qquad
 \frac{\beta(\chi_F)}{\gamma_F}=A \mathcal N,
 \qquad
 \frac{\beta_K}{\gamma_K}=A(\mathcal N-u).
\end{equation}
The normalization of $\tau$ in
Lemma~\ref{lem:odd-break-line-normalization} gives
\begin{equation*}
 \psi_F(A\varpi_F^t\widetilde z)
 =\psi_0(\eta_0z)\qquad(z\in k).
\end{equation*}

Suppose first that $m_F(\chi_F)=2d+1$.  Take
$\delta_\chi=\varpi_F^d$.  Since $a+2d=t$, the polar quotient of the
$\chi_F$ critical function is, by \eqref{eq:Hasse-functional-equation} and
\eqref{eq:odd-three-stationary-ratios},
\begin{align*}
 (X,Y)&\longmapsto
 \psi_F(A \mathcal N \delta_\chi^2\widetilde X\widetilde Y)\\
 &=\psi_F(A\varpi_F^t \mathcal N_0
                 \widetilde X\widetilde Y)
 =\psi_0(\eta_0\overline{\mathcal N_0}XY).
\end{align*}
Since $d_0=\overline{\mathcal N_0}$, this gives
$A_{\chi_F}=\eta_0d_0$.

To compute $A_K$, use the trace coefficient
\eqref{eq:odd-break-trace-unit}.  The parity of $m_K(\chi_K)$ is the parity of
$m_F(\chi_F)$: in the low range the conductors are equal, while in the
high range
$m_K(\chi_K)-1=t+p(m-T)\equiv m-1\pmod2$.
Assume this parity is odd and write $m_K(\chi_K)=2d_K+1$.  Put
$r=v_K(\mathcal N-u)$.  The exact conductor of $\chi_K$ implies $r=0$ at the
boundary; away from the boundary the two terms have distinct
valuations.  Hence
\begin{equation*}
 \begin{array}{c|c|c}
  &r&2d_K\\ \hline
  m<T&T-m&m-1\\
  m=T&0&t\\
  m>T&p(T-m)&t+p(m-T).
 \end{array}
\end{equation*}
In every row,
$r+2d_K=t$.
Take $\delta_K=\varpi_K^{d_K}$ and put
$w_0=\varpi_K^{-r}(\mathcal N-u)\in\cO_K^\times$.  In the natural residue coordinate $Z$ for $\chi_K$, define
\[
 H_K^{\mathrm{nat}}(Z)=
 \psi_K\!\left(\frac{\beta_K\delta_K\widetilde Z}{\gamma_K}\right)
 \chi_K(1+\delta_K\widetilde Z)^{-1}.
\]
For $Z,W\in k$, equations~\eqref{eq:Hasse-functional-equation} and
\eqref{eq:odd-three-stationary-ratios} give
\begin{align*}
 \frac{H_K^{\mathrm{nat}}(Z+W)}
      {H_K^{\mathrm{nat}}(Z)H_K^{\mathrm{nat}}(W)}
 &=\psi_F\!\left(
   A\Tr_{K/F}\bigl((\mathcal N-u)\delta_K^2
                  \widetilde Z\widetilde W\bigr)\right).
\end{align*}
Since $r+2d_K=t$, the argument of the trace is
$w_0\varpi_K^t\widetilde Z\widetilde W$.  Formula
\eqref{eq:odd-break-trace-unit} and the fact that
$x\mapsto\psi_F(A x)$ is trivial on $\p_F^{t+1}$ therefore
give
\begin{align*}
 \frac{H_K^{\mathrm{nat}}(Z+W)}
      {H_K^{\mathrm{nat}}(Z)H_K^{\mathrm{nat}}(W)}
 &=\psi_0\!\left(
   -\eta_0\lambda^{p-1}\bar w_0ZW\right)\notag\\
 &=\psi_0\!\left(
   \lambda^{-1}\operatorname{in}(u-\mathcal N)ZW\right),
\end{align*}
where
$\operatorname{in}(y)=\overline{\varpi_K^{-v_K(y)}y}$.
The last equality uses $\eta_0=\lambda^{-p}$ and
$\operatorname{in}(u-\mathcal N)=-\bar w_0$.

By \eqref{eq:odd-upper-coordinate-reindexing},
$H_{\chi_K}(X)=H_K^{\mathrm{nat}}(X^{1/p})$.  Since Frobenius
permutes $k$,
\[
 \sum_{X\in k}H_{\chi_K}(X)
 =\sum_{Z\in k}H_K^{\mathrm{nat}}(Z).
\]
By
Lemma~\ref{lem:critical-polynomial-coordinate}, applied to the expression
\eqref{eq:critical-pair-definition}, the quadratic coefficient in this coordinate is the $p$th power of the coefficient
in the displayed polar-quotient calculation.  Thus
\begin{equation}\label{eq:upper-A-leading}
 \frac{A_K}{\eta_0}=\operatorname{in}(u-\mathcal N)^p.
\end{equation}
The initial term of $u-\mathcal N$ is determined by the two valuations.  If
$m<T$, then $a>0$ and $v_K(u)=a<pa=v_K(\mathcal N)$, so it is $\zeta$.  If
$m=T$, both terms are units and it is $\zeta-d_0$.  If $m>T$, then
$a<0$ and $v_K(\mathcal N)=pa<a=v_K(u)$, so it is $-d_0$.  Consequently
\[
 A_K=\eta_0
 \begin{cases}
  \zeta^p,&m<T,\\
  (\zeta-d_0)^p,&m=T,\\
  (-d_0)^p,&m>T,
 \end{cases}
 =
 \begin{cases}
  A_{\chi_F},&m<T,\\
  -\eta_0(d_0^p-d_0),&m=T,\\
  -d_0^{p-1}A_{\chi_F},&m>T,
 \end{cases}
\]
using $d_0=\zeta^p$,
$A_{\chi_F}=\eta_0d_0$, and the fact that $p$ is odd.

It remains to determine the affine coefficient.  We do this from an
identity of critical functions, retaining the
stationary ratios \eqref{eq:odd-three-stationary-ratios}.

Let
\[
 P(y)=\Nm(1+y)-1,
 \qquad
 R(y)=P(y)-\Tr(y)=\sum_{j=2}^{p}s_j(y).
\]
For $X\in k$, let $Z=X^{1/p}$, choose a lift $\widetilde Z$, and put
$x=\delta_K\widetilde Z$.  Thus $X$ is the displayed coordinate for $\chi_K$ in \eqref{eq:odd-upper-coordinate-reindexing}.  Equation~\eqref{eq:stationary-parameter-ambiguity} gives
$P(x)\in\p_F^{\lfloor m_F(\chi_F)/2\rfloor}$.
In all three ranges $a+d_K\ge\lfloor T/2\rfloor$; since
$\lfloor T/2\rfloor\le t$, Theorem~\ref{thm:graded-norm}(b) gives
$P(ux)\in\p_F^{\lfloor T/2\rfloor}$.
For each odd conductor, use the fixed critical elements
$\delta_\chi=\varpi_F^{(m_F(\chi_F)-1)/2}$ and
$\delta_\tau=\varpi_F^{(T-1)/2}$, respectively, and define
\[
 r_\chi(X)=\overline{P(x)/\delta_\chi},\qquad
 r_\tau(X)=\overline{P(ux)/\delta_\tau}.
\]
If $T$ is even, set $r_\tau=0$.  Then
\begin{equation}\label{eq:odd-exact-critical-transport}
 H_{\chi_K}(X)
 =H_{\chi_F}(r_\chi(X))\,
  H_\tau(r_\tau(X))^{-1}\,
  \psi_F\!\left(A\{R(ux)-\mathcal N R(x)\}\right).
\end{equation}
To verify this identity, first note that
$1+P(ux)=\Nm(1+ux)$ and hence
$\tau(1+P(ux))=1$.  Therefore the right-hand side of
\eqref{eq:odd-exact-critical-transport} is
\begin{align*}
 &\psi_F(A\mathcal N P(x))\chi_F(1+P(x))^{-1}
  \psi_F(-A P(ux))
  \psi_F\!\left(A\{R(ux)-\mathcal N R(x)\}\right)\\
 &\qquad=
 \psi_F\!\left(A\{\mathcal N\Tr(x)-\Tr(ux)\}\right)
 \chi_F(\Nm(1+x))^{-1}\\
 &\qquad=
 \psi_K\!\left(\frac{\beta_Kx}{\gamma_K}\right)
 \chi_K(1+x)^{-1}
 =H_{\chi_K}(X),
\end{align*}
where the second equality uses $P=\Tr+R$ and the third uses
\eqref{eq:odd-three-stationary-ratios}.

We compute $r_\chi(X)$ and $r_\tau(X)$.  Only odd conductors need be
considered.  If $m<T$, write $m=2d+1$, so $2d<t$ and
$m_K(\chi_K)=m$.  Here $\delta_K=\varpi_K^d$ and
$\delta_\chi=\varpi_F^d$.  For
$x=\delta_K\widetilde{X^{1/p}}$, the norm term is
$\Nm(x)=\delta_\chi\widetilde X \pmod{\p_F^{d+1}}$,
whereas Equation~\eqref{eq:symmetric-bound} gives
$v_F(s_j(x))\ge d+1$ for $1\le j<p$.  Thus $r_\chi(X)=X$ in
the coordinate \eqref{eq:odd-upper-coordinate-reindexing}.  If the
conductor $T=t+1$ of $\tau$ is odd, put $d_\tau=t/2$.  Since
$v_K(u)=T-m$,
$v_K(ux)=d+T-m=t-d\ge d_\tau+1$,
and the same symmetric-function estimate puts every term of $P(ux)$
in $\p_F^{d_\tau+1}$.  For even $T$, $H_\tau=1$. In either case $r_\tau(X)=0$.

If $m=T$ and $T$ is odd, then $m=T=2d+1$ and
$v_K(u)=0$.  In this case
\[
 \delta_K=\varpi_K^d,
 \qquad \delta_\chi=\delta_\tau=\varpi_F^d.
\]
Modulo $\p_F^{d+1}$,
\[
 P(x)\equiv \Nm(x)=\delta_\chi\widetilde X,
 \qquad
 P(ux)\equiv \Nm(u)\Nm(x)=\delta_\tau\widetilde{d_0X}.
\]
Hence $r_\chi(X)=X$ and $r_\tau(X)=d_0X$ with no further coordinate
choice.

Finally suppose $m>T$ and write again $m=2d+1$.  The critical depth
for $\chi_K$ is
$d_K=\frac{t+p(m-T)}2$.
The norm term satisfies $\Nm(x)\in\p_F^{d+1}$ because
$d_K-d=\frac{(p-1)(m-T)}2\ge1$.
For the remaining symmetric terms, the smallest lower bound occurs at
$j=1$, and
$2\{d_K+(p-1)T-p(d+1)\}=(p-1)t-2\ge0$.
Hence Equation~\eqref{eq:symmetric-bound} gives
$P(x)\in\p_F^{d+1}$, so $r_\chi(X)=0$.  If $T$ is odd, write
$t=2d_\tau$.  Then
\[
 v_K(ux)=d_K+T-m
 =d_\tau+(p-2)(d-d_\tau)\ge d_\tau+1,
\]
and the symmetric-coefficient estimate gives $P(ux)\in\p_F^{d_\tau+1}$.
Thus $r_\tau(X)=0$; the assertion is automatic when $T$ is even.
We have proved
\begin{equation*}
 \begin{array}{c|c|c}
  &r_\chi(X)&r_\tau(X)\\ \hline
  m<T&X&0\\
  m=T&X&d_0X\\
  m>T&0&0.
 \end{array}
\end{equation*}

The polynomial $R(ux)-\mathcal N R(x)$ has no terms of degree zero or one.
Equations~\eqref{eq:odd-upper-higher-terms-vanish} and
\eqref{eq:odd-upper-H-quadratic-reduction} show that, modulo
$\p_F^T$, its degree-$p$ terms cancel and every term of degree
$3,\ldots,p-1$ vanishes.  Since
$z\mapsto\psi_F(A z)$ is trivial on $\p_F^T$, the correction
factor in \eqref{eq:odd-exact-critical-transport} is therefore even:
replacing $X$ by $-X$ replaces
$x=\delta_K\widetilde{X^{1/p}}$ by $-x$ and leaves the surviving
quadratic term unchanged. This factor is a quotient of the critical
functions in that identity, so it has a quadratic-affine normal form
in $X$. Its evenness in odd characteristic forces the affine
coefficient to be zero.
With $A_K$ given by \eqref{eq:upper-A-leading}, substitution of the
calculated values of $r_\chi(X)$ and $r_\tau(X)$ into
\eqref{eq:odd-exact-critical-transport} gives the linear-character factor
\begin{equation*}
 L_{\chi_K}(X)=
 \begin{cases}
  \psi_0(B_{\chi_F}X),&m<T,\\[1mm]
  \psi_0\!\left((B_{\chi_F}-d_0B_\tau)X\right),&m=T,\\[1mm]
  1,&m>T.
 \end{cases}
\end{equation*}
Together with the quadratic coefficients already computed, this proves
\eqref{eq:odd-transport-direct-pair}.
\end{proof}

\begin{lemma}[Integer stationary representatives]
\label{lem:integer-stationary-representatives}
Retain the minimal-conductor choice of $\chi_F$ and the
parameter choices of Proposition~\ref{lem:residual-parameter-transport}.
For $j\in\F_p^\times$, the following are admissible denominators
and stationary representatives:
\begin{equation}\label{eq:odd-stationary-representative-table}
\begin{array}{c|c|c|c}
 \text{range}&\theta&\gamma_\theta&\beta_\theta\\ \hline
 m>T&\tau^j&\gamma_F/\alpha&[j]\\
 m>T&\tau^j\chi_F&\gamma_F&\beta+[j]\alpha\\
 m\le T&\tau^j&\Gamma_\tau&[j]\alpha\\
 m\le T&\tau^j\chi_F&\Gamma_\tau&\xi\beta+[j]\alpha.
\end{array}
\end{equation}
Replacing $[j]$ by its integer representative
$j_{\mathbf Z}\in\{1,\ldots,p-1\}$ leaves the stationary class unchanged.
For odd conductor, it also leaves the critical function
\eqref{eq:Lamprecht-Hasse-function} unchanged, with any fixed critical element.
\end{lemma}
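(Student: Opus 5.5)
The plan is to verify each row of \eqref{eq:odd-stationary-representative-table} directly against the defining property of the stationary class in Lemma~\ref{lem:stationary-class-existence}. For each $\theta$ in the table we check three things. First, the listed $\gamma_\theta$ has valuation $m_F(\theta)+n_F(\psi_F)$. Second, the listed $\beta_\theta$ linearizes $\theta$ on the stationary layer. Third, $\beta_\theta$ is a unit. Conductors come from \eqref{eq:minimal-orbit-conductors}: $m_F(\tau^j)=T$, and $m_F(\tau^j\chi_F)=\max\{m,T\}$. In the range $m>T$, $v_F(\alpha)=m-T$ gives $v_F(\gamma_F/\alpha)=T+n_F(\psi_F)$. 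In the range $m\le T$, $\Gamma_\tau$ is admissible by its definition in \eqref{eq:gammas-low}.

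\textbf{Linearizations.} For $\tau^j$ we start from the linearization of $\tau$. This is \eqref{eq:tau-high-linearization} with $\alpha$ and $\gamma_F$ when $m>T$, and the first line of \eqref{eq:low-mu} when $m\le T$. It holds on $\p_F^{s_0}$, which is the stationary depth $\lceil T/2\rceil$ for conductor $T$. Lemma~\ref{lem:stationary-class-calculus}(c) gives $j_{\mathbf Z}$ times that coefficient as a representative for $\tau^j$, and Lemma~\ref{lem:teichmueller-stationary} lets us replace $j_{\mathbf Z}$ by $[j]$.

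For $\tau^j\chi_F$ we multiply this linearization by the one for $\chi_F$.
\begin{itemize}
\item When $m>T$, the depth is $d+\varepsilon\ge s_0$, and \eqref{eq:twist-beta-intermediate} already gives $\beta+[j]\alpha$.
\item When $m\le T$, the relevant depth is $s_0\ge d+\varepsilon$, and \eqref{eq:low-beta-twists} gives $\xi\beta+[j]\alpha$ relative to $\Gamma_\tau$.
\end{itemize}
Units follow from \eqref{eq:general-stationary-valuation}. One can also see this directly: $[j]$ is a unit, $\beta$ is a unit while $v(\alpha)>0$ when $m>T$, and \eqref{eq:minimal-orbit-no-leading-cancellation} handles $m=T$. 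When $m<T$, $v(\xi)>0$ and $\alpha$ is a unit.

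\textbf{Replacing $[j]$ by $j_{\mathbf Z}$.} For the class statement, Lemma~\ref{lem:teichmueller-stationary} shows that $[j]A$ and $j_{\mathbf Z}A$ give the same character on $\p_F^{s_0}$, where $A$ is the coefficient divided by the denominator. By uniqueness the two representatives agree modulo the annihilator. Where the stationary depth exceeds $s_0$ (the rows with $m>T$), the linearization on the smaller domain suffices, because by Lemma~\ref{lem:stationary-class-calculus}(a) the class at depth $d+\varepsilon$ is the same representative.

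The critical function is the main obstacle. It involves $\psi_F(\delta\beta_\theta\widetilde x/\gamma_\theta)$ at depth $v(\delta)=\lfloor m_\theta/2\rfloor$, which lies below the linearization depth, so equality of classes alone does not suffice. Here I would use the stronger congruence \eqref{eq:teichmueller-successor-depth}, valid since $p$ is odd: $[j]-j_{\mathbf Z}\in\p_F^{s_0}$. The difference of the two critical functions is the additive factor $\psi_F\bigl(([j]-j_{\mathbf Z})c\,\delta\widetilde x/\gamma_\theta\bigr)$, where $c$ is $1$ or $\alpha$ according to the row. This factor is trivial when
\[
s_0+v(c)+\lfloor m_\theta/2\rfloor\ge m_\theta .
\]
In the rows with conductor $T$ we have $v(c)\ge0$ and $s_0+\lfloor T/2\rfloor=T$. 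In the rows with $m>T$ we have $v(c)=m-T$ and $s_0+\lfloor m/2\rfloor+m-T\ge m$, because $\lfloor m/2\rfloor\ge\lfloor T/2\rfloor$.

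Finally, the $\theta(1+\delta\widetilde x)^{-1}$ factor does not involve $\beta_\theta$, so it is unchanged, and the critical functions agree for any fixed $\delta$.
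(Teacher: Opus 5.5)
Your proposal is correct and matches the paper's proof. You take the table entries from \eqref{eq:twist-beta-intermediate}, \eqref{eq:low-beta-twists} and Lemma~\ref{lem:teichmueller-stationary}, and you use the successor-depth congruence \eqref{eq:teichmueller-successor-depth} to show $\psi_F\bigl(([j]-j_{\mathbf Z})c\,\delta\widetilde x/\gamma_\theta\bigr)=1$, exactly as the paper does; your $c/\gamma_\theta$ is the paper's $A$, with $v_F(A)=-T-n_F(\psi_F)$. The only cosmetic difference is that you obtain equality of the stationary classes from the character identity in Lemma~\ref{lem:teichmueller-stationary}, whereas the paper reuses the same valuation bound; both arguments are valid.
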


\begin{proof}
The $[j]$-representatives of $\tau^j\chi_F$ are given by
\eqref{eq:twist-beta-intermediate} and \eqref{eq:low-beta-twists}.
For $\tau^j$, apply Lemma~\ref{lem:teichmueller-stationary} to
\eqref{eq:tau-high-linearization} or \eqref{eq:low-mu}, respectively.
By Corollary~\ref{cor:norm-characters} and
\eqref{eq:minimal-orbit-conductors}, $q:=m_F(\theta)\ge T$;
the displayed denominators satisfy $v_F(\gamma_\theta)=q+n_F(\psi_F)$.
Let $h$ be the old numerator minus the new one.  In every row,
$h/\gamma_\theta=A([j]-j_{\mathbf Z})$, where
$A$ is given by \eqref{eq:A-u} and
$v_F(A)=-T-n_F(\psi_F)$.  Thus
\eqref{eq:teichmueller-successor-depth} gives
\[
 v_F(h)\ge q-\lfloor T/2\rfloor\ge\lceil q/2\rceil.
\]
The two numerators therefore represent the same stationary class.
For odd $q$, any critical element has valuation $(q-1)/2$, so
\[
 v_F(h\delta_\theta/\gamma_\theta)
 \ge\frac{q+1}{2}+\frac{q-1}{2}-q-n_F(\psi_F)=-n_F(\psi_F).
\]
By \eqref{eq:Lamprecht-Hasse-function}, the quotient of the two
critical functions is $\psi_F(h\delta_\theta\widetilde z/\gamma_\theta)=1$
for every $z\in k$, since $\psi_F$ is trivial on $\p_F^{-n_F(\psi_F)}$.
\end{proof}

\begin{proposition}[Odd-prime coefficient table]
\label{prop:odd-coefficient-table}
Retain the minimal-conductor choice of $\chi_F$ and the
notation of Proposition~\ref{lem:residual-parameter-transport}.
For $j\in\F_p$, let $(A_j,B_j)$ be the coefficient pair of
$\tau^j\chi_F$, using the integer sum representative in
\eqref{eq:odd-stationary-representative-table} for $j\ne0$.
Each pair is taken only when its conductor is odd.  Use
$\delta_\tau=\varpi_F^{(T-1)/2}$ when $m\le T$ and $j\ne0$,
and $\delta_\chi=\varpi_F^{(m-1)/2}$ otherwise.
\begin{equation}\label{eq:lower-coeff-table}
 (A_j,B_j)=
 \begin{cases}
 (j\eta_0,jB_\tau),&m<T,\ j\ne0,\\[1mm]
 (A_{\chi_F},B_{\chi_F}),&m<T,\ j=0,\\[1mm]
 (\eta_0(j+d_0),\ B_{\chi_F}+jB_\tau),&m=T,\\[1mm]
 (A_{\chi_F},B_{\chi_F}),&m>T.
 \end{cases}
\end{equation}
The coefficient pair for $\chi_K$ is given by
\eqref{eq:odd-transport-direct-pair}.
\end{proposition}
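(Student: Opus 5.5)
The table follows row by row from three earlier ingredients. The first is the representatives in \eqref{eq:odd-stationary-representative-table}. The second is the stationary ratios \eqref{eq:odd-three-stationary-ratios}. The third is the normalization of $\tau$ in Lemma~\ref{lem:odd-break-line-normalization}. For each row I will compute the polar quotient of the critical function. This gives $A_j$, by the argument already used for $A_{\chi_F}=\eta_0d_0$. The affine coefficient $B_j$ I will obtain by writing the critical function as a product of known critical functions. By Lemma~\ref{lem:integer-stationary-representatives} I may use the integer representative $j_{\mathbf Z}$ throughout. Lemma~\ref{lem:stationary-class-calculus}(c) then makes the linearization of $\tau^{j}\chi_F$ literally the product of $j$ copies of that of $\tau$ with that of $\chi_F$.

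\emph{Cases $m>T$, and $m<T$ with $j=0$.} For $j=0$ the character is $\chi_F$ with representative $\beta$ and critical element $\delta_\chi$, so there is nothing to prove. For $m>T$, the representative of $\tau^j\chi_F$ is $\beta+j\alpha$, and the critical element is $\delta_\chi$ with $v_F(\delta_\chi)=(m-1)/2\ge T/2$. I will show that $\tau^j$ is trivial on $1+\delta_\chi\cO_F$ modulo the relevant layer. I will also show that $\psi_F(j\alpha\delta_\chi\widetilde x/\gamma_F)=\tau^j(1+\delta_\chi\widetilde x)$. Both follow from the linearization \eqref{eq:tau-high-linearization}, valid on $\p_F^{\lceil T/2\rceil}\ni\delta_\chi\widetilde x$. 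The extra factors therefore cancel exactly in \eqref{eq:Lamprecht-Hasse-function}, so $H_{\tau^j\chi_F}=H_{\chi_F}$ and the pair is unchanged.

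\emph{Case $m<T$, $j\ne0$.} Here $\tau^j\chi_F$ has conductor $T$, denominator $\Gamma_\tau$, representative $\xi\beta+j\alpha$ and critical element $\delta_\tau$. Since $v_F(\delta_\tau)=(T-1)/2\ge d+1\ge m_F(\chi_F)$ when $m<T$ and $m$ is odd, $\chi_F$ is trivial on $1+\delta_\tau\cO_F$. The term $\xi\beta\delta_\tau\widetilde x/\Gamma_\tau$ has valuation at least $v+(T-1)/2-T-n$. I must check this lies in $\p_F^{-n}$. It does, since $v=T-m$ and $m\le(T-1)/2$ for odd $m<T$ with $2d<t$; if the bound is too weak I would instead use that $\xi\beta\delta_\tau x/\Gamma_\tau$ paired with $\psi_F$ reproduces $\chi_F(1+\delta_\tau x)$ via \eqref{eq:low-mu}, and so cancels. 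Thus $H_{\tau^j\chi_F}=H_\tau^{\,j}\cdot(\text{quadratic correction})$. Since the polar character is linear in the representative, it is $\psi_0(j\eta_0 xy)$, and the affine part is $jB_\tau$. Iterating the functional equation shows that $H_\tau(x)^j$ is $\psi_0(j\eta_0x^2/2+jB_\tau x)$ up to nothing.

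\emph{Case $m=T$.} This is the main obstacle. Both critical elements coincide, $\delta_\chi=\delta_\tau=\varpi_F^d$, and the representative is $\xi\beta+j\alpha$ over $\Gamma_\tau$. The polar quotient is
\[
\psi_F((\xi\beta+j\alpha)\delta^2\widetilde x\widetilde y/\Gamma_\tau)=\psi_0(\eta_0(d_0+j)xy),
\]
using $\xi\beta/\alpha=\mathcal N$ and $\overline{\varpi_F^{-a}\mathcal N}=d_0$ with $a=0$. For the affine part I will write $(\tau^j\chi_F)(1+\delta\widetilde x)^{-1}=\tau(1+\delta\widetilde x)^{-j}\chi_F(1+\delta\widetilde x)^{-1}$ and split the additive term. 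This gives
\[
H_{\tau^j\chi_F}(x)=H_\tau(x)^{j}H_{\chi_F}(x),
\]
where the factor for $\chi_F$ is taken relative to $\Gamma_\tau$ with numerator $\xi\beta$. That equals $H_{\chi_F}$ relative to $\gamma_F=\Gamma_\tau/\xi$ with numerator $\beta$. Expanding $H_\tau^j$ as in the previous case gives $(\eta_0(j+d_0),\,B_{\chi_F}+jB_\tau)$, where $A_{\chi_F}=\eta_0d_0$ is taken from Proposition~\ref{lem:residual-parameter-transport}. The care needed is confined to checking that all normalizations of $\delta$, denominators and representatives in that proposition match those used here. The last sentence of the statement is just Proposition~\ref{lem:residual-parameter-transport}.
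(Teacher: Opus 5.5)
Your route is the paper's. In each range you compare the critical function of $\tau^j\chi_F$ (with the integer sum representative) with $H_{\chi_F}$, with $H_\tau^{\,j}$, or with their product, and then read off the pair.

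The cases $m>T$ and $m=T$ are fine as you argue them. At $m=T$ you correctly note that $H_{\chi_F}$ formed with $(\Gamma_\tau,\xi\beta)$ equals the one formed with $(\gamma_F,\beta)$. For $m>T$, your claim that $\tau^j$ is ``trivial on $1+\delta_\chi\cO_F$'' is wrong: since $(m-1)/2\le T-1$ in this range, it is not. What you actually need is only the identity $\psi_F(j\alpha\delta_\chi\widetilde x/\gamma_F)=\tau^j(1+\delta_\chi\widetilde x)$, which you also state and which does hold.

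The case $m<T$, $j\ne0$ is argued incorrectly in your main line.

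\begin{itemize}
\item The chain $(T-1)/2\ge d+1\ge m_F(\chi_F)$ is false, because $d+1<2d+1=m$ once $m>1$.
\item The claim $m\le (T-1)/2$ is also false, since $m<T$ does not give $T\ge 2m+1$. For example, with $T=5$ and $m=3$, $\chi_F$ is nontrivial on $U_F^2=1+\delta_\tau\cO_F$.
\item Consequently $\xi\beta\delta_\tau\widetilde x/\Gamma_\tau$ need not lie in $\p_F^{-n}$; its valuation can be $(T-1)/2-m-n<-n$.
\item Your restriction to odd $m$ is misplaced. The pair for $\tau^j\chi_F$ is required when $T$ is odd, whatever the parity of $m$.
\end{itemize}

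What works is your fallback, which is exactly the paper's argument. Since $T$ is odd and $m\le T-1$, one has $v_F(\delta_\tau)=(T-1)/2\ge\lceil m/2\rceil$. So $\delta_\tau\widetilde x$ lies in the domain of the linearization of $\chi_F$ relative to $\Gamma_\tau$ in \eqref{eq:low-mu}. This gives $\psi_F(\xi\beta\delta_\tau\widetilde x/\Gamma_\tau)\,\chi_F(1+\delta_\tau\widetilde x)^{-1}=1$ identically.

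The ratio $H_{\tau^j\chi_F}/H_{\tau^j}$ is therefore exactly $1$; there is no ``quadratic correction.'' Then $H_{\tau^j\chi_F}=H_\tau^{\,j}$ gives $(j\eta_0,jB_\tau)$ immediately. As you wrote it, with an unspecified correction factor, the affine coefficient $jB_\tau$ would not yet follow, so you should replace the main line with this exact cancellation.
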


\begin{proof}
Use the sum representatives in \eqref{eq:odd-stationary-representative-table}.
Lemma~\ref{lem:integer-stationary-representatives} permits
replacing $[j]$ by $j_{\mathbf Z}$ without changing a critical function.
Put
\[
 \Gamma=\begin{cases}\Gamma_\tau,&m\le T,\\\gamma_F,&m>T,\end{cases}
 \qquad \beta_\tau=\Gamma A,\quad
 \beta_\chi=\Gamma A\mathcal N,
\]
where $A$ is defined in \eqref{eq:A-u}.
By \eqref{eq:odd-three-stationary-ratios}, these are the coefficients
of $\tau$ and $\chi_F$ relative to $\Gamma$.  The selected coefficient
of $\tau^j\chi_F$ is $j_{\mathbf Z}\beta_\tau+\beta_\chi$.
For $x\in k$, let $\widetilde x\in\cO_F$ be a lift.

Suppose $m<T$ and $j\ne0$.  A coefficient pair is needed only when
$T$ is odd.  Put $d_\tau=(T-1)/2$ and use
$\delta_\tau=\varpi_F^{d_\tau}$, as in
Lemma~\ref{lem:odd-break-line-normalization}.  Since
$m_F(\chi_F)\le T-1$,
\[
 v_F(\delta_\tau)=\frac{T-1}{2}
 \ge\left\lceil\frac{m_F(\chi_F)}2\right\rceil.
\]
Thus the stationary linearization of $\chi_F$ is valid at
$\delta_\tau\widetilde x$.  With the sum representative just chosen,
\begin{align*}
 \frac{H_{\tau^j\chi_F}(x)}{H_{\tau^j}(x)}
 &=\psi_F\!\left(
   \frac{\beta_\chi \delta_\tau\widetilde x}{\Gamma}
  \right)
   \chi_F(1+\delta_\tau\widetilde x)^{-1}\\
 &=1.
\end{align*}
Hence the two critical functions agree.  Since
$H_{\tau^j}=H_\tau^j$, its pair is
$(j\eta_0,jB_\tau)$.  The case $j=0$ is $\chi_F$ itself and
has pair $(A_{\chi_F},B_{\chi_F})$.

Suppose $m>T$.  A coefficient pair is needed only when
$m_F(\chi_F)$ is odd.  Put
$d_\chi=(m_F(\chi_F)-1)/2$ and use
$\delta_\chi=\varpi_F^{d_\chi}$, as in
Proposition~\ref{lem:residual-parameter-transport}.  Since
$m_F(\chi_F)>T$,
\[
 v_F(\delta_\chi)=\frac{m_F(\chi_F)-1}{2}
 \ge\left\lceil\frac T2\right\rceil.
\]
The stationary linearization of every $\tau^j$ is therefore valid at
$\delta_\chi\widetilde x$, and for every $j\in\F_p$,
\begin{align*}
 \frac{H_{\tau^j\chi_F}(x)}{H_{\chi_F}(x)}
 &=\psi_F\!\left(
   \frac{j_{\mathbf Z}\beta_\tau \delta_\chi\widetilde x}{\Gamma}
  \right)
   \tau^j(1+\delta_\chi\widetilde x)^{-1}\\
 &=1.
\end{align*}
Thus every critical function has the pair
$(A_{\chi_F},B_{\chi_F})$.

At $m=T$, only odd $T$ requires coefficient pairs.  Both conductors
are $T$, and $\Gamma=\Gamma_\tau$ is admissible for that conductor.  Here
$\beta_\tau=\alpha$ and $\beta_\chi=\xi\beta$ are units, so
\eqref{eq:minimal-orbit-no-leading-cancellation} gives
$j_{\mathbf Z}\alpha+\xi\beta\notin\p_F$.
Use this sum representative and the common critical element
$\delta_\tau=\delta_\chi=\varpi_F^{(T-1)/2}$.  The
critical function of the product is the pointwise product of
the two critical functions, so the coefficient pairs add:
$j(\eta_0,B_\tau) +(A_{\chi_F},B_{\chi_F})$.
This proves the three cases for $(A_j,B_j)$.
\end{proof}

\begin{proposition}[Odd-prime finite phase cancellation]
\label{prop:odd-phase-cancel}
Retain the minimal-conductor choice of $\chi_F$.
The residual factors satisfy
\begin{equation}\label{eq:Rodd}
 \mathcal R_{\mathrm{odd}}:=
 \frac{\Delta_{3,K}(\chi_K)
       \prod_{j\in\F_p^\times}\Delta_{3,F}(\tau^j)}
      {\prod_{j\in\F_p}\Delta_{3,F}(\tau^j\chi_F)}=1.
\end{equation}
\end{proposition}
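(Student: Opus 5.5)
The plan is to evaluate every residual factor through the quadratic phase formula \eqref{eq:quadratic-phase-new}. In the normalization $\psi_0=\psi_{\rm can}$ each odd-conductor factor is $\Delta_{3,E}(\theta)=Q_{\psi_0}(A_\theta,B_\theta)=\nu(A_\theta)\psi_0(-B_\theta^2/2A_\theta)\,q_0$, and every even-conductor factor is $1$. The coefficient pairs are read off from Proposition~\ref{prop:odd-coefficient-table} for the twists $\tau^j\chi_F$, and from \eqref{eq:odd-transport-direct-pair} for $\chi_K$. The pairs for $\tau^j$ alone are $(j\eta_0,jB_\tau)$, since $H_{\tau^j}=H_\tau^j$, and they occur only when $T$ is odd. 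By Proposition~\ref{lem:residual-parameter-transport}, $m_K(\chi_K)\equiv m\pmod 2$. Each twist $\tau^j\chi_F$ with $j\ne0$ has conductor $\max\{m,T\}$ by \eqref{eq:minimal-orbit-conductors}. I then split into the three ranges $m<T$, $m=T$, $m>T$ and, inside each, by the parities of $m$ and $T$.

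First, a recurring sub-identity: when $T$ is odd, $\prod_{j\in\F_p^\times}\Delta_{3,F}(\tau^j)=1$. The quadratic characters contribute $\nu(\eta_0)^{p-1}\prod_j\nu(j)=\nu((p-1)!)=\nu(-1)$, because $p-1$ is even. The additive parts contribute $\psi_0(-\sum_j jB_\tau^2/2\eta_0)=1$, because $\sum_{j}j=0$. The factor $q_0^{p-1}=\nu(-1)$ comes from \eqref{eq:q0-powers}. The total is therefore $\nu(-1)^2=1$.

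Case $m<T$. The table gives $(A_K,B_K)=(A_{\chi_F},B_{\chi_F})$, so $\Delta_{3,K}(\chi_K)=\Delta_{3,F}(\chi_F)$ (both equal $1$ if $m$ is even). Likewise $\tau^j\chi_F$ has the same pair as $\tau^j$ for $j\ne0$ (both trivial if $T$ is even). Hence \eqref{eq:Rodd} cancels factor by factor.

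Case $m>T$. All $\tau^j\chi_F$ have the pair $(A_{\chi_F},B_{\chi_F})$, and the $\tau$-product is $1$ by the sub-identity. If $m$ is even there is nothing left to check. If $m$ is odd, then $\Delta_{3,K}(\chi_K)=Q_{\psi_0}(-d_0^{p-1}A_{\chi_F},0)=\nu(-1)\nu(A_{\chi_F})q_0$, since $d_0^{p-1}$ is a square. The denominator is $Q_{\psi_0}(A_{\chi_F},B_{\chi_F})^p=\nu(A_{\chi_F})\psi_0(-pB^2/2A)q_0^p$. Here the $p$ in the additive part kills it, as $p=0$ in $k$, and $q_0^p=\nu(-1)q_0$ by \eqref{eq:q0-powers}. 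So the two sides agree.

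Case $m=T$. If $T$ is even every factor is $1$. If $T$ is odd, I apply the homogeneous affine pencil, Proposition~\ref{prop:affine-pencil-homogeneous}, with $c=\eta_0d_0$, $\rho=d_0^{-1}$, $c\sigma=B_{\chi_F}$ and $c\upsilon=B_\tau$. Then:
\begin{itemize}
\item $c(1+j\rho)=\eta_0(j+d_0)$ and $c(\sigma+j\upsilon)=B_{\chi_F}+jB_\tau$, which are the pairs of $\tau^j\chi_F$ for all $j\in\F_p$;
\item $(cj\rho,cj\upsilon)=(j\eta_0,jB_\tau)$, which are the pairs of the $\tau^j$;
\item $c(1-\rho^{1-p})=-\eta_0(d_0^p-d_0)$ and $c(\sigma-\upsilon/\rho)=B_{\chi_F}-d_0B_\tau$, which is exactly the $\chi_K$ pair in \eqref{eq:odd-transport-direct-pair}.
\end{itemize}
The hypothesis $1+j\rho\ne0$ says $d_0+j\ne0$ in $k$. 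This is the leading noncancellation $j_{\mathbf Z}\alpha+\xi\beta\notin\p_F$ from Proposition~\ref{prop:low-parameter-table} and \eqref{eq:minimal-orbit-no-leading-cancellation}, rewritten through $A_j=\eta_0(j+d_0)\ne0$. The same fact gives $d_0\notin\F_p$, so $A_K\ne0$. The pencil identity then is precisely \eqref{eq:Rodd}.

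The main point to get right is this boundary case: verifying that the parameters are matched with the correct signs and that $d_0+j\ne0$ genuinely follows from minimality. The other cases are routine parity bookkeeping.
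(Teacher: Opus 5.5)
Your proposal is correct and follows essentially the same route as the paper. You use the same split into $m<T$, $m>T$, $m=T$. You show $\prod_{j\ne0}\Delta_{3,F}(\tau^j)=1$ via $q_0^{p-1}=\nu(-1)$ and $\sum_j j=0$, and above the break you use $q_0^p=\nu(-1)q_0$. At $m=T$ you make the same substitution $c=\eta_0d_0$, $\rho=d_0^{-1}$ into the homogeneous affine pencil, with $1+j\rho\ne0$ coming from the minimal-orbit noncancellation.
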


\begin{proof}
Write $R_j=\Delta_{3,F}(\tau^j)$,
$T_j=\Delta_{3,F}(\tau^j\chi_F)$, and $U=\Delta_{3,K}(\chi_K)$.
For odd conductor, evaluate $Q_{\psi_0}(A_\theta,B_\theta)$ by
\eqref{eq:quadratic-phase-new}; for even conductor the factor is $1$.

If $m<T$, \eqref{eq:lower-coeff-table} gives $T_j=R_j$ for
$j\ne0$, while the $j=0$ pair and the pair for $\chi_K$ agree, so $T_0=U$.

Suppose $m>T$.  If $m_F(\chi_F)$ is even, then every $T_j$ and $U$ is
$1$.  If it is odd, \eqref{eq:lower-coeff-table} gives the same phase
$Q_\chi=Q_{\psi_0}(A_{\chi_F},B_{\chi_F})$ for all $T_j$.  By \eqref{eq:quadratic-phase-new}, the additive factor has $p$th power $1$, because its
value lies in the group of $p$th roots of unity.  Hence \eqref{eq:q0-powers} gives
\[
 Q_\chi^p=q_0^p\nu(A_{\chi_F})^p
 =q_0\nu(-1)\nu(A_{\chi_F})
 =q_0\nu(-A_{\chi_F})=U,
\]
because $d_0^{p-1}$ is a square and the affine coefficient of
$\chi_K$ is zero by \eqref{eq:odd-transport-direct-pair}.
If the conductor $T=t+1$ of $\tau$ is even, then
all $R_j$ are $1$.  If it is odd, then
\begin{equation*}
\begin{aligned}
 \prod_{j\ne0}R_j
 &=q_0^{p-1}\nu(\eta_0)^{p-1}
   \nu\!\left(\prod_{j\ne0}j\right)
   \psi_0\!\left(-\frac{B_\tau^2}{2\eta_0}
                         \sum_{j\ne0}j\right)\\
 &=q_0^{p-1}\nu(-1)=1.
\end{aligned}
\end{equation*}
Here $\sum_{j\ne0}j=0$, $\prod_{j\ne0}j=-1$, and
$q_0^{p-1}=\nu(-1)$ by \eqref{eq:q0-powers}.  Hence
$U\prod R_j=\prod T_j$ in every parity
pattern above the break.

Finally suppose $m=T$.  All four kinds of character have conductor
$T$.  If $T$ is even, every residual phase is $1$.  Suppose $T$ is odd.
Put
\[
 c=A_{\chi_F}=\eta_0d_0,\qquad
 \rho=d_0^{-1},\qquad
 \sigma=\frac{B_{\chi_F}}{A_{\chi_F}},\qquad
 \upsilon=\frac{B_\tau}{A_{\chi_F}}.
\]
Here $c\ne0$.  At $m=T$, the stationary coefficients relative to
$\Gamma_\tau$ are $\alpha$ and $\xi\beta=\alpha\mathcal N$, with
$\alpha\in\cO_F^\times$ and $\overline{\mathcal N}=d_0$.
Equation~\eqref{eq:minimal-orbit-no-leading-cancellation} gives
$j+d_0\ne0$ for every $j\in\F_p$, hence $1+j\rho\ne0$.
The boundary rows of \eqref{eq:lower-coeff-table} and
\eqref{eq:odd-transport-direct-pair}, together with the pair
$(j\eta_0,jB_\tau)$ for $\tau^j$, are
\[
 \begin{aligned}
 (A_j,B_j)&=c(1+j\rho,\sigma+j\upsilon),&&j\in\F_p,\\
 (A_{\tau^j},B_{\tau^j})&=c(j\rho,j\upsilon),&&j\in\F_p^\times,\\
 (A_K,B_K)&=c(1-\rho^{1-p},\sigma-\rho^{-1}\upsilon).
 \end{aligned}
\]
Proposition~\ref{lem:affine-pencil} therefore gives
$U\prod_{j\ne0}R_j=\prod_jT_j$ directly.
Hence \eqref{eq:Rodd} also holds at $m=T$.
\end{proof}

\subsection{Evaluation of the stationary factors}

Retain the parameters $A,u,\mathcal N$ and $a=T-m$ from
\eqref{eq:A-u}.
Represent $\F_p^\times$ by its Teichm\"uller lifts.  Define
\begin{equation}\label{eq:correction-units}
 z_0=\frac{\Nm_{K/F}(1-\mathcal N/u)}{1-\mathcal N^{p-1}},
 \qquad
 z_j=\frac{\Nm_{K/F}(u+j)}{\mathcal N+j}\quad(j\ne0).
\end{equation}
If $a\ne0$, the denominators are nonzero by valuation.
If $a=0$, then $m_F(\chi_F)=T$ and $\xi\beta=\alpha \mathcal N$, with
$\alpha\in\cO_F^\times$.  Applying
\eqref{eq:minimal-orbit-no-leading-cancellation} to the coefficients
$\alpha$ and $\xi\beta$ gives
\begin{equation}\label{eq:odd-denominator-units}
 \overline{\mathcal N}+j\ne0\ (j\in\F_p),\qquad
 \mathcal N+j\in\cO_F^\times,\qquad 1-\mathcal N^{p-1}\in\cO_F^\times
 \quad(a=0).
\end{equation}
\begin{lemma}[Correction identities and depths]
\label{lem:correction-depth}\label{lem:parameter-ratios}
\label{prop:correction-linearization}
One has
\begin{equation}\label{eq:parameter-ratios}
 \Nm_{K/F}(\mathcal N-u)
 =\left(\prod_{j\in\F_p}(\mathcal N+j)\right)z_0.
\end{equation}
Moreover,
\begin{equation}\label{eq:correction-depths}
\begin{aligned}
 z_j-1&\in
 \p_F^{\left\lfloor(|a|+(p-1)T)/p\right\rfloor}
 \subseteq\p_F^{\lceil T/2\rceil} &&(j\ne0),\\
 z_0-1&\in
 \p_F^{\left\lfloor(p-1)(T+|a|)/p\right\rfloor}
 \subseteq\p_F^{\lceil m/2\rceil}.
\end{aligned}
\end{equation}
Thus $z_j-1$ lies in the linearization domain of $\tau$ and
$z_0-1$ lies in the linearization domain of $\chi_F$.
Consequently,
\begin{equation*}
 \chi_F(z_0)=\psi_F\bigl(A\mathcal N(z_0-1)\bigr),
 \qquad
 \tau^j(z_j)=\psi_F\bigl(Aj(z_j-1)\bigr)
 \quad(j\ne0).
\end{equation*}
\end{lemma}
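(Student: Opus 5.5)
The plan has three parts: an algebraic identity, then valuation estimates, then the two linearizations.

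\emph{The identity \eqref{eq:parameter-ratios}.} Write $\Nm=\Nm_{K/F}$ and start from $\mathcal N-u=-u(1-\mathcal N/u)$. Multiplicativity gives
\[
 \Nm(\mathcal N-u)=(-1)^p\,\Nm(u)\,\Nm(1-\mathcal N/u)=-\mathcal N\,\Nm(1-\mathcal N/u),
\]
since $p$ is odd and $\Nm(u)=\mathcal N$. The definition of $z_0$ turns this into $-\mathcal N(1-\mathcal N^{p-1})z_0=(\mathcal N^p-\mathcal N)z_0$. By \eqref{eq:teichmueller-polynomials} with $X=\mathcal N$ and $p$ odd, $\mathcal N^p-\mathcal N=\prod_{j\in\F_p}(\mathcal N+j)$. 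This proves the identity; the denominators are nonzero by the valuation remark or by \eqref{eq:odd-denominator-units}.

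\emph{The depths of $z_j-1$.} For $j\ne0$, expand
\[
 \Nm(u+j)=\sum_{i=0}^{p} j^{p-i}s_i(u),
\]
with the endpoint terms $j^p=j$ (Teichm\"uller) and $s_p(u)=\mathcal N$. This gives
\[
 z_j-1=\frac{\sum_{i=1}^{p-1}j^{p-i}s_i(u)}{\mathcal N+j}.
\]
I will split by the sign of $a$.
\begin{itemize}
\item If $a\ge0$, then $\mathcal N+j$ is a unit: either $a>0$, or $a=0$ and \eqref{eq:odd-denominator-units} applies. Lemma~\ref{lem:wild-symmetric-bound} bounds $v_F(s_i(u))$ below by $\lfloor (ia+(p-1)T)/p\rfloor$, and the minimum over $i$ occurs at $i=1$.
\item If $a<0$, I will instead divide by $\mathcal N$, which has valuation $a$. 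Write $z_j=\Nm(1+j/u)/(1+j/\mathcal N)$. Here $v_K(j/u)=|a|$ and $v_F(j/\mathcal N)=|a|\ge1$. Expand $\Nm(1+y)$ with $y=j/u$: the term $\Nm(y)=j/\mathcal N$ cancels against the denominator, and Lemma~\ref{lem:wild-symmetric-bound} bounds the middle terms by $\lfloor(|a|+(p-1)T)/p\rfloor$, again worst at $i=1$.
\end{itemize}
For $z_0$, the same device applies with $y=-\mathcal N/u$, which has $v_K(y)=(p-1)a$ when $a\ge0$. Then $\Nm(y)=-\mathcal N^{p-1}$, and it cancels against the denominator $1-\mathcal N^{p-1}$. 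When $a<0$ I will write
\[
 1-\mathcal N/u=-(\mathcal N/u)(1-u/\mathcal N),
\]
so that $v_K(u/\mathcal N)=(p-1)|a|$ and the prefactor norm is $(-1)^p\mathcal N^{p-1}$. This reproduces the ratio against $1-\mathcal N^{p-1}=-\mathcal N^{p-1}(1-\mathcal N^{1-p})$. In both cases the middle terms have valuation at least $\lfloor ((p-1)|a|+(p-1)T)/p\rfloor$. When $a=0$ the denominator is a unit by \eqref{eq:odd-denominator-units}.

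\emph{The inclusions into $\p_F^{\lceil T/2\rceil}$ and $\p_F^{\lceil m/2\rceil}$.} These are elementary integer inequalities, using $T\ge2$, $p\ge3$, and $1<m<2T$ (so $|a|<T$). For the first, $(p-1)T/p\ge 2T/3\ge\lceil T/2\rceil$ once $T\ge2$; the case $T=3$, $p=3$ gives $\lfloor 2\rfloor=2=\lceil 3/2\rceil$, which is fine. For the second I will use $m\le T+|a|$ when $a\le0$ and $m=T-a\le T$ when $a\ge0$, so that
\[
 \left\lfloor (p-1)(T+|a|)/p\right\rfloor\ \ge\ \left\lfloor 2m/3\right\rfloor\ \ge\ \lceil m/2\rceil \quad\text{for } m\ge2.
\]
I need to check the small cases $m=2,3$ separately.

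\emph{The character values.} These follow from the linearizations of $\chi_F$ and $\tau$ recorded in \eqref{eq:odd-three-stationary-ratios}: $\beta(\chi_F)/\gamma_F=A\mathcal N$ on $\p_F^{\lceil m/2\rceil}$, and $\beta(\tau)/\gamma_\tau=A$ on $\p_F^{\lceil T/2\rceil}$. The power $\tau^j$ is handled by Lemma~\ref{lem:stationary-class-calculus}(c) together with Lemma~\ref{lem:teichmueller-stationary}, which lets the Teichm\"uller $j$ stand in for $j_{\mathbf Z}$.

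I expect the main obstacle to be the $z_0$ estimate in the range $a<0$ (i.e. $m>T$). There one has to check that no stray residue unit survives the rescaling and that the lower bound really is $\lfloor(p-1)(T+|a|)/p\rfloor$ rather than something weaker.
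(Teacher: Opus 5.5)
Your proposal is correct and follows essentially the paper's argument. It uses the same factorization $\Nm_{K/F}(\mathcal N-u)=-\mathcal N\,\Nm_{K/F}(1-\mathcal N/u)$ for the identity. It rewrites $z_j$ and $z_0$ in the form $\Nm_{K/F}(1+w)/(1+\Nm_{K/F}(w))$, inverting $w$ when $a<0$, which is exactly the paper's device $Z(w)=Z(w^{-1})$, and then applies the symmetric-coefficient bound at $i=1$, with the same linearizations at the end. Your explicit appeal to Lemma~\ref{lem:teichmueller-stationary} to reconcile the integer exponent in $\tau^j$ with the Teichm\"uller $j$ in $\psi_F(Aj(z_j-1))$ fills in a step the paper leaves implicit, and your $\lfloor 2M/3\rfloor\ge\lceil M/2\rceil$ route to the inclusions is an equally valid substitute for the paper's integer inequality.
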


\begin{proof}
Since $p$ is odd,
\[
 \Nm_{K/F}(1-\mathcal N/u)
 =-\frac{\Nm_{K/F}(\mathcal N-u)}{\mathcal N},
 \qquad
 1-\mathcal N^{p-1}
 =-\frac{\mathcal N^p-\mathcal N}{\mathcal N}.
\]
Hence
\[
 z_0=\frac{\Nm_{K/F}(\mathcal N-u)}{\mathcal N^p-\mathcal N}.
\]
Now \eqref{eq:teichmueller-polynomials} gives
$\mathcal N^p-\mathcal N=\prod_{j\in\F_p}(\mathcal N+j)$.

Write $\Nm=\Nm_{K/F}$ and, for $c\ge0$, put
$R(c)=\left\lfloor\frac{c+(p-1)T}{p}\right\rfloor$.
For $w\in K$ with $1+\Nm(w)\ne0$, put
$Z(w)=\Nm(1+w)/(1+\Nm(w))$.  If $v_K(w)=c>0$, then
$Z(w)-1=\frac{s_1(w)+\cdots+s_{p-1}(w)}{1+\Nm(w)}$,
so \eqref{eq:symmetric-bound} gives
$v_F(Z(w)-1)\ge R(c)$.  If $c<0$, the identity
$Z(w)=Z(w^{-1})$ reduces to the positive case.  If $c=0$, the
unit assertions \eqref{eq:odd-denominator-units} show that
$1+\Nm(w)$ is a unit in each occurrence of $Z(w)$: for $w=u/j$ this is equivalent to
$\overline{\mathcal N}+j\ne0$, and for $w=-\mathcal N/u$ to
$1-\overline{\mathcal N}^{p-1}\ne0$.
Equation~\eqref{eq:symmetric-bound} then gives
\[
 v_F\bigl(s_i(w)\bigr)\ge
 \floor{\frac{(p-1)T}{p}}=R(0)
 \qquad(1\le i\le p-1),
\]
so the same conclusion holds when $c=0$.

Now $z_j=Z(u/j)$, while $z_0=Z(-\mathcal N/u)$ and
$v_K(-\mathcal N/u)=(p-1)a$ by \eqref{eq:A-u}.  This proves
the first inclusions.

We use the elementary integer inequality
\begin{equation*}
 \floor{\frac{(p-1)M}{p}}\ge\ceil{\frac M2}
 \qquad(p\ge3,\ M\ge2).
\end{equation*}
If $M$ is even, the real inequality
$(p-1)M/p\ge M/2$ already has an integral right side.  If $M$ is odd,
then $M\ge3$, and
\[
 \frac{(p-1)M}{p}\ge\frac{M+1}{2}
 \quad\Longleftrightarrow\quad
 (p-2)M\ge p,
\]
which follows from $M\ge3$ and $p\ge3$.

Since $R(|a|)\ge R(0)$, applying
$\lfloor(p-1)M/p\rfloor\ge\lceil M/2\rceil$ with $M=T$ gives
$R(|a|)\ge\ceil{T/2}$. For $z_0$, since $m=T-a$, the same inequality gives
\[
 R((p-1)|a|)
 =\floor{\frac{(p-1)(T+|a|)}p}
 \ge\ceil{\frac{T+|a|}{2}}
 \ge\ceil{\frac m2}.
\]
This proves \eqref{eq:correction-depths}.

Finally, the linearizations of $\chi_F$ and $\tau$ have coefficients
$A\mathcal N$ and $A$, respectively. Their domains contain $z_0-1$
and $z_j-1$ by \eqref{eq:correction-depths}. Applying the linearizations
and raising the latter value to the $j$th power proves the character
identities.
\end{proof}

\begin{proposition}[Assembly of the stationary quotient]
\label{prop:exact-odd-assembly}
The error quotient \eqref{eq:error-term}
\[
 \Err_{K/F}(\chi_F)
 =\mathcal R_{\mathrm{odd}}\,\psi_F(-AX),
\]
where
\begin{equation}\label{eq:X-def}
 X=\Tr_{K/F}(u)+\mathcal N(z_0-1)+\sum_{j\ne0}j(z_j-1).
\end{equation}
\end{proposition}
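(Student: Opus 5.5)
We factor each local constant by Lamprecht's formula \eqref{eq:Delta-factorization-new} as $\theta(\gamma)\Delta_{2}(\theta)\Delta_{3}(\theta)$, using throughout the denominators and representatives of Proposition~\ref{lem:residual-parameter-transport} and Lemma~\ref{lem:integer-stationary-representatives}. For $\chi_K$ we use $(\gamma_K,\beta_K)$; for $\tau^j$ and $\tau^j\chi_F$ we use the entries of \eqref{eq:odd-stationary-representative-table}, with Teichm\"uller lifts $[j]$. Taking the quotient in \eqref{eq:error-term}, the $\Delta_3$ factors assemble exactly into $\mathcal R_{\mathrm{odd}}$ of \eqref{eq:Rodd}. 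This uses the same representatives and critical elements as in Proposition~\ref{prop:odd-coefficient-table}; Lemma~\ref{lem:complete-stationary-invariant} lets us keep the $\Delta_2\Delta_3$ pairs consistent. The trivial character contributes $1$ by \eqref{eq:trivial-local-constant}. It therefore remains to show that the product of the elementary factors $\theta(\gamma_\theta)\psi(\beta_\theta/\gamma_\theta)\theta(\beta_\theta)^{-1}$ equals $\psi_F(-AX)$.

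\emph{Additive part.} By \eqref{eq:odd-three-stationary-ratios}, the additive arguments are $A(\mathcal N-u)$ for $\chi_K$, read through $\psi_K=\psi_F\circ\Tr$. For $\tau^j$ the argument is $Aj$, and for $\tau^j\chi_F$ it is $A(\mathcal N+j)$. Since $\Tr(\mathcal N)=p\mathcal N$ and $\sum_j j=0$ by \eqref{eq:teichmueller-sum}, the net additive phase is $\psi_F\bigl(A(p\mathcal N-\Tr u)-\sum_{j}A(\mathcal N+j)+\sum_{j\ne0}Aj\bigr)=\psi_F(-A\Tr u)$. This accounts for the first summand of $X$.

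\emph{Multiplicative part.} We write every $\theta(\gamma_\theta)\theta(\beta_\theta)^{-1}=\theta(\gamma_\theta/\beta_\theta)$. Here $\gamma_\theta/\beta_\theta$ equals $A^{-1}$ times $(\mathcal N-u)^{-1}$, $j^{-1}$, or $(\mathcal N+j)^{-1}$, respectively. For $\chi_K$ we get $\chi_F(\Nm(A^{-1}(\mathcal N-u)^{-1}))=\chi_F(A^{-p})\chi_F(\Nm(\mathcal N-u))^{-1}$. By \eqref{eq:parameter-ratios} this is $\chi_F(A)^{-p}\chi_F(\prod_j(\mathcal N+j))^{-1}\chi_F(z_0)^{-1}$. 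The denominator contributes $\prod_j(\tau^j\chi_F)(A^{-1}(\mathcal N+j)^{-1})$, and the numerator norm-character factors contribute $\prod_{j\ne0}\tau^j(A^{-1}j^{-1})$.

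The $\chi_F$-parts cancel except for $\chi_F(z_0)^{-1}$. The remaining $\tau$-terms combine into $\prod_{j\ne0}\tau^j\bigl((\mathcal N+j)/j\bigr)$ with the powers of $A$ cancelling, since $\prod_j\tau^j=1$. We rewrite $(\mathcal N+j)/j=\Nm(u/j+1)\,z_j^{-1}\cdot(\text{norm})$: indeed $\Nm(u+j)/j^p$ is a norm, $j^p=j$, and $\tau$ kills norms, so $\tau^j((\mathcal N+j)/j)=\tau^j(z_j)^{-1}$. The character identities of Lemma~\ref{lem:correction-depth} then turn $\chi_F(z_0)^{-1}\prod_{j\ne0}\tau^j(z_j)^{-1}$ into $\psi_F\bigl(-A\mathcal N(z_0-1)-\sum_{j\ne0}Aj(z_j-1)\bigr)$. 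Combined with the additive part, this gives exactly $\psi_F(-AX)$.

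\emph{Main obstacle.} The delicate step is the bookkeeping of sign and root-of-unity conventions. We must check that $\prod_{j\ne0}[j]=-1$ and $\Nm(-1)=-1$ cancel correctly, so that the sign in \eqref{eq:parameter-ratios} (which was derived using that $p$ is odd) matches. We must also check that the uniform use of $\Gamma$ versus $\gamma_F,\gamma_K$ in the two ranges $m\le T$ and $m>T$ produces the same formal expressions. In the range $m\le T$ one must additionally absorb the factors of $\xi,\widetilde\xi$ into $\gamma_K=\Gamma_\tau/\widetilde\xi$; there $\Nm(\widetilde\xi)=\xi$ supplies precisely the compensating factor $\chi_F(\xi)$. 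We would verify this range first, since it is where the ratios $\beta_\theta/\gamma_\theta$ of \eqref{eq:odd-three-stationary-ratios} hide the most normalization.
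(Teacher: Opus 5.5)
Your proposal is correct and follows essentially the same route as the paper. Both use the Lamprecht factorization, collect the residual factors into $\mathcal R_{\mathrm{odd}}$, sum the additive values to $-A\Tr_{K/F}(u)$, and reduce the character values via \eqref{eq:parameter-ratios}, $(\mathcal N+j)/j=\Nm_{K/F}((u+j)/j)\,z_j^{-1}$, and Lemma~\ref{lem:correction-depth}. The only difference is cosmetic: you write $\theta(\gamma_\theta)\theta(\beta_\theta)^{-1}=\theta(\gamma_\theta/\beta_\theta)$ and use the uniform ratios \eqref{eq:odd-three-stationary-ratios}, which treats both ranges at once and makes the $\xi,\widetilde\xi$ and sign bookkeeping you flag as the main obstacle automatic, whereas the paper first cancels the admissible-element factors separately in each range and then collects the $\theta(\beta_\theta)^{-1}$ terms.
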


\begin{proof}
Use the factorization \eqref{eq:Delta-factorization-new}, the sum
representatives in \eqref{eq:odd-stationary-representative-table}, and
the critical functions in
Proposition~\ref{prop:odd-coefficient-table}.

In the high range $m>T$, the admissible-element and
stationary-parameter pairs, with their sources, are
\[
 \begin{array}{c|c|c}
 \text{character}&(\gamma_\theta,\beta_\theta)&\text{source}\\ \hline
 \chi_K&(\gamma_F,\ \alpha(\mathcal N-u))
   &\text{Proposition~\ref{prop:high-parameter-table}(d), }\eqref{eq:Pstar-intermediate}\\
 \tau^j&(\gamma_F/\alpha,\ j)\quad(j\ne0)
   &\text{Lemma~\ref{lem:integer-stationary-representatives}}\\
 \tau^j\chi_F&(\gamma_F,\ \alpha(\mathcal N+j))\quad(j\in\F_p)
   &\eqref{eq:twist-beta-intermediate},\ \text{Lemma~\ref{lem:integer-stationary-representatives}}.
 \end{array}
\]
In the low range $m\le T$, the pairs are
\[
 \begin{array}{c|c|c}
 \text{character}&(\gamma_\theta,\beta_\theta)&\text{source}\\ \hline
 \chi_K&(\Gamma_\tau/\widetilde{\xi},\ (\alpha/\widetilde{\xi})(\mathcal N-u))
   &\text{Proposition~\ref{prop:low-parameter-table}}\\
 \tau^j&(\Gamma_\tau,\ \alpha j)\quad(j\ne0)
   &\text{Lemma~\ref{lem:integer-stationary-representatives}}\\
 \chi_F&(\Gamma_\tau/\xi,\ (\alpha/\xi)\mathcal N)
   &\eqref{eq:gammas-low},\ \eqref{eq:low-mu}\\
 \tau^j\chi_F&(\Gamma_\tau,\ \alpha(\mathcal N+j))\quad(j\ne0)
   &\eqref{eq:low-beta-twists},\ \text{Lemma~\ref{lem:integer-stationary-representatives}}.
 \end{array}
\]
The factors contributed by the admissible elements cancel.  In
the high range their quotient is
\[
 \frac{\chi_K(\gamma_F)
       \prod_{j\ne0}\tau^j(\gamma_F/\alpha)}
      {\prod_j(\tau^j\chi_F)(\gamma_F)}=1,
\]
and in the low range it is
\[
 \frac{\chi_K(\Gamma_\tau/\widetilde{\xi})\prod_{j\ne0}\tau^j(\Gamma_\tau)}
      {\chi_F(\Gamma_\tau/\xi)
       \prod_{j\ne0}(\tau^j\chi_F)(\Gamma_\tau)}=1.
\]
Here $\Nm_{K/F}(\widetilde{\xi})=\xi$ and $\alpha$ is a norm.

It remains to collect the factors $\theta(\beta)^{-1}$ from
\eqref{eq:stationary-factor-definitions}.  In the high range they give
\[
 \chi_K\!\left(\alpha(\mathcal N-u)\right)^{-1}
 \prod_{j\ne0}\tau^j(j)^{-1}
 \prod_{j\in\F_p}(\tau^j\chi_F)\!\left(\alpha(\mathcal N+j)\right).
\]
Since $\chi_K=\chi_F\circ \Nm_{K/F}$, $\Nm_{K/F}(\alpha)=\alpha^p$, and
$\tau(\alpha)=1$ because $\alpha$ is a norm, this becomes
\[
H= \chi_F\!\left(\frac{\prod_{j\in\F_p}(\mathcal N+j)}
 {\Nm_{K/F}(\mathcal N-u)}\right)
 \prod_{j\ne0}\tau^j\!\left(\frac{\mathcal N+j}{j}\right).
\]
The low-range contribution is
\[
 \chi_K\!\left(\frac{\alpha}{\widetilde{\xi}}(\mathcal N-u)\right)^{-1}
 \prod_{j\ne0}\tau^j(\alpha j)^{-1}
 \chi_F\!\left(\frac{\alpha}{\xi}\mathcal N\right)
 \prod_{j\ne0}(\tau^j\chi_F)\!\left(\alpha(\mathcal N+j)\right).
\]
Using $\Nm_{K/F}(\widetilde{\xi})=\xi$, $\chi_K=\chi_F\circ \Nm_{K/F}$, and
$\tau(\alpha)=1$, this reduces to the same expression $H$.   
By \eqref{eq:parameter-ratios}, the first ratio is $z_0^{-1}$.
Moreover
\[
 \frac{\mathcal N+j}{j}=\Nm_{K/F}\!\left(\frac{u+j}{j}\right)z_j^{-1},
\]
so the norm character kills the norm and
$H=\chi_F(z_0)^{-1}\prod_{j\ne0}\tau^j(z_j)^{-1}$.

The additive stationary values have signed sum
\[
 \Tr_{K/F}(\mathcal N-u)+\sum_{j\ne0}j-
 \sum_{j\in\F_p}(\mathcal N+j)=-\Tr_{K/F}(u),
\]
using the Teichm\"uller sum \eqref{eq:teichmueller-sum}.  Their additive factor is
therefore $\psi_F(-A\Tr_{K/F}(u))$.  Together with the expression for
$H$ and the residual quotient $\mathcal R_{\mathrm{odd}}$, this gives
\[
 \Err_{K/F}(\chi_F)
 =\mathcal R_{\mathrm{odd}}\,
 \left[
 \psi_F(A\Tr_{K/F}(u))\chi_F(z_0)
 \prod_{j\ne0}\tau^j(z_j)
 \right]^{-1}.
\]
Substituting Lemma~\ref{prop:correction-linearization} gives
$\Err_{K/F}(\chi_F)=\mathcal R_{\mathrm{odd}}\psi_F(-AX)$, with
$X$ as in \eqref{eq:X-def}.
\end{proof}

\subsection{The norm--trace formula and its valuation}

Let $s_r=s_r(u)$ be the elementary symmetric functions of the
conjugates of $u$, with $s_1=\Tr_{K/F}(u)$ and $s_p=\mathcal N$.  Put
\begin{equation*}
 B=s_1+\sum_{r=1}^{p-2}(-\mathcal N)^r s_{p-r}.
\end{equation*}

\begin{proposition}[Exact norm--trace formula]\label{prop:X-closed}
The element $X$ in \eqref{eq:X-def} satisfies  
\begin{equation}\label{eq:X-closed}
 X=\frac{pB}{1-\mathcal N^{p-1}}.
\end{equation}
\end{proposition}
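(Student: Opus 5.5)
The identity \eqref{eq:X-closed} is purely algebraic. The plan is to express every term of \eqref{eq:X-def} through the characteristic polynomial of $u$ and then evaluate the resulting sums over $\F_p$ with the Teichm\"uller identities of Lemma~\ref{lem:teichmueller-stationary}. Put
\[
 f(Y)=\prod_{\sigma}(Y-\sigma u)=\sum_{r=0}^{p}(-1)^r s_rY^{p-r},\qquad s_0=1,\ s_p=\mathcal N .
\]
Since $p$ is odd, $\Nm_{K/F}(u+j)=-f(-j)$ and $\Nm_{K/F}(\mathcal N-u)=f(\mathcal N)$. Hence $\Nm_{K/F}(1-\mathcal N/u)=-f(\mathcal N)/\mathcal N$. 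By \eqref{eq:teichmueller-polynomials} one also has $1-\mathcal N^{p-1}=-(\mathcal N^p-\mathcal N)/\mathcal N$. Therefore
\[
 \mathcal N z_0=\frac{\mathcal N f(\mathcal N)}{\mathcal N^p-\mathcal N},\qquad
 z_j=-\frac{f(-j)}{\mathcal N+j}.
\]
All denominators are nonzero: in the case $a=0$ this is \eqref{eq:odd-denominator-units}, and otherwise it follows by valuation. So the statement is an identity of rational functions in $\mathcal N$ with coefficients $s_1,\dots,s_{p-1}$, and it suffices to prove it formally.

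First I would split off the value at $\mathcal N$. Polynomial division gives $f(-j)=f(\mathcal N)-(\mathcal N+j)q(\mathcal N,j)$, where
\[
 q(Y,j)=\frac{f(Y)-f(-j)}{Y+j}
\]
is a polynomial in $Y$ and $j$. This yields
\[
 \sum_{j\ne0}j z_j=-f(\mathcal N)\sum_{j\in\F_p}\frac{j}{\mathcal N+j}+\sum_{j\in\F_p} j\,q(\mathcal N,j).
\]
For the first sum I would write $j/(\mathcal N+j)=1-\mathcal N/(\mathcal N+j)$ and use logarithmic differentiation of $\prod_j(\mathcal N+j)=\mathcal N^p-\mathcal N$:
\[
 \sum_j\frac{j}{\mathcal N+j}=p-\frac{\mathcal N(p\mathcal N^{p-1}-1)}{\mathcal N^p-\mathcal N}.
\]
Its non-$p$ part is $\mathcal N/(\mathcal N^p-\mathcal N)$. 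This part cancels exactly against $\mathcal N z_0$, because $-f(\mathcal N)\cdot\mathcal N/(\mathcal N^p-\mathcal N)+\mathcal N f(\mathcal N)/(\mathcal N^p-\mathcal N)=0$. What remains from the $f(\mathcal N)$ terms is
\[
 -p f(\mathcal N)+\frac{p\mathcal N^p f(\mathcal N)}{\mathcal N^p-\mathcal N}=\frac{p\,\mathcal N f(\mathcal N)}{\mathcal N^p-\mathcal N}=\frac{-p f(\mathcal N)}{1-\mathcal N^{p-1}},
\]
which is already visibly divisible by $p$. This cancellation is the structural heart of the argument.

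Second I would handle the polynomial term $\sum_j j\,q(\mathcal N,j)$ together with the constant contributions $s_1-\mathcal N-\sum_{j\ne0}j$. The last sum vanishes by \eqref{eq:teichmueller-sum}. Expanding $q(Y,j)=\sum_{r}(-1)^rs_r\sum_{i+k=p-r-1}Y^i(-j)^k$ reduces everything to the power sums $\sum_{j\in\F_p}[j]^{k+1}$. These equal $p-1$ when $(p-1)\mid k+1$ and $k+1>0$, and equal $0$ otherwise, because the $[j]$ for $j\ne0$ are the $(p-1)$st roots of unity. With $0\le k\le p-1$, only $k=p-2$ survives, coming from the $r=0$ and $r=1$ parts of $q$. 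So this sum is an explicit short polynomial, a multiple of $(p-1)$, in $\mathcal N$ and $s_1$.

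The final step is to add everything and put it over the common denominator $1-\mathcal N^{p-1}$. One then checks that the numerator equals
\[
 p\Bigl(s_1+\sum_{r=1}^{p-2}(-\mathcal N)^rs_{p-r}\Bigr)=pB.
\]
This is done by expanding $-pf(\mathcal N)$ and the leftover $(p-1)$-multiples, then matching coefficients of $s_1,\dots,s_{p-1}$ and the pure powers of $\mathcal N$, using $s_p=\mathcal N$. I expect this coefficient bookkeeping to be the main obstacle. In particular, the constant term $s_1-\mathcal N$ and the $(p-1)$ from the Teichm\"uller power sums must combine into the coefficient $p$ on $s_1$. Moreover, the pure $\mathcal N$-powers must cancel completely; this requires $s_p=\mathcal N$ and the identity $\mathcal N^p-\mathcal N=\mathcal N^{p-1}\cdot\mathcal N-\mathcal N$. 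If the direct matching becomes messy, I would first verify the case $p=3$ by hand as a check, and then organize the general case by the degree in $\mathcal N$.
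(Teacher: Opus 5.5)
Your proposal is correct, and it reaches \eqref{eq:X-closed} by a genuinely different organization than the paper.

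The paper never isolates $f(\mathcal N)$. It writes $z_j-1=\sum_{r=1}^{p-1}j^{p-r}s_r/(\mathcal N+j)$ and expands $1/(\mathcal N+j)$ as the finite geometric series $\sum_{q=0}^{p-2}(-\mathcal N)^qj^{p-2-q}/(1-\mathcal N^{p-1})$, which is valid because $j^{p-1}=1$ for the Teichm\"uller lifts. The power sums $\sum_{j\ne0}j^r$ then select the index pairs with $r+q\in\{1,p\}$. This yields $\sum_{j\ne0}j(z_j-1)=(p-1)B/(1-\mathcal N^{p-1})$ at once. A separate expansion gives $\mathcal N(z_0-1)=B/(1-\mathcal N^{p-1})-s_1$, so the factor $p$ appears as $(p-1)+1$.

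You instead divide $f(-j)$ by $\mathcal N+j$, so the only pole terms are $-f(\mathcal N)/(\mathcal N+j)$. Summing them by logarithmic differentiation of $\prod_j(Y+j)=Y^p-Y$ shows that the part without a factor $p$ cancels $\mathcal N z_0$ identically. This explains conceptually why $z_0$ has its particular form and why every surviving term is divisible by $p$. The cost is an extra division step and a final regrouping.

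The bookkeeping you flagged as the main obstacle is short. The power sums give $\sum_j j\,q(\mathcal N,j)=(p-1)(s_1-\mathcal N)$, hence
\[
 X=p(s_1-\mathcal N)-\frac{p\,f(\mathcal N)}{1-\mathcal N^{p-1}}.
\]
Expanding $f(\mathcal N)$ with $s_p=\mathcal N$ and $p$ odd gives $(s_1-\mathcal N)(1-\mathcal N^{p-1})-f(\mathcal N)=s_1-\sum_{r=2}^{p-1}(-1)^rs_r\mathcal N^{p-r}=B$.

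Both arguments are identities in $F$ itself; your logarithmic derivative is the formal one. Both therefore hold verbatim in equal characteristic.
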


\begin{proof}
Put $x_0=z_0-1$ and $x_j=z_j-1$ for $j\ne0$.
For $j\ne0$, multiplication of the conjugates of $u+j$ gives
\[
 \Nm_{K/F}(u+j)=\sum_{r=0}^{p}j^{p-r}s_r,
 \qquad s_0=1,\quad s_p=\mathcal N.
\]
Hence
\[
 x_j=\frac{\Nm_{K/F}(u+j)-(\mathcal N+j)}{\mathcal N+j}
 =\frac{\sum_{r=1}^{p-1}j^{p-r}s_r}{\mathcal N+j}.
\]
Because $j^{p-1}=1$,
\begin{equation*}
 \frac1{\mathcal N+j}=
 \frac{\sum_{q=0}^{p-2}(-\mathcal N)^qj^{p-2-q}}
      {1-\mathcal N^{p-1}}.
\end{equation*}
Indeed, multiplying the numerator by $\mathcal N+j$ telescopes to
$1-\mathcal N^{p-1}$.  Using this expansion to sum over the
$(p-1)$ Teichm\"uller roots, together with
\[
 \sum_{j\ne0}j^r=
 \begin{cases}p-1,&p-1\mid r,\\0,&p-1\nmid r,
 \end{cases}
\]
shows that only the indices $r+q=1$ and $r+q=p$ survive.
By the definition of $B$, this gives
\begin{equation*}
 \sum_{j\ne0}jx_j=\frac{(p-1)B}{1-\mathcal N^{p-1}}.
\end{equation*}

On the other hand,
\[
 \Nm_{K/F}(\mathcal N-u)-(\mathcal N^p-\mathcal N)=\sum_{r=1}^{p-1}(-1)^rs_r\mathcal N^{p-r}.
\]
By \eqref{eq:teichmueller-polynomials},
$z_0=\Nm_{K/F}(\mathcal N-u)/(\mathcal N^p-\mathcal N)$, so
\begin{align*}
 \mathcal N x_0
 &=-\frac{\sum_{r=1}^{p-1}(-1)^rs_r\mathcal N^{p-r}}
          {1-\mathcal N^{p-1}}\\
 &=\frac{s_1\mathcal N^{p-1}+\sum_{q=1}^{p-2}(-\mathcal N)^qs_{p-q}}
          {1-\mathcal N^{p-1}}\\
 &=\frac{B}{1-\mathcal N^{p-1}}-s_1.
\end{align*}
Adding $s_1$, this identity, and the formula for $\sum_{j\ne0}jx_j$ above proves
\eqref{eq:X-closed}.  This is an identity in the field $F$ itself.  In
equal characteristic the same finite sums have $p=0$ and $p-1=-1$ in
$F$, so no lifting to characteristic zero is being used.
\end{proof}

In mixed characteristic, the closed formula \eqref{eq:X-closed}
appears in \cite[Chapter~III, proof of Lemma~8, pp.~89--90]{Dwork}.

\begin{proposition}[Vanishing or depth of the norm--trace error]
\label{prop:X-depth}
Retain the minimal-conductor choice of $\chi_F$.
One has
\begin{equation}\label{eq:X-depth-final}
 X\in\p_F^T,
 \qquad \psi_F(A\p_F^T)=1.
\end{equation}
If $\operatorname{char}F=p$, then $X=0$.  In all cases
$\psi_F(-AX)=1$.
\end{proposition}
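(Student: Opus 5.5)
The second assertion $\psi_F(A\p_F^T)=1$ comes first, since it does not depend on $X$. In every range $v_F(A)=-T-n_F(\psi_F)$. This holds because $A=\alpha/\Gamma_\tau$ with $\alpha$ a unit when $m\le T$, and $A=\alpha/\gamma_F$ with $v_F(\alpha)=m-T$, $v_F(\gamma_F)=m+n_F(\psi_F)$ when $m>T$. Hence $A\p_F^T\subseteq\p_F^{-n_F(\psi_F)}$, on which $\psi_F$ is trivial. So once $X\in\p_F^T$ is known, $\psi_F(-AX)=1$ follows at once.

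In equal characteristic $p$ the integer $p$ is zero in $F$. The closed formula \eqref{eq:X-closed} therefore gives $X=0$, provided $1-\mathcal N^{p-1}\neq0$. This denominator is nonzero: for $a\ne0$ by valuation, and for $a=0$ by \eqref{eq:odd-denominator-units}. This disposes of equal characteristic.

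For mixed characteristic, I would bound each factor in $X=pB/(1-\mathcal N^{p-1})$ separately.
\begin{itemize}
\item The valuation of $p$: since $p=\Tr_{K/F}(1)$, the trace-ideal formula \eqref{eq:trace-ideal} gives $e=v_F(p)\ge\lfloor(p-1)T/p\rfloor$.
\item The denominator: $v_F(1-\mathcal N^{p-1})$ equals $0$ if $a\ge0$ and $(p-1)a$ if $a<0$, because $v_F(\mathcal N)=a$.
\item The numerator $B$: Lemma~\ref{lem:wild-symmetric-bound} applied to $u$, with $v_K(u)=a$, gives $v_F(s_r)\ge\lfloor(ra+(p-1)T)/p\rfloor$ for $1\le r\le p-1$. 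Combined with $v_F((-\mathcal N)^r)=ra$, this bounds each term $(-\mathcal N)^rs_{p-r}$.
\end{itemize}
For $a<0$ (the high range), I would rewrite $B/(1-\mathcal N^{p-1})$ by dividing numerator and denominator by $\mathcal N^{p-1}$. This absorbs the negative powers, and the dominant term becomes $-\mathcal N^{-1}s_{2}$-type contributions, so each summand carries a valuation $\ge\lfloor(r|a|+(p-1)T)/p\rfloor - $ something controlled.

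The case analysis then needs $e+v_F(B)-v_F(1-\mathcal N^{p-1})\ge T$ in the three ranges $m<T$, $m=T$, $m>T$. The remaining constraint is $m<2T$, i.e. $|a|<T$ when $a<0$. The crude estimate $e\ge\lfloor(p-1)T/p\rfloor$ is only slightly less than $T$, so I need $v_F(B/(1-\mathcal N^{p-1}))\ge1$ or so. This comes from the $(p-1)T/p$ part of the symmetric bound contributing additional valuation beyond what is needed.

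I expect the main obstacle to be this last bookkeeping when $a$ is small or zero. At $a=0$ one gets only $v_F(B)\ge\lfloor(p-1)T/p\rfloor$. Then $e+v_F(B)\ge2\lfloor(p-1)T/p\rfloor\ge T$ holds using $p\ge3$ and $T\ge2$, and I would verify this via the inequality $\lfloor(p-1)M/p\rfloor\ge\lceil M/2\rceil$ already proved in Lemma~\ref{lem:correction-depth}. For negative $a$ near $-T$, I would check the worst term explicitly, namely $r=p-2$ after the rescaling by $\mathcal N^{1-p}$. I would use $|a|\le T-1$ to keep every term in $\p_F^{T-e}$.
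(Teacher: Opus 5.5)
Your proposal is correct and follows the paper's proof. The closed formula $X=pB/(1-\mathcal N^{p-1})$ kills $X$ in equal characteristic, and in mixed characteristic $X\in\p_F^T$ follows from $e=v_F(p)\ge e_0=\lfloor(p-1)T/p\rfloor$, the symmetric-function bounds (after rescaling by $\mathcal N^{1-p}$ when $a<0$), and $2e_0\ge T$. Your derivation of $\psi_F(A\p_F^T)=1$ from $v_F(A)=-T-n_F(\psi_F)$ is a valid alternative to the paper's use of $\tau|_{U_F^T}=1$.

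Two of your heuristics are off but harmless. The deficit $T-e_0=\lceil T/p\rceil$ is not ``about one''. No bound $|a|\le T-1$ is needed either: for $a<0$ each rescaled term $\mathcal N^{1-j}s_j(u)$, $2\le j\le p-1$, has valuation at least $\lfloor(((p-1)j-p)|a|+(p-1)T)/p\rfloor\ge e_0$, and likewise for $\mathcal N^{1-p}s_1(u)$. So $e+e_0\ge T$ closes every range uniformly.
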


\begin{proof}
The denominator $1-\mathcal N^{p-1}$ is nonzero, as established
following \eqref{eq:correction-units}.

Suppose first that $\operatorname{char}F=p$.  In \eqref{eq:X-closed}, the integer $p$ is zero in $F$.  Therefore
$X=\frac{pB}{1-\mathcal N^{p-1}}=0$,
which proves the first assertion in equal characteristic.

Now assume $\operatorname{char}F=0$.  Put $e=v_F(p)$ and $e_0=\lfloor(p-1)T/p\rfloor=T-\lceil T/p\rceil$.
Lemma~\ref{lem:teichmueller-stationary} gives $e\ge e_0$; for odd $p$,
its proof gives $e_0\ge\lceil T/2\rceil$, hence $2e_0\ge T$.

If $a\ge0$, the denominator in \eqref{eq:X-closed} is a unit, as
shown by \eqref{eq:odd-denominator-units} when $a=0$ and by
$v_F(\mathcal N)>0$ when $a>0$.  By \eqref{eq:symmetric-bound}, every $s_j(u)$ with
$1\le j<p$ has valuation at least $e_0$, and the powers of $\mathcal N$ do not decrease it.
Hence $v_F(pB)\ge e+e_0\ge2e_0\ge T$.

If $a<0$, put $c=-a>0$. Since
$v_F(1-\mathcal N^{p-1})=(p-1)a$, division by this denominator is
multiplication, up to a unit, by $\mathcal N^{1-p}$.
Apply \eqref{eq:symmetric-bound} directly with $v_K(u)=-c$. For the
$s_1$ term it gives
\[
\begin{aligned}
 v_F\bigl(p\mathcal N^{1-p}s_1(u)\bigr)
 &\ge e+\floor{\frac{(p(p-1)-1)c+(p-1)T}{p}}\\
 &\ge e+e_0\ge T.
\end{aligned}
\]
For $2\le j<p$, the other terms satisfy
\[
\begin{aligned}
 v_F\bigl(p\mathcal N^{1-j}s_j(u)\bigr)
 &\ge e+\floor{\frac{((p-1)j-p)c+(p-1)T}{p}}\\
 &\ge e+e_0\ge T,
\end{aligned}
\]
since $(p-1)j-p\ge p-2>0$.
Thus $X\in\p_F^T$ also in mixed characteristic.

Finally $\tau$ has conductor $T$ and
$\tau(1+x)=\psi_F(Ax)$ for $x\in\p_F^{\lceil T/2\rceil}$.
Restricting to $x\in\p_F^T$ and using $\tau|_{U_F^T}=1$ gives
$\psi_F(A\p_F^T)=1$.  The conclusion follows.
\end{proof}

\begin{corollary}[Wild odd-prime non-stable case]
\label{cor:wild-odd}
For odd $p$, the First Main identity \eqref{eq:first-main} holds in every
non-stable wildly ramified case.
\end{corollary}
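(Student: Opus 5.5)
The corollary is essentially an assembly of the last three results. Let $K/F$ be wildly ramified of odd prime degree $p$ with $T=t+1$. By Lemma~\ref{lem:twist-invariance}, fix $\chi_F$ of minimal conductor $m$ in its $S(K/F)$-orbit. The non-stable range is $1<m<2T$; the endpoints $m\le1$ belong to Section~\ref{sec:wild-endpoints} and $m\ge 2T$ to Proposition~\ref{prop:wild-stable-new}. In this range the hypotheses of Proposition~\ref{lem:residual-parameter-transport} hold, namely $2\le m<2T$ together with the minimal-conductor choice. So the parameter choices there, and those of Propositions~\ref{prop:low-parameter-table} and \ref{prop:high-parameter-table}(d), are available according to whether $m\le T$ or $m>T$. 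Propositions~\ref{prop:low-parameter-table} and \ref{prop:high-parameter-table}(d) require $2\le m\le T$ and $T\le m\le 2T-1$ respectively, and together these cover the range. With a generator $\tau$ of $S(K/F)$ normalized as in Lemma~\ref{lem:odd-break-line-normalization}, the product over $S(K/F)$ is the product over $\tau^j$, $j\in\F_p$.

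First, Proposition~\ref{prop:exact-odd-assembly} writes the error quotient \eqref{eq:error-term} as
\[
 \Err_{K/F}(\chi_F)=\mathcal R_{\mathrm{odd}}\,\psi_F(-AX).
\]
Second, Proposition~\ref{prop:odd-phase-cancel} gives $\mathcal R_{\mathrm{odd}}=1$ in every subcase: $m<T$, $m=T$ (where the affine pencil Proposition~\ref{lem:affine-pencil} enters), and $m>T$. Third, Proposition~\ref{prop:X-depth} gives $\psi_F(-AX)=1$. In equal characteristic this holds because $X=0$ by the closed formula \eqref{eq:X-closed}. In mixed characteristic it holds because $X\in\p_F^T$ and $\psi_F(A\p_F^T)=1$. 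Hence $\Err_{K/F}(\chi_F)=1$, which is \eqref{eq:first-main} for the chosen $\chi_F$. Twist invariance \eqref{eq:error-twist-invariance} then transfers the identity to every member of the orbit.

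There is one point I would check carefully, since the assembly silently relies on it. The factorization \eqref{eq:Delta-factorization-new} was applied to every character $\tau^j$, $\tau^j\chi_F$ and $\chi_K$, so each of these must have conductor greater than $1$. This follows from \eqref{eq:minimal-orbit-conductors}, since these conductors equal $\max\{m,T\}\ge2$, and from \eqref{eq:minimal-critical} or \eqref{eq:pullback-high} for $\chi_K$. I would also confirm that the normalization $\psi_0=\psi_{\rm can}$ is harmless; this is Corollary~\ref{cor:additive-independence}. No further calculation should be needed, so I expect no real obstacle beyond this bookkeeping of hypotheses.
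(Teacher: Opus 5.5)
Your proposal is correct and matches the paper's proof: the corollary is the combination of Proposition~\ref{prop:exact-odd-assembly}, Proposition~\ref{prop:odd-phase-cancel}, and Proposition~\ref{prop:X-depth}, applied after the minimal-conductor reduction of Lemma~\ref{lem:twist-invariance}. Your extra bookkeeping is implicit in the setup of Section~\ref{sec:nonstable-odd}: all relevant conductors are at least $\max\{m,T\}\ge2$, so \eqref{eq:Delta-factorization-new} applies, and the normalization $\psi_0=\psi_{\rm can}$ is harmless.
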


\begin{proof}
Propositions~\ref{prop:odd-phase-cancel} and~\ref{prop:exact-odd-assembly}, together with \eqref{eq:X-depth-final}, give the result.
\end{proof}

\section{Wild quadratic extensions}\label{sec:wild-quadratic}

Let $K/F$ be wildly ramified quadratic with lower break $t$,
$S(K/F)=\{1,\tau\}$, and common residue field $k$ of characteristic
$p=2$. Retain the minimal-conductor choice of $\chi_F$.
Put $m=m_F(\chi_F)>1$, $T=t+1$, and $a=T-m$.

\subsection{Stationary parameters and the norm correction}

Use the stationary parameters of Proposition~\ref{prop:low-parameter-table}
for $m\le T$ and Proposition~\ref{prop:wild-quadratic-high-parameters}
for $m>T$, and put
\begin{equation}\label{eq:quadratic-parameters}
\begin{gathered}
 (A,u,\mathcal N)=
 \begin{cases}
 (\alpha/\Gamma_\tau,\ \widetilde{\xi}\widetilde{\beta}/\widetilde{\alpha},\ \xi\beta/\alpha),&m\le T,\\[1mm]
 (\alpha \kappa/\gamma_F,\ \widetilde{\beta}/\widetilde{\alpha},\ \beta/(\alpha \kappa)),&m>T,
 \end{cases}\\
 \mathcal N=\Nm_{K/F}(u),\qquad v_K(u)=v_F(\mathcal N)=a.
\end{gathered}
\end{equation}
Equations~\eqref{eq:low-mu}, \eqref{eq:wild-two-tau-linearization}, and
\eqref{eq:Lamprecht-linearization} give
\begin{equation}\label{eq:quadratic-linearization-domains}
\begin{aligned}
 \tau(1+z)&=\psi_F(Az)
 &&\bigl(z\in\p_F^{\lceil T/2\rceil}\bigr),\\
 \chi_F(1+z)&=\psi_F(A\mathcal N z)
 &&\bigl(z\in\p_F^{\lceil m/2\rceil}\bigr).
\end{aligned}
\end{equation}
Define the correction units by
\begin{equation}\label{eq:quad-corrections}
 z_0=\frac{\Nm_{K/F}(1-\mathcal N/u)}{1+\mathcal N},
 \qquad z_1=\frac{\Nm_{K/F}(u+1)}{\mathcal N+1}.
\end{equation}
If $a\ne0$, the denominator is nonzero by valuation.  If $a=0$, then
$m=T$, $\xi\beta=\alpha\mathcal N$, and $\alpha$ is a unit, so
\eqref{eq:minimal-orbit-no-leading-cancellation} gives
$1+\mathcal N\in\cO_F^\times$.

\begin{lemma}[Exact quadratic norm correction]
\label{lem:quad-X}
Put $s=\Tr_{K/F}(u)$ and
$X=s+\mathcal N(z_0-1)+(z_1-1)$.  Then
\begin{equation}\label{eq:quad-zxy}
 z_0-1=-\frac{s}{1+\mathcal N},\qquad
 z_1-1=\frac{s}{1+\mathcal N},\qquad
 X=\frac{2s}{1+\mathcal N}.
\end{equation}
\end{lemma}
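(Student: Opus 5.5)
This is a direct computation with the quadratic norm formula, the analogue of Proposition~\ref{prop:X-closed} for $p=2$. Write $\Nm=\Nm_{K/F}$ and $s=\Tr_{K/F}(u)$. For $K/F$ quadratic and any $w\in K$ one has $\Nm(1+w)=1+\Tr_{K/F}(w)+\Nm(w)$.

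First, for $z_1$:
\[
 \Nm(u+1)=\Nm(u)+\Tr_{K/F}(u)+1=\mathcal N+s+1 .
\]
Hence
\[
 z_1-1=\frac{\mathcal N+1+s}{\mathcal N+1}-1=\frac{s}{1+\mathcal N}.
\]

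Next, for $z_0$, I would use $w=-\mathcal N/u$. Since $\Nm(u)=\mathcal N$, the conjugate of $u$ is $\bar u=\mathcal N/u$. Therefore $\Tr_{K/F}(\mathcal N/u)=\Tr_{K/F}(\bar u)=s$ and $\Nm(\mathcal N/u)=\mathcal N^2/\mathcal N=\mathcal N$. This gives
\[
 \Nm(1-\mathcal N/u)=1-s+\mathcal N ,
\]
and so
\[
 z_0-1=\frac{1+\mathcal N-s}{1+\mathcal N}-1=-\frac{s}{1+\mathcal N}.
\]
Equivalently, $\Nm(\mathcal N-u)=\mathcal N^2-\mathcal N s+\mathcal N$, and dividing by $\Nm(u)=\mathcal N$ gives the same result. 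Both formulas are identities in $F$ and hold in any characteristic. The denominators $1+\mathcal N$ are nonzero by the remark following \eqref{eq:quad-corrections}.

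Finally, substituting into the definition of $X$:
\[
 X=s-\frac{\mathcal N s}{1+\mathcal N}+\frac{s}{1+\mathcal N}
 =\frac{s(1+\mathcal N)-\mathcal N s+s}{1+\mathcal N}=\frac{2s}{1+\mathcal N}.
\]
This is \eqref{eq:quad-zxy}.

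The only point needing care is identifying $\Tr_{K/F}(\mathcal N/u)$ with $s$, which relies on $\mathcal N/u=\sigma(u)$ for the nontrivial automorphism $\sigma$. This holds because $\mathcal N=u\,\sigma(u)$ exactly, by \eqref{eq:quadratic-parameters}. Apart from that the computation is routine.
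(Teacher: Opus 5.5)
Your proposal is correct and follows the paper's proof essentially verbatim: the paper likewise notes that $\mathcal N/u$ is the nontrivial conjugate of $u$, applies the quadratic norm identity to obtain $\Nm_{K/F}(1+u)=1+s+\mathcal N$ and $\Nm_{K/F}(1-\mathcal N/u)=1-s+\mathcal N$, and then substitutes into the definitions of $z_0$, $z_1$, and $X$. Your algebra for each of the three identities checks out.
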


\begin{proof}
Since $\mathcal N/u$ is the nontrivial conjugate of $u$, the quadratic
norm identity gives
\[
 \Nm_{K/F}(1+u)=1+s+\mathcal N,\qquad
 \Nm_{K/F}(1-\mathcal N/u)=1-s+\mathcal N.
\]
Substitution in \eqref{eq:quad-corrections} proves the first two
identities, and their substitution in the definition of $X$ proves
the third.
\end{proof}

The formulas \eqref{eq:quad-zxy} occur in Dwork's $p$-adic calculation
\cite[Chapter~III, proof of Lemma~$8'$, p.~93]{Dwork}.

\begin{lemma}[Quadratic graded trace]
\label{lem:quadratic-graded-trace}
Let $K/F$ be wildly ramified quadratic and let
$T=v_K(\mathfrak D_{K/F})=t+1$.  If $r\ge0$ and $r+T$ is odd,
trace induces a $k$-linear isomorphism
\[
 \Tr_{K/F}:\p_K^r/\p_K^{r+1}
 \xrightarrow{\ \sim\ }
 \p_F^{(r+T-1)/2}/\p_F^{(r+T+1)/2}.
\]
In particular:
\begin{enumerate}[label=(\alph*)]
\item If $T$ is odd and $d_\tau=(T-1)/2$, then
$v_F(2)=d_\tau$.  For every $w\in\cO_K^\times$ and every lift
$\widetilde{\bar w}\in\cO_F$ of its residue,
\begin{equation}\label{eq:quadratic-unit-trace-class}
 \Tr_{K/F}(w)\equiv2\widetilde{\bar w}\pmod{\p_F^{d_\tau+1}}.
\end{equation}
\item If $m=2d+1>T$ and $r=m-T$, then trace identifies
$\p_K^r/\p_K^{r+1}$ with $\p_F^d/\p_F^{d+1}$.
\end{enumerate}
\end{lemma}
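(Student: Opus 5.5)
The plan is to derive everything from the trace-ideal formula \eqref{eq:trace-ideal} with $\ell=2$ and $D_{K/F}=T$. That formula gives $\Tr_{K/F}(\p_K^a)=\p_F^{\lfloor(a+T)/2\rfloor}$ for every $a$. When $r+T$ is odd, put $c=(r+T-1)/2$. Then
\[
 \Tr(\p_K^r)=\p_F^{c},\qquad \Tr(\p_K^{r+1})=\p_F^{\lfloor (r+1+T)/2\rfloor}=\p_F^{c+1}.
\]
Hence the trace induces a well-defined surjection $\p_K^r/\p_K^{r+1}\to\p_F^c/\p_F^{c+1}$. It is additive, and it is $\cO_F$-linear, hence $k$-linear, because the residue fields coincide under total ramification. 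Both sides are one-dimensional over $k$, so a surjection is an isomorphism. This establishes the main assertion.

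For (a), take $T$ odd and $r=0$, so that $c=d_\tau$. The element $1\in\cO_K^\times$ represents a nonzero class in $\cO_K/\p_K$, so $\Tr(1)=2$ represents a nonzero class in $\p_F^{d_\tau}/\p_F^{d_\tau+1}$. This gives $v_F(2)=d_\tau$ in mixed characteristic.

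In equal characteristic $2$ we would have $2=0$, and the isomorphism would then be contradicted. I therefore expect that in characteristic $2$ the parity hypothesis simply never occurs with $r=0$. That is, $T$ must be even, because the break $t$ of an Artin--Schreier extension is odd, and so (a) is vacuous there. This is the one delicate point. I would justify it by the isomorphism itself: if $T$ were odd, then $\Tr(1)=2=0$ would contradict injectivity. So (a) implicitly forces characteristic $0$, and no separate argument about breaks is needed.

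For the congruence \eqref{eq:quadratic-unit-trace-class}, write $w=\widetilde{\bar w}+y$ with $y\in\p_K$. Then
\[
 \Tr(w)=2\widetilde{\bar w}+\Tr(y),
\]
and $\Tr(y)\in\Tr(\p_K)=\p_F^{\lfloor(1+T)/2\rfloor}=\p_F^{d_\tau+1}$.

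For (b), take $r=m-T$ with $m=2d+1$. Then $r\ge1$ and $r+T=m$ is odd, so the main assertion applies with $c=(m-1)/2=d$.
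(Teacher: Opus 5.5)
Your proposal is correct and matches the paper's proof essentially step for step. The trace-ideal formula with $D_{K/F}=T$ gives surjectivity onto a one-dimensional quotient, so the $k$-linear map is an isomorphism. Taking $r=0$ with $\Tr_{K/F}(1)=2$ gives $v_F(2)=d_\tau$, and, as the paper also remarks, this excludes equal characteristic when $T$ is odd. The inclusion $\Tr_{K/F}(\p_K)=\p_F^{d_\tau+1}$ gives the congruence, and (b) is the specialization $r=m-T$.
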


\begin{proof}
For a quadratic extension, \eqref{eq:trace-ideal} reads
$\Tr(\p_K^j)=\p_F^{\floor{(j+T)/2}}$.
If $r+T=2q+1$, then
\[
 \Tr(\p_K^r)=\p_F^q,
 \qquad
 \Tr(\p_K^{r+1})=\p_F^{q+1}.
\]
Thus the displayed map induced by trace is surjective.  Both source and target are
one-dimensional $k$-vector spaces, and the map is $k$-linear because
the residue extension is trivial.  It is therefore an isomorphism.

If $T$ is odd, apply the isomorphism with $r=0$.  The class of $1$ is
nonzero and maps to the class of $\Tr(1)=2$, proving
$v_F(2)=d_\tau$.  Since $w-\widetilde{\bar w}\in\p_K$, the trace-ideal formula \eqref{eq:trace-ideal}
at $j=1$ gives
\[
 \Tr(w)-2\widetilde{\bar w}\in\Tr(\p_K)
 =\p_F^{(T+1)/2}=\p_F^{d_\tau+1},
\]
which proves (a).  In particular, a wildly ramified quadratic
extension with $T$ odd is necessarily of mixed characteristic; in
equal characteristic two the equality $v_F(2)=d_\tau$ is impossible.
If $m=2d+1$ and $r=m-T$, then $r+T=m$ and $q=d$, proving (b).
\end{proof}

The lemma also follows from \eqref{eq:trace-ideal} and
Proposition~\ref{prop:critical-norm-polynomial} by specializing to $p=2$;
the proof above takes the more elementary route through trace ideals
and $\Tr_{K/F}(1)=2$.

\subsection{The four critical functions}

Put $\psi_0(z)=(-1)^{\Tr_{k/\F_2}(z)}$ and fix a normalized
refinement $\mathfrak q$ as in Proposition~\ref{lem:normalized-refinement}.
Use the family $h_\lambda$ and phases $H(\lambda)$ defined there.

\begin{proposition}[Wild-quadratic critical functions]
\label{prop:quadratic-critical-functions}
Retain the minimal-conductor choice of $\chi_F$.
For
\[
 \theta\in\{\tau,\chi_F,\tau\chi_F,\chi_K\},
 \qquad
 E_\theta=
 \begin{cases}
  F,&\theta\ne\chi_K,\\
  K,&\theta=\chi_K,
 \end{cases}
 \qquad
 \psi_\theta=
 \begin{cases}
  \psi_F,&\theta\ne\chi_K,\\
  \psi_K,&\theta=\chi_K,
 \end{cases}
\]
if $m_{E_\theta}(\theta)=2d_\theta+1$ is odd, choose an admissible denominator $\gamma_\theta$ and a stationary
representative $\beta_\theta$ from the classes given by
\eqref{eq:low-mu} and Proposition~\ref{prop:low-parameter-table} when
$m\le T$, and by \eqref{eq:wild-two-tau-linearization} and
Proposition~\ref{prop:wild-quadratic-high-parameters} when $m>T$.
Choose a critical element
$\delta_\theta$ with
$\delta_\theta\cO_{E_\theta}=\p_{E_\theta}^{d_\theta}$.
The choices can be made so that, for every $z\in k$ and every lift
$\widetilde z\in\cO_F\subset\cO_K$,
\[
 \psi_\theta\!\left(
   \frac{\beta_\theta\delta_\theta^2\widetilde z}
        {\gamma_\theta}\right)=\psi_0(z).
\]
The critical function
$ H_\theta(z)=
 \psi_\theta\!\left(
   \frac{\delta_\theta\beta_\theta\widetilde z}
        {\gamma_\theta}\right)
 \theta(1+\delta_\theta\widetilde z)^{-1}$
then satisfies 
\[
 H_\theta(r+s)=H_\theta(r)H_\theta(s)\psi_0(rs)
 \qquad(r,s\in k).
\]
For even conductor put $H_\theta=1$, in accordance with
\eqref{eq:Lamprecht-linearization}.

For each $\theta\in\{\tau,\chi_F,\tau\chi_F,\chi_K\}$ of odd conductor,
let $\lambda_\theta\in k$ be the unique coefficient such that
\[
 H_\theta=h_{\lambda_\theta},
\]
with $h_\lambda$ as in \eqref{eq:q-linear-family}.
At $m=T$, let $\rho\in k^\times\setminus\{1\}$ be the residue of
$u$ from \eqref{eq:quadratic-parameters}, and let $\sqrt\rho$ denote its
unique square root in $k$.
These coefficients satisfy
\begin{equation}\label{eq:quadratic-gamma-table}
\begin{aligned}
 \lambda_{\tau\chi_F}&=
 \begin{cases}
  \lambda_{\tau},&m<T,\ T\text{ odd},\\[1mm]
  \dfrac{\lambda_{\tau}+\rho\lambda_{\chi_F}+\sqrt\rho}{1+\rho},
     &m=T\text{ odd},\\[3mm]
  \lambda_{\chi_F},&m>T,\ m\text{ odd},
 \end{cases}\\[3mm]
 \lambda_{\chi_K}^2&=
 \begin{cases}
  \lambda_{\chi_F},&m<T,\ m\text{ odd},\\[1mm]
  \dfrac{\lambda_{\chi_F}+\rho\lambda_{\tau}+\rho+\sqrt\rho}{1+\rho},
     &m=T\text{ odd},\\[3mm]
  1+\lambda_{\tau},&m>T,\ T\text{ odd}.
 \end{cases}
\end{aligned}
\end{equation}
\end{proposition}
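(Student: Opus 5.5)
The plan has three stages. First normalize the polar characters, then express every critical function through the single refinement $\mathfrak q$, and finally compare the four functions range by range, as in Proposition~\ref{lem:residual-parameter-transport}.

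\emph{Normalization.} Fix an odd-conductor $\theta$ and the stationary data listed in the statement. By \eqref{eq:Lamprecht-residual-character}, $z\mapsto\psi_\theta(\beta_\theta\delta_\theta^2\widetilde z/\gamma_\theta)$ is a nontrivial additive character of $k$. The trace pairing for $\psi_0=(-1)^{\Tr_{k/\F_2}}$ is perfect, so this character is $\psi_0(c_\theta z)$ for a unique $c_\theta\in k^\times$. Squaring is bijective on $k$, so I will replace $\delta_\theta$ by $\widetilde a\delta_\theta$ with $a^2=c_\theta^{-1}$. By Lemma~\ref{lem:critical-polynomial-coordinate}, which is purely formal in $\psi_0$ and equally valid in characteristic two, this rescales the polar coefficient by $a^2$ and makes it $1$. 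The functional equation then follows from \eqref{eq:Hasse-functional-equation}. Proposition~\ref{lem:normalized-refinement} gives a unique $\lambda_\theta$ with $H_\theta=h_{\lambda_\theta}$.

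\emph{Range comparisons for $\tau\chi_F$.}
\begin{itemize}
\item For $m<T$ with $T$ odd, I will argue as in Proposition~\ref{prop:odd-coefficient-table}. The linearization \eqref{eq:quadratic-linearization-domains} of $\chi_F$ holds at depth $(T-1)/2\ge\lceil m/2\rceil$. With the sum representative $\beta_\tau+\beta_\chi$ and the common critical element, $H_{\tau\chi_F}=H_\tau$, and the normalizations agree, so $\lambda_{\tau\chi_F}=\lambda_\tau$.
\item For $m>T$ the symmetric argument gives $H_{\tau\chi_F}=H_{\chi_F}$.
\item For $m=T$ odd, use a common critical element $\delta$. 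Then $H_{\tau\chi_F}$ is the pointwise product of the unnormalized $H_\tau$ and $H_{\chi_F}$. Their polar coefficients are $1$ and $\rho$, since $\beta_\chi/\beta_\tau=\mathcal N$, whose residue is read off from \eqref{eq:quadratic-parameters}.
\end{itemize}
So in the boundary case $H_\tau=h_{\lambda_\tau}(z)$, while $H_{\chi_F}=h_{\lambda_{\chi_F}}(\sqrt\rho\,z)$ in the normalized coordinate. Their product has polar coefficient $1+\rho\neq0$, by \eqref{eq:minimal-orbit-no-leading-cancellation}. Renormalizing by $z\mapsto z/\sqrt{1+\rho}$ forces one to rewrite $\mathfrak q(cz)$ as $\mathfrak q(z)$ times an additive character. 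For this I will use that $z\mapsto\mathfrak q(cz)\mathfrak q(c'z)$ has polar $\psi_0((c^2+c'^2)rs)$, together with $\mathfrak q(r^2)=\mathfrak q(r)$ and $\mathfrak q(r)^2=\psi_0(r)$. The idea is to determine the linear character $\mathfrak q(\sqrt\rho z)\mathfrak q(z)/\mathfrak q(\sqrt{1+\rho}\,z)$ by evaluating its square on Frobenius orbits. That evaluation is where I expect the $\sqrt\rho$ correction and the denominator $1+\rho$ in \eqref{eq:quadratic-gamma-table} to appear.

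\emph{$\chi_K$.} I will write the quadratic analogue of \eqref{eq:odd-exact-critical-transport}:
\[
H_{\chi_K}(X)=H_{\chi_F}(r_\chi(X))\,H_\tau(r_\tau(X))^{-1}\,\psi_F\bigl(A\{R(ux)-\mathcal N R(x)\}\bigr).
\]
Here $x=\delta_K\widetilde{X^{1/2}}$ and $R(y)=\Nm(y)$, so the correction term $\Nm(ux)-\mathcal N\Nm(x)$ vanishes identically. The remaining work is to compute the residues $r_\chi$ and $r_\tau$ from Lemma~\ref{lem:quadratic-graded-trace}: part (a) handles odd $T$ and part (b) handles $m>T$. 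In the three ranges this should give, respectively, $r_\chi(X)=X$ with $r_\tau=0$; both nonzero and proportional to $\rho$; and $r_\chi=0$ with $r_\tau(X)=X$.

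The Frobenius reindexing $X\mapsto X^{1/2}$, together with $\mathfrak q(r^2)=\mathfrak q(r)$, converts a linear coefficient $\lambda$ into $\lambda^2$. This explains why the table records $\lambda_{\chi_K}^2$. The constant $1$ in $1+\lambda_\tau$ should come from $H_\tau^{-1}=\overline{h_{\lambda_\tau}}=h_{1+\lambda_\tau}$, which follows from $\overline{\mathfrak q(r)}=\mathfrak q(r)\psi_0(r)$. The boundary row should then follow by the same renormalization identity as for $\tau\chi_F$, with the roles of $\tau$ and $\chi_F$ exchanged.

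I expect the main obstacle to be that boundary renormalization: pinning down the exact additive character relating $\mathfrak q(\sqrt\rho z)\mathfrak q(z)$ to $\mathfrak q(\sqrt{1+\rho}z)$.
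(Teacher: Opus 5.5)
Your treatment of the non-boundary rows is sound and is essentially the paper's argument:
\begin{itemize}
\item normalizing each polar character by rescaling the critical element;
\item the sum representative, which makes $H_{\tau\chi_F}$ coincide with $H_\tau$ or with $H_{\chi_F}$;
\item the transport identity for $\chi_K$, where $\Nm(ux)-\mathcal N\Nm(x)=0$;
\item the Frobenius reindexing that explains why the table records $\lambda_{\chi_K}^2$;
\item the identity $h_\lambda^{-1}=h_{1+\lambda}$.
\end{itemize}

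The boundary rows, however, rest on a wrong input. You take the polar coefficients of $H_\tau$ and $H_{\chi_F}$ in a common coordinate to be $1$ and $\rho$. But $\rho$ is the residue of $u$, and $\mathcal N=\Nm_{K/F}(u)$. The residue norm of a totally ramified extension is squaring, so $\overline{\mathcal N}=\rho^2$. Consequences:
\begin{itemize}
\item In the normalized $\delta_\tau$ coordinate the $\chi_F$-function is $h_{\lambda_{\chi_F}}(\rho z)$, not $h_{\lambda_{\chi_F}}(\sqrt\rho\,z)$; equivalently $\delta_\chi=\delta_\tau/\widetilde\rho$.
\item The product has polar coefficient $1+\rho^2=(1+\rho)^2$, so the renormalization is $z\mapsto z/(1+\rho)$. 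Noncancellation gives $1+\rho^2\ne0$, hence $1+\rho\neq0$.
\item Carried through with your coefficient, the computation yields $(\lambda_\tau+\sqrt\rho\,\lambda_{\chi_F}+\rho^{1/4})/(1+\sqrt\rho)$, which is not the stated formula.
\item The same slip affects the boundary ratio $r_\tau(X)/r_\chi(X)$ for $\chi_K$. It must be $\overline{\mathcal N}\cdot\overline{\delta_\chi/\delta_\tau}=\rho^2\cdot\rho^{-1}=\rho$.
\end{itemize}

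The step you single out as the main obstacle is not one, and your proposed method for it cannot work. In characteristic two every additive character of $k$ is $\{\pm1\}$-valued, so its square is identically $1$ and determines nothing. The missing observation is that in characteristic two the renormalizing scalar is itself a sum. Indeed $\sqrt{1+\rho^2}=1+\rho$, and in your own notation $\sqrt{1+\rho}=1+\sqrt\rho$. So the refinement law \eqref{eq:q-refinement} applies additively:
\begin{itemize}
\item $\mathfrak q(z)\mathfrak q(\rho z)=\mathfrak q((1+\rho)z)\psi_0(\rho z^2)$;
\item $\psi_0(\rho z^2)=\psi_0(\sqrt\rho\,z)$ by Frobenius invariance of the trace.
\end{itemize}
Multiplying in $\psi_0((\lambda_\tau+\rho\lambda_{\chi_F})z)$ and substituting $z\mapsto z/(1+\rho)$ gives the boundary value of $\lambda_{\tau\chi_F}$ at once.

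For $\chi_K$ at $m=T$, the argument runs as follows.
\begin{itemize}
\item Scale $\delta_K$ so that the residue maps are $(Z^2,\rho Z^2)$.
\item Rescale once more by a unit with residue $(1+\rho)^{-1/2}$, and put $z=Z^2/(1+\rho)$.
\item Apply the same identity to $h_{\lambda_{\chi_F}}(z)\,h_{\lambda_\tau}(\rho z)\,\psi_0(\rho z)$. The extra $\rho$ in the numerator comes from $H_\tau^{-1}=H_\tau\psi_0$.
\item Finally $\mathfrak q(Z^2)=\mathfrak q(Z)$ converts the coefficient into $\lambda_{\chi_K}^2$.
\end{itemize}

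One smaller point: the residues $r_\chi,r_\tau$ come from the norm term alone, since \eqref{eq:trace-ideal} pushes the trace term past the critical depth. So Lemma~\ref{lem:quadratic-graded-trace} is not the input needed for them.
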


\begin{proof}
Write $\Nm=\Nm_{K/F}$.  Equations~\eqref{eq:minimal-orbit-conductors},
\eqref{eq:minimal-critical}, and \eqref{eq:pullback-high} give
\begin{equation}\label{eq:quadratic-conductor-table}
\begin{array}{c|cccc}
 &m_F(\tau)&m_F(\chi_F)&m_F(\tau\chi_F)&m_K(\chi_K)\\ \hline
 m\le T&T&m&T&m\\
 m>T&T&m&m&2m-T.
\end{array}
\end{equation}
All four conductors are even exactly when $m$ and $T$ are even.

Equation~\eqref{eq:stationary-class-adjoint},
Proposition~\ref{prop:low-parameter-table}, and
Proposition~\ref{prop:wild-quadratic-high-parameters} identify the
stationary classes in the two ranges.  To realize the choices asserted
in the statement, take representatives satisfying the following
equalities, with the parameters \eqref{eq:quadratic-parameters}:
\begin{equation}\label{eq:quadratic-unified-stationary-ratios}
 \frac{\beta(\chi_F)}{\gamma_F}=A\mathcal N,
 \qquad
 \frac{\beta_K}{\gamma_K}=A(\mathcal N-u),
 \qquad \mathcal N=\Nm(u).
\end{equation}
Use the sum representative, relative to a common denominator, for
$\tau\chi_F$.

For an odd-conductor character, form
\eqref{eq:Lamprecht-Hasse-function} with the representative just fixed.
Equations~\eqref{eq:Hasse-functional-equation} and
\eqref{eq:Lamprecht-residual-character} give a nondegenerate polar
character $\psi_0(cxy)$ with $c\in k^\times$.  Multiplying the
critical element by a unit with residue $c^{-1/2}$ gives the
normalization stated in the proposition, since Frobenius is bijective
on $k$.  For even conductor use $H_\theta=1$, as given by
\eqref{eq:Lamprecht-linearization}.
Write $H_\tau=h_{\lambda_\tau}$ when $T$ is odd and
$H_{\chi_F}=h_{\lambda_{\chi_F}}$ when $m$ is odd.
At $m=T$, $u$ is a unit and $\overline{\mathcal N}=\rho^2$.
Equation~\eqref{eq:minimal-orbit-no-leading-cancellation}, with $j=1$,
gives $1+\rho^2\ne0$, hence $1+\rho\ne0$.
When $m=T$ is odd, the stationary ratios $A$ and $A\mathcal N$
in the normalized $\delta_\tau$ coordinate give polar characters $\psi_0(xy)$ and
$\psi_0(\rho^2xy)$.  Thus, for a lift
$\widetilde\rho\in\cO_F^\times$, take
$\delta_\chi=\delta_\tau/\widetilde\rho$; the $\chi_F$-function in the
$\delta_\tau$ coordinate is then $h_{\lambda_{\chi_F}}(\rho z)$.
The product has polar character $\psi_0((1+\rho)^2xy)$, so its
normalized coordinate is obtained by replacing $z$ by $z/(1+\rho)$.

Suppose $m<T$ and $T$ is odd.  Put $d_\tau=(T-1)/2$ and retain the normalized critical element
$\delta_\tau$, of valuation $d_\tau$.  Since
$m<T$,
\[
 d_\tau\ge\left\lceil\frac m2\right\rceil,
\]
so the linearization of $\chi_F$ in \eqref{eq:Lamprecht-linearization}
applies to $x=\delta_\tau\widetilde z$.  Choose the sum representative for the
stationary class of $\tau\chi_F$.  Then, for $z\in k$,
\[
 \frac{H_{\tau\chi_F}(z)}{H_\tau(z)}
 =\psi_F(A\mathcal N \delta_\tau\widetilde z)
   \chi_F(1+\delta_\tau\widetilde z)^{-1}=1.
\]
Thus the critical functions of $\tau\chi_F$ and $\tau$ agree, and
$\lambda_{\tau\chi_F}=\lambda_{\tau}$.

Suppose $m>T$ and $m$ is odd.  Put $d_\chi=(m-1)/2$ and retain
the normalized critical element $\delta_\chi$, of valuation $d_\chi$.
Since $m>T$,
$d_\chi\ge\left\lceil\frac T2\right\rceil$,
so the linearization of $\tau$ in \eqref{eq:Lamprecht-linearization}
applies to $x=\delta_\chi\widetilde z$.  With the sum representative for
$\tau\chi_F$ one obtains
\[
 \frac{H_{\tau\chi_F}(z)}{H_{\chi_F}(z)}
 =\psi_F(A\delta_\chi\widetilde z)
   \tau(1+\delta_\chi\widetilde z)^{-1}=1.
\]
Hence the critical functions of $\tau\chi_F$ and $\chi_F$ agree, and
$\lambda_{\tau\chi_F}=\lambda_{\chi_F}$.

At the boundary $m=T$, the conductors of $\tau$, $\chi_F$, and
$\tau\chi_F$ are all $T$.  When $T$ is odd, the common-coordinate
convention $\delta_\chi=\delta_\tau/\widetilde\rho$ gives the critical functions
of $\tau$ and $\chi_F$
\begin{equation}\label{eq:quadratic-boundary-character-functions}
 \mathfrak q(z)\psi_0(\lambda_{\tau}z),\qquad
 \mathfrak q(\rho z)\psi_0(\rho\lambda_{\chi_F} z).
\end{equation}

We now assume that $m_K(\chi_K)$ is odd and compute the critical
function of $\chi_K$.  Let $\delta_K$ be a critical element for $\chi_K$.  For $Z\in k$, choose a
lift $\widetilde Z\in\cO_F$, put
\[
 x_Z=\delta_K\widetilde Z,
 \qquad P(y)=\Nm(1+y)-1=\Tr(y)+\Nm(y),
\]
and put $m_K(\chi_K)=2d_K+1$.  Equation
\eqref{eq:stationary-parameter-ambiguity} gives
$P(x_Z)\in\p_F^{\lfloor m/2\rfloor}$.  Also
$v_K(ux_Z)\ge a+d_K\ge\lfloor T/2\rfloor$, so
Theorem~\ref{thm:graded-norm}(b) gives
$P(ux_Z)\in\p_F^{\lfloor T/2\rfloor}$.
Define
\[
 p_{\rm nat}(Z)=\overline{P(x_Z)/\delta_\chi}
 \quad(m\text{ odd}),\qquad
 q_{\rm nat}(Z)=\overline{P(ux_Z)/\delta_\tau}
 \quad(T\text{ odd}).
\]
If $m$ is even, set $p_{\rm nat}=0$; if $T$ is even, set $q_{\rm nat}=0$.

Write $H_K^{\rm nat}(Z)$ for the critical function of $\chi_K$ in
this natural coordinate.  Its definition and
\eqref{eq:quadratic-unified-stationary-ratios} give
\begin{align*}
 H_K^{\rm nat}(Z)
 &=\psi_F\!\left(A\Tr((\mathcal N-u)x_Z)\right)
   \chi_F(1+P(x_Z))^{-1}\\
 &=H_{\chi_F}(p_{\rm nat}(Z))
   \psi_F\!\left(-A\{\Tr(ux_Z)+\mathcal N \Nm(x_Z)\}\right)\\
 &=H_{\chi_F}(p_{\rm nat}(Z))\psi_F(-AP(ux_Z)).
\end{align*}
For the second equality, substitute
\[
 H_{\chi_F}(p_{\rm nat}(Z))
 =\psi_F(A\mathcal N P(x_Z))\chi_F(1+P(x_Z))^{-1}
\]
from \eqref{eq:Lamprecht-Hasse-function} when $m$ is odd and
\eqref{eq:Lamprecht-linearization} when $m$ is even, and cancel
$\mathcal N\Tr(x_Z)$.  The last equality uses
$\mathcal N \Nm(x_Z)=\Nm(ux_Z)$.  Since
$1+P(ux_Z)=\Nm(1+ux_Z)$ and $\tau$ is trivial on norms,
\eqref{eq:Lamprecht-Hasse-function} and \eqref{eq:Lamprecht-linearization} similarly give
$H_\tau(q_{\rm nat}(Z))=\psi_F(AP(ux_Z))$.
For odd $T$, \eqref{eq:q-linear-family} and
\eqref{eq:q-phase-translation} give
$H_\tau(q_{\rm nat}(Z))^{-1}
 =H_\tau(q_{\rm nat}(Z))\psi_0(q_{\rm nat}(Z))$.
For even $T$, this also holds because $q_{\rm nat}(Z)=0$ and
$H_\tau\equiv1$.  Therefore
\begin{equation}\label{eq:quadratic-exact-transport-natural}
 H_K^{\rm nat}(Z)
 =H_{\chi_F}(p_{\rm nat}(Z))
  H_\tau(q_{\rm nat}(Z))
  \psi_0(q_{\rm nat}(Z)).
\end{equation}

The trace-ideal formula \eqref{eq:trace-ideal} and total ramification give
\begin{equation*}
 v_F(\Tr(\delta_K\widetilde Z))
 \ge\left\lfloor\frac{d_K+T}{2}\right\rfloor,
 \qquad v_F(\Nm(\delta_K\widetilde Z))\ge d_K.
\end{equation*}
The norm bound is an equality for $Z\ne0$.
When $m_F(\chi_F)=2d_\chi+1$, $d_K=d_\chi$, and
$\lfloor(d_K+T)/2\rfloor\ge d_\chi+1$, reduction of
$P(\delta_K\widetilde Z)=\Tr(\delta_K\widetilde Z)+\Nm(\delta_K\widetilde Z)$
modulo $\p_F^{d_\chi+1}$ gives
\begin{equation*}
 p_{\rm nat}(Z)=c_p Z^2,
 \qquad c_p=\overline{\frac{\Nm(\delta_K)}{\delta_\chi}}\in k^\times.
\end{equation*}
When $T=2d_\tau+1$, $a+d_K=d_\tau$, and
$\lfloor(a+d_K+T)/2\rfloor\ge d_\tau+1$, the same reduction of
$P(u\delta_K\widetilde Z)$, with $v_K(u)=a$ and $\Nm(u)=\mathcal N$, gives
\begin{equation*}
 q_{\rm nat}(Z)=c_q Z^2,
 \qquad c_q=\overline{\frac{\mathcal N \Nm(\delta_K)}{\delta_\tau}}\in k^\times.
\end{equation*}
Both displayed scalars are nonzero when defined.  Replacing $\delta_K$ by
$v_0\delta_K$, with $v_0\in\cO_K^\times$, multiplies both scalars by
$\bar v_0^{\,2}$.  Since Frobenius is an automorphism of $k$, we may
therefore normalize either nonzero scalar to $1$ without changing any
valuation or the ratio of the two scalars.

Put $a=T-m=v_K(u)$.  Suppose first that $m<T$.  Then $m_K(\chi_K)=m$, and
when $m$ is odd one has $d_K=d_\chi=(m-1)/2$.  The trace--norm bounds and the formula for $p_{\rm nat}$ above show that
$p_{\rm nat}(Z)=c_p Z^2$.  Scale $\delta_K$ so that
$c_p=1$.  If $T$ is odd, put $d_\tau=(T-1)/2$.  Since the
odd conductors $m<T$ differ by at least two,
$v_K(u\delta_K)=d_K+a=d_\tau+\frac a2\ge d_\tau+1$.
The norm and trace terms of $P(u\delta_K\widetilde Z)$ then lie in
$\p_F^{d_\tau+1}$, so $q_{\rm nat}(Z)=0$; if $T$ is even there is no
$\tau$-critical factor.

Suppose next that $m>T$ and that $m_K(\chi_K)$ is odd.  Then
$T$ is odd.  With $d_\tau=(T-1)/2$ and
$m_K(\chi_K)=2m-T$, one has
\[
 d_K=m-d_\tau-1,
 \qquad
 v_K(u\delta_K)=a+d_K=d_\tau.
\]
The trace of $u\delta_K\widetilde Z$ lies in
$\p_F^{d_\tau+1}$, while its norm has valuation at least $d_\tau$,
with equality for $Z\ne0$; hence
the formula for $q_{\rm nat}$ above applies.  Scale $\delta_K$ so that
$c_q=1$.  If $m$ is odd and $d_\chi=(m-1)/2$, then
$d_K-d_\chi=\frac{m-T}{2}\ge1$,
so $P(\delta_K\widetilde Z)\in\p_F^{d_\chi+1}$; if $m$
is even there is no such critical factor.

Finally suppose $m=T$.  Then $a=0$ and $m_K(\chi_K)=m=T$.  When $T$ is odd,
$d_K=d_\chi=d_\tau$.  Both initial terms are norm terms.  The formulas for $p_{\rm nat}$ and $q_{\rm nat}$ above give
\[
 p_{\rm nat}(Z)=c_p Z^2,
 \qquad
 q_{\rm nat}(Z)=c_q Z^2.
\]
The critical coordinates for $\chi_F$ and $\tau$ were fixed by
$\delta_\chi=\delta_\tau/\widetilde\rho$, and $\overline{\mathcal N}=\rho^2$.  Therefore
\begin{equation*}
 \frac{c_q}{c_p}
 =\overline{\mathcal N}\,\overline{\frac{\delta_\chi}{\delta_\tau}}
 =\rho^2\rho^{-1}=\rho.
\end{equation*}
Scale $\delta_K$ so that $c_p=1$.  This single scaling preserves
this ratio, and hence simultaneously
normalizes the two maps to
$p_{\rm nat}(Z)=Z^2$ and $q_{\rm nat}(Z)=\rho Z^2$.
Thus
\[
 (p_{\rm nat}(Z),q_{\rm nat}(Z))=
 \begin{cases}
  (Z^2,0),&m<T,\\
  (Z^2,\rho Z^2),&m=T,\\
  (0,Z^2),&m>T.
 \end{cases}
\]
Define
\[
 H_{\chi_K}(Z)=
 \begin{cases}
 H_K^{\rm nat}(Z),&m\ne T,\\[1mm]
 H_K^{\rm nat}(Z/\sqrt{1+\rho}),&m=T.
 \end{cases}
\]
In the boundary case, the change-of-critical-element formula following
\eqref{eq:Lamprecht-Hasse-function} realizes this substitution by
rescaling $\delta_K$ by a unit with residue $(1+\rho)^{-1/2}$.
Thus $H_{\chi_K}$ remains a critical function, and
$\sum_{Z\in k}H_{\chi_K}(Z)=\sum_{Z\in k}H_K^{\rm nat}(Z)$.

For $m\ne T$, substitution in \eqref{eq:quadratic-exact-transport-natural}
gives
\[
 H_{\chi_K}(Z)=
 \begin{cases}
 \mathfrak q(Z^2)\psi_0(\lambda_{\chi_F}Z^2),&m<T,\\
 \mathfrak q(Z^2)\psi_0((1+\lambda_\tau)Z^2),&m>T.
 \end{cases}
\]
At $m=T$, put $z=Z^2/(1+\rho)$. The same identity gives
\[
\begin{aligned}
 H_{\chi_K}(Z)
 &=\mathfrak q(z)\mathfrak q(\rho z)
     \psi_0((\lambda_{\chi_F}+\rho\lambda_\tau+\rho)z)\\
 &=\mathfrak q(Z^2)
     \psi_0\bigl((\lambda_{\chi_F}+\rho\lambda_\tau+\rho)z+\rho z^2\bigr)\\
 &=\mathfrak q(Z^2)\psi_0\!\left(
   \frac{\lambda_{\chi_F}+\rho\lambda_\tau+\rho+\sqrt\rho}{1+\rho}Z^2\right).
\end{aligned}
\]
The second equality uses \eqref{eq:q-refinement}; the third uses
$\psi_0(\rho z^2)=\psi_0(\sqrt\rho\,z)$.
Since $\mathfrak q(Z^2)=\mathfrak q(Z)$ and
$\psi_0(LZ^2)=\psi_0(\sqrt L\,Z)$ for every $L\in k$,
these expressions have polar character $\psi_0(XY)$ and give all
three formulas for $\lambda_{\chi_K}^2$ in
\eqref{eq:quadratic-gamma-table}.

When $m=T$ is odd, multiplying the two functions in
\eqref{eq:quadratic-boundary-character-functions} gives
\[
 \mathfrak q((1+\rho)z)
 \psi_0\bigl((\lambda_\tau+\rho\lambda_{\chi_F})z+\rho z^2\bigr).
\]
Using $\psi_0(\rho z^2)=\psi_0(\sqrt\rho\,z)$ and replacing $z$ by
$z/(1+\rho)$ gives the boundary formula for $\lambda_{\tau\chi_F}$.
The nonboundary formulas were proved above, completing
\eqref{eq:quadratic-gamma-table}.
\end{proof}

\begin{proposition}[Wild-quadratic residual quotient]
\label{prop:quadratic-residual-quotient}
Retain the setup and notation of
Proposition~\ref{prop:quadratic-critical-functions}.  Then
\begin{equation}\label{eq:quadratic-raw-Hasse-quotient}
\begin{aligned}
 \mathcal R_{\mathrm{even}}
 &:=\frac{\Delta_{3,K}(\chi_K)\Delta_{3,F}(\tau)}
      {\Delta_{3,F}(\chi_F)\Delta_{3,F}(\tau\chi_F)}\\
 &=\begin{cases}
  1,&m<T\text{ or }m\text{ even},\\[1mm]
  h_{\lambda_{\chi_F}}(1)^{-1},&m>T,\ m\text{ odd},\\[1mm]
  h_{\lambda_{\tau\chi_F}+1}\!\left(\dfrac{\rho}{1+\rho}\right),
     &m=T \text{ odd}.
 \end{cases}
\end{aligned}
\end{equation}
\end{proposition}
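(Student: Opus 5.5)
The plan is to reduce everything to the phase calculus of Proposition~\ref{lem:normalized-refinement}. By Proposition~\ref{prop:quadratic-critical-functions}, every odd-conductor critical function among $\tau,\chi_F,\tau\chi_F,\chi_K$ equals $h_{\lambda_\theta}$ for the normalized refinement $\mathfrak q$. Hence each odd residual factor is $\Delta_{3}(\theta)=H(\lambda_\theta)=H_0\overline{\mathfrak q(\lambda_\theta)}$ by \eqref{eq:q-phase-translation}, while each even-conductor factor is $1$. The conductor table \eqref{eq:quadratic-conductor-table} tells us which factors are present, and we split into the cases $m<T$, $m>T$, and $m=T$, with subcases by the parities of $m$ and $T$. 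The case where $m$ and $T$ are both even is immediate, since all four factors are $1$.

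\emph{Case $m<T$.} The conductors are $T,m,T,m$. If $T$ is odd, then $\lambda_{\tau\chi_F}=\lambda_\tau$ by \eqref{eq:quadratic-gamma-table}, so the $\tau$ and $\tau\chi_F$ factors cancel. If $m$ is odd, then $\lambda_{\chi_K}^2=\lambda_{\chi_F}$, and $H(\lambda^2)=H(\lambda)$ gives $\Delta_{3,K}=\Delta_{3,F}(\chi_F)$. Either way the quotient is $1$.

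\emph{Case $m>T$.} The conductors are $T,m,m,2m-T$. If $m$ is odd, then $\lambda_{\tau\chi_F}=\lambda_{\chi_F}$; otherwise both of these factors are $1$. If $T$ is even, $\chi_K$ has even conductor and the quotient is $1$. If $T$ is odd, then $\lambda_{\chi_K}^2=1+\lambda_\tau$, so $\Delta_{3,K}=H(1+\lambda_\tau)=H(\lambda_\tau)^{-1}$ by the relation $H(1+\lambda)H(\lambda)=1$. Thus $\Delta_{3,K}\Delta_{3,F}(\tau)=1$. When $m$ is even, $m$ and $T$ have different parity, and this gives $1$, matching the ``$m$ even'' entry. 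When $m$ is odd, $T$ is also odd and the quotient is $\Delta_{3,F}(\chi_F)^{-2}=H(\lambda_{\chi_F})^{-2}=h_{\lambda_{\chi_F}}(1)^{-1}$, using $H(\lambda)^2=h_\lambda(1)$. I should double-check the parity bookkeeping here, namely that $2m-T$ is odd exactly when $T$ is odd and that the even-$m$ subcase is covered by the first row.

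\emph{Case $m=T$ odd.} This is the main obstacle. Write $\lambda_1=\lambda_{\tau\chi_F}$. The quotient is
\[
H_0^{0}\,\overline{\mathfrak q(\lambda_{\chi_K})}\;\overline{\mathfrak q(\lambda_\tau)}\;\mathfrak q(\lambda_{\chi_F})\,\mathfrak q(\lambda_1),
\]
where the four factors of $H_0$ cancel. To handle this, use $\mathfrak q(\lambda^2)=\mathfrak q(\lambda)$ to replace $\mathfrak q(\lambda_{\chi_K})$ by $\mathfrak q(\lambda_{\chi_K}^2)$. Then use the boundary formulas in \eqref{eq:quadratic-gamma-table}, which give $(1+\rho)\lambda_1=\lambda_\tau+\rho\lambda_{\chi_F}+\sqrt\rho$ and $(1+\rho)\lambda_{\chi_K}^2=\lambda_{\chi_F}+\rho\lambda_\tau+\rho+\sqrt\rho$. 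Eliminating, I expect
\[
\lambda_{\chi_K}^2=\lambda_1+\lambda_\tau+\lambda_{\chi_F}+\frac{\rho}{1+\rho}.
\]
Indeed, summing the two numerators gives $(1+\rho)(\lambda_\tau+\lambda_{\chi_F})+\rho$ in characteristic two, so I will verify this identity first. Next, expand $\mathfrak q$ of this sum using the functional equation \eqref{eq:q-refinement} repeatedly, and express conjugates via $\overline{\mathfrak q(r)}=\mathfrak q(r)\psi_0(r)$ and $\mathfrak q(r)^2=\psi_0(r)$. The products $\mathfrak q(\lambda_\tau)\overline{\mathfrak q(\lambda_\tau)}$ and $\mathfrak q(\lambda_{\chi_F})\mathfrak q(\lambda_{\chi_F})$ collapse to $\psi_0$-values. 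The remaining terms should assemble as $\mathfrak q(\mu)\psi_0((\lambda_1+1)\mu)=h_{\lambda_1+1}(\mu)$ with $\mu=\rho/(1+\rho)$.

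The cross terms of the form $\psi_0(\lambda_\tau\lambda_{\chi_F})$ and $\psi_0(\lambda_\cdot\mu)$ must be tracked carefully. I would re-substitute the linear relation for $\lambda_1$ to absorb them, and use Lemma~\ref{lem:AS-phase-new} to discard the Artin--Schreier terms $\psi_0(c+c^2)$. I expect this bookkeeping to be the only delicate point. If the signs do not close, I would instead expand with $\lambda_1$ eliminated in favour of $\lambda_\tau,\lambda_{\chi_F}$, and compare both sides as explicit functions of these two independent variables.
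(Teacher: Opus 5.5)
Your treatment of $m<T$, of the case where $m$ and $T$ are both even, and your plan for $m=T$ odd all follow the paper's proof. The case $m>T$, however, contains a concrete error.

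You state that for $T$ even ``the quotient is $1$'', and later that when $m$ is odd ``$T$ is also odd''. The second claim is false, and the subcase it discards ($m>T$, $m$ odd, $T$ even) is actually the generic one. In equal characteristic two $T$ is always even, and in mixed characteristic $T$ odd forces $v_F(2)=(T-1)/2$ by Lemma~\ref{lem:quadratic-graded-trace}(a).

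In this subcase $\tau$ and $\chi_K$ have even conductors $T$ and $2m-T$, so the numerator $\Delta_{3,K}(\chi_K)\Delta_{3,F}(\tau)$ equals $1$. But $\chi_F$ and $\tau\chi_F$ have odd conductor $m$ and the common coefficient $\lambda_{\tau\chi_F}=\lambda_{\chi_F}$, so the denominator is $H(\lambda_{\chi_F})^2=h_{\lambda_{\chi_F}}(1)$. Hence $\mathcal R_{\mathrm{even}}=h_{\lambda_{\chi_F}}(1)^{-1}$, not $1$. This is not $1$ in general, since $h_\lambda(1)^2=\psi_0(1)=(-1)^{[k:\F_2]}$. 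Your value would therefore contradict the statement, and it would also spoil the later cancellation against the factor $h_{\lambda_{\chi_F}}(1)$ in \eqref{eq:quadratic-assembly}.

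The repair is the paper's ordering. First prove $\Delta_{3,K}(\chi_K)\Delta_{3,F}(\tau)=1$ for either parity of $T$: this is trivial if $T$ is even, and follows from $H(1+\lambda_\tau)H(\lambda_\tau)=1$ if $T$ is odd. Then split only on the parity of $m$ in the denominator.

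For $m=T$ odd, your relation $\lambda_{\chi_K}^2=\lambda_{\tau\chi_F}+\lambda_\tau+\lambda_{\chi_F}+\rho/(1+\rho)$ is exactly the paper's $X_0+Y_0=A_\rho$, and the computation you left open does close. Put $\mu=\rho/(1+\rho)$ and let $P$ be the sum of the six pairwise products of $\lambda_{\tau\chi_F},\lambda_\tau,\lambda_{\chi_F},\mu$. Your full expansion gives $\mathcal R_{\mathrm{even}}=\mathfrak q(\mu)\psi_0(\lambda_\tau+\mu+P)$. This differs from $h_{\lambda_{\tau\chi_F}+1}(\mu)$ by $\psi_0(D)$, where $D=\lambda_\tau+(\lambda_{\tau\chi_F}+\mu)(\lambda_\tau+\lambda_{\chi_F})+\lambda_\tau\lambda_{\chi_F}$. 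Substituting the boundary formula for $\lambda_{\tau\chi_F}$ gives $D=Z^2+Z$ with $Z=(\lambda_\tau+\sqrt\rho\,\lambda_{\chi_F})/(1+\sqrt\rho)$, so Lemma~\ref{lem:AS-phase-new} gives $\psi_0(D)=1$.

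The paper instead first combines $\mathfrak q(\lambda_{\chi_F})\mathfrak q(\lambda_{\tau\chi_F})$ and $\overline{\mathfrak q(L)}\,\overline{\mathfrak q(\lambda_\tau)}$, and then uses the Artin--Schreier witness $a_\rho(\lambda_{\chi_F}+\lambda_\tau)+A_\rho$. The two routes differ only in bookkeeping.
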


\begin{proof}
For odd conductor use the phase $H(\lambda)$ in
\eqref{eq:quadratic-normalized-sum} and the identities
\eqref{eq:q-phase-translation}; for even conductor the factor is
$1$ by \eqref{eq:stationary-factor-definitions}.

If $m<T$, the conductors in \eqref{eq:quadratic-conductor-table} and
the coefficients in \eqref{eq:quadratic-gamma-table} give
$\Delta_{3,K}(\chi_K)=\Delta_{3,F}(\chi_F)$ and
$\Delta_{3,F}(\tau\chi_F)=\Delta_{3,F}(\tau)$.
Thus $\mathcal R_{\mathrm{even}}=1$.

If $m>T$, the product
$\Delta_{3,K}(\chi_K)\Delta_{3,F}(\tau)$ is $1$: for even $T$ both
factors are $1$, and for odd $T$ it is
$H(1+\lambda_{\tau})H(\lambda_{\tau})=1$ by
\eqref{eq:quadratic-gamma-table}.
The denominator is $1$ for even $m$ and is
$H(\lambda_{\chi_F})^2=h_{\lambda_{\chi_F}}(1)$ for odd $m$.
This proves the asserted formulas when $m>T$.

If $m=T$ and $T$ is even, all four conductors are even, so again
$\mathcal R_{\mathrm{even}}=1$.

It remains to assume $m=T$ and $T$ odd.
All four critical factors are present.  Put
\[
 L=\frac{\lambda_{\chi_F}+\rho\lambda_{\tau}+\rho+\sqrt\rho}{1+\rho},
 \qquad A_\rho=\frac{\rho}{1+\rho}.
\]
Since $\lambda_{\chi_K}^2=L$, equation~\eqref{eq:q-phase-translation}
gives $H(\lambda_{\chi_K})=H(L)$. Hence
\begin{align*}
 \mathcal R_{\mathrm{even}}
 &=\frac{\overline{\mathfrak q(L)}
          \overline{\mathfrak q(\lambda_{\tau})}}
         {\overline{\mathfrak q(\lambda_{\chi_F})}
          \overline{\mathfrak q(\lambda_{\tau\chi_F})}}\\
 &=\mathfrak q(\lambda_{\chi_F})\mathfrak q(\lambda_{\tau\chi_F})
   \overline{\mathfrak q(L)}\overline{\mathfrak q(\lambda_{\tau})}.
\end{align*}
Set $X_0=\lambda_{\chi_F}+\lambda_{\tau\chi_F}$ and $Y_0=L+\lambda_{\tau}$.  In characteristic
two, substitution of the $m=T$ formulas in \eqref{eq:quadratic-gamma-table} gives
\begin{align*}
 X_0+Y_0
 &=\lambda_{\chi_F}+\lambda_{\tau}
   +\frac{\lambda_{\tau}+\rho\lambda_{\chi_F}+\sqrt\rho
          +\lambda_{\chi_F}+\rho\lambda_{\tau}+\rho+\sqrt\rho}{1+\rho}\\
 &=\frac{\rho}{1+\rho}=A_\rho.
\end{align*}
Thus $X_0=Y_0+A_\rho$.  The refinement law therefore gives
\begin{equation*}
 \mathcal R_{\mathrm{even}}=\mathfrak q(A_\rho)\psi_0(E),
 \qquad
 E=Y_0A_\rho+\lambda_{\chi_F}\lambda_{\tau\chi_F}+L\lambda_{\tau}.
\end{equation*}
Let $a_\rho=\sqrt\rho/(1+\sqrt\rho)$, so $a_\rho^2=A_\rho$.  An expansion in
characteristic two gives
\begin{equation*}
 E+\lambda_{\tau\chi_F}A_\rho+A_\rho
 =\bigl(a_\rho(\lambda_{\chi_F}+\lambda_{\tau})+A_\rho\bigr)^2
  +\bigl(a_\rho(\lambda_{\chi_F}+\lambda_{\tau})+A_\rho\bigr).
\end{equation*}
The right side of this identity has
absolute trace zero. Substituting this into
$\mathcal R_{\mathrm{even}}=\mathfrak q(A_\rho)\psi_0(E)$ gives
\[
 \mathcal R_{\mathrm{even}}=\mathfrak q(A_\rho)
      \psi_0(\lambda_{\tau\chi_F}A_\rho+A_\rho)=h_{\lambda_{\tau\chi_F}+1}(A_\rho).
\]

This proves \eqref{eq:quadratic-raw-Hasse-quotient}.
\end{proof}

\subsection{Evaluation of the stationary factors}

\begin{lemma}[Depth of the quadratic correction units]
\label{lem:quadratic-correction-depth}
The correction units in \eqref{eq:quad-corrections} satisfy
\begin{equation}\label{eq:quadratic-correction-depth-bound}
 v_F(z_0-1)=v_F(z_1-1)
 \ge\left\lfloor\frac{T+|a|}{2}\right\rfloor.
\end{equation}
\end{lemma}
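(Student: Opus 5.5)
By Lemma~\ref{lem:quad-X}, $z_0-1=-s/(1+\mathcal N)$ and $z_1-1=s/(1+\mathcal N)$, where $s=\Tr_{K/F}(u)$. The two differences are therefore negatives of each other and have the same valuation, which is the equality in \eqref{eq:quadratic-correction-depth-bound}. It remains to bound
\[
 v_F(s)-v_F(1+\mathcal N)\qquad\text{from below by}\qquad \left\lfloor\frac{T+|a|}{2}\right\rfloor.
\]
I will split into the three cases $a>0$, $a=0$, and $a<0$, using $v_K(u)=v_F(\mathcal N)=a$ from \eqref{eq:quadratic-parameters}.

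For the trace, the quadratic form of \eqref{eq:trace-ideal} (with $D_{K/F}=T$) gives
\[
 v_F(\Tr_{K/F}(u))\ge\left\lfloor\frac{a+T}{2}\right\rfloor .
\]
\textbf{Case $a\ge0$.} Here $1+\mathcal N$ is a unit: for $a>0$ because $v_F(\mathcal N)=a>0$, and for $a=0$ by the remark after \eqref{eq:quad-corrections} via \eqref{eq:minimal-orbit-no-leading-cancellation}. So $v_F(z_1-1)=v_F(s)\ge\lfloor(T+a)/2\rfloor=\lfloor(T+|a|)/2\rfloor$.

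\textbf{Case $a<0$.} Put $c=-a>0$. Then $v_F(1+\mathcal N)=v_F(\mathcal N)=-c$, so
\[
 v_F(z_1-1)=v_F(s)+c\ge\left\lfloor\frac{T-c}{2}\right\rfloor+c=\left\lfloor\frac{T+c}{2}\right\rfloor,
\]
using $\lfloor x\rfloor+c=\lfloor x+c\rfloor$ for integer $c$. This is again the claimed bound with $|a|=c$.

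There is no real obstacle. The only point needing care is the unit claim for $1+\mathcal N$ at $a=0$. This rests on the minimal-conductor choice: the coefficients $\alpha$ and $\xi\beta=\alpha\mathcal N$ of $\tau$ and $\chi_F$ relative to $\Gamma_\tau$ cannot cancel, so $\overline{\mathcal N}\ne-1$. One could alternatively rewrite $z_0$ symmetrically as in the odd case via $Z(w)=Z(w^{-1})$, but the direct computation above already suffices.
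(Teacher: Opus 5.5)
Your proposal is correct and is essentially the paper's proof. Both arguments use $z_0-1=-s/(1+\mathcal N)$ and $z_1-1=s/(1+\mathcal N)$ from Lemma~\ref{lem:quad-X}, the trace-ideal bound $v_F(s)\ge\lfloor(a+T)/2\rfloor$, and $v_F(1+\mathcal N)=\min\{a,0\}$. Your split by the sign of $a$ simply unpacks the paper's one-line identity $\lfloor(a+T)/2\rfloor-\min\{a,0\}=\lfloor(T+|a|)/2\rfloor$.
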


\begin{proof}
Put $s=\Tr_{K/F}(u)$.  Since
$v_F(1+\mathcal N)=\min\{a,0\}$, equations~\eqref{eq:quad-zxy} and
\eqref{eq:trace-ideal} give
\begin{align*}
 v_F(z_0-1)=v_F(z_1-1)
 &=v_F(s)-\min\{a,0\}\\
 &\ge\left\lfloor\frac{a+T}{2}\right\rfloor-\min\{a,0\}
  =\left\lfloor\frac{T+|a|}{2}\right\rfloor.\qedhere
\end{align*}
\end{proof}

\begin{proposition}[Character values on the quadratic correction units]
\label{prop:quadratic-correction-values}
Retain the setup and notation of
Proposition~\ref{prop:quadratic-critical-functions}.   One has
\begin{equation}\label{eq:quadratic-correction-values}
\begin{aligned}
 &\bigl(\psi_F(A\mathcal N(z_0-1))\chi_F(z_0)^{-1},\,
        \psi_F(A(z_1-1))\tau(z_1)^{-1}\bigr)\\
 &\qquad=\begin{cases}
 (1,1),&m<T\text{ or }m\text{ even},\\[1mm]
 (h_{\lambda_{\chi_F}}(1),1),&m>T,\ m\text{ odd},\\[1mm]
 \left(h_{\lambda_{\chi_F}}\!\left(\dfrac{\rho^2}{1+\rho^2}\right),\,
       h_{\lambda_{\tau}}\!\left(\dfrac{\rho}{1+\rho^2}\right)\right),
       &m=T \text{ odd}.
 \end{cases}
\end{aligned}
\end{equation}
\end{proposition}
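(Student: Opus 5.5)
The plan is to recognize each entry of \eqref{eq:quadratic-correction-values} as a value of a critical function, or as $1$. For a character $\theta$ with linearization coefficient $c$ (namely $c=A\mathcal N$ for $\chi_F$ and $c=A$ for $\tau$, by \eqref{eq:quadratic-linearization-domains}) and a unit $z$, the quantity $\psi_F(c(z-1))\theta(z)^{-1}$ is $1$ as soon as $z-1$ lies in the linearization domain $\p_F^{\lceil m_F(\theta)/2\rceil}$. When $m_F(\theta)=2d_\theta+1$ is odd and $v_F(z-1)=d_\theta$, it equals $H_\theta(w)$ by \eqref{eq:Lamprecht-Hasse-function}. Here $w$ is the residue of $(z-1)/\delta_\theta$, formed with the normalized critical elements of Proposition~\ref{prop:quadratic-critical-functions}, so that $H_\theta=h_{\lambda_\theta}$.

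\emph{Step 1: the trivial cases.} By Lemma~\ref{lem:quadratic-correction-depth}, both $z_0-1$ and $z_1-1$ lie in $\p_F^{\lfloor (T+|a|)/2\rfloor}$. I will compare this depth with $\lceil m/2\rceil$ for $\chi_F$ and with $\lceil T/2\rceil$ for $\tau$, in each range.
\begin{itemize}
\item If $m<T$, then $|a|=T-m\ge1$, so $\lfloor(2T-m)/2\rfloor\ge\lceil m/2\rceil$ and also $\ge\lceil T/2\rceil$.
\item If $m>T$, the depth is $\lfloor m/2\rfloor$. This is at least $\lceil T/2\rceil$, so the $\tau$-entry is always $1$. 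It is also at least $\lceil m/2\rceil$ when $m$ is even.
\item If $m=T$ is even, the depth $T/2$ equals both domain exponents.
\end{itemize}
This gives $(1,1)$ in the first row, and the second coordinate $1$ in the second row.

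\emph{Step 2: $m>T$, $m$ odd.} Here $1+\mathcal N$ has valuation $a<0$, and \eqref{eq:quad-zxy} gives $z_0-1=-s/(1+\mathcal N)$. I will write this as $-\Tr_{K/F}(u^{-1})$ up to a factor in $U_F^{1}$, using $s/\mathcal N=\Tr(u^{-1})$ and $v_K(u^{-1})=m-T$. Lemma~\ref{lem:quadratic-graded-trace}(b) then shows $v_F(z_0-1)=d$ exactly, and its residue coordinate is $w=\overline{\Tr(u^{-1})/\delta_\chi}$; the sign is irrelevant in characteristic two. It remains to show $w=1$, which yields $h_{\lambda_{\chi_F}}(1)$. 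For this I will use the normalization $\psi_F(A\mathcal N\delta_\chi^2\widetilde z)=\psi_0(z)$. Together with $\psi_0(w^2 z)=\psi_0(w z^{1/2})$, this reduces $w=1$ to an identity of the shape $\overline{A\mathcal N\,\Tr(u^{-1})^2/\delta_\chi^2\cdot\delta_\chi^2}\sim$ the residual coefficient. That identity I will derive from the trace isomorphism applied to $u^{-1}$ together with the choice of $\delta_K$ fixed by $c_q=1$ in the proof of Proposition~\ref{prop:quadratic-critical-functions}.

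\emph{Step 3: $m=T$ odd.} Now $a=0$ and $u$ is a unit with residue $\rho$. By Lemma~\ref{lem:quadratic-graded-trace}(a), $s\equiv 2\widetilde\rho$ modulo $\p_F^{d_\tau+1}$, and $v_F(2)=d_\tau$. Since $\overline{1+\mathcal N}=1+\rho^2$, both $z_0-1$ and $z_1-1$ have valuation exactly $d_\tau$. Their residue coordinates are $e\rho/(1+\rho^2)$ relative to $\delta_\tau$, where $e=\overline{2/\delta_\tau}$. Passing to the $\chi_F$ coordinate $\delta_\chi=\delta_\tau/\widetilde\rho$ multiplies this by $\rho$, which gives the arguments $e\rho^2/(1+\rho^2)$ and $e\rho/(1+\rho^2)$.

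\emph{Main obstacle.} The crux, in both Step 2 and Step 3, is proving that the normalized critical element forces $e=1$, i.e.\ that $\delta_\tau\equiv 2$ modulo $\p_F^{d_\tau+1}$ (and the analogous statement in Step 2). I would try to get this from the normalization $\psi_F(A\delta_\tau^2\widetilde z)=\psi_0(z)$ combined with the fact that $\tau$ is trivial on norms. Expanding $\Nm(1+y)=1+\Tr y+\Nm y$ for $y$ at the critical level forces a relation between $A\cdot 2^2$ and $A\delta_\tau^2$. This parallels Lemma~\ref{lem:endpoint-discriminant-congruence}, via Lemma~\ref{lem:norm-character-conversion} and the Artin--Schreier identity \eqref{eq:AS-phase}. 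If this only determines $e^2$ up to an element killed by $\psi_0(c+c^2)=1$, I will absorb the discrepancy using \eqref{eq:q-phase-translation}.
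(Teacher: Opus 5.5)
Your Step 1 and the reductions in Steps 2 and 3 agree with the paper. The depth bound of Lemma~\ref{lem:quadratic-correction-depth} settles every trivial entry. When $m>T$ one has $z_0-1=-\Tr(u^{-1})\cdot\mathcal N/(1+\mathcal N)$ with $\mathcal N/(1+\mathcal N)\in U_F^1$. At $m=T$ the two arguments are $e\rho^2/(1+\rho^2)$ and $e\rho/(1+\rho^2)$, with $e=\overline{2/\delta_\tau}$.

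The gap is the decisive step: showing that $-\Tr(u^{-1})$, respectively $2$, is itself a normalized critical element for $\chi_F$, respectively $\tau$. You do not supply it, and the route you name for Step 2 cannot supply it. The scaling $c_q=1$ exists only when $m_K(\chi_K)=2m-T$ is odd, i.e.\ when $T$ is odd. That never happens in equal characteristic two, and need not happen in mixed characteristic. Even when it is available, it is only a choice of the coordinate $\delta_K$ relative to $\delta_\tau$, and carries no arithmetic information relating $\Tr(u^{-1})$ to $\delta_\chi$. Some input from the triviality of $\tau$ on norms is unavoidable. Your fallback of ``absorbing a discrepancy'' through \eqref{eq:q-phase-translation} is also not legitimate: the statement fixes the arguments of $h_{\lambda_{\chi_F}}$ and $h_{\lambda_\tau}$, so $e\ne1$ would make it false.

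The missing idea is the paper's trace--norm normalization, which handles both cases uniformly. For $w\in K^\times$ put $c_w=\Tr(w)^2/\Nm(w)$ and suppose $v_F(c_w)=T-1$. For $z\in\cO_F$,
\[
 \Nm\Bigl(1+\tfrac{\Tr w}{\Nm w}\,wz\Bigr)=1+c_w(z+z^2).
\]
Since $T-1\ge\lceil T/2\rceil$, the linearization of $\tau$ gives $\psi_F(Ac_w(z+z^2))=1$. The character $\bar z\mapsto\psi_F(Ac_w\widetilde z)$ is nontrivial by exactness of the conductor of $\tau$ on $U_F^{T-1}=1+c_w\cO_F$. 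It is trivial on $\ker\Tr_{k/\F_2}$, so Lemma~\ref{lem:frobenius-difference-trace} forces it to equal $\psi_0$.

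\emph{Step 2.} Take $w=u^{-1}$, so $c_w=\mathcal N\Tr(u^{-1})^2$ and $v_F(c_w)=(T-m)+(m-1)=T-1$. Hence $-\Tr(u^{-1})$ is normalized and $x'=1$.

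\emph{Step 3.} Take $w=1$, so $c_w=4$ and $v_F(4)=T-1$ by Lemma~\ref{lem:quadratic-graded-trace}(a). Hence $2$ is normalized.

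Two normalized critical elements differ by a unit $v$ with $\bar v^2=1$, hence $\bar v=1$ in characteristic two, so they define the same critical function. Concretely, your $e$ satisfies $\psi_0(e^2(z+z^2))=1$ for all $z$, which forces $e^2=1$ and hence $e=1$ exactly.

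Your Step 3 sketch via Lemma~\ref{lem:norm-character-conversion} with $y=2z$ is precisely this computation for $w=1$. It needs to be carried out, and the same device with $w=u^{-1}$ must replace the $c_q=1$ plan in Step 2.
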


\begin{proof}
Write $\Nm=\Nm_{K/F}$ and $\Tr=\Tr_{K/F}$.
Put $x=z_0-1$ and $y=z_1-1$.
For a quasi-character $\theta$ of $F^\times$ of odd conductor
$2d_\theta+1$, write $H_\theta=h_{\lambda_\theta}$ using its normalized
critical element $\delta_\theta$.
Substitution of the lift $w/\delta_\theta$ in
\eqref{eq:Lamprecht-Hasse-function} gives
\begin{equation}\label{eq:quadratic-residual-correction}
 \theta(1+w)^{-1}
 =\psi_F\!\left(-\frac{\beta_\theta w}{\gamma_\theta}\right)
  h_{\lambda_\theta}\!\left(\overline{w/\delta_\theta}\right)
 \qquad(w\in\p_F^{d_\theta}).
\end{equation}
Since $m=T-a$, the bound \eqref{eq:quadratic-correction-depth-bound}
is at least both $\lfloor m/2\rfloor$ and $\lfloor T/2\rfloor$.
Thus \eqref{eq:quadratic-residual-correction} applies to
$(\theta,w)=(\chi_F,x)$ and $(\tau,y)$ when the respective conductor
is odd.  For even conductor, the bound reaches the linearization
domain in \eqref{eq:quadratic-linearization-domains}.
For odd conductor put
$x'=\overline{x/\delta_\chi}$ or $y'=\overline{y/\delta_\tau}$,
respectively.  The two residual factors are $h_{\lambda_{\chi_F}}(x')$ and
$h_{\lambda_{\tau}}(y')$; for even conductor the respective factor is $1$.

For a fixed stationary representative, normalized critical elements
give the same critical function.  Indeed, if $\delta_\theta'=v\delta_\theta$
with $v\in\cO_F^\times$, their normalization implies
$\psi_0((\bar v^2-1)z)=1$ for every $z\in k$.
Nondegeneracy of the trace pairing gives $\bar v^2=1$, hence $\bar v=1$.
By \eqref{eq:Lamprecht-Hasse-function},
$H_\theta'(z)=H_\theta(\bar v z)=H_\theta(z)$.

We will use the following trace--norm normalization. For $w\in K^\times$,
put $c_w=\Tr(w)^2/\Nm(w)$ and suppose $v_F(c_w)=T-1$.
Then $v_K((\Tr(w)/\Nm(w))w)=T-1>0$, and for $z\in\cO_F$,
\[
 \Nm\!\left(1+\frac{\Tr(w)}{\Nm(w)}wz\right)=1+c_w(z+z^2).
\]
The character $\tau$ is trivial on this norm. Since
$T-1\ge\ceil{T/2}$, its stationary linearization gives
$\psi_F(Ac_w(z+z^2))=1$.
Exactness of the conductor makes
$\bar z\mapsto\psi_F(Ac_wz)$ a nontrivial character of $k$.
It is trivial on the Artin--Schreier image, which is
$\ker\Tr_{k/\F_2}$ by \eqref{eq:scaled-frobenius-trace-image}.
Consequently $\psi_F(Ac_wz)=\psi_0(\bar z)$.

If $m<T$, the bound \eqref{eq:quadratic-correction-depth-bound} is
at least $(m+1)/2$ when $m$ is odd and at least $(T+1)/2$ when $T$
is odd.  Thus $x'=y'=0$ whenever defined, and both residual factors
are $1$.

Suppose $m>T$.  When $T$ is odd,
\eqref{eq:quadratic-correction-depth-bound} gives
$v_F(y)\ge\lfloor m/2\rfloor\ge(T+1)/2$, so $y'=0$.
When $T$ is even, \eqref{eq:quadratic-linearization-domains} applies
to $y$.  Thus the $\tau$ residual factor is always $1$.
For even $m$, the same linearization applies to $x$, proving the
asserted pair $(1,1)$.

Now suppose $m>T$ and $m$ is odd.  Put $c=m-T>0$ and $w=u^{-1}$.  Then $v_K(w)=c$ and
$c+T=m$ is odd.  Lemma~\ref{lem:quadratic-graded-trace}(b) shows that
\[
 \delta_\chi:=-\Tr(w)\in\p_F^{(m-1)/2}\setminus\p_F^{(m+1)/2}
\]
is a valid critical element for $\chi_F$.
Since $\Nm(w)=\mathcal N^{-1}$, one has
$c_w=\mathcal N\delta_\chi^2$ and
$v_F(c_w)=(T-m)+(m-1)=T-1$.
The trace--norm normalization above therefore gives
\[
 \psi_F(A\mathcal N\delta_\chi^2\widetilde z)=\psi_0(z)
 \qquad(z\in k),
\]
so $\delta_\chi=-\Tr(w)$ has the required normalized polar character.
Put $s=\Tr_{K/F}(u)$.  Since
$\frac{s}{\mathcal N}=\Tr(u^{-1})=\Tr(w)$
and $x=-s/(1+\mathcal N)$, one has the identity
\[
 \frac{x}{\delta_\chi}=\frac{\mathcal N}{1+\mathcal N}
 =1-\frac1{1+\mathcal N}.
\]
Here $v_F(\mathcal N)=T-m=-c$, so $v_F((1+\mathcal N)^{-1})=c>0$.  Therefore the
normalized correction class is
$x'=\overline{x/\delta_\chi}=1$.
Equation~\eqref{eq:quadratic-residual-correction} therefore gives
the residual factor $h_{\lambda_{\chi_F}}(1)$, proving the case $m>T$, $m$ odd.

If $m=T$ and $T$ is even, the bound
\eqref{eq:quadratic-correction-depth-bound} gives $x,y\in\p_F^{T/2}$,
so \eqref{eq:quadratic-linearization-domains} gives the pair $(1,1)$.

Finally suppose $m=T$ and $T$ is odd.  Put
$s=\Tr_{K/F}(u)$ and $d_\tau=(T-1)/2$.
Choose a lift $\widetilde\rho\in\cO_F^\times$ of $\rho$.
Lemma~\ref{lem:quadratic-graded-trace}(a) gives $v_F(2)=d_\tau$, so
$\delta_\tau=2,\qquad \delta_\chi=2/\widetilde\rho$
are critical elements for $\tau$ and $\chi_F$, respectively.
For $w=1$, one has $c_w=4$ and $v_F(c_w)=T-1$.
The same trace--norm normalization gives
\begin{equation}\label{eq:quadratic-boundary-polar-normalization}
 \psi_F(A\delta_\tau^2\widetilde z)=\psi_0(z)
 \qquad(z\in k).
\end{equation}
Since $\overline{\mathcal N}=\rho^2$,
\eqref{eq:quadratic-boundary-polar-normalization} gives
\[
 \psi_F(A\mathcal N\delta_\chi^2\widetilde z)
 =\psi_F\!\left(4A\frac{\mathcal N}{\widetilde\rho^{\,2}}
                    \widetilde z\right)=\psi_0(z).
\]
Thus these choices give the normalized functions $h_{\lambda_{\tau}}$ and
$h_{\lambda_{\chi_F}}$ already fixed in Proposition~\ref{prop:quadratic-critical-functions}.

Since $u-\widetilde\rho\in\p_K$, the unit-trace congruence
\eqref{eq:quadratic-unit-trace-class} gives
\[
 s-2\widetilde\rho=\Tr(u-\widetilde\rho)\in\p_F^{d_\tau+1}.
\]
Consequently
$\overline{s/\delta_\tau}=\rho$.
Combining $\overline{s/\delta_\tau}=\rho$,
\eqref{eq:quad-zxy}, and $\overline{\mathcal N}=\rho^2$ gives
\begin{equation}\label{eq:quadratic-boundary-correction-classes}
 x'=\overline{x/\delta_\chi}=\frac{\rho^2}{1+\rho^2},
 \qquad
 y'=\overline{y/\delta_\tau}=\frac{\rho}{1+\rho^2}.
\end{equation}
Substituting \eqref{eq:quadratic-boundary-correction-classes} in
\eqref{eq:quadratic-residual-correction} proves the remaining case of
\eqref{eq:quadratic-correction-values}.
\end{proof}

\begin{proposition}[Quadratic stationary assembly]
\label{prop:quadratic-complete-assembly}
Retain the minimal-conductor choice of $\chi_F$.
With $X$ from Lemma~\ref{lem:quad-X} and
$\mathcal R_{\mathrm{even}}$ from
\eqref{eq:quadratic-raw-Hasse-quotient}, the error quotient satisfies
\begin{equation}\label{eq:quadratic-assembly}
\begin{aligned}
 \Err_{K/F}(\chi_F)
 &=\mathcal R_{\mathrm{even}}\,\psi_F(-AX)\\[-1mm]
 &\quad\times\begin{cases}
  1,&m<T\text{ or }m\text{ even},\\[1mm]
  h_{\lambda_{\chi_F}}(1),&m>T,\ m\text{ odd},\\[1mm]
  h_{\lambda_{\chi_F}}\!\left(\dfrac{\rho^2}{1+\rho^2}\right)
  h_{\lambda_{\tau}}\!\left(\dfrac{\rho}{1+\rho^2}\right),
    &m=T \text{ odd}.
 \end{cases}
\end{aligned}
\end{equation}
\end{proposition}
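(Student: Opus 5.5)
The argument will follow the odd-prime assembly in Proposition~\ref{prop:exact-odd-assembly}, specialized to $p=2$. The one new feature is that the correction values now enter through critical functions rather than through linearizations. First, factor each of the four local constants in $\Err_{K/F}(\chi_F)$ by \eqref{eq:Delta-factorization-new}. For $\chi_K$, $\tau$, $\chi_F$ and $\tau\chi_F$, use the admissible elements and stationary representatives fixed in Proposition~\ref{prop:quadratic-critical-functions}. These satisfy the ratios \eqref{eq:quadratic-unified-stationary-ratios}; concretely, $\beta_\tau/\gamma_\tau=A$ and $\beta_{\tau\chi_F}/\gamma=A(\mathcal N+1)$ relative to a common denominator. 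The $\Delta_3$ factors then assemble, by definition, into $\mathcal R_{\mathrm{even}}$, and it remains to show that the $\theta(\gamma)\Delta_2$ factors produce $\psi_F(-AX)$ times the last factor in \eqref{eq:quadratic-assembly}. Since $\Delta_3$ and $\Delta_2$ are computed with the same fixed representatives, Lemma~\ref{lem:complete-stationary-invariant} does not have to be invoked again.

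\textbf{Admissible-element factors.} These cancel exactly as in the odd case. In the high range use $\gamma_F$ for $\chi_K$, $\chi_F$ and $\tau\chi_F$, and $\gamma_F/(\alpha\kappa)$ for $\tau$. In the low range use $\Gamma_\tau/\widetilde\xi$, $\Gamma_\tau/\xi$ and $\Gamma_\tau$. The cancellation relies on $\tau$ being trivial on the norms $\alpha$ and $\xi$. In the high range there is the extra unit $\kappa\in U_F^t$, and the plan is to absorb it into $A=\alpha\kappa/\gamma_F$, as \eqref{eq:quadratic-parameters} already does. The surviving value $\tau(\kappa)$ should then cancel against the $\beta$-factors, because $\beta=\kappa\Nm(\widetilde\beta)$ occurs with the same exponent in the product.

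\textbf{The factors $\theta(\beta_\theta)^{-1}$.} Collecting these gives
\[
 \chi_F\!\Bigl(\frac{\mathcal N(\mathcal N+1)}{\Nm_{K/F}(\mathcal N-u)}\Bigr)\,\tau(\mathcal N+1),
\]
up to norms killed by $\tau$. The identity $\Nm(\mathcal N-u)=\mathcal N(1+\mathcal N)z_0$, which is the $p=2$ form of \eqref{eq:parameter-ratios} read off from the proof of Lemma~\ref{lem:quad-X}, together with $\mathcal N+1=\Nm(u+1)z_1^{-1}$, reduces this to $\chi_F(z_0)^{-1}\tau(z_1)^{-1}$. The additive parts $\psi_\theta(\beta_\theta/\gamma_\theta)$ have signed sum $\Tr(\mathcal N-u)+1-\mathcal N-(\mathcal N+1)$, with the term for $\tau$ counted positively. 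Since $\Tr_{K/F}(\mathcal N)=2\mathcal N$, this sum equals $-s$, so the additive factor is $\psi_F(-As)$.

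\textbf{Correction values.} Combining the last two steps, the $\Delta_2$ part equals
\[
 \psi_F(-As)\,\chi_F(z_0)^{-1}\tau(z_1)^{-1}
 =\psi_F(-AX)\bigl[\psi_F(A\mathcal N(z_0-1))\chi_F(z_0)^{-1}\bigr]\bigl[\psi_F(A(z_1-1))\tau(z_1)^{-1}\bigr],
\]
by the definition of $X$ in Lemma~\ref{lem:quad-X}. Proposition~\ref{prop:quadratic-correction-values} evaluates the two brackets case by case, and their product is exactly the case-dependent factor in \eqref{eq:quadratic-assembly}.

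The step I expect to be the main obstacle is the bookkeeping of signs and of the unit $\kappa$ in the high range, together with checking that the representatives used for $\Delta_2$ are the same ones used to define the $h_\lambda$ in Proposition~\ref{prop:quadratic-critical-functions}. That proposition rescales only the critical elements $\delta_\theta$, not the $\beta_\theta$, so its $\Delta_3$ factors are computed from the same representatives and the factorization stays consistent. If a sign discrepancy appears, I would check it against Lemma~\ref{lem:inverse-pair} with $\tau(-1)$.
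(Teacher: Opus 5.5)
Your proposal is correct and follows the paper's proof essentially step for step. It uses the same admissible elements and stationary representatives (ratios $A$, $A\mathcal N$, $A(\mathcal N+1)$, $A(\mathcal N-u)$), the same cancellation of the admissible-element factors, and the same reduction of the $\theta(\beta_\theta)^{-1}$ factors to $\chi_F(z_0)^{-1}\tau(z_1)^{-1}$ via $\Nm_{K/F}(\mathcal N-u)=\mathcal N(1+\mathcal N)z_0$ and $\Nm_{K/F}(1+u)=(1+\mathcal N)z_1$. It then gets the same signed additive sum $-\Tr_{K/F}(u)$ and finishes by substituting Proposition~\ref{prop:quadratic-correction-values}.

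One point of precision: in the high range, the factor $\tau(\alpha\kappa)$ that cancels the admissible quotient $\tau(\alpha\kappa)^{-1}$ comes from $(\tau\chi_F)(\alpha\kappa(\mathcal N+1))$, not from $\beta=\kappa\Nm_{K/F}(\widetilde\beta)$. The two values $\chi_F(\alpha\kappa)$ cancel against $\chi_K(\alpha\kappa)=\chi_F((\alpha\kappa)^2)$. Also, no sign $\tau(-1)$ ever arises, so the fallback to Lemma~\ref{lem:inverse-pair} is unnecessary.
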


\begin{proof}
Write $\Nm=\Nm_{K/F}$.  Use \eqref{eq:Delta-factorization-new},
\eqref{eq:trivial-local-constant}, and the following admissible denominators and stationary representatives.
Rescaling a denominator and its numerator uses
Lemma~\ref{lem:stationary-class-calculus}(b).
For $m>T$, the pairs and their sources are
\[
\begin{array}{c|c|c}
 \text{character}&(\gamma_\theta,\beta_\theta)&\text{source}\\ \hline
 \chi_K&(\gamma_F,\ \alpha \kappa(\mathcal N-u))
  &\eqref{eq:Pstar-wild-two}\\
 \tau&(\gamma_F/(\alpha \kappa),\ 1)
  &\eqref{eq:wild-two-tau-linearization}\\
 \chi_F&(\gamma_F,\ \alpha \kappa\mathcal N)
  &\eqref{eq:Lamprecht-linearization},\ \eqref{eq:quadratic-parameters}\\
 \tau\chi_F&(\gamma_F,\ \alpha \kappa(\mathcal N+1))
  &\eqref{eq:Pstar-wild-two}.
\end{array}
\]
For $m\le T$, they are
\[
\begin{array}{c|c|c}
 \text{character}&(\gamma_\theta,\beta_\theta)&\text{source}\\ \hline
 \chi_K&(\Gamma_\tau/\widetilde{\xi},\ (\alpha/\widetilde{\xi})(\mathcal N-u))
  &\text{Proposition~\ref{prop:low-parameter-table}}\\
 \tau&(\Gamma_\tau/\alpha,\ 1)&\eqref{eq:low-mu}\\
 \chi_F&(\Gamma_\tau/\xi,\ (\alpha/\xi)\mathcal N)
  &\eqref{eq:gammas-low},\ \eqref{eq:low-mu}\\
 \tau\chi_F&(\Gamma_\tau,\ \alpha(\mathcal N+1))
  &\eqref{eq:low-beta-twists}\ (j=1).
\end{array}
\]
The factors contributed by the admissible elements are, in the high
range,
\[
 \frac{\chi_K(\gamma_F)\tau(\gamma_F/(\alpha \kappa))}
      {\chi_F(\gamma_F)(\tau\chi_F)(\gamma_F)}
 =\tau(\alpha \kappa)^{-1},
\]
and in the low range,
\[
 \frac{\chi_K(\Gamma_\tau/\widetilde{\xi})\tau(\Gamma_\tau/\alpha)}
      {\chi_F(\Gamma_\tau/\xi)(\tau\chi_F)(\Gamma_\tau)}
 =\tau(\alpha)^{-1}=1.
\]
Here $\chi_K=\chi_F\circ \Nm$, $\Nm(\widetilde{\xi})=\xi$, and $\alpha$ is a norm.

It remains to collect the factors $\theta(\beta)^{-1}$ from
\eqref{eq:stationary-factor-definitions}.  In the high range they give
\[
\begin{aligned}
 &\chi_K\!\left(\alpha \kappa(\mathcal N-u)\right)^{-1}
  \chi_F(\alpha \kappa\mathcal N)
  (\tau\chi_F)\!\left(\alpha \kappa(\mathcal N+1)\right)\\
 &\qquad=\tau(\alpha \kappa)\,
  \chi_F\!\left(\frac{\mathcal N(\mathcal N+1)}{\Nm(\mathcal N-u)}\right)
  \tau(\mathcal N+1),
\end{aligned}
\]
Indeed, $\Nm(\alpha \kappa)=(\alpha \kappa)^2$, so the scalar factors
in the two $\chi_F$-values cancel that in the $\chi_K$-value.
The low-range contribution is
\[
 \begin{aligned}
 &\chi_K\!\left(\frac{\alpha}{\widetilde{\xi}}(\mathcal N-u)\right)^{-1}
  \chi_F\!\left(\frac{\alpha}{\xi}\mathcal N\right)
  (\tau\chi_F)\!\left(\alpha(\mathcal N+1)\right)\\
 &\qquad=\chi_F\!\left(
   \frac{\mathcal N(\mathcal N+1)}{\Nm(\mathcal N-u)}\right)
   \tau(\mathcal N+1).
 \end{aligned}
\]
because $\Nm(\alpha/\widetilde{\xi})=\alpha^2/\xi$ and $\tau(\alpha)=1$.
Multiplication by the admissible-element quotients therefore leaves
the same character-value factor in both ranges; the high-range factor
$\tau(\alpha \kappa)$ cancels its inverse.

By \eqref{eq:quad-corrections},
\[
 \mathcal N-u=-u(1-\mathcal N/u),\qquad
 \Nm(\mathcal N-u)=\mathcal N \Nm(1-\mathcal N/u)
               =\mathcal N(1+\mathcal N)z_0.
\]
Also $\Nm(1+u)=(1+\mathcal N)z_1$, and $\tau$ is trivial on norms.
Consequently the combined character-value factor is
$\chi_F(z_0)^{-1}\tau(z_1)^{-1}$.

Since $\psi_K=\psi_F\circ\Tr_{K/F}$, the additive stationary values
have signed sum
\[
 \Tr_{K/F}(\mathcal N-u)+1-\mathcal N-(\mathcal N+1)
 =-\Tr_{K/F}(u).
\]
Together with this character-value factor and the residual quotient
\eqref{eq:quadratic-raw-Hasse-quotient}, this gives
\[
 \Err_{K/F}(\chi_F)=\mathcal R_{\mathrm{even}}\,
 \left[\psi_F(A\Tr_{K/F}(u))\chi_F(z_0)\tau(z_1)\right]^{-1}.
\]
Substitute \eqref{eq:quadratic-correction-values}.  The additive
arguments combine as
\[
 -A\bigl(\Tr_{K/F}(u)+\mathcal N(z_0-1)+(z_1-1)\bigr)=-AX
\]
by the definition of $X$ in Lemma~\ref{lem:quad-X}.
This proves \eqref{eq:quadratic-assembly}.
\end{proof}

\subsection{The norm--trace term and final cancellation}

\begin{proposition}[Depth and phase of the quadratic norm--trace term]
\label{prop:quadratic-norm-trace-phase}
Retain the minimal-conductor choice of $\chi_F$ and $X$ from
Lemma~\ref{lem:quad-X}.
If $\operatorname{char}F=2$, then $X=0$.
If $\operatorname{char}F=0$, then $X\in\p_F^T$ when $m\ne T$ or $T$ is even,
whereas $v_F(X)=T-1$ when $m=T$ and $T$ is odd.
In all cases,
\begin{equation}\label{eq:quadratic-norm-trace-phase}
 \psi_F(-AX)=1.
\end{equation}
\end{proposition}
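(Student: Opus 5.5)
We start from the closed form $X=2s/(1+\mathcal N)$ of Lemma~\ref{lem:quad-X}, where $s=\Tr_{K/F}(u)$. In equal characteristic two the factor $2$ vanishes in $F$, so $X=0$ and \eqref{eq:quadratic-norm-trace-phase} is immediate.

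Assume $\operatorname{char}F=0$ and put $e=v_F(2)$. Since $2=\Tr_{K/F}(1)$, the trace-ideal formula \eqref{eq:trace-ideal} with $D_{K/F}=T$ gives $e\ge\lfloor T/2\rfloor$. It also gives $v_F(s)\ge\lfloor(a+T)/2\rfloor$, and we have $v_F(1+\mathcal N)=\min\{a,0\}$ whenever $a\ne0$. Hence
\[
 v_F(X)\ge e+\left\lfloor\frac{T+|a|}{2}\right\rfloor\ge\left\lfloor\frac T2\right\rfloor+\left\lfloor\frac{T+|a|}{2}\right\rfloor.
\]
We check that this bound is at least $T$ in the required cases. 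If $T$ is even, then $\lfloor T/2\rfloor=T/2$, so the bound is at least $T$. If $T$ is odd and $|a|\ge1$, the bound is at least $(T-1)/2+(T+1)/2=T$.

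At the boundary $a=0$, the element $1+\mathcal N$ is a unit, as noted after \eqref{eq:quad-corrections}.
\begin{itemize}
\item If $T$ is even, the bound is again $\ge T$.
\item If $T$ is odd, Lemma~\ref{lem:quadratic-graded-trace}(a) gives $e=d_\tau=(T-1)/2$. By \eqref{eq:quadratic-unit-trace-class}, $s\equiv2\widetilde\rho\pmod{\p_F^{d_\tau+1}}$, so $v_F(s)=d_\tau$. Therefore $v_F(X)=2d_\tau=T-1$ exactly.
\end{itemize}
When $X\in\p_F^T$, the argument at the end of Proposition~\ref{prop:X-depth} applies: the linearization \eqref{eq:quadratic-linearization-domains} of $\tau$, restricted to $\p_F^T$ where $\tau$ is trivial, gives $\psi_F(A\p_F^T)=1$. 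So $\psi_F(-AX)=1$ in these cases.

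The main obstacle is the odd boundary $m=T$, where $X$ lies one step short of $\p_F^T$. Here we compute the residue of $X$ on $\p_F^{T-1}/\p_F^T$ and show it lies in the kernel of the character. Write $\delta_\tau=2$, which is a critical element by Lemma~\ref{lem:quadratic-graded-trace}(a). Then $X=2\cdot 2\cdot(s/2)/(1+\mathcal N)$, whose normalized class is
\[
 \overline{X/\delta_\tau^2}=\frac{\rho}{1+\rho^2},
\]
using $\overline{s/2}=\rho$ and $\overline{\mathcal N}=\rho^2$. By \eqref{eq:quadratic-boundary-polar-normalization}, $\psi_F(-A\delta_\tau^2\widetilde z)=\psi_0(z)$, with the sign irrelevant in characteristic two. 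Thus $\psi_F(-AX)=\psi_0\bigl(\rho/(1+\rho^2)\bigr)$, because the discrepancy in $\p_F^T$ is killed.

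Finally we write
\[
 \frac{\rho}{1+\rho^2}=\frac{1}{1+\rho}+\frac{1}{(1+\rho)^2},
\]
which one checks using $(1+\rho)^2=1+\rho^2$ in characteristic two. This has the form $c+c^2$ with $c=(1+\rho)^{-1}$, and $c$ exists because $\rho\ne1$. Lemma~\ref{lem:AS-phase-new} then gives $\psi_0(c+c^2)=1$, which proves \eqref{eq:quadratic-norm-trace-phase} at the odd boundary.
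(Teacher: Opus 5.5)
Your proof is correct and follows the paper's argument. You bound the depth by $\lfloor T/2\rfloor+\lfloor (T+|a|)/2\rfloor$ using $v_F(2)\ge\lfloor T/2\rfloor$ and the trace-ideal formula, and at the odd boundary the residue $\overline{X/4}=\rho/(1+\rho^2)=c+c^2$, with $c=(1+\rho)^{-1}$, is killed by the Artin--Schreier phase; the only cosmetic differences are that you inline Lemma~\ref{lem:quadratic-correction-depth} by working with $X=2s/(1+\mathcal N)$ directly, and you obtain $v_F(X)=T-1$ from $v_F(s)=d_\tau$ rather than from $\overline{y/2}=C\neq0$.
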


\begin{proof}
Equation~\eqref{eq:quadratic-linearization-domains} gives
$\psi_F(A\p_F^T)=1$, since $\tau$ has conductor $T$.
If $\operatorname{char}F=2$, equation~\eqref{eq:quad-zxy} gives $X=0$,
proving the assertions in this case.

Assume $\operatorname{char}F=0$ and put $e=v_F(2)$.
Since $2=\Tr_{K/F}(1)$, equation~\eqref{eq:trace-ideal} gives
$e\ge\lfloor T/2\rfloor$. By \eqref{eq:quad-zxy}, $X=2(z_1-1)$,
so Lemma~\ref{lem:quadratic-correction-depth} gives
\[
 v_F(X)\ge e+\floor{\frac{T+|a|}{2}}
 \ge\floor{\frac T2}+\floor{\frac{T+|a|}{2}}.
\]
If $a\ne0$ or $T$ is even, the last expression is at least $T$.
Thus $X\in\p_F^T$ and $\psi_F(-AX)=1$ in these cases.

It remains to take $m=T$ and $T$ odd.  Put
$C=\rho/(1+\rho^2)$ and $y=z_1-1$.
The critical element in \eqref{eq:quadratic-boundary-polar-normalization}
is $\delta_\tau=2$, and
\eqref{eq:quadratic-boundary-correction-classes} gives
$\overline{y/2}=C$.
By \eqref{eq:quad-zxy}, $X=2y$; since $v_F(4)=T-1$, it follows that
$X\equiv4\widetilde C\pmod{\p_F^T}$
for any lift $\widetilde C\in\cO_F$ of $C$.
Since $C\ne0$ and $v_F(4)=T-1$, this gives $v_F(X)=T-1$.
Using $\psi_F(A\p_F^T)=1$ and
\eqref{eq:quadratic-boundary-polar-normalization}, we obtain
$\psi_F(-AX)=\psi_0(-C)=\psi_0(C)$,
because $k$ has characteristic two.  In $k$, put $c=(1+\rho)^{-1}$.
Then
$c+c^2=\frac{\rho}{(1+\rho)^2}=C$.
Equation~\eqref{eq:AS-phase} gives $\psi_0(C)=1$, proving
\eqref{eq:quadratic-norm-trace-phase} also at the boundary.
\end{proof}

\begin{proposition}[Wild quadratic cancellation]
\label{prop:wild-quadratic-cancellation}
For every wildly ramified quadratic extension $K/F$ of nonarchimedean
local fields and every quasi-character $\chi_F$ of $F^\times$,
\[
 \Err_{K/F}(\chi_F)=1.
\]
\end{proposition}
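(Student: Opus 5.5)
By twist invariance (Lemma~\ref{lem:twist-invariance}), we may keep the minimal-conductor choice of $\chi_F$ in its $S(K/F)$-orbit and treat the conductor $m=m_F(\chi_F)$ case by case. For $m=0$, Lemma~\ref{lem:ramified-conductor-zero} applies with $\ell=2$. For $m=1$, Proposition~\ref{prop:wild-conductor-one} applies with $p=2$. From now on let $m>1$. Proposition~\ref{prop:quadratic-complete-assembly} expresses $\Err_{K/F}(\chi_F)$ as a product of three factors: $\mathcal R_{\mathrm{even}}$, $\psi_F(-AX)$, and an explicit correction factor. Proposition~\ref{prop:quadratic-norm-trace-phase} gives $\psi_F(-AX)=1$ in every case. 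So the task reduces to showing that $\mathcal R_{\mathrm{even}}$ cancels the correction factor in \eqref{eq:quadratic-assembly}, using the table \eqref{eq:quadratic-raw-Hasse-quotient}.

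The nonboundary cases follow by inspection. If $m<T$ or $m$ is even, both factors equal $1$. If $m>T$ and $m$ is odd, then $\mathcal R_{\mathrm{even}}=h_{\lambda_{\chi_F}}(1)^{-1}$ while the correction factor is $h_{\lambda_{\chi_F}}(1)$, and these cancel.

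The main obstacle is the boundary case $m=T$ with $T$ odd. There one must verify
\[
 h_{\lambda_{\tau\chi_F}+1}\!\left(\frac{\rho}{1+\rho}\right)
 h_{\lambda_{\chi_F}}\!\left(\frac{\rho^2}{1+\rho^2}\right)
 h_{\lambda_{\tau}}\!\left(\frac{\rho}{1+\rho^2}\right)=1 .
\]
I would write each factor as $h_\lambda(z)=\mathfrak q(z)\psi_0(\lambda z)$ and substitute $\lambda_{\tau\chi_F}=(\lambda_\tau+\rho\lambda_{\chi_F}+\sqrt\rho)/(1+\rho)$ from \eqref{eq:quadratic-gamma-table}.

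For the $\mathfrak q$-part, the identities $\mathfrak q(r^2)=\mathfrak q(r)$ and $\mathfrak q(r+s)=\mathfrak q(r)\mathfrak q(s)\psi_0(rs)$ from \eqref{eq:q-refinement} let us rewrite $\mathfrak q(\rho^2/(1+\rho^2))$ as $\mathfrak q(\rho/(1+\rho))$. Then $\mathfrak q(\rho/(1+\rho))^2=\psi_0(\rho/(1+\rho))$ by \eqref{eq:q-phase-translation}. Likewise $\mathfrak q(\rho/(1+\rho^2))=\mathfrak q(\sqrt\rho/(1+\rho))$ after square roots; set $c=\sqrt\rho/(1+\rho)$. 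Everything is thereby reduced to $\psi_0$ of an explicit rational expression in $\rho,\sqrt\rho,\lambda_\tau,\lambda_{\chi_F}$.

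Next, the coefficients of $\lambda_\tau$ and $\lambda_{\chi_F}$ should cancel modulo Frobenius, using $\psi_0(L z^2)=\psi_0(\sqrt L z)$. For example, $\lambda_{\chi_F}\rho^2/(1+\rho^2)$ pairs against $\lambda_{\chi_F}\rho/(1+\rho)\cdot\rho/(1+\rho)$ after squaring. What remains is a $\lambda$-free element, which I expect to be of the form $w+w^2$ (for instance with $w=\sqrt\rho/(1+\rho)$ or $1/(1+\rho)$). Lemma~\ref{lem:AS-phase-new} then kills it, as in the proof of Proposition~\ref{prop:quadratic-norm-trace-phase}.

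The delicate point is the bookkeeping between $\mathfrak q$-values and $\psi_0$-values under the Frobenius square-root substitutions. I would organize it by moving every argument into the form $z^2$ with $z\in\{\rho/(1+\rho),\sqrt\rho/(1+\rho)\}$ before expanding. If a leftover $\psi_0$ of absolute trace possibly $1$ survives, I would recheck it against $H(1+\lambda)H(\lambda)=1$ from \eqref{eq:q-phase-translation}.
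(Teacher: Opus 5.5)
Your reduction (twist invariance, conductors $0$ and $1$, $\psi_F(-AX)=1$) and the nonboundary cases agree with the paper. The boundary case $m=T$ with $T$ odd, however, has a genuine gap in the $\mathfrak q$-bookkeeping.

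Write $A_\rho=\rho/(1+\rho)$, $B=\rho^2/(1+\rho^2)$, $C=\rho/(1+\rho^2)$. Frobenius invariance gives $\mathfrak q(B)=\mathfrak q(A_\rho)$, so $\mathfrak q(A_\rho)\mathfrak q(B)=\psi_0(A_\rho)$. This cancels the factor $\psi_0(A_\rho)$ coming from the shift $\lambda_{\tau\chi_F}+1$.

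The third value $\mathfrak q(C)=\mathfrak q(c)$, with $c=\sqrt\rho/(1+\rho)$, is then left unpaired. Squaring and Frobenius only give $\mathfrak q(c)^2=\psi_0(c)$, so $\mathfrak q(c)$ is a priori a fourth root of unity. Nothing you list converts it into a $\psi_0$-value, so your claim that everything is reduced to $\psi_0$ of an explicit expression is false.

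Meanwhile the $\lambda$-terms cancel identically in characteristic two, not merely modulo Frobenius:
$\lambda_{\tau\chi_F}A_\rho+\lambda_{\chi_F}B+\lambda_\tau C=z$, where $z=\rho\sqrt\rho/(1+\rho^2)$. Your route therefore ends at $\Err_{K/F}(\chi_F)=\mathfrak q(c)\,\psi_0(z)$.

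The factor $\psi_0(z)$ is not identically $1$. For $k=\F_4$ and $\rho$ a primitive cube root of unity, $z=\rho^2$ has absolute trace $1$, so $\psi_0(z)=-1$. Hence the $\lambda$-free remainder you expect to have the form $w+w^2$ does not appear. The fallback $H(1+\lambda)H(\lambda)=1$ also says nothing about the sign of $\mathfrak q(c)$.

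The missing step is to apply the polar law $\mathfrak q(r+s)=\mathfrak q(r)\mathfrak q(s)\psi_0(rs)$ of \eqref{eq:q-refinement} to the linear relation $B+C=A_\rho$. This relation holds because $1+\rho^2=(1+\rho)^2$ in characteristic two. It gives $\mathfrak q(A_\rho)\mathfrak q(B)\mathfrak q(C)=\mathfrak q(A_\rho)^2\psi_0(BC)=\psi_0(A_\rho+BC)$, or equivalently $\mathfrak q(c)=\psi_0(BC)$.

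Since $BC=\rho^3/(1+\rho^2)^2=z^2$, the cross term pairs with the remainder $z$. The error becomes $\psi_0(z+z^2)=1$ by \eqref{eq:AS-phase}, which is exactly how the paper concludes. In the $\F_4$ example, $c=1$ and $\mathfrak q(1)=-1=\psi_0(z)$, so the cross term $BC$ is indispensable rather than cosmetic.
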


\begin{proof}
By \eqref{eq:error-twist-invariance}, choose $\chi_F$ of minimal
conductor in its $S(K/F)$-orbit.  Conductors $0$ and $1$ were treated
in Section~\ref{sec:wild-endpoints}, so assume $m_F(\chi_F)>1$.
Equation~\eqref{eq:quadratic-norm-trace-phase} gives $\psi_F(-AX)=1$.
It remains to cancel the residual quotient with the additional
critical-function factors in \eqref{eq:quadratic-assembly}.

If $m<T$ or $m$ is even, both the residual quotient in
\eqref{eq:quadratic-raw-Hasse-quotient} and the additional factor in
\eqref{eq:quadratic-assembly} are $1$.
If $m>T$ and $m$ is odd, those factors are $h_{\lambda_{\chi_F}}(1)^{-1}$ and
$h_{\lambda_{\chi_F}}(1)$, respectively, and cancel.  This proves the result when
$m\ne T$ or $T$ is even.

Suppose $m=T$ and $T$ is odd.  Put
\[
 A_\rho=\frac{\rho}{1+\rho},\qquad
 B=\frac{\rho^2}{1+\rho^2},\qquad C=\frac{\rho}{1+\rho^2}.
\]
Equations~\eqref{eq:quadratic-raw-Hasse-quotient} and
\eqref{eq:quadratic-assembly}, with $\psi_F(-AX)=1$, give
\[
 \Err_{K/F}(\chi_F)
 =h_{\lambda_{\tau\chi_F}+1}(A_\rho)h_{\lambda_{\chi_F}}(B)h_{\lambda_{\tau}}(C).
\]
Since $B+C=A_\rho$, the refinement identities
\eqref{eq:q-refinement} and \eqref{eq:q-phase-translation} imply
\[
 \mathfrak q(A_\rho)\mathfrak q(B)\mathfrak q(C)
 =\mathfrak q(A_\rho)^2\psi_0(BC)=\psi_0(A_\rho+BC).
\]
Expanding $h_\lambda(z)=\mathfrak q(z)\psi_0(\lambda z)$ therefore yields
\[
 \Err_{K/F}(\chi_F)
 =\psi_0(\lambda_{\tau\chi_F}A_\rho+\lambda_{\chi_F} B+\lambda_{\tau}C+BC).
\]
The boundary relation in \eqref{eq:quadratic-gamma-table} gives
\[
 \lambda_{\tau\chi_F}A_\rho+\lambda_{\chi_F} B+\lambda_{\tau}C
 =\frac{\rho(\lambda_{\tau}+\rho\lambda_{\chi_F}+\sqrt\rho)
              +\lambda_{\chi_F}\rho^2+\lambda_{\tau}\rho}{(1+\rho)^2}
 =\frac{\rho\sqrt\rho}{1+\rho^2}.
\]
Put $z=\rho\sqrt\rho/(1+\rho^2)$.  Then
$BC=\frac{\rho^3}{(1+\rho^2)^2}=z^2$.
Thus $\Err_{K/F}(\chi_F)=\psi_0(z+z^2)=1$ by \eqref{eq:AS-phase}.
\end{proof}

\section*{Acknowledgments}
I am grateful to Chung Pang Mok for discussions on a local proof of the local
Langlands correspondence, which provided the original motivation for the First
and Second Main Lemma projects, and for his continued interest in this work.

OpenAI ChatGPT assisted with drafting and wording revisions of this manuscript;
the author audited the manuscript, checked the proof, and assumes full responsibility.


\begin{thebibliography}{99}
\bibitem{DavenportHasse}
H.~Davenport and H.~Hasse,
\emph{Die Nullstellen der Kongruenzzetafunktionen in gewissen
zyklischen F\"allen},
J. Reine Angew. Math. \textbf{172} (1935), 151--182.

\bibitem{Dwork}
B.~M.~Dwork,
\emph{On the Root Number in the Functional Equation of the Artin--Weil
$L$-Series}, Ph.D. thesis, Columbia University, 1954.

\bibitem{Langlands}
R.~P.~Langlands,
\emph{On the Functional Equation of the Artin $L$-functions},
Yale University preprint (1970), IAS electronic edition.

\bibitem{Serre}
J.-P.~Serre,
\emph{Local Fields},
Graduate Texts in Mathematics \textbf{67}, Springer, 1979.

\bibitem{UedaSML}
F.~Ueda,
\emph{A Local Proof of Langlands's Second Main Lemma for Local Epsilon
Factors over Nonarchimedean Local Fields},
2026.
\href{https://arxiv.org/abs/2609.13960}{arXiv:2609.13960}.
\end{thebibliography}
\end{document}